\documentclass[11pt]{amsart}
\usepackage[latin1]{inputenc}
\usepackage[active]{srcltx}
\usepackage{bbm}
\usepackage{fullpage}
\usepackage{amsfonts,enumerate}
\usepackage[mathscr]{euscript}
\usepackage{epsfig}
\usepackage{latexsym}
\usepackage{leftindex}
\usepackage[all]{xy}
\usepackage{amssymb}
\usepackage{amsthm}
\usepackage{float}
\usepackage{amsmath}
\usepackage{amsxtra}
\usepackage{xcolor}
\usepackage[colorlinks]{hyperref}
\usepackage{fancyvrb}
\usepackage{tikz}
\hypersetup{citecolor=blue}
\usepackage{algorithmic}
\usepackage{subcaption}
\usepackage{graphicx}
\usepackage[textwidth=2.5cm]{todonotes}
\usepackage{multirow}

.tfm
.tfm

\theoremstyle{plain}
\newtheorem{proposition}{Proposition}[section]
\newtheorem{theorem}[proposition]{Theorem}
\newtheorem{corollary}[proposition]{Corollary}

\newtheorem{lemma}[proposition]{Lemma}
\newtheorem{definition}[proposition]{Definition}
\newtheorem{alg}[proposition]{Algorithm}
\newtheorem{conjecture}[proposition]{Conjecture}

\newtheorem{ass}{Assumption}
\newtheorem*{thm*}{Theorem}
\newtheorem*{lemma*}{Lemma}
\newtheorem{thmA}{Theorem}
\newtheorem{thmB}[thmA]{Theorem}
\newtheorem{thmC}[thmA]{Theorem}

\newtheorem*{THM1}{Theorem~\ref{thm:extension2}}
\newtheorem*{prop*}{Proposition}

\theoremstyle{definition}
\newtheorem{example}{Example}
\newtheorem*{example*}{Example}

\theoremstyle{remark}

\newtheorem{remark}[proposition]{Remark}
\newtheorem*{remarkintro}{Remark}

\numberwithin{table}{section}

\DeclareMathOperator{\Id}{Id}
\DeclareMathOperator{\den}{den}
\DeclareMathOperator{\Perm}{Perm}
\DeclareMathOperator{\Real}{Re}

\DeclareMathOperator{\denominator}{den}

\DeclareMathOperator{\Frob}{Frob}
\DeclareMathOperator{\lcm}{lcm}

\DeclareMathOperator{\Gal}{Gal}
\DeclareMathOperator{\Aut}{Aut}
\DeclareMathOperator{\Ind}{Ind}

\DeclareMathOperator{\Jac}{Jac}
\DeclareMathOperator{\Disc}{disc}
\DeclareMathOperator{\Het}{H_{et,\chi}^{1}(\Cnsp, \QQ_{\ell})}
\DeclareMathOperator{\HetL}{H_{et,\chi}^{1}(\Cnsp, F_\lambda)}
\DeclareMathOperator{\JacMot}{\mathbf J}
\DeclareMathOperator{\len}{len}
\DeclareMathOperator{\Teich}{Teich}
\DeclareMathOperator{\car}{char}
\DeclareMathOperator{\Sp}{Sp}

\DeclareMathOperator{\ptame}{pt}
\DeclareMathOperator{\pwild}{pw}

\newcommand{\calO}{\mathcal O}
\newcommand{\D}{\mathcal D}

\newcommand{\F}{\mathbb F}
\newcommand{\bfC}{\mathcal C}

\newcommand{\JJ}{\mathscr J}
\newcommand{\T}{\mathcal T}
\newcommand{\B}{\mathcal B}
\newcommand{\Ab}{\mathcal A}
\newcommand{\Om}{{\mathscr{O}}}

\newcommand{\GL}{{\rm GL}}

\newcommand{\Cnsp}{\widehat{\C}}

\makeatletter
\newcommand{\verbatimfont}[1]{\renewcommand{\verbatim@font}{\ttfamily#1}}
\makeatother
\newcommand{\HGM}{\mathcal H}
\newcommand{\hgm}{\HGM((a,b),(c,d)|z)}

\newcommand{\hgms}{\HGM((a,b),(c,d)|\z-spec)}
\newcommand{\hgmps}{\HGM((a',b'),(c',d')|\z-spec)}
\newcommand{\Euler}{H^1_{\chi_0}(\Cnsp,F)^{\text{new}}}

\newcommand{\orbit}{{\mathcal O}}

\newcommand{\Gammap}{{\mathbf \Gamma}}
\newcommand{\psip}{{\pmb \psi}}
\newcommand{\gaussp}{{\pmb g}}
\newcommand{\hgmpadic}{{\pmb H}}
\newcommand{\chip}{\chi_{\id{p}}}
\newcommand{\veca}{\pmb \alpha}
\newcommand{\vecb}{\pmb \beta}
\newcommand{\vecg}{\pmb \gamma}
\newcommand{\vecd}{\pmb \delta}
\newcommand{\vecx}{\pmb x}
\newcommand{\vecy}{\pmb y}
\newcommand{\twist}{\varkappa}
\newcommand{\Count}{\mathcal {N}}

\def\Char{\varphi}

\def\irr{(\bf{Irr})}

\def\ZZ{\mathbb Z}

\def\FF{\mathbb F}
\def\QQ{\mathbb Q}
\def\PP{\mathbb P}

\def\PP{\mathbb P}

\def\CC{\mathbb C}

\def\gen{\varpi}
\def\gent{\chi_{\id{p}}}
\def\K{\mathfrak R}
\def\Yst{\mathcal{Y}^{{\rm st}}}
\def\C{\mathcal C}

\def\z-spec{\xi}
\def\chil{\chi_{0,\lambda}}
\newcommand{\hyp}[3]{{{\mathcal H}({#1},{#2}{\,|\,}{#3})}}
\newcommand{\hgmvec}{\hyp{\veca}{\vecb}{z}}
\newcommand{\hgmvecz}{\hyp{\veca}{\vecb}{\z-spec}}

\newcommand{\mubb}{\mbox{$\raisebox{-0.59ex}
  {$l$}\hspace{-0.18em}\mu\hspace{-0.88em}\raisebox{-0.98ex}{\scalebox{2}
  {$\color{white}.$}}\hspace{-0.416em}\raisebox{+0.88ex}
  {$\color{white}.$}\hspace{0.46em}$}{}}

\def\<#1>{{\left\langle{#1}\right\rangle}}

\def\calH{{\mathcal H}}
\def\calP{{\mathcal P}}
\def\Z{{\mathbb Z}}             
\def\Q{{\mathbb Q}}             

\def\id#1{{\mathfrak{#1}}}      
\def\idL#1{\hat{\mathfrak{#1}}}
\def\normid#1{{\norm{\id{#1}}}}
\DeclareMathOperator{\norm}{{\mathscr N}}

\DeclareMathOperator{\trace}{{\mathrm{Tr}}}

\let\kro\dkro

\usetikzlibrary{calc}

\newcommand\FundGroup[1][1cm]{%

\begin{tikzpicture}[every node={circle,thick, inner sep=1.5pt}]

\filldraw [black] (0,0) circle (2pt);
\node at (0,0) (A) [below] {$z_0$};
\filldraw [black] (3,0) circle (2pt);
\node at (3,0) (B) [above] {$1$};
\filldraw [black] (-3,0) circle (2pt);
\node at (-3,0) (C) [above] {$0$};

\node at (1,-.3) (D) [below] {$\gamma_1$};
\node at (-1,-.3) (E) [below] {$\gamma_0$};
\node at (0,-1) (F) [below] {$\gamma_{\infty}$};

\draw [->](0,0) .. controls (3,-1) and (4,-1) .. (4,0);
\draw (4,0) .. controls (4,1) and (3,1) .. (0,0);

\draw (0,0) .. controls (-3,-1) and (-4,-1) .. (-4,0);
\draw [<-](-4,0) .. controls (-4,1) and (-3,1) .. (0,0);

\draw (0,0) .. controls (0,1) and (2,1.5) .. (3,1.5);
\draw (0,0) .. controls (0,1) and (-2,1.5) .. (-3,1.5);
\draw (3,1.5) .. controls (4,1.5) and (5,1) .. (5,0);
\draw (-3,1.5) .. controls (-4,1.5) and (-5,1) .. (-5,0);
\draw [->](5,0) .. controls (5,-1) and (3,-1.5) .. (0,-1.5);
\draw (-5,0) .. controls (-5,-1) and (-3,-1.5) .. (0,-1.5);
\end{tikzpicture}
}

\usetikzlibrary{snakes,fit,positioning,calc}

\definecolor{amethyst}{rgb}{0.6, 0.4, 0.8}
\definecolor{atomictangerine}{rgb}{1.0, 0.6, 0.4}
\definecolor{deeppeach}{rgb}{1.0, 0.8, 0.64}
\definecolor{eggshell}{rgb}{0.94, 0.92, 0.84}
\definecolor{lightapricot}{rgb}{0.99, 0.84, 0.69}
\definecolor{lemonchiffon}{rgb}{1.0, 0.98, 0.8}
\definecolor{roundabout}{rgb}{1.0, 0.91, 0.75}
\definecolor{atomictangerine}{rgb}{1.0, 0.6, 0.4}

\def\rootsep{0.03}               
\def\clustersep{0.06}            
\def\cnamescale{0.4}             
\def\cdepthscale{0.4}            
\def\cltopskip{1pt}              
\def\clbottomskip{1pt}           

\def\rootscale{0.5}   \def\rootcolor{amethyst}

\usepackage{pbox}
\tikzset{
  root/.style = {circle,scale=\rootscale,ball color=\rootcolor},
    rc/.style 2 args = {right=#1*1.5*\clustersep of {#2.east|-first},root}, rr/.style = {right=\rootsep of {#1.east|-first},root},
  roott/.style = {circle,inner sep=-2pt,minimum size=5pt,black,font=\ttfamily\footnotesize},
    rct/.style 2 args = {right=#1*1.5*\clustersep of {#2.east|-first},roott}, rrt/.style = {right=\rootsep of {#1.east|-first},roott},
  rootA/.style = {circle,scale=\rootscaleA,ball color=\rootcolorA},
    rcA/.style 2 args = {right=#1*1.5*\clustersep of {#2.east|-first},rootA}, rrA/.style = {right=\rootsep of {#1.east|-first},rootA},
  rootB/.style = {circle,scale=\rootscaleB,ball color=\rootcolorB},
    rcB/.style 2 args = {right=#1*1.5*\clustersep of {#2.east|-first},rootB}, rrB/.style = {right=\rootsep of {#1.east|-first},rootB},
  rootC/.style = {circle,scale=\rootscaleC,ball color=\rootcolorC},
    rcC/.style 2 args = {right=#1*1.5*\clustersep of {#2.east|-first},rootC}, rrC/.style = {right=\rootsep of {#1.east|-first},rootC},
  rootD/.style = {circle,scale=\rootscaleD,ball color=\rootcolorD},
    rcD/.style 2 args = {right=#1*1.5*\clustersep of {#2.east|-first},rootD}, rrD/.style = {right=\rootsep of {#1.east|-first},rootD},
  cluster/.style = {draw=blue!70,thick,rounded corners,inner sep=22*\clustersep,outer xsep=22*\clustersep,fit=#1},
  clabel/.style  = {anchor=west,scale=\cdepthscale,black,inner sep=0,outer xsep=1,outer ysep=0},
  clabelL/.style = {above right=-\clustersep of #1t.north east,clabel},
  clabelD/.style = {below right=-\clustersep of #1t.south east,clabel},
  clouter/.style = {inner sep=0,outer sep=0,fit=#1}
}

\def\Cluster #1 = #2;{\node[cluster=#2] (#1) {};}
\def\ClusterL #1[#2] = #3;{
  \node[cluster=#3] (#1t) {}; \node[clabelL=#1] (#1l) {$#2$}; \node[clouter=(#1t)(#1l)] (#1) {};}
\def\ClusterD #1[#2] = #3;{
  \node[cluster=#3] (#1t) {}; \node[clabelD=#1] (#1d) {$#2$}; \node[clouter=(#1t)(#1d)] (#1) {};}
\def\ClusterLD #1[#2][#3] = #4;{
  \node[cluster=#4] (#1t) {}; \node[clabelL=#1] (#1l) {$#2$}; 
  \node[clabelD=#1] (#1d) {$#3$}; \node[clouter=(#1t)(#1l)(#1d)] (#1) {};}
\def\ClusterLDName #1[#2][#3][#4] = #5;{
  \node[cluster=#5] (#1t) {}; \node[clabelL=#1] (#1l) {$#2$}; 
  \node[clabelD=#1] (#1d) {$#3$}; 
  \node[scale=\cnamescale,above=\clustersep/3 of #1t,inner sep=0, outer sep=0] (#1n) {$#4$}; 
  \node[clouter=(#1l)(#1d)(#1t)] (#1) {};}

\newcommand{\Root}[4][]{
  \ifx\relax#2\relax\node[rr#1=#3] (#4) {};\else\node[rc#1={#2}{#3}] (#4) {};\fi}
\newcommand{\RootT}[5][]{
  \ifx\relax#2\relax\node[rrt#1=#3] (#4) {#5};\else\node[rct#1={#2}{#3}] (#4) {#5};\fi}

\long\def\clusterpicture#1\endclusterpicture{\pb{\vbox to \cltopskip{\vfill}\\%
  \begin{tikzpicture}\node[coordinate] (first) {};#1\end{tikzpicture}\\[-11pt]\vbox to \clbottomskip{\vfill}}}   
\long\def\clusterpictureopt#1#2\endclusterpicture{\pb{\vbox to \cltopskip{\vfill}\\%
  \begin{tikzpicture}[#1]\node[coordinate] (first) {};#2\end{tikzpicture}\\[-11pt]\vbox to \clbottomskip{\vfill}}}   

\def\pb#1{\pbox[c]{\textwidth}{\hfil #1\hfil}}

\begin{document}
	
\title{On rank $2$ hypergeometric motives}
	
\author{Franco Golfieri Madriaga}
\address[Golfieri]{FCEFN Department of Mathematics and Chemistry, Universidad Nacional de San Juan. CP: 5400, San Juan, Argentina}
\email{francogolfieri87@gmail.com}
\author{Ariel Pacetti}

\address[Pacetti]{Center for Research and Development in Mathematics and
	Applications (CIDMA), Department of Mathematics, University of
	Aveiro, 3810-193 Aveiro, Portugal} \email{apacetti@ua.pt}
\thanks{The second author was supported by CIDMA under the Portuguese Foundation for Science and Technology 
(FCT, https://ror.org/00snfqn58) Multi-Annual Financing Program for R\&D Units, grants UID/4106/2025 and UID/PRR/4106/2025.}

\author{Fernando Rodriguez-Villegas}
\address[Villegas]{The Abdus Salam International Centre for Theoretical Physics, Strada Costiera 11, 34151 Trieste, Italy}
\email{villegas@ictp.it}

\address[Lorenzo-García]{Aix Marseille Univ, CNRS, I2M, Marseille, France}
\email{elisa.lorenzo-garcia@univ-amu.fr}

\dedicatory{with an Appendix by Elisa Lorenzo Garc\'ia and Ariel Pacetti}
      
\keywords{Hypergeometric Series, Hypergeometric Motives}
\subjclass[2020]{33C05,11T23,11F80,11S40}
\begin{abstract} Hypergeometric motives are family of motives
  associated to hypergeometric local systems. Their special features,
  in particular their rigidity, makes them more tractable than general
  motives. In the present article we prove most of the properties that
  they are expected to satisfy in the rank $2$ case.
\end{abstract}
	
\maketitle
{
  	\hypersetup{linkcolor=black}
	\setcounter{tocdepth}{1}
        \tableofcontents
}

\section{Introduction}

The theory of hypergeometric motives has its origins in the study of
the complex hypergeometric series 
\[
  {}_2F_1(a,b;c|z)=\sum_{n=0}^\infty \frac{(a)_n(b)_n}{(c)_nn!}z^n,
\]
for $z$ a complex number satisfying $|z|<1$ (as done by Gauss
\cite{MR0616131}, Kummer \cite{MR1578088}, Goursat \cite{MR1508709} et
al). Riemann (in \cite{MR221900}, ``Contribution a la th\'eorie des
fonctions repr\'esentables par la s\'erie de Gauss $F(a, b, c, x)$'')
studied the differential equation satisfied by this series. This
differential equation yields a \emph{rank two local system} on
$\PP^1\setminus\{0,1,\infty\}$
 which in turn determines a
$2$-dimensional  \emph{monodromy
  representation}
\[
\rho: \pi_1(\PP^1\setminus\{0,1,\infty\},z_0) \to \GL_2(\CC),
\]
describing how a chosen basis of local solutions at a regular point
$z_0$ changes while looping around appropriately around the missing
points $0,1, \infty$. Riemann realized that the monodromy
representation is a natural way to encode information of the
differential equation and its solutions.

We denote by $M_0$, $M_1$ and $M_\infty$ the local monodromy matrices
around $0$, $1$ and $\infty$ respectively such that
$$
M_0M_1M_\infty=I_2,
$$
where $I_2$ is the identity matrix.

We are interested in algebraic incarnations of the hypergeometric
series, so for the rest of the introduction we let $a,b,c$ be rational
numbers and let $N$ be their common denominator. Under this
assumption, a result of Levelt implies that we may choose a basis of
local solutions such that the monodromy
takes values in the number field $F:=\Q(\zeta_N)$, with $\zeta_N$ a
primitive $N$-th root of unity.  Given a prime ideal $\id{p}$ of
$\Z[\zeta_N]$ one can extend the monodromy representation to a
\emph{geometric} continuous representation
\begin{equation}
  \label{eq:geom-rep}
\rho_{\id{p}}:\Gal(\overline{\Q(z)}/\overline{\Q}(z)) \to \GL_2(F_{\id{p}}),  
\end{equation}
where $F_{\id{p}}$ denotes the $\id{p}$-adic completion of
$F$ at $\id{p}$ (as explained in \S 2) and $z$ is a
variable.  

We expect the representation~\eqref{eq:geom-rep} to be \emph{motivic},
namely to match the restriction of a representation of
$\Gal(\overline{\Q(z)}/F(z))$ to
$\Gal(\overline{\Q(z)}/\overline{\Q}(z))$ arising from a motive
defined over $F(z)$. In particular and more precisely, for every
specialization $z=\z-spec \in L$ of the parameter in a finite
extension $L/F$ there should exist a pure motive
$\HGM(a,b;c\,|\,\z-spec)$ defined over $L$ related to Gauss's
hypergeometric series. Concretely, (see
Conjecture~\ref{conjecture:field-of-defi}) we should be able to
determine from the defining hypergeometric data the characteristic
polynomial of Frobenius, under the compatible system of Galois
representations determined by $\HGM(a,b;c\,|\,\z-spec)$, for primes of
good reduction, as well as obtain information on the action of inertia.

In more detail, let $H$ denote the set of elements
$i \in (\Z/N)^\times$ fixing $c$ and the set $\{a,b\}$ modulo~$\Z$;
i.e., $ic\equiv c \bmod \Z$ and $i\{a,b\}\equiv\{a,b\} \bmod \Z$. Let
$K=F^H$, after identifying $H$ with a subgroup of $\Gal(F/\Q)$ in a
natural way. Then the following should hold (see Corollary~\ref{coro:conjecture}):

{\it
\begin{enumerate}[(i)]
\item
  \label{prop-1}
  The motive $\HGM(a,b;c\,|\,z)$ has a model defined over its
  field of moduli~$K(z)$.

\item For generic values $\z-spec \in K$ the motive $\HGM(a,b;c\,|\,\z-spec)$
  defined over $K$ resulting from specializing~(i) has coefficient field~$K$.

\noindent 
For $\z-spec \neq 0,1 \in F$ we have the following.

\item
\label{prop-2}
 The prime ideals $\id{q}$ of $F$ not diving $N, \z-spec, \z-spec^{-1}$ and $\z-spec-1$
are primes of good reduction for $\HGM(a,b;c\,|\,\z-spec)$.
  
\item
  \label{prop-3}
  Let $\id{q}$ be a prime ideal of $F$ of good reduction for
  $\HGM(a,b;c\,|\,\z-spec)$. Then the trace of $\Frob_{\id{q}}$ on
  $\HGM(a,b;c\,|\,\z-spec)$ matches the finite field analogue of the function
  ${}_2F_1(a,b;c\,|\,\z-spec)$ (defined in~ \S4).
  
\item
  \label{prop-4}
  The primes $\id{q}$ of $F$ not dividing $N$ but dividing
  $\z-spec, \z-spec^{-1}$ or $\z-spec-1$ are (at worst) \emph{tame} primes
  for~$\rho_{\id{p}}$. Moreover, the conjugacy class of a generator of
  the image of inertia~$\rho_{\id{p}}(I_{\id{q}})$ (for any prime
  ideal $\id{p}$ whose residual characteristic is prime to that of
  $\id{q}$) is that of $M_s^{k_s}$, where $s=0, 1, \infty$
  and
$$
k_0:=v_{\id{q}}(\z-spec), \qquad k_1:=v_{\id{q}}(\z-spec-1),\qquad
k_{\infty}:=v_{\id{q}}(\z-spec^{-1}).
$$
  
\end{enumerate}
}
It is expected that all of the above properties actually hold for
hypergeometric motives of any rank (see~\S3).
In general if the rank is~$n$ the motive can be described as an
isotypical component of the middle cohomology of a corresponding Euler
variety
\begin{equation}
  \label{eq:euler-high-dim}
V:\qquad y^N = \prod_{i=1}^{n-1}x_i^{A_i}(1-x_i)^{B_i}(1-zx_1 \cdots
x_{n-1})^{A_n}
\end{equation}
under the action of the automorphism $y\mapsto \zeta_N y$
(see~\cite{Voight} for results on compactifications of these
varieties). For $K=\Q$ the motive may also be seen as the top weight
piece of the middle cohomology of a hypersurface in a torus
(see~\cite{BCM},~\cite{MR4442789}). Partial results on properties
(i)-(v) were obtained by Katz in \cite{MR1366651} (see also
\cite{MR4493579}). 

To define our hypergeometric motive we reason heuristically as
follows. Assume first that $d=1$. The hypergeometric series ${}_2F_1$
has the integral presentation
\begin{equation}
\label{eq:integr-repn}
\frac{\Gamma(b)\Gamma(c-b)}{\Gamma(c)} \leftindex_2{F}_1(a,b;c\,|\,z) = \int_0^1 x^{b-1}(1-x)^{c-b-1}(1-zx)^{-a} dx.
\end{equation}
We may interpret this formula as an identity of periods. The right
hand side (the integral) is a period of the holomorphic differential
$dx/y$ of the Euler curve $\C$ (see \eqref{curva2}); the first factor on the left hand
side (a product of Gamma values) is a period of a Jacobi motive
$J((a,b),(c,1))$ (defined in \S\ref{section:Jacobi}) and the second
factor (the series ${}_2F_1$ itself) should correspond to a period of
our hypergeometric motive $\hgmvecz$.

Suppose for the rest of the introduction that $\veca$ and $\vecb$ have
rank $2$. We prove \eqref{prop-2} and \eqref{prop-3} (see Theorem~\ref{thm:Trace-match})
\begin{thmA}
\label{thm:thmA}
Let $\id{p}$ be a prime ideal of $\Q(\zeta_N)$ satisfying~\eqref{prop-2}. Then
${\hgmvec}$ has good reduction at $\id{p}$
and the trace of the Frobenius automorphism $\Frob_{\id{p}}$ acting on
${\hgmvec}$ equals
$H_{\id{p}}(\veca,\vecb\,|\,z)$.
\end{thmA}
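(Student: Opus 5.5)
We realize $\hgmvec$ geometrically through the Euler curve $\C$ of \eqref{curva2},
\[
y^N = x^{B}(1-x)^{C}(1-zx)^{A},
\]
with exponents determined by $\veca,\vecb$ as in \eqref{eq:integr-repn}. We then define the motive as the new part of the $\chi$-isotypical component of $H^1_{et}$ of a smooth projective model $\Cnsp$, under the automorphism $y\mapsto\zeta_N y$. In the case $d\neq 1$ we twist by a Jacobi motive $J$, which is a Hecke character, and remove the old part coming from quotient curves. The proof splits into two independent pieces: good reduction, and the trace formula.

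\textbf{Good reduction.} Fix $\id{p}\nmid N$ with $z$, $z^{-1}$ and $z-1$ all $\id{p}$-units. The affine equation has branch points $0, 1, z^{-1}, \infty$, and these remain distinct modulo $\id{p}$. The cover $\Cnsp\to\PP^1$ is a $\mu_N$-cover of degree prime to $p$, branched only along a divisor that is étale over $\calO_{\id{p}}$. By Abhyankar's lemma (tame specialization), $\Cnsp$ has a smooth model over $\calO_{\id{p}}$; one normalizes $\PP^1_{\calO_{\id{p}}}$ in the function field. Hence inertia at $\id{p}$ acts trivially on $H^1$ and on each isotypical piece. The Jacobi-motive factor is unramified away from $N$ by the results of \S\ref{section:Jacobi}, so $\hgmvec$ is unramified at $\id{p}$.

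\textbf{Trace formula.} Use the Lefschetz fixed-point formula on the reduction $\Cnsp_{\F_q}$, with $q=\norm(\id{p})$, for the operators $\Frob\circ[\zeta]$. Projecting onto the $\chi$-component via the idempotent $\frac1N\sum_{\zeta}\chi(\zeta)^{-1}[\zeta]$ gives
\[
-\trace(\Frob_{\id{p}}\mid H^1_\chi)=\sum_{x\in\F_q}\omega^{j}\bigl(x^{B}(1-x)^{C}(1-zx)^{A}\bigr),
\]
where $\omega$ is the Teichmüller character of order $N$ attached to $\id{p}$. The points above $0$, $1$, $z^{-1}$ and $\infty$ contribute only to the trivial character, or cancel, and checking this is routine. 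The right-hand side is a finite-field Euler integral. By the definition of $H_{\id{p}}(\veca,\vecb|z)$ in \S4, through Gauss-sum expansion and orthogonality, it equals $H_{\id{p}}$ multiplied by the Jacobi sum that gives the Frobenius trace on $J$. This is the finite-field analogue of \eqref{eq:integr-repn}. Dividing by the Jacobi-motive eigenvalue gives the trace on $\hgmvec$.

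\textbf{Main obstacle.} The hard step is the precise bookkeeping in the identification of the character sum with $H_{\id{p}}$. One must handle the degenerate cases where some $\omega^{jB}$, $\omega^{jC}$ or $\omega^{j(B+C)}$ is trivial: there the old and new parts of the cohomology separate, and the naive Gauss-sum identity acquires correction terms of size $q$ or $1$. I would first prove the identity in the generic case, with no exponent divisible by $N$, via the standard expansion of $\omega(1-zx)$ in Gauss sums. I would then treat the degenerate cases by checking that the extra terms are exactly the traces on the old part that the "new" projection removes. I also expect trouble matching the normalization of the twist, that is, signs and powers of $q$, with the conventions of \S4.
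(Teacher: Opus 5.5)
Your route is the paper's: Theorem~\ref{thm:trace-equality} (Lefschetz for $\Frob\circ\sigma_i$ on the smooth reduction given by Assumption~\ref{assumption}) turns the trace on the $\chi_0$-part into a character sum. Theorem~\ref{thm:hypergeometric-char-sum} rewrites that sum as a Jacobi sum times a finite hypergeometric sum, and one then divides by the Jacobi motive. Your tame-ramification argument for good reduction just spells out what the paper asserts. But there is a genuine gap whenever $d\notin\Z$. Your curve carries no power of $z$, and for $d\neq 1$ your only correction is a Jacobi motive, which does not depend on $z$. Theorem~\ref{thm:hypergeometric-char-sum} always outputs a finite sum with an integral lower parameter, here $H_{\id{p}}((a-d,b-d),(c-d,1)\,|\,\xi)$, and by Lemma~\ref{lemma:hypergeom-fin-rel}
\[
H_{\id{p}}((a-d,b-d),(c-d,1)\,|\,\xi)=\chi_{\id{p}}(\xi)^{-dN}\,\JacMot((-a,-b,c,d),(d-a,d-b,c-d))\,H_{\id{p}}((a,b),(c,d)\,|\,\xi).
\]
The factor $\chi_{\id{p}}(\xi)^{-dN}$ depends on $\xi$: it is the Frobenius value of the Kummer character of $F(\sqrt[N]{\xi})/F$. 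So it is not a Jacobi sum, and your step ``it equals $H_{\id{p}}$ multiplied by the Jacobi sum'' is false. The defect is also visible geometrically. A twist by a Hecke character of $F$ is trivial on $\Gal(\overline{\Q(z)}/\overline{\Q}(z))$, whereas passing from $(a-d,b-d),(c-d,1)$ to $(a,b),(c,d)$ multiplies the local monodromies at $0$ and $\infty$ by mutually inverse nontrivial scalars. Hence your object does not even have the monodromy of $\hgmvec$.

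The missing ingredient is the Kummer character of $\sqrt[N]{z}$, and there are two ways to include it. You can keep the factor $z^{D}$, $D=dN$, of \eqref{curva2}, so the point count acquires $\chi_{\id{p}}(\xi)^{D}$; this is Definition~\ref{defi:HGM}. Or you can tensor with $\eta_d(z)$ as in Definition~\ref{defi:general-HGM}, using $\eta_d(\Frob_{\id{p}})=\chi_{\id{p}}(\xi)^{dN}$ from \eqref{eq:eta-weil}. In either case Lemma~\ref{lemma:hypergeom-fin-rel} then finishes the computation. You need both versions: when the Euler curve is irreducible, $\hgmvec$ is given by Definition~\ref{defi:HGM}. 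You cannot switch definitions via Theorem~\ref{thm:isogenous}(1), because its proof uses this very trace formula.

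Two smaller points. The sign you expect trouble with is not a normalization issue. The factor $\chi_{\id{p}}(-1)^{(d-b)N}$ produced by Theorem~\ref{thm:hypergeometric-char-sum} is cancelled exactly by the quadratic twist $\twist_N^{(d-b)N}$ built into Definition~\ref{defi:HGM}, since $\twist_N(\id{p})=\chi_{\id{p}}(-1)$ by \eqref{eq:quad-char}; your description of the motive omits this twist.

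Conversely, the ``main obstacle'' you anticipate does not occur. Take $A=(d-b)N$, $B=(b-c)N$, $C=(a-d)N$ as in the paper. Genericity gives $N\nmid B$, $N\nmid C$ and $N\nmid A+B+C$, which is all that is needed. First, nothing comes from the points at infinity for characters of exact order $N$. Second, Lemma~\ref{lemma:Jacobi} holds uniformly as soon as its second character is nontrivial, including when the first character or the product of the two is trivial. So no correction terms of size $1$ or $q$ appear. Finally, the old/new splitting is governed by the order of the $\mu_N$-character, not by vanishing of exponents, and Theorem~\ref{thm:trace-equality} already isolates the characters of exact order $N$.
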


\begin{definition}
  \label{def:H}
  Let $H$ be the subgroup of elements in $(\Z/N)^\times$ that fix
  (under multiplication) the multisets
  $\{\alpha_1,\ldots,\alpha_n\}$ and $\{\beta_1,\ldots,\beta_n\}$
  modulo $\Z$.
\end{definition}
The group $H$ is naturally
identified with a subgroup of $\Gal(F/\Q)$. Since
$-1\in (\Z/N\Z)^\times$ corresponds to complex conjugation,
$K:=F^H\subseteq \Q(\zeta_N)^+$ (or equivalently, it is totally real)
if and only if $-1\in H$.

Theorems~\ref{thm:extension1} and \ref{thm:extension2} imply the
following version of \eqref{prop-1} (see also Corollary~\ref{coro:extension-motive}).
\begin{thmB}
\label{thm:thmB}
If $-1\in H$ then for all specializations $\z-spec$ of the parameter
$\hgmvecz$ maybe defined over the totally real field $K$.
\end{thmB}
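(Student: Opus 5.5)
We would descend the motive from $F=\Q(\zeta_N)$ to $K=F^H$ by Galois descent in two stages. First we descend the geometric family from $F(z)$ to $K(z)$. Then we specialize, checking that the descent data survive at any $\z-spec\in K$, including degenerate values.

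\emph{Setup.} Realize $\hgmvec$ as the $\chi$-isotypical piece of $H^1$ of the Euler curve $\C: y^N=x^{A}(1-x)^{B}(1-zx)^{C}$. Here the character $\chi$ of $\mu_N$ is twisted by the Jacobi motive $J$ accounting for the Gamma factor in \eqref{eq:integr-repn}. Both $\C$ and the action of $\mu_N$ are defined over $\Q(z)$, so for $\sigma_i\in\Gal(F/\Q)$ with $i\in(\Z/N)^\times$ we have ${}^{\sigma_i}(\text{$\chi$-part})=\chi^i$-part. This $\chi^i$-part is the motive attached to $(i\veca,i\vecb)$.

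\emph{Isomorphisms for $i\in H$.} For $i\in H$ the parameters $i\veca,i\vecb$ agree with $\veca,\vecb$ modulo $\Z$ as multisets. The $\chi^i$-isotypical piece is then attached to data differing from $(\veca,\vecb)$ by integer shifts and a permutation. This gives an isomorphism $\phi_i:{}^{\sigma_i}\HGM\to\HGM$ of motives over $F(z)$, which I would build from explicit correspondences: contiguity relations and the Euler/Pfaff transformations $x\mapsto 1-x$, $x\mapsto x/(x-1)$, together with the corresponding isomorphism of Jacobi motives.

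\emph{The cocycle.} The core of the argument is to check the cocycle condition $\phi_{ij}=\phi_i\circ{}^{\sigma_i}\phi_j$. Rigidity (Levelt) makes the local system irreducible, so each $\phi_i$ is unique up to a scalar in $F^\times$. The composite $\phi_{ij}^{-1}\phi_i\,{}^{\sigma_i}\phi_j$ therefore defines a class in $H^2(\Gal(F/K),F^\times)$, equivalently a quaternion-type obstruction in $\mathrm{Br}(K)$. We must show this class vanishes after a suitable choice of the $\phi_i$.

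Here the hypothesis $-1\in H$ enters. It makes $K$ totally real, and complex conjugation $\sigma_{-1}$ acts through $\chi\mapsto\bar\chi$, which is essentially duality: the polarization pairs the $\chi$- and $\bar\chi$-parts. I would normalize each $\phi_i$ by requiring it to respect a fixed rational structure. Take the rational cohomology $H^1(\C,\Q)$ with its $\Q[\mu_N]$-module structure and the polarization, and take the explicit period (the Euler integral, with the Gamma/Jacobi periods divided out) as a fixed vector. Normalizing so the period is preserved should kill the obstruction: the period vector is $\sigma$-equivariant over $\Q$, so the scalars compose consistently. One needs the Jacobi-motive contributions to descend as well. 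These are Hecke characters whose descent to $K$ should follow from the analogous statement for Jacobi sums in \S\ref{section:Jacobi}, using $-1\in H$ to make the infinity type compatible with a real field.

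\emph{Specialization.} For $\z-spec\in K\setminus\{0,1\}$, specializing the descended family over $K(z)$ gives $\hgmvecz$ over $K$. For $\z-spec\in\{0,1,\infty\}$, the paper's "for all specializations" presumably uses the limiting (inertia-invariant) motive. This is handled by taking invariants of the descended object under local monodromy, which commutes with descent.

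\emph{Main obstacle.} I expect the main difficulty to be the Brauer obstruction, namely showing that the scalars can be normalized. Without $-1\in H$ the class can genuinely be nontrivial: the $\chi$-part need not be defined over $K$, only up to a twist or over a quadratic extension. My first attempt would be the explicit period normalization above, verified on the generators of $H$ using Gauss-sum identities for the Jacobi factor.
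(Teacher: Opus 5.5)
\textbf{The proposal does not yet prove the statement.} Descending from $F(z)$ to $K(z)$ through semilinear isomorphisms and then killing a Brauer-type obstruction is a reasonable framework. However, the two steps that carry all the content are missing.

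\textbf{The isomorphisms $\phi_i$ are never constructed.} The substitutions $x\mapsto 1-x$ and $x\mapsto x/(x-1)$ are not automorphisms of the Euler curve at a fixed $z$. They identify the fibre over $z$ with the fibres over $z/(z-1)$ or $1-z$, with permuted exponents, so they cannot give maps ${}^{\sigma_i}\HGM\to\HGM$ over the same $z$. Moreover, for $i\in H$ the induced permutation of parameters (e.g.\ $a\leftrightarrow b$) changes the Euler curve itself. This is why the paper obtains these symmetries only as isomorphisms of Galois representations (Theorem~\ref{thm:isogenous}, via Lemma~\ref{lemma:isom-condition} and trace comparison), not as correspondences. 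The only fixed-$z$ automorphisms intertwining $\zeta_N$ with $\zeta_N^{\pm j}$ that are available are the involutions $\iota_j$ of Proposition~\ref{prop:involutions}, and these exist when $a+b,c+d\in\Z$.

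\textbf{The vanishing of the obstruction is not shown.} You only say the period normalization ``should'' kill the class in $H^2(\Gal(F/K),F^\times)=\mathrm{Br}(F/K)$; this is not an argument as stated. $\Gal(F/K)$ does not act on complex periods. On the algebraic de Rham side $\phi_i$ moves a chosen differential to another vector (at best an $F(z)^\times$-multiple of it), and rescaling $\phi_i$ by constants cannot undo that. The hypothesis $-1\in H$ is also never actually used. For $H=\{\pm1\}$ the obstruction is the class of $c=\phi_{-1}\circ{}^{\sigma_{-1}}\phi_{-1}\in K^\times$ modulo $N_{F/K}(F^\times)$. You would have to prove that $c$ is a norm, in particular totally positive, and the polarization heuristic does not give this.

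\textbf{How the paper handles it.} The obstruction is genuine, and the paper does not control it in general. It deduces the statement from Theorems~\ref{thm:extension1} and~\ref{thm:extension2} via a case split, and this split is where $-1\in H$ enters.
\begin{itemize}
\item If $|H|\le 2$, Lemma~\ref{lemma:extension} extends the Galois representation of each specialization to $\Gal_K$. The obstruction then lies in $H^2$ of a cyclic group with values in the divisible group $\overline{\Q}_p^\times$, hence vanishes, at the cost of letting the coefficient field grow to a quadratic extension of $K$. The condition $\rho^\tau\cong\rho$ is checked via Theorem~\ref{thm:Trace-match} and $H_{\id{q}}(j\veca,j\vecb|\xi)=H_{\id{q}}(\veca,\vecb|\xi)$, and reducible specializations are handled through Hecke characters.
\item If $|H|=4$, then together with $-1\in H$ this forces $a+b,c+d\in\Z$ and condition $\irr$ (Lemma~\ref{lemma:non-irr}). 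Theorem~\ref{thm:extension2} then descends geometrically by passing to quotients of $\C$ by the $\iota_j$. Because these are genuine involutions of the curve, the cocycle condition holds automatically.
\end{itemize}
To repair your argument you would need either such explicit involutions or an actual proof that the Brauer class vanishes.
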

Under some extra hypothesis (see
Proposition~\ref{prop:coef-lowerbound}) we can prove that for generic
values of the parameter $\z-spec\in K$, the motive $\hgmvecz$ has
coefficient field $K$. It seems to be true that always the field
of definition of the motive is the same as the coefficient field (even
when it is smaller than $K$, like in Example~\ref{example:reducible}).

\medskip

Let $p$ be a rational prime. For $a,b$ rational numbers, denote by
$a\sim_p b$ the relation defined by the condition that the denominator
of $a-b$ is a power of $p$. Extend this relation to vectors
component-wise.

\begin{thmC}
  Let $\veca,\vecb$ and $\veca',\vecb'$ be
  generic pairs of rational numbers (see~\ref{defi:generic}) such that
  $\veca \sim_p \veca'$ and
  $\vecb \sim_p \vecb'$ for some $p$ prime. Then
  $\hgms$ is congruent to
  $\hgmps$ modulo any prime ideal
  $\id{p}$ of  a common field of definition dividing $p$.
\end{thmC}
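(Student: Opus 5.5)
The plan is to compare Frobenius traces and then pass to residual representations. Take a common field of definition $L$ and a prime $\id{p}\mid p$ of the coefficient field. By Theorem~\ref{thm:thmA}, for every prime $\id{q}$ of good reduction for both motives, $\trace\rho_{\id{p}}(\Frob_{\id{q}})$ on $\hgms$ equals the finite hypergeometric sum $H_{\id{q}}(\veca,\vecb\,|\,\z-spec)$, and likewise for the primed data. This covers all primes $\id{q}$ not dividing $Np$, $\z-spec$, $\z-spec^{-1}$ or $\z-spec-1$. The finite field sum is a normalized sum of products of Gauss sums $g(\omega^{(q^f-1)\alpha_i}\chi)$, over characters $\chi$ of $\F_{\id{q}}^\times$, weighted by $\chi(\z-spec)$. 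The key claim is then
\[
H_{\id{q}}(\veca,\vecb\,|\,\z-spec)\equiv H_{\id{q}}(\veca',\vecb'\,|\,\z-spec)\pmod{\id{p}}
\]
for all such $\id{q}$.

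To prove the claim, first replace the Teichmüller-type characters by characters of $\F_{\id{q}}^\times$. Since $\alpha_i-\alpha_i'$ has $p$-power denominator, the characters $\omega^{(q^f-1)\alpha_i}$ and $\omega^{(q^f-1)\alpha_i'}$ differ by a character $\psi$ of $p$-power order. Write $\zeta$ for a value of $\psi$; it is a $p$-power root of unity, hence $\zeta\equiv 1$ modulo any prime above $p$. The Gauss sums themselves are sums $\sum_x \chi'(x)\,\zeta_\ell^{\trace x}$, with $\ell$ the residue characteristic of $\id{q}$, which is prime to $p$. Replacing a character value $\chi'(x)$ by $\chi'(x)\psi(x)$ therefore changes it only modulo a prime above $p$. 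So each Gauss sum, and hence each term of the character sum, is congruent term by term.

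Two points need care here. First, the normalizing factors (Jacobi sums or Gauss-sum quotients, and the $1/(1-q)$ prefactor) must stay $\id{p}$-integral. Second, the cases where some $\chi\omega^{\alpha_i}$ is trivial for one datum but not the other must be handled, since there the Gauss sum is $-1$ rather than of absolute value $\sqrt q$. Genericity (Definition~\ref{defi:generic}) should guarantee that no $\alpha_i-\beta_j$ is integral, for the original data and compatibly for the primed data, so that these degenerate terms match.

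From the trace congruences at a density-one set of primes, Chebotarev and Brauer–Nesbitt give that the semisimplified residual representations $\bar\rho_{\id{p}}$ of $\hgms$ and $\hgmps$ are isomorphic. This requires the characteristic polynomials of Frobenius to agree mod $\id{p}$. For rank $2$ the determinant is the remaining invariant; it is a Hecke character, explicitly a power of the cyclotomic character times a finite-order character determined by $\veca,\vecb$. The $\sim_p$ relation makes the two finite-order parts differ by a $p$-power character, which is trivial mod $\id{p}$. So the determinants are congruent and the characteristic polynomials match.

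The main obstacle will be the integrality and degenerate-term bookkeeping in the Gauss sum comparison. In particular, the normalization of $H_{\id{q}}$ may involve division by Gauss sums or by $q-1$, and this must be shown not to introduce denominators at $\id{p}$. If a direct check fails, I will instead compare point counts on the Euler curves $\C$ through the isotypic decomposition. In that approach I would twist the congruence by the Jacobi motive factor, using its known Frobenius traces, and cancel it, since Jacobi sums are units away from $N$.
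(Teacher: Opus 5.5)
The gap is the one you postpone to the end. It is fatal to your main line, not a check still to be carried out. That main line is exactly the paper's argument: identify traces with $H_{\id q}$ via Theorem~\ref{thm:Trace-match}, then compare the Gauss sums in \eqref{eq:hpergeom-BCM} summand by summand. The paper's proof passes over the same point.

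In any non-trivial instance some $\alpha_i-\alpha_i'$ or $\beta_i-\beta_i'$ has denominator $p^k$ with $k\ge 1$, so $p\mid \lcm(N,N')$. The primes $\id q$ at which Theorem~\ref{thm:Trace-match} computes both traces lie in a field containing $\zeta_N$ and $\zeta_{N'}$, so $p\mid q-1$. The termwise comparison controls each summand only modulo a prime $\mathfrak P\mid p$. Summing therefore yields only $(1-q)\bigl(H_{\id q}(\veca,\vecb|\xi)-H_{\id q}(\veca',\vecb'|\xi)\bigr)\in\mathfrak P$. This is vacuous, because both values are $\mathfrak P$-integral and $1-q\in\mathfrak P$. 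So the obstruction is not the integrality of the prefactor; it is that dividing a congruence by $1-q$ destroys it. The true congruence relies on cancellation in the sum over characters, which no termwise argument can see.

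Your worry about degenerate terms, by contrast, is unfounded. Gauss sums run over $\F_q^\times$, so $g(\psi,1)=-1\equiv g(\psi,\rho)\pmod{\mathfrak P}$ for every $\rho$ of $p$-power order. Nothing has to be matched via genericity.

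So commit to your fallback. It works once it is phrased through character sums that carry no $1/(q-1)$.

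\begin{enumerate}
\item Take one generator $\omega$ of $\widehat{\F_q^\times}$ admissible in Definition~\ref{defi:finite-hgm} for both data. Normalize $\omega^{(q-1)/M}$ with $M=\lcm(N,N')$ and use Lemma~\ref{lemma:character-dependence}.
\item Lemma~\ref{lemma:hypergeom-fin-rel} writes $H_{\id q}((a,b),(c,d)|\xi)$ as the root of unity $\omega(\xi)^{-(q-1)d}$, times a Jacobi-motive value, times $H_{\id q}((a-d,b-d),(c-d,1)|\xi)$.
\item Theorem~\ref{thm:hypergeometric-char-sum} writes the latter as $\pm$ a Jacobi-motive value times $\sum_x\varepsilon_1(x)\eta_1(1-x)\chi^{-1}(1-\xi x)$, where $\varepsilon_1=\omega^{(q-1)(a-d)}$, $\eta_1=\omega^{(q-1)(c-a)}$ and $\chi=\omega^{(q-1)(b-d)}$. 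It applies because genericity makes $\eta_1$ and $\chi$ non-trivial.
\item Every character here differs from its primed counterpart by a character of $p$-power order. Hence the character sum (term by term), the root of unity, the sign and the Gauss sums all agree with their primed analogues modulo $\mathfrak P$.
\item The Jacobi-motive values are $\mathfrak P$-units, since $g(\varphi)g(\overline\varphi)\in\{\pm q,1\}$ and $p\nmid q$.
\end{enumerate}

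Two corrections to how you phrased the cancellation. The Jacobi factors are units away from the residue characteristic of $\id q$, not ``away from $N$''; that phrasing would exclude precisely $p$, which typically divides $N$. And to cancel them you need them congruent to each other, not merely units.

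This route also covers the case where $\irr$ fails. There the Euler curve of $(a,b),(c,d)$ is reducible, so point counts via Theorem~\ref{thm:trace-equality} are not available for it.

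Finally, for odd $p$, trace congruences together with Chebotarev and Brauer--Nesbitt already suffice in dimension $2$. Your determinant paragraph matters only for $p=2$, and there it is asserted rather than proved. The paper itself only establishes the statement at the level of traces.
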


The previous results has two interesting applications: on the one
hand, it provides a ``lowering the level'' result, given by removing a
prime ideal $p$ from the denominator of any hypergeometric motive's
parameters. This is very useful for example in the study of
Diophantine equations (as used in~\cite{GP} to study solutions of
the generalized Fermat equation).

On the other hand, it gives a ``raising the level'' result, since
given a pair of rational parameters $\veca,\vecb$ we can
add a rational number $r/p^s$ to each $\alpha_i$ and $\beta_i$, with
$r,s$ positive integers and $p$ a prime not dividing their
denominator. Note that the resulting motive will a priory have wild
ramification at primes dividing $p$.  
\begin{example}
\label{example:Legendre}
Consider the hypergeometric motive with parameters $(1/2,1/2),(1,1)$,
corresponding (see Example~\ref{rem:Legendre}) to a quadratic twist by
$\kro{-4}{\cdot}$ of Legendre's family of elliptic curves
\[
E_z:y^2=x(1-x)(1-zx).
\]
Adding/subtracting $1/3$ to the first parameters, we obtain the
rational motive with parameters $(1/6,-1/6),(1,1)$
corresponding (as proved in \cite{Cohen}) to the family of elliptic
curves with equation
\[
\widetilde{E_z}: y^2=x^3-\frac{x^2}{4}+\frac{z}{432}.
\]
Then for any specialization $\z-spec$ of the parameter $z$ the elliptic
curves are congruent modulo $3$ as implied by Theorem~\ref{thm:thmB}.
\end{example}

We mostly restricted to rank two hypergeometric motives because of the
applications to the study of solutions to the generalized Fermat's
equation, following the program described by Darmon in \cite{Darmon},
as explained in the article \cite{GP}. We expect, however, that our
proofs extend to the arbitrary rank case by considering the general
Euler variety (\ref{eq:euler-high-dim}).

\vspace{2pt}

The article is organized as follows: section \S\ref{section:rank1} is
devoted to rank $1$ hypergeometric motives. Although the theory of
rank $1$ motives is quite elementary (solutions to the differential
equation are algebraic), it gives a flavor of the general theory.

Section \S\ref{section:hypergeometric-reps} starts recalling the
definition of the rank $n$ hypergeometric series, the differential
equation it satisfies, and some of its well known properties. It
includes a detailed description of the construction of the geometric
representation~(\ref{eq:geom-rep}) extending the monodromy one.

Section~\S\ref{section:HGM} contains a discussion of arbitrary rank
$n$ hypergeometric motives.  Following computations of David Roberts
and the third author, analogues of properties $(i)$ to $(v)$ are
expected to hold.
The section includes a proof of a weaker version of statement
$(ii)$ for general rank $n$ hypergeometric motives (based on
\cite{MR1116916}). We end the section by recalling an algorithm to
compute the Hodge numbers of the hypergeometric motive of
parameters~$\veca,\vecb$ and include different examples.

Section \S\ref{section:Jacobi} is a short survey on Jacobi motives
(following \cite{MR3838691} and \cite{MR0051263}). It includes results
needed to interpret the product/quotient of Gamma values
in~\eqref{eq:integr-repn} as a Jacobi motive.
Section~\S\ref{section:finite} gives the definition of the finite
field analogue $H_q(\veca,\vecb|z)$ of the hypergeometric
series~${}_nF_{n-1}$ (as studied in \cite{MR879564} and
\cite{katz}). Proposition~\ref{prop:props-hyperg-sum} summarizes the
main properties satisfied by finite hypergeometric sums needed in the
present article. There is an alternate $p$-adic definition of finite
hypergeometric sums in terms of the $p$-adic Gamma function (as
defined by Morita in
\cite{zbMATH03482465}). Theorem~\ref{thm:algebricity} proves that both
definitions coincide (using the Gross-Koblitz formula). The section
ends (Theorem~\ref{thm:hypergeometric-char-sum}) with a relation
between hypergeometric character sums and finite hypergeometric
series. The result plays a crucial role while computing the zeta
function of the Euler curve.

In Section~\S\ref{section:superelliptic} we compute the zeta function
of a general superelliptic curve, relating the number of points of its
isotypical components to finite character sums (see
Theorem~\ref{thm:trace-equality}).  In Section~\S\ref{section:hgm-2}
we focus on the study of rank $2$ hypergeometric motives. Previous
results on rank $2$ motives (like \cite{MR2005278} and
\cite{MR4493579}) always assumed that $d=1$. This a priory
``innocent'' assumption not only excludes many interesting motives,
but also has deep consequences since it implies that Euler's curve is
absolutely irreducible. For general values of $d$ this is no longer
true (as proved in Example~\ref{example:reducible}); Euler's curve can
have any number of irreducible components. To circumvent this problem,
we start studying the case of irreducible curves, and use them to
define a general rank $2$ hypergeometric motive in
Definition~\ref{defi:general-HGM}.  In \S\ref{section:rank2-hodge} we
study the possible Hodge numbers of a rank $2$ hypergeometric motive,
a very reach theory. In Example~\ref{example:shimura} we compute the
Hodge numbers of some curves studied by Shimura (in
\cite{MR176113}). The last part of the section is devoted to computing
the zeta function of the Euler curve $\C$ (and its factorization
according to the action of $\mubb_N$). The main result
(Theorem~\ref{thm:Trace-match}) gives a precise relation between
$H_{\id{p}}(\veca,\vecb|z)$ and the trace of a Frobenius element
$\Frob_{\id{p}}$ acting on $H^1_{\text{\'et},\chi_0}(\Cnsp,F_\lambda)$
(for $\id{p}$ a prime ideal of $F$ of good reduction for $\C$), where
$\Cnsp$ denotes the desingularization of the Euler curve. The result
is an arithmetic counterpart to~\eqref{eq:integr-repn}.

Section \S\ref{section:extension} is devoted to prove results on the
field of definition of the motive (property (i) of the
introduction). Our main result reads.
\begin{THM1}
  Let $(a,b),(c,d)$ be generic parameters such that $a+b$ and $c+d$
  are integers and $\irr$ holds. Then $\hgm$ is defined over
  $K=F^H$. For $\z-spec \in K$, the coefficient field of $\hgms$ is
  contained in $K$.
\end{THM1}
Its proof is interesting on its own, since it consists on constructing
involutions of Euler's curve. The section includes a study of the
Galois representation attached to specializations of the
hypergeometric motive. We prove irreducibility results
(Theorem~\ref{thm:irred-spec}) and extension results
(Theorem~\ref{thm:extension1} and
Corollary~\ref{coro:extension-motive}).

Section \S\ref{section:HGM-tot-real} is devoted to the study of
hypergeometric motives defined over totally real fields (meaning that
$-1 \in H$). In this case we can prove a stronger irreducibility
result (Theorem~\ref{thm:real-irred}).  The last section
\S\ref{section:congruences} contains the explicit statement and proof
of congruences between hypergeometric motives. Although the proof is
elementary, the result has deep implications in Darmon's program (as
exploited in \cite{GP}). The present article ends with two
appendices. The first one studies reduction types of the Euler curve
and is used to relate the monodromy of the differential equation with
the action of inertia on the hypergeometric motive $\hgmvec$.  It also
includes a proof of \eqref{prop-4} (see Theorem~\ref{thm:HGM-0} and
\ref{thm:HGM-1-infty}).

The second
appendix addresses the following problem: keeping the previous
notation, let $\id{p}$ be a prime ideal of $F$ not dividing $N$ but
(say) dividing the numerator of $\z-spec$. Then it is a priory a tame
prime for the motive, but it may actually be of good reduction (as
follows from $(4)$, since the matrix $M_0$ might have finite
order). In this case the stated formula (proven in
Theorem~\ref{thm:Trace-match}) does not apply. One can still compute
the trace of the Frobenius element geometrically, by computing the
stable model of Euler's equation (as explained in the appendix). It
would be interesting to have an explicit description for wild primes.

The third author wrote a script to compute $H_q(\veca,\vecb|\z-spec)$
$p$-adically in terms of the $p$-adic Gamma function (a different
algorithm to compute the $L$-series of a hypergeometric motive is
given in \cite{MR4235111} and \cite{MR4858169}). The script can be
downloaded from the github repository
\begin{center}
\url{https://github.com/frvillegas/frvmath}.  
\end{center}
\begin{example}
\label{example:1}  
Here is an example of how the code works. Consider the hypergeometric
motive $\calH_z$ with parameters~$\veca=(1/8,7/8), \vecb=(3/8,5/8)$,
Tate twisted so that it has weight one (or equivalently so that it is
an effective motive as explained in \S \ref{section:hodge}).  A priory
$\calH_z$ is defined over $\Q(\sqrt{2})$ as explained in
Example~\ref{example:genericity-condition}.

 First we take $z=9$ and  compute the action of Frobenius for the prime $p=7$.
\begin{verbatim}
? hgm(9,[1/8,7/8],[3/8,5/8],7)
3*7^-1 + 6 + 6*7 + 6*7^2 + 6*7^3 + 6*7^4 + 6*7^5 + 6*7^6 + 6*7^7 + 6*7^8 + 
6*7^9 + 6*7^10 + 6*7^11 + 6*7^12 + 6*7^13 + 6*7^14 + 6*7^15 + 6*7^16 + 6*7^17 + 
6*7^18 + O(7^19)
? recognizep(hgm(9,[1/8,7/8],[3/8,5/8],7))
-4/7
\end{verbatim}
We conclude that the trace of Frobenius on $\calH_9$ is $-4$.  To get
the full characteristic polynomial $L_7(T)$ of Frobenius we can
proceed as follows
\begin{verbatim}
? hgmfrob(9,[1/8,7/8],[3/8,5/8],7)
(7^-1 + O(7^18))*x^2 + (4*7^-1 + O(7^19))*x + 1
? polrecognizep(hgmfrob(9,[1/8,7/8],[3/8,5/8],7))
1/7*x^2 + 4/7*x + 1
\end{verbatim}
This case is special however since the specialization parameter is a
square, which results in the field of definition being $\Q$. The
characteristic polynomial of Frobenius at $p=7$ is then
$$
L_7(T)=7T^2+4T+1.
$$
For a further discussion of this example see Example~\ref{example:reducible}.
\end{example}
\begin{example}
\label{example:1a}  

For $z=3$ on the other hand, we have
\begin{verbatim}
? L1=hgmfrob(3,[1/8,7/8],[3/8,5/8],7);
? L2=hgmfrob(3,[3/8,5/8],[1/8,7/8],7);
? polrecognizep(L1*L2)
1/49*x^4 + 6/49*x^2 + 1
\end{verbatim}
In fact, looking at the polynomials {\tt L1,L2} individually we find
that the characteristic polynomial $L_\calP(T)$ of Frobenius for
primes $\calP$ of $\Q(\sqrt 2)$ dividing $7$ acting on $\calH_3$ is
$$
L_\calP(T)=7T^2\pm 2\sqrt 2T+1.
$$
Let $\chi$ be the quadratic character of
$\Gal(\overline{\Q}/\Q(\sqrt{2}))$ corresponding to the extension
$\Q(\sqrt{\sqrt{2}+2})$ over $\Q(\sqrt{2})$. Computing for more
primes, it is not hard to verify that the twist by $\chi$ of the
Frobenius polynomials of the hypergeometric motive $\calH_3$ match
those of a Hilbert newform over the real quadratic field
$\Q(\sqrt{2})$ of parallel weight $2$ and level $3^2\cdot \sqrt{2}^5$
for all prime ideals of $\Q(\sqrt{2})$ less than $200$. This form
is not in the LMFDB, but we can nevertheless uniquely identify it
using its standardized labeling/ordering of forms. To this end, we
compute the trace of the Hecke eigenvalue of each (Galois orbit)
eigenform for primes up to norm $31$ (sorted as explained in
\cite{2005.09491}). The form corresponds to the $11$-th one in the
resulting ordering. Here is the code in magma to compute the space of
all relevant Hilbert modular forms.
\begin{verbatim}
> R<x> := PolynomialRing(IntegerRing());
> F := NumberField(x^2-2);  OF := Integers(F);
> P2:=Factorisation(2*OF)[1][1];
> M := HilbertCuspForms(F, 9*P2^5);
> decomp := NewformDecomposition(NewSubspace(M));
\end{verbatim}
It seems plausible that the Faltings-Serre's method can be used to
prove modularity of the corresponding motive and hence the equality of
Frobenius polynomials for all $p$.
\end{example}

\begin{remarkintro}
  One important advantage of working $p$-adically is that there is
  typically no need to pass to an algebraic extension to compute the
  corresponding Gauss sums. For the above example, we would a priory
  need to compute in the field $\Q(\zeta_8)$ or possibly
  $\Q(\sqrt{2})$ if set up appropriately. As it stands we simply
  compute the trace of Frobenius in $\Z_p$ and then recognize it as an
  (algebraic) integer.
\end{remarkintro} 

We finish this introduction with a few challenges that we expect to
address in the near future:
\begin{itemize}
\item Can the inertial information be used to prove large image
  results for hypergeometric motives? (We do know (see~\cite{BH}) that
  the Zariski closure of the geometric monodromy is big.)
  
\item  Extend the analogue of property (iii) to primes of the field of
  definition of $\hgm$.

\item Prove properties (i) to (v) for rank $n$ hypergeometric
  motives.
\end{itemize}

\vspace{3pt}

\noindent {\bf Acknowledgments:} We would like to thank David Roberts for many useful conversations.

\section{Rank one hypergeometric motives}
\label{section:rank1}
We start by considering a baby example of the theory, that of rank $1$
motives. The definitions used in the present section will be explained
in detail in subsequent sections for the general case of rank $n$
hypergeometric motives.

Let $\alpha$ be a rational number of denominator $N>1$. Consider the differential
equation (see~(\ref{eq:diff-eq}) for its general definition)
\begin{equation}
  \label{eq:dif-rank1}
  D(\alpha,1):= z\frac{d}{dz} - z (z \frac{d}{dz}+\alpha)=
  z(1-z)\frac{d}{dz}-z\alpha=0. 
\end{equation}
Its one-dimensional space of holomorphic local solutions on a disk
centered at a base point $z_0\neq 0,1$ in the unit disk is spanned by
the algebraic function $H(z) = (1-z)^{-\alpha}$ (for any fixed branch
determination).  Analytic continuation along closed loops based at $z_0$
determines the \emph{monodromy representation}
\[
\rho: \pi_1(\CC\setminus\{1\},z_0) \to \CC^\times.
\]
It is easy to verify that if $\gamma$ is a simple loop around $1$ (in
counterclockwise direction) then
$\rho(\gamma)=\exp\left(\frac{2\pi i}{N}\right)$.
Let $F=\Q(\zeta_N)$. Algebraically, $\rho$ corresponds to the Galois representation
\[
\widetilde{\rho}: \Gal(\overline{\Q(z)}/F(z)) \to \CC^\times,
\]
given by
\[
\widetilde{\rho}(\sigma) = \frac{\sigma(\sqrt[N]{1-z})}{\sqrt[N]{1-z}}.
\]
The representation is ramified only at $1$ and $\infty$. For
$\z-spec \in \QQ$, $\z-spec \neq 1$, we can consider the
specialization of $\widetilde{\rho}$ at $\z-spec$, corresponding to the extension of number
fields $F(\sqrt[N]{1-\z-spec})/F$. 

Let $\Om_F$ denote the ring of integers of $F$, let $\id{q}$ a prime
ideal of $\Om_F$ unramified in the extension $F(\sqrt[N]{1-\z-spec})/F$ and let
$\FF_q$ denote the finite field $\Om_F/\id{q}$ of $q = \norm \id{q}$
elements. Since $N \mid q-1$, $\FF_q$ contains the $N$-th roots of
unity. Fix $\gen$ a generator of the character group of
$\FF_q^\times$ and $\psi$ a non-trivial additive character of $\FF_q$. For
$\z-spec \in \FF_q$, define (see \S \ref{section:finite})
\begin{equation}
  \label{eq:defi-n=1-hyper}
  H_q(\alpha,1|\z-spec):= \frac{1}{1-q}\sum_{\Char} \frac{\JJ(\alpha \Char,\Char)}{\JJ(\alpha,1)} \Char(\z-spec),
\end{equation}
where the sum runs over characters $\Char$ of $\FF_q^\times$,
\[
  \JJ(\alpha\Char,\Char):=g(\psi,\gen^{(q-1)\alpha} \Char) g(\psi^{-1},\overline \Char),
\]
and $g(\psi,\Char)$ denotes the standard Gauss sum (as recalled in (\ref{gsum})).
Denote by $\varepsilon$ the character $\gen^{(q-1)\alpha}$ (a
character of order $N$). Since
$g(\psi^a,\Char)=\overline\Char(a)g(\psi,\Char)$, we have
  \[
\frac{\JJ(\varepsilon\Char,\Char)}{\JJ(\varepsilon,1)}=-\Char(-1)\frac{g(\psi,\varepsilon{\Char})g(\psi,\overline{\Char})}{g(\psi,\varepsilon)} = -\Char(-1)J({\varepsilon\Char},\overline{\Char}),
\]
where for $\Char,\eta$ characters, $J(\Char,\eta)$ denotes the standard
Jacobi sum (whose definition is recalled in
Definition~\ref{defi:Jacobi}).
 Then
\eqref{eq:defi-n=1-hyper} becomes
\[
\frac{1}{q-1}\sum_{\Char} \sum_{t \neq 1} \Char(-\z-spect)\varepsilon(t)\overline{\Char}(1-t) = \frac{1}{q-1}\sum_{t \neq 1} \varepsilon(t) \sum_\Char \Char\left(\frac{-\z-spect}{1-t}\right).
\]
The last sum equals zero unless $-\z-spect=1-t$ and $q-1$ in that case. Therefore
\[
  H_q(\alpha,1|\z-spec)=\varepsilon({1-\z-spec})^{-1},
\]
showing that our finite hypergeometric series $H_q(\alpha,1|\z-spec)$
at a prime ideal $\id{q}$ matches 
the Dirichlet character of the extension
$F(\sqrt[N]{1-\z-spec})/F$.

Consider now the same problem for parameters $1,-\alpha$. The
differential equation becomes
\[
D(1,-\alpha) = z(1-z)\frac{d}{dz}-\alpha =0.
\]
Its space of solutions is spanned by the function
$G(z)= \left(\frac{z}{1-z}\right)^{\alpha}$.
Adjoining it yields the abelian Galois extension
$F\left(\sqrt[N]{\frac{z}{1-z}}\right)/F(z)$, unramified outside
$\{0,1\}$. An elementary computation proves that both the differential
equation and the field extension are related to the previous case by
the change of variables $z \to 1/z$. This is consistent with the
finite hypergeometric series behavior
\[
H_q(1,-\alpha|\z-spec)=\frac{1}{1-q}\sum_{\Char} \frac{J(\Char,\varepsilon^{-1} \Char)}{J(1,\varepsilon^{-1})} \Char(\z-spec)= \frac{1}{1-q}\sum_{\Char} \frac{J(\varepsilon\Char^{-1},\Char^{-1})}{J(\varepsilon,1)} \Char(\z-spec)= H_q(\alpha,1|\z-spec^{-1}).
\]

\section{Hypergeometric Series and Monodromy Representation}
\label{section:hypergeometric-reps}
For $\alpha \in \CC$ and $k$ a positive integer, let $(\alpha)_k$
denote the Pochammer symbol, defined by
\[
(\alpha)_k = \alpha (\alpha+1) \cdots (\alpha + k-1) = \frac{\Gamma(\alpha+k)}{\Gamma(\alpha)}.
\]
\begin{definition}
  Let $\alpha_1,\ldots,\alpha_n,\beta_1,\dots,\beta_{n-1}$ be complex
  numbers.  The hypergeometric series 
  with parameters
  $\veca:=(\alpha_1,\ldots,\alpha_n),\vecb:=(\beta_1,\dots,\beta_{n-1},1)$ is
  defined as the convergent power series 
\begin{equation}
  \label{eq:hypergeometric}
_nF_{n-1}(\veca,\vecb|z) = \sum_{k=0}^\infty \frac{(\alpha_1)_k\cdots(\alpha_n)_k z^k}{(\beta_1)_k \cdots (\beta_{n-1})_k k!},
\end{equation}
for $z \in \CC$ with $|z|<1$.
\end{definition}
We will briefly recall some the properties of this classical function
and refer the reader to~\cite{BH} and the references therein for
details.  For $\Real(\beta_i) > \Real(\alpha_i)>0$, for
$i=1,\ldots n-1$, we have the following integral presentation (see
formula (2.2.2) of \cite{MR1688958})
\begin{equation}
  \label{eq:integral-rep}
_nF_{n-1}(\veca;\vecb|z) =
\prod_{i=1}^{n-1}\frac{\Gamma(\beta_i)}{\Gamma(\alpha_i)\Gamma(\beta_i-\alpha_i)}\int_{0}^1 
\cdots \int_0^1
\frac{\prod_{i=1}^{n-1}(x_i^{\alpha_i-1}(1-x_i)^{\beta_i-\alpha_i-1}dx_i)}{(1-zx_1\cdots
  x_{n-1})^{\alpha_n}}, \quad |z|<1.  
\end{equation}

As it is well-known the hypergeometric series $_nF_{n-1}$ satisfies
the differential equation 
\begin{equation}
  \label{eq:diff-eq}
D(\veca;\vecb)u=0,\qquad
D(\veca;\vecb):= (\theta+\beta_1-1) \cdots (\theta +
\beta_n-1) - z(\theta+\alpha_1) \cdots(\theta+\alpha_n), 
\end{equation}
where $\theta = z\frac{d}{dz}$. This equation has regular
singularities at the points $z=0,1,\infty$.
\begin{remark}
  Note that in the definition of the series~\label{eq:hypergeometric}
  we assume $\beta_n=1$.  Since the order of the $\beta_i$'s (or, for
  that matter, the order of the $\alpha_i$'s) is irrelevant we could
  have assumed any given $\beta_i=1$ (adjusting the range for the
  parameters in~\eqref{eq:integral-rep}). We will now, in any case,
  relax this condition and let all $\beta_i$ be arbitrary, in
  particular when considering the differential equation
  $D(\veca;\vecb)$.
\end{remark}
 We have (\cite[\S 2]{BH}).
\begin{proposition}
  If the number $\beta_1,\ldots,\beta_n$ are distinct modulo $\ZZ$, $n$
  independent solutions of~\eqref{eq:diff-eq} are given by
  \begin{equation}
    \label{eq:ui-def}
 u_i(z):=z^{1-\beta_i} {}_nF_{n-1}(\veca_i,\vecb_i|z),     \qquad i=1,\ldots,n,
  \end{equation}
where
$$
\veca_i:=(1+\alpha_1-\beta_i,\ldots,1+\alpha_n-\beta_i),
\qquad 
\vecb_i:=(1+\beta_1-\beta_i,\ldots,1+\beta_n-\beta_i).
$$
\label{prop:twist}
\end{proposition}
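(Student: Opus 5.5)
The plan is to verify directly that each $u_i$ is annihilated by $D(\veca;\vecb)$, and then to prove linear independence by looking at the local exponents at $z=0$.

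\emph{Step 1: conjugating $\theta$ past a power of $z$.} For any $\lambda$ and any function $f$ we have $\theta(z^{\lambda}f)=z^{\lambda}(\theta+\lambda)f$. Hence for every polynomial $P$,
\[
P(\theta)\,z^{\lambda}=z^{\lambda}P(\theta+\lambda)
\]
as operators. Take $\lambda=1-\beta_i$. Then
\[
D(\veca;\vecb)\,z^{1-\beta_i}=z^{1-\beta_i}\Big(\prod_{j}(\theta+\beta_j-\beta_i)-z\prod_j(\theta+1+\alpha_j-\beta_i)\Big)=z^{1-\beta_i}D(\veca_i;\vecb_i),
\]
because $(1+\beta_j-\beta_i)-1=\beta_j-\beta_i$. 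So it suffices to show that $F(z)={}_nF_{n-1}(\veca_i,\vecb_i|z)$ satisfies $D(\veca_i;\vecb_i)F=0$. Note that the $i$-th entry of $\vecb_i$ equals $1$, so $F$ is a genuine ${}_nF_{n-1}$ series in the sense of \eqref{eq:hypergeometric}, after reordering so that this entry comes last.

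\emph{Step 2: the series satisfies the equation.} Write $F=\sum_k c_kz^k$. Since $\theta z^k=kz^k$, the statement $D(\veca_i;\vecb_i)F=0$ reduces to the coefficient recursion
\[
c_k\prod_j(k+\beta'_j-1)=c_{k-1}\prod_j(k-1+\alpha'_j),
\]
where $\veca_i=(\alpha'_j)$ and $\vecb_i=(\beta'_j)$. The factor $\beta'_n=1$ (after reordering) contributes $k$ on the left, which matches the $k!$ in the denominator, and the Pochhammer ratios $(\alpha)_k/(\alpha)_{k-1}=\alpha+k-1$ give exactly this recursion. At $k=0$ the left side vanishes because of the factor $k$, so there is no constraint. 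This also proves that \eqref{eq:hypergeometric} solves \eqref{eq:diff-eq}, the fact asserted earlier.

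One point needs care: the hypothesis that the $\beta_j$ are distinct modulo $\ZZ$ guarantees that $1+\beta_j-\beta_i\notin\ZZ_{\le0}$ for $j\ne i$. Therefore the denominators $(1+\beta_j-\beta_i)_k$ never vanish and $F$ is well defined. Convergence for $|z|<1$ follows from the ratio test, since $c_k/c_{k-1}\to1$.

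\emph{Step 3: linear independence.} Each $u_i$ has the form $z^{1-\beta_i}(1+O(z))$ on a slit disc. Suppose $\sum_i\lambda_iu_i=0$. The exponents $1-\beta_i$ are pairwise incongruent modulo $\ZZ$. The functions $z^{\mu}g(z)$ with $g$ holomorphic at $0$ can be separated by monodromy: analytic continuation around $0$ multiplies the $i$-th term by $e^{-2\pi i\beta_i}$, and these multipliers are distinct. Applying powers of the monodromy gives a Vandermonde system, which forces $\lambda_iu_i=0$ for each $i$, hence $\lambda_i=0$.

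Since the equation has order $n$ (its leading coefficient $1-z$ is nonzero near $z_0$), the $n$ functions $u_i$ form a basis of the solution space. The main obstacle is really only the bookkeeping in Step 1, namely getting the shift $\theta\mapsto\theta+1-\beta_i$ to convert the $\beta$'s and $\alpha$'s exactly into $\vecb_i$ and $\veca_i$. The remaining steps are routine.
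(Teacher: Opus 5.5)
Your argument is correct and is the standard one: conjugate $D(\veca;\vecb)$ by $z^{1-\beta_i}$ to shift the parameters to $(\veca_i,\vecb_i)$, check the coefficient recursion, and separate the $u_i$ by their distinct local monodromy multipliers $e^{-2\pi i\beta_i}$ at $0$. This is the reasoning of Beukers--Heckman (\S 2), which the paper cites instead of giving its own proof. One harmless slip: as an operator in $d/dz$, the leading coefficient of $D(\veca;\vecb)$ is $z^n(1-z)$, not $1-z$, and it is still nonzero at $z_0\neq 0,1$.
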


\begin{lemma}
  \label{lemma:change-parameters}
  Let $\alpha_1,\ldots,\alpha_n,\beta_1,\ldots,\beta_n \in \CC$. Then
  the change of variables $z \to u=1/z$ sends solutions of the
  equation $D(\veca;\vecb)$ to solutions of the equation
  $D(\vecb',\veca')$, where
$$
\veca':= (-\alpha_1+1,\ldots,-\alpha_n+1), \qquad
\vecb':= (-\beta_1+1,\ldots,-\beta_n+1).
$$
\end{lemma}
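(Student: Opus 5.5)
The plan is a direct computation in terms of the Euler operator $\theta=z\frac{d}{dz}$. With $u=1/z$ we have $\theta_u:=u\frac{d}{du}=-\theta$: if $f(z)=g(1/z)$, then $z f'(z)=-z^{-1}g'(1/z)=-(u g'(u))|_{u=1/z}$. Hence every polynomial expression in $\theta$ becomes the same expression in $-\theta_u$. The strategy is to rewrite $D(\veca;\vecb)$ in the variable $u$, multiply by a suitable power of $u$, and recognise the result as (a nonzero multiple of) $D(\vecb',\veca')$.

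First I will substitute into $D(\veca;\vecb)=\prod_i(\theta+\beta_i-1)-z\prod_i(\theta+\alpha_i)$. This gives
\[
\prod_{i=1}^n(-\theta_u+\beta_i-1)-u^{-1}\prod_{i=1}^n(-\theta_u+\alpha_i)=(-1)^n\Bigl[\prod_{i=1}^n(\theta_u+\beta'_i)-u^{-1}\prod_{i=1}^n(\theta_u+\alpha'_i-1)\Bigr],
\]
where $\beta_i-1=-\beta'_i$ and $\alpha_i=1-\alpha'_i$.

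Next I will left-multiply by $-u$. Multiplication by $u$ does not commute with $\theta_u$, but the identity used below only needs left multiplication. The bracket becomes
\[
\prod_{i=1}^n(\theta_u+\alpha'_i-1)-u\prod_{i=1}^n(\theta_u+\beta'_i).
\]
This is exactly $D(\vecb',\veca')$ in the convention of~\eqref{eq:diff-eq}, with the roles of the two parameter vectors swapped: the vector $\veca'$ enters in the position $\theta+(\cdot)-1$, and $\vecb'$ enters in the position $u(\theta+\cdot)$.

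Finally, since left multiplication by the nonzero function $(-1)^{n+1}u$ does not change the kernel, $f$ solves $D(\veca;\vecb)f=0$ if and only if $g(u)=f(1/u)$ solves $D(\vecb',\veca')g=0$. This argument works on any simply connected open set avoiding $0,1,\infty$.

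The only delicate point is getting the ordering and sign conventions right, so that the $-1$ shifts land on the correct parameter set. Here the identity $\theta_u=-\theta$ does all the work, and there is no real obstacle beyond this bookkeeping.
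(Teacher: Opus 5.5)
Your proposal is correct and is essentially the paper's proof. You substitute $\theta=-\theta_u$ and $z=u^{-1}$ into $D(\veca;\vecb)$ and find $D(\veca;\vecb)=(-1)^{n+1}u^{-1}D(\vecb',\veca')$, which is exactly the paper's one-line computation; your version just writes out the sign bookkeeping and the kernel argument more explicitly.
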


\begin{proof}
  Set $u=1/z$, so $z \frac{d}{dz}= -u\frac{d}{du}$. In the variable
  $u$, equation~\eqref{eq:diff-eq} becomes
  \[
    \frac{(-1)^{n+1}}{u}\left((\theta -\alpha_1)\cdots (\theta
      -\alpha_n) - u(\theta - \beta_1+1)\cdots(\theta - \beta_n
     +1)\right).
    \]
\end{proof}

We are interested in the case when the parameters $\alpha_i, \beta_i$
are rational numbers, where we can expect a geometric origin for the
differential equation. We restrict to this case from now on.
Set~$N$~to be their least common denominator and define the
quantities:   
  \begin{equation}
    \label{eq:parameters}
  A_i:=N(\beta_n-\alpha_i), \quad
  B_i:=N(\alpha_i-\beta_i), \text{ for }i=1,\ldots,n-1, \quad A_n:=N(\alpha_n-\beta_n),\quad B_n:=N\beta_n.
\end{equation}

Then $u_n$ is a period of the middle cohomology of the twisted Euler's variety
\begin{equation}
  \label{eq:Euler-general}
  V_n:\quad y^N = z^{B_n}\prod_{i=1}^{n-1}x_i^{A_i}(1-x_i)^{B_i}(1-zx_1\cdots x_{n-1})^{A_n}.
\end{equation}

\begin{definition}
  Let $\veca = (\alpha_1,\ldots,\alpha_n)$ and
  $\vecb=(\beta_1,\ldots,\beta_n)$ be $n$-tuples of complex
  numbers. We say that $\veca$ and $\vecb$ are \emph{generic} if the
  sets $\{\exp(2\pi i \alpha_j)\},\{\exp(2\pi i \beta_k)\}$ are
  disjoint; equivalently, $\alpha_i\neq \beta_j \bmod \Z$ for
  $i,j=1,\ldots,n$. We will also say that such a pair determines a
  \emph{hypergeometric data}.
\label{defi:generic}
\end{definition}

\subsection{The monodromy representation of the differential equation}
Keep the notation of the previous section. For $X$ a topological
space, and $x_0 \in X$, let $\pi_1(X,x_0)$ denote the fundamental group of
$X$ based at the point $x_0$. Recall the following definition.

\begin{definition} Let $G$ be a group and let
  $\rho_i:G \to \GL_n(\CC)$, $i=1,2$, be two representations of the
  group $G$. The representations are isomorphic if there exists
  $M \in \GL_n(\CC)$ such that $\rho_1(g) = M \rho_2(g) M^{-1}$ for
  all $g \in G$.
\end{definition}

\begin{definition} Let $z_0$ be a complex number different from $0, 1$.  The
  differential equation $D(\veca;\vecb)$ has attached, up to isomorphism, a natural
  \emph{monodromy representation}
\begin{equation}
  \label{eq:monodromy-rep}
  \rho: \pi_1(\PP^1(\CC)\setminus \{0,1,\infty\},z_0) \to \GL_n(\CC)  
\end{equation}
obtained as follows: pick a basis $\{f_1,\ldots,f_n\}$ of $n$
independent solutions to $D(\veca;\vecb)$ around the point
$z_0$. Given a loop
$\gamma \in \pi_1(\PP^1(\CC)\setminus\{0,1,\infty\},z_0)$ extend
analytically each solution $f_i$ along $\gamma$, to obtain a new basis
$\{\tilde{f_1},\ldots,\tilde{f_n}\}$ of linearly independent solutions
at $z_0$. The matrix $\rho(\gamma)$ is defined as the change of basis
matrix.
\end{definition}
\begin{remark}
  \label{remark:rigidity}
 The generic case is \emph{rigid}: the representations $\rho_1$ and
 $\rho_2$ are isomorphic if and only if each monodromy matrix $\rho_1(\gamma_i)$
 is individually conjugate to $\rho_2(\gamma_i)$ (for $i=0,1,\infty$) (see
 \cite[Thm 3.5]{BH}).
\end{remark}

\begin{proposition}
  \label{prop:irreducible}
  The monodromy representation attached to the parameters
  $\veca,\vecb$ is irreducible if and only if the
  parameters are generic.
\end{proposition}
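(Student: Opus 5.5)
The plan is to prove this statement (Beukers–Heckman, Prop.~3.3) directly from the local structure of the monodromy at the three singular points, handling the two directions separately.

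\emph{Local monodromy.} From the differential equation~\eqref{eq:diff-eq}, the local exponents at $z=0$ are $1-\beta_1,\dots,1-\beta_n$. Hence $M_0$ has characteristic polynomial $\prod_j (T-e^{-2\pi i\beta_j})$; up to a harmless sign convention the eigenvalues are $b_j:=e^{2\pi i\beta_j}$. At $z=\infty$ the exponents are $\alpha_1,\dots,\alpha_n$, so $M_\infty$ has eigenvalues $a_j:=e^{2\pi i\alpha_j}$ (again up to inversion). At $z=1$ there are $n-1$ holomorphic solutions with exponents $0,1,\dots,n-2$, so $M_1-I$ has rank at most one; that is, $M_1$ is a pseudo-reflection. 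By a change of variable ($\theta\mapsto$ the operator obtained by writing $D$ as $(\theta+\beta-1)\cdots - z(\dots)$ and using $M_0M_1M_\infty=I$) it is convenient to set $h_0=M_\infty^{-1}$ and $h_1=M_0$. Then $h_0^{-1}h_1=M_1^{-1}$ up to ordering, so $h_0$ and $h_1$ differ by a rank-one matrix. I will use this structure freely.

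\emph{Generic $\Rightarrow$ irreducible.} Let $W\subset\CC^n$ be a nonzero proper invariant subspace and put $V=\ker(h_0-h_1)=\ker(M_1-I)$, which has dimension $n-1$. Either $W\subset V$, or $W+V=\CC^n$. In the first case, $h_0$ and $h_1$ agree on $W$ and $W$ is $h_0$-stable. Pick an eigenvector $w\in W$ of $h_0$ with eigenvalue $a$. Then $h_1w=h_0w=aw$, so $a$ is an eigenvalue of both $h_0$ and $h_1$, i.e.\ $a_i=b_j$ for some $i,j$, contradicting genericity. In the second case, consider the transposed (dual) representation. There $W^\perp$ is invariant, nonzero and proper, and $W^\perp\subset V^\perp$ because $W+V=\CC^n$. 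We have $\dim V^\perp=1$, and on the dual side $h_0^{t}$ and $h_1^{t}$ coincide on the annihilator of $\mathrm{Im}(h_0-h_1)$. The cleanest route is to use that $W\not\subset V$ implies $\mathrm{Im}(h_0-h_1)\subset W$. Then the quotient $\CC^n/W$ carries induced maps $\bar h_0=\bar h_1$, and an eigenvector of this common map again produces a common eigenvalue of $h_0$ and $h_1$, giving the same contradiction.

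\emph{Irreducible $\Rightarrow$ generic.} Suppose instead that $a_i=b_j=:\lambda$. I will exhibit an invariant subspace. The first attempt is the direct one. Since $h_1-h_0=u\otimes v$ has rank one, I will show that either $v$ vanishes on some eigenvector of $h_0^{t}-\lambda$, or $u$ lies in $\mathrm{Im}(h_0-\lambda)$.

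\begin{itemize}
\item In the first case, pick $w\neq 0$ with $h_0^tw=\lambda w$ and $\langle v,w\rangle$ chosen so that $h_1^t$ fixes $w$ too. This gives a one-dimensional invariant subspace of the dual.
\item In the second case, $\det(h_1-\lambda)=\det(h_0-\lambda)\,(1+\langle v,(h_0-\lambda)^{-1}u\rangle)$ is used via the adjugate: $\lambda$ being a common eigenvalue forces $u$ into the image of $h_0-\lambda$. Then $\mathrm{Im}(h_0-\lambda)$, a hyperplane containing $u$, is stable under both $h_0$ and $h_1$.
\end{itemize}

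A cleaner way to organize this is through the matrix determinant lemma applied to $\det(T-h_1)$ versus $\det(T-h_0)$.

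\emph{Expected obstacle.} The main obstacle is making the converse rigorous when $h_0$ is not semisimple, since then eigenvectors do not span. I would handle this by working with generalized eigenspaces, or simply cite~\cite[Prop.~3.3]{BH}. An alternative for the converse is analytic: when $\alpha_i-\beta_j\in\Z$, the operator $D(\veca;\vecb)$ factors as a product of lower-order hypergeometric-type operators (shifting parameters by integers via contiguity). The solution space of the right factor is then a monodromy-invariant subspace.
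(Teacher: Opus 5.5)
Your generic $\Rightarrow$ irreducible argument is correct: the split into $W\subset\ker(h_0-h_1)$ versus $\mathrm{Im}(h_0-h_1)\subset W$, followed by passing to $\CC^n/W$, is the standard hypergeometric-group argument of \cite{BH}. The paper itself just cites \cite[Props.~2.7, 3.3]{BH}.

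One caveat. Your ``harmless sign convention'' is harmless only if it is applied at both points at once. With your pairing $h_0=M_\infty^{-1}$, $h_1=M_0$, you must use the exponent-derived eigenvalues, namely $a_j$ for $M_\infty$ and $b_j^{-1}$ for $M_0$. Then a common eigenvalue reads $a_i^{-1}=b_j^{-1}$, as you want. If instead you combine your pairing with the normalization $M_0\sim b_j$ of Proposition~\ref{prop:monodromy-eigenvalues}, you get the wrong criterion $a_ib_j=1$. Separately, $W+V=\CC^n$ gives $W^\perp\cap V^\perp=0$, not $W^\perp\subset V^\perp$. The quotient argument you switch to does not need that claim.

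\textbf{The gap is in the converse, which is not established.} Write $h_1=h_0+uv^{t}$.
\begin{enumerate}
\item In your first bullet, $u$ and $v$ are swapped: if $h_0^{t}w=\lambda w$, then $h_1^{t}w=\lambda w+\langle u,w\rangle v$, so the relevant condition is on $u$, not $v$.
\item In your second bullet, $(h_0-\lambda)^{-1}$ does not exist at a common eigenvalue. The adjugate identity only gives $v^{t}\operatorname{adj}(h_0-\lambda)\,u=0$, and this does not force $u\in\mathrm{Im}(h_0-\lambda)$. For example, take $h_0=\mathrm{diag}(\lambda,\mu)$ with $\mu\neq\lambda$, $u=e_1$, $v=e_2$. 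Then $h_1=\left(\begin{smallmatrix}\lambda&1\\0&\mu\end{smallmatrix}\right)$ has eigenvalue $\lambda$, but $u\notin\mathrm{Im}(h_0-\lambda)=\CC e_2$.
\end{enumerate}
So you never show that one of your two alternatives must occur. The non-semisimple case you flag as the obstacle is a red herring.

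The missing step is one line. Pick $x\neq0$ with $h_1x=\lambda x$, so that $(h_0-\lambda)x=-(v^{t}x)\,u$.
\begin{enumerate}
\item If $v^{t}x=0$, then $x$ is a common eigenvector of $h_0$ and $h_1$, and $\CC x$ is invariant.
\item If $v^{t}x\neq0$, then $u\in W:=\mathrm{Im}(h_0-\lambda)$. This $W$ is proper because $\lambda$ is an eigenvalue of $h_0$, nonzero because it contains $u\neq0$, and $h_0$-stable. Moreover $h_1W\subset h_0W+\CC u\subset W$.
\end{enumerate}
This needs $n\ge2$; for $n=1$ the ``only if'' direction of the statement actually fails.

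\textbf{Two further points for the converse.} First, you need the local data for \emph{non-generic} parameters, whereas Proposition~\ref{prop:monodromy-eigenvalues} assumes genericity.
\begin{enumerate}
\item The characteristic polynomials of $M_0$ and $M_\infty$ do follow from the exponents.
\item The bound $\operatorname{rank}(M_1-I)\le1$ requires $n-1$ independent holomorphic solutions at $z=1$. This does not follow from the exponents $0,\dots,n-2$ alone, since logarithmic terms are a priori possible. It comes instead from the leading coefficient $z^n(1-z)$ of $D(\veca;\vecb)$ vanishing only to first order at $z=1$, as shown in \cite{BH}.
\end{enumerate}

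Second, your analytic alternative also works, but not by quoting the integer-shift isomorphism of monodromy, which is stated only for generic parameters. Argue instead as follows.
\begin{enumerate}
\item The operator $\theta+\alpha_i$ (resp.\ $\theta+\beta_j-1$) maps solutions of $D(\veca;\vecb)$ to solutions of the equation with $\alpha_i$ raised (resp.\ $\beta_j$ lowered) by $1$.
\item It commutes with monodromy and has kernel of dimension at most one.
\item Hence it is either an isomorphism, or it has nonzero proper invariant kernel and image. Either way, reducibility passes between the two equations.
\item Once $\alpha_i=\beta_j$, one has $D(\veca;\vecb)=(\theta+\alpha_i-1)D'$ with $D'$ of order $n-1$, and the solutions of $D'$ form the required invariant subspace.
\end{enumerate}
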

\begin{proof}
  See Propositions 2.7 and 3.3 of \cite{BH}.
\end{proof}

\begin{proposition}
  If the parameters $\veca$, $\vecb$ are generic then the
  monodromy representation for the parameters
  $\veca,\vecb$ is isomorphic to the one corresponding to the parameters
  $\veca+\pmb{n}$, $\vecb+\pmb{m}$ for any 
  $\pmb{n},\pmb{m} \in \ZZ^n$.
\end{proposition}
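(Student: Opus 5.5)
The plan is to use rigidity (Remark~\ref{remark:rigidity}) together with irreducibility (Proposition~\ref{prop:irreducible}). Genericity depends only on the classes of the $\alpha_i,\beta_j$ modulo $\Z$. So the pair $\veca+\pmb n,\vecb+\pmb m$ is generic exactly when $\veca,\vecb$ is. It therefore suffices to show that the local monodromy matrices at $0$, $1$ and $\infty$ of the two equations are individually conjugate; rigidity then gives that the global representations are isomorphic.

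First I will identify the local monodromies. At $z=0$ the indicial polynomial of $D(\veca;\vecb)$ is $\prod(\lambda+\beta_i-1)$, with exponents $1-\beta_i$. At $\infty$, using $u=1/z$ (Lemma~\ref{lemma:change-parameters}), the exponents are the $\alpha_i$. The characteristic polynomial of $M_0$ is therefore $\prod(T-e^{-2\pi i\beta_j})$, and that of $M_\infty$ is $\prod(T-e^{2\pi i \alpha_j})$, up to the orientation convention. These depend only on the parameters modulo $\Z$.

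The subtle point is the Jordan structure when exponents coincide modulo $\Z$. For the irreducible, generic case I will use the standard fact from \cite{BH} (Levelt's theorem): $M_0$ and $M_\infty$ are \emph{regular}, meaning they have a single Jordan block for each eigenvalue, and $M_1$ is a pseudo-reflection, since $\mathrm{rank}(M_1-I)=1$. Its special eigenvalue is $\det M_1=\det(M_0M_\infty)^{-1}$, which equals $\exp(2\pi i\sum(\beta_j-\alpha_j))$ up to sign conventions, again invariant under integer shifts. A regular matrix is determined up to conjugacy by its characteristic polynomial. A pseudo-reflection is determined by its determinant, with a transvection in the case where the determinant is $1$. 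Hence all three local monodromies agree up to conjugacy for the two sets of parameters.

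The main obstacle is justifying regularity of $M_0$ and $M_\infty$ and the pseudo-reflection property of $M_1$ intrinsically, without quoting Levelt. I would argue this via a cyclic vector: $\theta$ generates the local solution module, and $D$ has the form $P(\theta)-zQ(\theta)$. This shows that the $\CC[M]$-module of solutions near $0$ is cyclic, which forces one Jordan block per eigenvalue. For $M_1$, the space of solutions holomorphic at $z=1$ has dimension $n-1$, since the exponents there are $0,1,\dots,n-2$ and $\gamma:=\sum\beta_j-\sum\alpha_j-1$. Alternatively, I would simply invoke Levelt's theorem in the form of \cite[Thm 3.5]{BH}: for any $a_j,b_j$ with $\{a_j\}\cap\{b_j\}=\emptyset$, there is up to conjugacy a unique irreducible representation with $M_\infty$, $M_0^{-1}$ having characteristic polynomials $\prod(T-a_j)$, $\prod(T-b_j)$ and $M_1$ a pseudo-reflection. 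Since these data are unchanged by integer shifts, the result follows at once.
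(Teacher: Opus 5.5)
Your argument is correct, but it is not the route behind the paper's proof, which only cites \cite[Corollary~2.6]{BH}. That corollary sits in \S2 of Beukers--Heckman, before Levelt's theorem, and it comes from contiguity operators rather than from rigidity. Let $e_j$ be the $j$-th unit vector; using $\theta z=z(\theta+1)$ one checks
\[
D(\veca+e_j;\vecb)\,(\theta+\alpha_j)=(\theta+\alpha_j)\,D(\veca;\vecb),\qquad D(\veca;\vecb-e_k)\,(\theta+\beta_k-1)=(\theta+\beta_k-2)\,D(\veca;\vecb).
\]
So $\theta+\alpha_j$ (resp.\ $\theta+\beta_k-1$) maps local solutions of $D(\veca;\vecb)$ to local solutions of the shifted equation, and it commutes with analytic continuation. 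Its kernel would be spanned by $z^{-\alpha_j}$ (resp.\ $z^{1-\beta_k}$). That function solves $D(\veca;\vecb)$ only if some $\alpha_{j'}=\beta_{k'}-1$, which genericity excludes. Hence the map is a monodromy-equivariant isomorphism. Genericity is stable under integer shifts, so composing such maps and their inverses handles arbitrary $\pmb n,\pmb m$.

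What each route buys: the contiguity argument is elementary and produces an explicit intertwiner given by an algebraic differential operator. It needs only non-resonance: no irreducibility, no information about $M_1$, and no Levelt theorem. Your rigidity argument is more conceptual: the monodromy depends only on $\exp(2\pi i\veca),\exp(2\pi i\vecb)$ because the local data do. It rests on heavier inputs, namely Proposition~\ref{prop:irreducible}, Proposition~\ref{prop:monodromy-eigenvalues} and Remark~\ref{remark:rigidity}. The paper has all of these available by citation, so your route is sound.

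Your ``intrinsic'' fallback justifications would not stand as written. First, the cyclic-vector sketch does not prove regularity. Regularity really follows from irreducibility plus the pseudo-reflection property: two Jordan blocks for an eigenvalue $\lambda$ of $M_\infty$ give a $\lambda$-eigenvector $v$ of $M_\infty$ fixed by $M_1$. Then $M_0v=M_\infty^{-1}M_1^{-1}v=\lambda^{-1}v$, so $v$ spans an invariant line, contradicting irreducibility. Second, the exponents $0,\dots,n-2,\gamma$ at $z=1$ differ by integers, so by themselves they do not exclude logarithms there. The $n-1$ holomorphic solutions come instead from the leading coefficient $z^n(1-z)$ vanishing only to first order at $z=1$. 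Since you ultimately invoke Levelt's theorem, neither aside is needed.
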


\begin{proof}
  See Corollary 2.6 of \cite{BH}.  
\end{proof}

\begin{figure}
  \centering
\centering
\FundGroup
\caption{Fundamental group of $\pi_1(\PP^1(\CC)\setminus \{0,1,\infty\},z_0)$}
\label{figure:fundamentalgroup}
\end{figure}

Let $\gamma_0,\gamma_1,\gamma_{\infty}$ denote paths going through $0, 1, \infty$
respectively in counterclockwise direction as in
Figure~\ref{figure:fundamentalgroup}. It follows from Van Kampen's theorem 
that
\[
  \pi_1(\PP^1(\CC)\setminus \{0,1,\infty\},z_0) = \langle
  \gamma_0,\gamma_1,\gamma_{\infty} \; : \; \gamma_0\gamma_1\gamma_{\infty}=1\rangle.
\]
In particular, the monodromy representation is determined by the
matrices
\begin{equation}
  \label{eq:monodromy-matrices}
  M_0:=\rho(\gamma_0),\qquad M_1:=\rho(\gamma_1),\qquad M_\infty:=\rho(\gamma_\infty),
\end{equation}
corresponding to the local monodromy matrices around $0,1$ and $\infty$
respectively.

\begin{definition}
A \emph{pseudo-reflection} is a matrix $M$ satisfying
that $M-1$ has rank one. Its determinant is called the special
eigenvalue of $M$.
\end{definition}

\begin{proposition}
  \label{prop:monodromy-eigenvalues}
  Let $\veca=(\alpha_1,\ldots,\alpha_n)$ and $\vecb=(\beta_1,\ldots,\beta_n)$ be generic. Let $a_j = \exp(2\pi i \alpha_j)$ and $b_j = \exp(2 \pi i \beta_j)$
  for $j=1,\ldots,n$. Define
  \begin{equation}
    \label{eq:monod-rep}
    q_\infty = \prod_{j=1}^n (t-a_j), \qquad q_0 = \prod_{j=1}^n (t-b_j),
  \end{equation}
  Then $M_\infty$ and $M_0$ are conjugate to the companion matrix of $q_\infty$ and $q_0$
  and $M_1$ is a pseudo-reflection with special eigenvalue
  $$
\mu:=\exp\left(2 \pi i \sum_{j=1}^n (\beta_j - \alpha_j)\right)
$$
\end{proposition}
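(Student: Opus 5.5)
The plan is to compute the local exponents of $D(\veca;\vecb)$ at the three singular points and translate them into eigenvalues of the local monodromies. The key observation for $M_1$ is that the equation has many holomorphic solutions at $z=1$.

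\emph{Monodromy at $0$.} First assume the $\beta_j$ are pairwise distinct modulo $\Z$. By Proposition~\ref{prop:twist}, the functions $u_i(z)=z^{1-\beta_i}{}_nF_{n-1}(\veca_i,\vecb_i|z)$ form a basis of solutions near $0$. Each ${}_nF_{n-1}$ factor is holomorphic and single valued on the disc $|z|<1$. Hence continuation along $\gamma_0$ multiplies $u_i$ by $\exp(2\pi i(1-\beta_i))=b_i^{-1}$. The convention choice (orientation of $\gamma_0$, or using the dual basis) turns this into $b_i$, so $M_0$ is diagonalizable with eigenvalues $b_1,\dots,b_n$.

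When some $\beta_j$ coincide modulo $\Z$, the indicial equation at $0$, namely $\prod_j(\theta+\beta_j-1)$, still gives eigenvalues $b_j$ by Frobenius theory. In that case I would add an argument that $M_0$ is \emph{cyclic}, i.e.\ it has a single Jordan block per eigenvalue, so that it is conjugate to the companion matrix of $q_0$. The source of cyclicity is irreducibility (Proposition~\ref{prop:irreducible}) together with the $M_1$ result below. Suppose an eigenvalue $\lambda$ of $M_0$ had a $2$-dimensional eigenspace $E$. Let $W=\ker(M_1-1)$, which has dimension $n-1$. Then $E\cap W\neq0$, and a common vector $v$ satisfies $M_\infty^{-1}v=M_0M_1v=\lambda v$. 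Ideally one then gets an invariant line, contradicting irreducibility. This is the standard Beukers--Heckman argument \cite[Prop.~3.2--3.3]{BH}.

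\emph{Monodromy at $\infty$.} By Lemma~\ref{lemma:change-parameters}, the substitution $u=1/z$ turns $D(\veca;\vecb)$ into $D(\vecb';\veca')$ with $\alpha_j'=1-\alpha_j$. The roles are swapped, so the $0$-exponents of the new equation are $1-\alpha'_j=\alpha_j$. Applying the previous paragraph to this equation gives eigenvalues $a_j$ for the loop around $u=0$. That loop is $\gamma_\infty$, up to orientation and conjugation, and the orientation is consistent with $M_0M_1M_\infty=I$. This yields $M_\infty\sim$ companion of $q_\infty$.

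\emph{Monodromy at $1$.} Write the operator as $(1-z)\theta^n+\text{lower}$. At $z=1$ the point is regular singular, and the local exponents are $0,1,\dots,n-2$ together with $\gamma:=\sum_j(\beta_j-\alpha_j)-1$. I would check this by computing the indicial polynomial in $t=1-z$, using $\theta=-(1-t)\,d/dt$ and tracking the coefficient of $\theta^{n-1}$, which is $\sum(\beta_j-1)-\sum\alpha_j$. The $n-1$ integer exponents give $n-1$ independent holomorphic solutions at $1$, by a Frobenius argument after noting they come from an apparent-type structure. Hence $\ker(M_1-1)$ has dimension at least $n-1$, so $M_1$ is a pseudo-reflection or the identity. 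It cannot be the identity: otherwise $M_0=M_\infty^{-1}$, and the eigenvalues $b_j$ and $a_j^{-1}$ would have to coincide. Comparing determinants gives $\det M_1=\det M_0^{-1}\det M_\infty^{-1}=\prod b_j\prod a_j^{-1}$ after orientation bookkeeping, which equals $\mu$.

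\emph{Main obstacle.} The hardest step is proving the existence of $n-1$ holomorphic solutions at $z=1$, since integer-spaced exponents can produce logarithms. I would first try the determinant-free route of \cite{BH}: show directly that $\dim\ker(M_1-1)\ge n-1$ by exhibiting the solutions as integrals (Euler-type representations) that are holomorphic across $1$. Failing that, I would reduce the operator near $1$ to show that the $n-1$ integer exponents carry no log terms.
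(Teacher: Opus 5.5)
Your route is the Beukers--Heckman argument that the paper merely cites (\cite[Props.~2.10 and~3.2]{BH}). You get the characteristic polynomials of $M_0,M_\infty$ from local exponents, the pseudo-reflection from holomorphic solutions at $z=1$, the special eigenvalue from determinants, and regularity from the pseudo-reflection. However, the step that carries all the content of the $M_1$ claim is missing. ``A Frobenius argument after noting they come from an apparent-type structure'' is not an argument. The exponents $0,1,\dots,n-2$ by themselves only show that $1$ is an eigenvalue of $M_1$ of algebraic multiplicity at least $n-1$. They do not give $\dim\ker(M_1-1)\ge n-1$, which is what being a pseudo-reflection means.

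The missing idea is elementary and needs no integral representation. In terms of $d/dz$, the leading coefficient of $D(\veca;\vecb)$ is $z^n(1-z)$, which has a \emph{simple} zero at $z=1$. Dividing by $-z^n$ and putting $t=z-1$, the equation becomes $t\,y^{(n)}+q_{n-1}(t)y^{(n-1)}+\cdots+q_0(t)y=0$, with $q_i$ holomorphic at $0$ and $q_{n-1}(0)=n-1-\gamma$. For $y=\sum_m c_mt^m$, the coefficient of $t^k$ contains $c_{k+n-1}$ only through $(k+1)\cdots(k+n-1)(k+n-1-\gamma)\,c_{k+n-1}$; every other term involves $c_m$ with $m<k+n-1$. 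This factor vanishes for at most one $k\ge0$, namely $k=\gamma-n+1$ when $\gamma\in\Z_{\ge n-1}$. So $c_0,\dots,c_{n-2}$ are free up to at most one linear condition, and that condition is compensated by the then-free coefficient $c_\gamma$. This gives at least $n-1$ independent formal solutions, which converge because $z=1$ is a regular singular point. Logarithms can only enter the solution attached to $\gamma$.

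Second, the ``convention choice'' turning $b_i^{-1}$ into $b_i$ is not harmless. With $M_0M_1M_\infty=I$ you cannot simultaneously have $M_0$ with eigenvalues $b_j$, $M_\infty$ with eigenvalues $a_j$, and $\det M_1=\mu$: taking determinants would force $(\prod_j b_j)^2=1$. Reversing only $\gamma_0$ breaks the relation, and dualizing inverts all three data. Keep the $b_j^{-1}$ you computed. The consistent statement is Beukers--Heckman's, where $h_0^{-1}$ has characteristic polynomial $q_0$: $M_0^{-1}$ is conjugate to the companion matrix of $q_0$, and the proposition should be read that way.

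With this normalization your remaining steps work and simplify:
\begin{itemize}
\item $M_1=I$ would give $\{b_j^{-1}\}=\{a_j^{-1}\}$, contradicting genericity. Under your switched convention, comparing $b_j$ with $a_j^{-1}$ contradicts nothing.
\item $\det M_1=\prod_j b_j/\prod_j a_j=\mu$.
\item In the repeated case, a vector $v\in E\cap\ker(M_1-1)$ satisfies $M_\infty v=\lambda^{-1}v$ with $\lambda^{-1}=b_k$. So $b_k$ equals some $a_j$, a direct contradiction with genericity that does not even need irreducibility.
\end{itemize}
The symmetric identity $M_0^{-1}v=M_1M_\infty v$ handles $M_\infty$ when the $\alpha_j$ repeat modulo $\Z$.
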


\begin{proof}
  See  Proposition 2.10 and Proposition 3.2 of \cite{BH}.
\end{proof}

Let $\veca, \vecb$ be generic rational parameters and let $N$ be their
least common denominator. Then a construction due to Levelt (see
Theorem 3.5 of \cite{BH}) gives explicit matrices $M_0$, $M_1$,
$M_\infty$ for the monodromy representation, with image in $\GL_N(F)$,
where $F=\Q(\zeta_N)$.

\subsection{Geometric representations} A good reference for the
statements of the present section is \S 6.3 of \cite{MR2363329}. Let
$\K$ be an algebraic closed field, and $X$ a projective smooth curve
of genus $g$ over $\K$ (in our case $X = \PP^1$, so $g=0$). Let
$P_1,\ldots,P_k$ be distinct points in $X(\K)$. Let $\overline{\K(X)}$
be an algebraic closure of the function field $\K(X)$ of $X$ and let
$\Omega \subset \overline{\K(X)}$ be the maximal extension of $\K(X)$
unramified outside the points $P_1,\ldots,P_k$. The algebraic
fundamental group is defined by
\begin{equation}
  \label{eq:alg-fundamentalgroup}
  \pi_1^{\text{alg}}(X\setminus \{P_1,\ldots,P_k\}) := \Gal(\Omega/\K(X)).
\end{equation}

\begin{definition}
  Let $G$ be a discrete group. The \emph{profinite completion} of $G$
  is the topological group
  \[
   \widehat{G}:=\lim_{\longleftarrow} G/H,
 \]
 where the inverse limit is with respect to finite index normal
 subgroups $H$ of $G$.
\end{definition}
Given a discrete group $G$, we denote by $\widehat{G}$ its profinite
completion.

\begin{definition}
  Let $g,k$ be positive integers. The group $\pi_1(g,k)$ is defined as
  the group with generators and relations
  \begin{equation}
    \label{eq:profinite}
    \pi_1(g,k) = \langle a_1,b_1,\ldots,a_g,b_g,c_1,\ldots,c_k \; : \; a_1b_1a_1^{-1}b_1^{-1} \cdots a_g b_ga_g^{-1}b_g^{-1}c_1\cdots c_k = 1\rangle.
  \end{equation}
\end{definition}

\begin{theorem}
  The algebraic fundamental group
  $\pi_1^{\text{alg}}(X\setminus \{P_1,\ldots,P_k\})$ is isomorphic to
  $\widehat{\pi_1(g,k)}$, the profinite completion of the group
  $\pi_1(g,k)$.
\label{thm:p1-comparison}
\end{theorem}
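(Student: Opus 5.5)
This is Grothendieck's comparison theorem (Riemann existence theorem plus the computation of topological $\pi_1$). The plan is to reduce first to characteristic zero and then to $\K=\CC$. After that, compare finite étale covers of $X\setminus\{P_1,\ldots,P_k\}$ with finite topological covers of the associated Riemann surface.

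\textbf{Step 1 (topological fundamental group).} Over $\CC$, $X(\CC)$ is a compact orientable surface of genus $g$. Removing $k\ge 1$ points gives a space whose fundamental group is $\pi_1(g,k)$. This follows from Van Kampen: cut the surface along the standard $4g$-gon, so the boundary word is $\prod [a_i,b_i]$. Each puncture then contributes a loop $c_j$, and the single relation $\prod[a_i,b_i]\prod c_j=1$ comes from the disc glued in the complement. This is exactly the computation the paper already used for $\PP^1\setminus\{0,1,\infty\}$.

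\textbf{Step 2 (Riemann existence).} Finite-index normal subgroups $H\subset\pi_1(g,k)$ correspond to finite Galois topological covers of $X(\CC)\setminus\{P_i\}$. By the Riemann existence theorem, each such cover extends uniquely to a branched cover of compact Riemann surfaces. Such a cover is algebraic (GAGA), so it is a finite Galois extension of $\CC(X)$ unramified outside the $P_i$ with group $\pi_1(g,k)/H$. Conversely, every such extension gives a topological cover. These correspondences are compatible with morphisms. Passing to the inverse limit yields $\Gal(\Omega/\CC(X))\simeq\widehat{\pi_1(g,k)}$.

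\textbf{Step 3 (change of algebraically closed field).} For $\K$ algebraically closed of characteristic $0$, I would show that base change between algebraically closed fields induces isomorphisms on $\pi_1^{\text{alg}}$. Since the curve, the points and any finite cover are defined over a finitely generated subfield, the question reduces to $\overline{\Q}$-type fields. One then embeds in $\CC$ and uses the invariance of the étale fundamental group of a normal variety under extension of algebraically closed fields of characteristic zero (SGA1, XIII).

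I expect the main obstacle to be this last step, since it is the part that needs real machinery. I would simply cite it: the paper only needs $X=\PP^1$ over $\overline{\Q}$ or $\CC$, for which the statement is the classical result of Grothendieck (SGA1). The statement is false as written in positive characteristic (wild ramification), so I would record the implicit characteristic-zero hypothesis.
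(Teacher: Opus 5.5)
Your proposal is correct and follows essentially the same route as the paper. The paper's proof is just a citation of the standard fact that the algebraic fundamental group of a complex curve minus points is the profinite completion of its topological fundamental group, and your Steps 1--2 (the Van Kampen presentation together with Riemann existence and GAGA) are exactly what that citation contains. Your Step 3 and your remark that a characteristic-zero hypothesis is needed are worthwhile additions: the paper assumes only that $\K$ is algebraically closed, its cited comparison concerns complex varieties, and it later applies the result over $\overline{\Q}$.
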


\begin{proof}
  See \cite[Theorem 6.3.1]{MR2363329}. This is a particular case of
  the well known general result: the algebraic fundamental group of a
  complex variety is isomorphic to the profinite completion of the
  topological fundamental group.
\end{proof}
\begin{remark}
  As noted in \cite{MR2363329}, the canonical map
  $\pi_1(g,k) \to \widehat{\pi_1(g,k)}$ is in fact injective.
\end{remark}

Let $F$ be a number field, let $\Om$ be its ring of integers and let
$\id{p}$ be a prime ideal of $\Om$. We denote by $\Om_{\id{p}}$ the
completion of $\Om$ at $\id{p}$ (and by $F_{\id{p}}$ its field of
fractions).

Let $\veca$ and $\vecb$ be generic rational parameters. By Levelt's
theorem the hypergeometric monodromy representation can be chosen to
be defined over $\Om$. Fix such a choice (unique up to isomorphism
over $F$)
\[
\rho: \quad \pi_1(\PP^1\setminus \{0,1,\infty\},z_0) \rightarrow  \GL_n(\Om).
\]
\begin{corollary}
  Let $\veca$ and $\vecb$ be generic rational
  parameters. The representation
  \[
    \xymatrix{
\pi_1(\PP^1\setminus \{0,1,\infty\},z_0) \ar[r]^(0.7){\rho}&  \GL_n(\Om) \ar[r]^{\iota} &
      \prod_{\id{p}} \GL_n(\Om_{\id{p}}), 
      }
    \]
 Extends to a  continuous representation (unique up to isomorphism)
\begin{equation}
  \label{eq:p-adic-rep}
  \widehat{\rho}: \Gal(\overline{\K(z)}/\K(z)) \to \prod_{\id{p}}\GL_n(\Om_{\id{p}}),
\end{equation}
unramified outside $\{0,1,\infty\}$.
\label{coro:extension}
\end{corollary}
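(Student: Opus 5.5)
We will obtain $\widehat{\rho}$ by passing to profinite completions, using Theorem~\ref{thm:p1-comparison} with $X=\PP^1$, $g=0$, $k=3$ and $P_1,P_2,P_3=0,1,\infty$. In that case $\pi_1(0,3)=\langle c_1,c_2,c_3 : c_1c_2c_3=1\rangle$. This is exactly the presentation $\langle \gamma_0,\gamma_1,\gamma_\infty : \gamma_0\gamma_1\gamma_\infty=1\rangle$ of the topological fundamental group $\pi_1(\PP^1(\CC)\setminus\{0,1,\infty\},z_0)$ given by Van Kampen. So Theorem~\ref{thm:p1-comparison} identifies $\Gal(\Omega/\K(z))$ with the profinite completion $\widehat{\pi_1}$ of the topological fundamental group. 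Here $\Omega$ is the maximal extension of $\K(z)$ unramified outside $\{0,1,\infty\}$, and the comparison map is compatible with the canonical inclusion $\pi_1\hookrightarrow\widehat{\pi_1}$ of the subsequent remark.

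Next, the target $G:=\prod_{\id{p}}\GL_n(\Om_{\id{p}})$ is a profinite group. Indeed, each $\GL_n(\Om_{\id{p}})=\varprojlim_m \GL_n(\Om/\id{p}^m)$ is profinite, and a product of profinite groups is profinite. It has a basis of open normal subgroups $U$ of finite index, for instance the kernels of reduction modulo $\id{p}_1^{m_1}\cdots\id{p}_r^{m_r}$ on finitely many factors. For each such $U$, the composite $\pi_1\to G\to G/U$ has finite image, so its kernel is a finite-index normal subgroup of $\pi_1$. Hence it factors through $\widehat{\pi_1}\to \pi_1/\ker$. These maps are compatible as $U$ shrinks, and the universal property of the profinite completion gives a unique continuous homomorphism $\widehat{\pi_1}\to \varprojlim_U G/U = G$ extending $\iota\circ\rho$. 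This is the step requiring a little care: one must check that $G$ really equals the inverse limit of its finite quotients, i.e. that it is compact, Hausdorff and totally disconnected. That holds by Tychonoff and the corresponding property of each factor.

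Finally, we compose with the quotient $\Gal(\overline{\K(z)}/\K(z))\twoheadrightarrow\Gal(\Omega/\K(z))\cong\widehat{\pi_1}$ to obtain $\widehat{\rho}$. It is continuous, and it is unramified outside $\{0,1,\infty\}$ by construction, since it factors through the Galois group of $\Omega$. Uniqueness up to isomorphism follows because $\pi_1$ is dense in $\widehat{\pi_1}$ and $G$ is Hausdorff, so a continuous extension is determined by its restriction to $\pi_1$. Moreover, $\rho$ is itself only defined up to conjugation over $F$ and the identification of Theorem~\ref{thm:p1-comparison} is only defined up to inner automorphism (choice of base point), and both ambiguities only change $\widehat{\rho}$ by conjugation.

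One subtlety we will check is that conjugating by $M\in\GL_n(F)$ may not preserve $\GL_n(\Om_{\id{p}})$ at finitely many primes. Isomorphism should therefore be understood in $\prod_{\id{p}}\GL_n(F_{\id{p}})$, or over a fixed $\Om$-integral choice.
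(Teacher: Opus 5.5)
Your proposal is correct and follows essentially the same route as the paper. You identify the topological fundamental group with $\pi_1(0,3)$, use the universal property of the profinite completion for the profinite target $\prod_{\id{p}}\GL_n(\Om_{\id{p}})$, transport through the comparison theorem to $\Gal(\Omega/\K(z))$ as a quotient of $\Gal(\overline{\K(z)}/\K(z))$, and get uniqueness from density. Your extra checks (that the product is genuinely profinite, and that conjugation over $F$ need not preserve integrality at finitely many primes) are refinements the paper leaves implicit, not a different argument.
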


\begin{proof}
  By Van Kampen's theorem
  $\pi_1(\PP^1\setminus \{0,1,\infty\},z_0)\simeq \pi_1(0,3)$, so we
  can think of $\rho$ as a representation
  $\rho:\pi_1(0,3) \to \GL_n(\Om)$. Consider the following natural morphisms
  \[
    \xymatrix{
      \pi_1(0,3) \ar[r]^{\rho}&  \GL_n(\Om) \ar[r]^{\iota} &
      \prod_{\id{p}} \GL_n(\Om_{\id{p}}).
      }
    \]
    Since $\GL_n(\Om_{\id{p}})$ is profinite, by the universal
    property of the profinite completion (see \cite[Lemma
    3.2.1]{MR2599132}) we get a morphism
    \[
      \widehat{\rho}: \widehat{\pi_1(0,3)} \to \prod_{\id{p}}\GL_n(\Om_{\id{p}}).
    \]
    Theorem~\ref{thm:p1-comparison} (with $X = \PP^1$ and
    $\{P_1,P_2,P_3\} = \{0,1,\infty\}$) gives an isomorphism
    $\widehat{\pi_1(0,3)} \simeq \Gal(\Omega/\K(z))$, the later being
    a quotient of $\Gal(\overline{\K(z)}/\K(z))$ (corresponding to
    extensions unramified outside $\{0,1,\infty\}$). This proves the
    existence statement. Uniqueness follows from the fact that any
    discrete group $G$ is dense in its profinite completion
    $\widehat{G}$.
\end{proof}
Let $\rho_{\id{p}}$ be the map obtained by composing $\widehat{\rho}$
with the projection to $\GL_n(\Om_{\id{p}})$.  It follows from
Corollary~\ref{coro:extension} that for rational generic parameters,
the differential equation satisfied by the hypergeometric series
induces a compatible family of Galois representations
\begin{equation}
  \label{eq:geometric-rep}
\rho_{\id{p}}:\Gal(\overline{\Q(z)}/\overline{\Q}(z)) \to \GL_N(F_{\id{p}}).  
\end{equation}

\section{Generalities on hypergeometric motives}
\label{section:HGM}
We refer the reader to the survey~\cite{MR4442789} for an introduction
to the subject (see also Katz's article~\cite{MR1366651}).  Let
$\veca,\vecb$ be vectors of generic rational numbers (as in
Definition~\ref{defi:generic}), and let $N$ be their least common
denominator and let $r$ denote the number integral coordinates of $\veca, \vecb$. Keeping the previous notation, let $F =
\Q(\zeta_N)$. Then for $\z-spec \in F$, $\z-spec \neq 0,1$, there should exist
a motive $\HGM(\veca,\vecb|\z-spec)$, a \emph{hypergeometric motive or
  HGM}, attached to the parameters $\veca, \vecb$ satisfying the
following properties

\begin{enumerate}[(i)]
  
\item It is a pure motive of degree $n$ and weight $r-1$ defined over $F$.
\item Its $\ell$-\'adic \'etale realization is
related to the monodromy representation.
\item
  The primes of $F$ of bad reduction belong to
  $S_{\ptame}\cup S_{\pwild}$ ({\it potentially tame} and {\it
    potentially wild} primes respectively), where
$$
S_{\pwild}:=\{\id{p} \,|\,  v_{\id{p}}(N)>0 \},
\qquad
S_{\ptame}:= (S_0\cup S_1\cup S_\infty)\setminus S_{\pwild},
$$
and 
$$
S_0:=\{\id{p} \,|\,  v_{\id{p}}(\z-spec)> 0\},
\qquad S_1:=\{\id{p} \,|\, v_{\id{p}}(\z-spec-1)> 0\},
\qquad
S_\infty:=\{\id{p} \,|\, v_{\id{p}}(\z-spec) < 0\}.
$$
We let $S_g$ be the set of primes not in $S_{\ptame}\cup S_{\pwild}$.
\label{prop:iii}
\item For a prime $\id{p}\in S_g$   the
  trace of powers of the Frobenius automorphism at $\id{p}$ acting on
  $\HGM(\veca,\vecb|\z-spec)$ are given by an explicit finite
  hypergeometric sum.
\end{enumerate}
\begin{remark}
  As is well-known (iv) yields a way to compute the characteristic polynomial
  of the Frobenius automorphism at $\id{p}$ acting on
  $\HGM(\veca,\vecb|\z-spec)$ by means of Newton's formulas.
\end{remark}
We present a more precise statement in
Conjecture~\ref{conjecture:field-of-defi}. We use the term motive to
refer to a pure motive (as in \S 4 of \cite{MR2115000}), i.e. a pair
$(X,p)$ where $X$ is a projective variety and $p$ is a correspondence
on $X$ which is an idempotent. We say that the motive has a
\emph{realization} over $K$ if both the variety $X$ and the
correspondence $p$ are defined over $K$.
\begin{remark}
  The existence of a motive over a number field for general rigid
  local systems was proved by Katz in \cite{MR1366651}. See \S 6 of
  \cite{2003.05031} for a short exposition of the motivic
  interpretation.

  Among the extensive literature on topics related to HGMs we may
  highlight the following as most relevant for our purposes: for
  \emph{rational} HGMs (see Definition~\ref{defi:rational})~\cite{BCM}
  and ~\cite{MR4442789} and for the specific case of rank $n=2$ with
  $\beta_2=1$~\cite{MR2005278} and~\cite{MR4493579}.
\end{remark}
Let $H$ be the subgroup of $\Gal(F/\Q)$ that fixes the set
$\exp(2 \pi i \veca)$ and $\exp(2 \pi i \vecb)$.
Via the natural identification of $\Gal(F/\Q)$ with $(\Z/N)^\times$,
the group $H$ can be defined as the set of elements
$j \in (\Z/N)^\times$ satisfying
\begin{equation}
  \label{eq:H-defi}
 j \cdot \{\alpha_1,\ldots,\alpha_n\} \equiv
 \{\alpha_1,\dots,\alpha_n\}\bmod \Z,\quad \text{and} \quad 
 j \cdot \{\beta_1,\ldots,\beta_n\} \equiv
 \{\beta_1,\dots,\beta_n\}\bmod \Z.
\end{equation}

\begin{definition}
\label{defn:base-field}
We define the \emph{base field} $K:=\Q(\zeta_N)^H \subseteq \Q(\zeta_N)$ of the
hypergeometric data $(\veca,\vecb)$ as the field fixed  by $H$.
\end{definition}

\begin{definition}
\label{defn:coeff-field}
  Given a motive we will call the field generated by the traces of all
  Frobenius elements the \emph{coefficient field} of the motive.
\end{definition}

We have (see Proposition~\ref{prop:props-hyperg-sum})
\begin{equation}
  \label{eq:conjugation}
 \HGM(\veca,\vecb|\z-spec)^\sigma =
  \HGM(j\veca,j\vecb|\z-spec^\sigma),
\end{equation}
where $1\leq j \leq N-1$ coprime to $N$ corresponds to $\sigma$.
Hence it is natural to expect (and all numerical evidence supports)
the following
\begin{conjecture}
  Let $K$ be the base field of the hypergeometric data
  $\veca,\vecb$. Then
\begin{enumerate}
\item
The  motive $\HGM(\veca,\vecb|z)$ has a realization $\calH(z)$ 
over $K(z)$.
\item (Compatibility) The representation of the motive restricted to
  $\Gal(\overline{K(z)}/\overline{K}(z))$ is isomorphic to the
  geometric Galois representation~\eqref{eq:geometric-rep}.

\item Let $L$ be a finite extension of $K$.  The specialization
  $\calH(\z-spec)$ for generic $\z-spec\in L$ has coefficient field
  $K$.
\item The primes $\id{p}\in S_g$ of $L$ are of good reduction and the
  trace of Frobenius on $\calH(\z-spec)$ is given by the finite
  hypergeometric sum $H_{\id{p}}(\veca,\vecb\,|\,\z-spec)$.
\end{enumerate}
\label{conjecture:field-of-defi}
\end{conjecture}

\begin{remark}
  For particular values of the parameter $z$, we might by able to
  descend to a motive over a proper subfield $L\subseteq K$. For
  example the motive of Example~\ref{example:genericity-condition} has
  base field $\Q(\sqrt{2})$, but evaluated at squares is
  defined over $\Q$ and has coefficient field $\Q$.
\end{remark}

\begin{definition}
  Let $\rho \in \Q$. The \emph{hypergeometric twist by $\rho$} of the
  hypergeometric motive $(\veca,\vecb)$ is the motive with
  hypergeometric data $(\rho\veca,\rho\vecb)$, where
  $\rho(\alpha_1,\ldots,\alpha_n)=(\rho+\alpha_1,\ldots,\rho+\alpha_n)$.
\end{definition}

It follows from Proposition~\ref{prop:monodromy-eigenvalues} that the
hypergeometric twist by $\rho$ multiplies the local monodromies at $0$
and $\infty$ of $\HGM(\veca,\vecb|z)$ by the scalars $\lambda$ and
$\lambda^{-1}$ respectively, where $\lambda = \exp(-2\pi i \rho)$. In
particular, hypergeometric twisting does not change the projective
monodromy representation. As a consequence, for example, the
corresponding hypergeometric representations are simultaneously finite
or infinite.

\begin{example}
\label{example:base-field-switch}
A generic hypergeometric twist of a HGM typically increases
the degree of its base field. But in some special cases it does not.
For example, the following parameter set
$$
\veca=(1/24, 11/24, 17/24, 19/24), \qquad 
\vecb=(1/4, 1/2, 3/4, 1)
$$
has base field $\Q(\sqrt{-8})$. However, its twist by $-1/8$
$$
\veca'=(1/3, 7/12, 2/3, 11/12)  \qquad \vecb'=(1/8, 3/8, 5/8, 7/8)
$$
has base field $\Q(i)$. The HGM of parameters $(\veca',\vecb')$ is
number $38$ in the Beukers-Heckman~\cite{BH} list of algebraic
hypergeometric functions; it has weight zero and corresponds to an
Artin motive. The geometric monodromy group for $\HGM(\veca,\vecb|z)$
is of order $2304$ and is a central extension by $C_2$ of
$S_4 \wr S_2$. However, the motive of parameters $(\veca,\vecb)$
itself does not actually appear in the Beukers-Heckman list since they
mod out by twisting. There are only three cases of this kind in the
Beukers-Heckman list for rank $n>2$, all with base field
$\Q(\sqrt{-8})$. Namely,
$$
\begin{array}{cc}
(1/24, 11/24, 17/24, 19/24) & (1/4, 1/2, 3/4, 1)\\
(1/24, 11/24, 17/24, 19/24) & (1/4, 5/8, 3/4, 7/8)\\
(1/8, 1/4, 3/8, 3/4) & (1/6, 1/3, 2/3, 5/6)
\end{array}
$$
\end{example}

\begin{definition}
  A pair $(\veca,\vecb)$ of vectors is \emph{rational} if its base
  field is $\Q$ (equivalently, $H=\Gal(F/\QQ)$).
\label{defi:rational}
\end{definition}

\vspace{2pt}

Suppose for the rest of the section that the
Conjecture~\ref{conjecture:field-of-defi} holds for the motive
$\HGM(\veca,\vecb|z)$.
Then there exists a number field $L$ with ring of integers $\Om_L$ and
a compatible family of Galois representations
\[
\rho_{\id{p}}:\Gal(\overline{K(z)}/K(z)) \to \GL_n(L_{\id{p}}),
\]
for primes $\id{p}$ of $L$ extending the geometric
representation~\eqref{eq:geometric-rep}. The potentially wild primes
should be the ones dividing the denominator of $\veca$ or $\vecb$. We
can prove a weaker version.  Define
\begin{equation}
  \label{eq:constant}
  S(n,L) := \gcd\{\# \GL_n(\Om_L/\id{p}) : \id{p} \text{ is a prime ideal of }\Om_L\}.
\end{equation}

The following result follows the lines of \cite[Lemma 1.2]{Darmon}, and its proof resembles that of \cite[Corollary 2]{MR236190}.

\begin{theorem}
  \label{thm:unramified-primes}
  Keep the previous notation and assumptions and assume
  Conjecture~\ref{conjecture:field-of-defi} holds. Let
  $\z-spec \in \PP^1(K) \setminus \{0,1,\infty\}$ and let
  $\HGM(\veca,\vecb|{\z-spec})$ be the specialization at
  $\z-spec$. Let $p, q, r$ denote the order of the monodromy
  representation at the points $0,1,\infty$ respectively (possibly
  equal to $\infty$). Let $\id{n}$ be a prime ideal in $\Om_K$ not
  dividing~$S(n,L)$ and satisfying one of the following properties:
  \begin{itemize}
  \item $\id{n} \mid \z-spec$, $p$ is finite  and $p \mid v_{\id{n}}(\z-spec)$,
    
  \item $\id{n} \mid \z-spec-1$, $q$ is finite and
    $q \mid v_{\id{n}}(\z-spec-1)$,
    
  \item $v_{\id{n}}(\z-spec)<0$, $r$ is finite and $r \mid v_{\id{n}}(\z-spec)$.
    
  \item $v_{\id{n}}(\z-spec) = v_{\id{n}}(\z-spec-1)=0$.
  \end{itemize}
  Then the compatible family of Galois representations attached to
  $\HGM(\veca,\vecb|{\z-spec})$ is unramified at $\id{n}$.
\end{theorem}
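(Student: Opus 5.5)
The plan follows Darmon's argument (\cite[Lemma 1.2]{Darmon}) and Beckmann's specialization lemma. We view $\rho_{\id{p}}$ as a representation of $\Gal(\overline{K(z)}/K(z))$ that is unramified outside $\{0,1,\infty\}$ on the geometric part. Specialization at $\z-spec\in\PP^1(K)$ is then obtained by pulling back along the section $s_{\z-spec}:\Gal(\overline K/K)\to\pi_1^{\text{alg}}(\PP^1_K\setminus\{0,1,\infty\})$, which is well defined up to conjugacy. To control ramification at $\id{n}$, we need to describe the image of the inertia group $I_{\id{n}}$ under $s_{\z-spec}$.

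\emph{Step 1: reduce to the tame part.} The image of $\rho_{\id{p}}$ is contained in $\GL_n(\Om_{L,\id{p}})$. Its reduction mod $\id{p}$ lands in $\GL_n(\Om_L/\id{p})$, and the kernel of reduction is a pro-$\ell$ group, where $\ell$ is the residue characteristic of $\id{p}$. We choose $\id{p}$ of residue characteristic different from that of $\id{n}$; this is the freedom the compatible family gives us. Then wild inertia at $\id{n}$, a pro-$(\mathrm{char}\,\id{n})$ group, meets the pro-$\ell$ kernel trivially and so acts through a finite subgroup of $\GL_n(\Om_L/\id{p})$. Because $\id{n}\nmid S(n,L)$, the order of such finite images is prime to $\id{n}$ for a suitable choice of $\id{p}$, in the spirit of Serre's argument in \cite[Corollary 2]{MR236190}. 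Hence wild inertia acts trivially, and $\rho_{\id{p}}(I_{\id{n}})$ factors through tame inertia, which is procyclic.

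\emph{Step 2: compute the tame generator.} We take an integral model $\PP^1_{\Om_{K,\id{n}}}$ with the divisor $D=\{0,1,\infty\}$; since $\id{n}\nmid S(n,L)$, this divisor is étale over $\Om_{K,\id{n}}$ (assume $\id{n}\nmid 2$, or handle $2$ via $S$). The section $\z-spec$ extends to an $\Om_{K,\id{n}}$-point. If $\z-spec$ reduces away from $D$ (the fourth case), then the specialization map factors through $\pi_1$ of the smooth model minus $D$, whose tame part is unramified over $\Om_{K,\id{n}}$ by Grothendieck's specialization theorem for the tame fundamental group. In that case $I_{\id{n}}$ acts trivially. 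If $\z-spec$ meets $0$ with multiplicity $k=v_{\id{n}}(\z-spec)$, we use Abhyankar's lemma and the local structure of tame covers near $0$, namely the Kummer covers $\sqrt[m]{z}$. These show that a generator of tame inertia at $\id{n}$ maps to $\gamma_0^{k}$, up to conjugation in the geometric fundamental group. Therefore $\rho_{\id{p}}(I_{\id{n}})$ is generated by a conjugate of $M_0^{k}$, which is trivial exactly when $p\mid k$. The cases at $1$ and $\infty$ are identical, using $1-z$ and $1/z$ as local parameters.

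\emph{Expected obstacle.} The main obstacle is Step 2, namely making precise that the tame inertia at $\id{n}$ maps to $\gamma_s^{k_s}$. We will handle it by passing to the finite Galois cover of $\PP^1_K$ cut out by the kernel of $\rho_{\id{p}}$ modulo a high power of $\id{p}$, which is tamely ramified over $D$ given Step 1. We then apply Abhyankar's lemma over the discrete valuation ring at the intersection point, reducing to the explicit computation for Kummer extensions $K(\sqrt[m]{\z-spec})$, whose ramification index at $\id{n}$ is $m/\gcd(m,k)$. A secondary issue is whether the hypothesis $\id{n}\nmid S(n,L)$ really guarantees that the finite residual images have order prime to $\id{n}$ for some choice of $\id{p}$. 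Here we use that $S(n,L)$ is a gcd over all $\id{p}$, so that some $\id{p}$ with $\#\GL_n(\Om_L/\id{p})$ prime to the residue characteristic of $\id{n}$ exists.
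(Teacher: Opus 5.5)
Your proposal is essentially the paper's proof. The paper likewise chooses $\id{p}$ with $\id{n}\nmid\#\GL_n(\Om_L/\id{p})$, applies Beckmann's specialization theorem \cite[Theorem 1.2]{MR1116916} to the finite covers cut out by $\rho_{\id{p}}\bmod\id{p}^m$ (whose Galois groups have order prime to $\id{n}$) and lets $m\to\infty$, and your Steps~1--2 (the pro-$\ell$ kernel kills wild inertia, then Grothendieck's tame specialization plus Abhyankar's lemma produce $M_s^{k_s}$) just unpack Beckmann's theorem.

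The one step to tighten, equally implicit in the paper's appeal to Beckmann, is your claim in Step~2 that $\rho_{\id{p}}$ factors through $\pi_1$ of $\PP^1_{\Om_{K,\id{n}}}\setminus\{0,1,\infty\}$. Grothendieck's theorem gives this only for the geometric part: the cover $y^2=3z$ specialized at $z=2$ is ramified at $3$. So the absence of vertical ramification at $\id{n}$ must come from elsewhere. For $\id{n}\nmid N$ it follows from the good-reduction clause of Conjecture~\ref{conjecture:field-of-defi}(4) applied at a single $\xi_0$ with $v_{\id{n}}(\xi_0(\xi_0-1))=0$, since a tame vertical ramification index $e>1$ would make that specialization ramified.
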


\begin{proof} Let $\id{n}$ be a prime ideal of $K$ not dividing $S(n,L)$ and
  satisfying one of the required hypothesis. Then there exists a prime ideal
  $\id{p}$ of $L$ such that $\id{n} \nmid \#\GL_n(\Om_L/\id{p})$ (in
  particular, $\id{n}$ does not divide the norm of $\id{p}$). Let
  \[
\rho_{\id{p}}:\Gal(\overline K(z)/K(z)) \to \GL_n(L_{\id{p}})
\]
be the $\id{p}$-adic representation attached to the motive
$\HGM(\veca,\vecb|z)$. Consider first the \emph{geometric} part of the
representation, namely its restriction to
$G^{\text{geom}}:=\Gal(\overline{\Q(z)}/\overline{\Q}(z))$.

Part (2) of Conjecture~\ref{conjecture:field-of-defi} implies that the image of inertia
at $z=0$, $z=1$ and $z=\infty$ under (the restriction to
$G^{\text{geom}}$ of) $\rho_{\id{p}}$ has order $p$, $q$ and $r$
respectively.  Since the extension $\overline{\Q}(z)/K(z)$ is
unramified at $z=0$, $z=1$ and $z=\infty$, the image of the inertia
subgroup at these three points under $\rho_{\id{p}}$ also has order
$p, q$ and $r$ respectively.

After choosing a lattice fixed by $\rho_{\id{p}}$, we can assume that
our representation actually takes values in $\GL_n(\Om_{\id{p}})$ (the
completion of $\Om_K$ at $\id{p}$). Let $m$ be a positive integer,
and let $\overline{\rho_{\id{p},m}}$ be the reduction of
$\rho_{\id{p}}$ modulo $\id{p}^m$. The kernel of
$\overline{\rho_{\id{p},m}}$ corresponds to a curve $X_{\id{p},m}$
which is a finite cover of $\PP^1$ unramified outside
$\{0,1,\infty\}$. Then Theorem 1.2 of \cite{MR1116916} implies that if
one of our hypothesis is satisfied and if $\id{n}$ does not divide the
order of the image of $\overline{\rho_{\id{p},m}}$, then the image of
$I_{\id{n}}$ under the specialization map is trivial. But the order of
the image of $\overline{\rho_{\id{p},m}}$ divides the order of
$\GL_n(\Om_L/\id{p}^m)$, whose order is supported at the same primes
as $\GL_n(\Om_L/\id{p})$. Then, if
$\id{n} \nmid \#\GL_n(\Om_L/\id{p})$,
$\overline{\rho_{\id{p},m}}(I_{\id{n}})=1$ for all $m$, hence
$\rho_{\id{p}}(I_{\id{n}})=1$, and the family is unramified at
$\id{n}$.
\end{proof}

Suppose that the motive $\hgmvec$ has rank $n$ and is
rational. Then any potentially wild prime $p$ satisfies $p-1 \le
n$. The reason is that if a prime $p$ divides the denominator of
$\veca$ (respectively the denominator of $\vecb$), all values
$\frac{i}{p}$ for $i=1,\ldots,p-1$ are also coordinates of $\veca$
(respectively $\vecb$).
\begin{lemma}
  If $L = K=\Q$ and $n>1$, then $p \mid S(n,\Q)$ if and only if
  $p-1 \le n$.
\end{lemma}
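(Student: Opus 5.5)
We need to show: for $n>1$, a prime $p$ divides $S(n,\Q)=\gcd_{\ell}\#\GL_n(\F_\ell)$ if and only if $p-1\le n$. Here the gcd runs over all primes $\ell$, and
\[
\#\GL_n(\F_\ell)=\ell^{n(n-1)/2}\prod_{k=1}^{n}(\ell^k-1).
\]
The argument splits into the two implications, both driven by Fermat's little theorem and Dirichlet's theorem on primes in progressions.

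\emph{If $p-1\le n$, then $p\mid S(n,\Q)$.} Take any prime $\ell$. If $\ell=p$, then $p\mid \ell^{n(n-1)/2}$, since $n>1$ makes the exponent positive. If $\ell\neq p$, then $\ell^{p-1}\equiv 1\bmod p$ by Fermat. Because $p-1\le n$, the factor $\ell^{p-1}-1$ appears in the product, so $p$ divides $\#\GL_n(\F_\ell)$. Hence $p$ divides every order and therefore the gcd.

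\emph{If $p-1>n$, then $p\nmid S(n,\Q)$.} It suffices to find one prime $\ell$ with $p\nmid\#\GL_n(\F_\ell)$. Choose a generator $g$ of $(\Z/p)^\times$. By Dirichlet's theorem there is a prime $\ell\equiv g\bmod p$, so $\ell\neq p$ and $\ell$ has multiplicative order exactly $p-1$ modulo $p$. Then $\ell^k\not\equiv1\bmod p$ for $1\le k\le n<p-1$, and $p\nmid \ell^{n(n-1)/2}$. So $p$ does not divide $\#\GL_n(\F_\ell)$, and hence $p\nmid S(n,\Q)$.

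The only non-elementary input is Dirichlet's theorem, needed to produce a prime $\ell$ whose class generates $(\Z/p)^\times$. The rest is direct bookkeeping with the order formula. The hypothesis $n>1$ is used only in the case $\ell=p$ of the first implication: it makes the power of $\ell$ nontrivial, and without it the statement would fail at $\ell=p$.
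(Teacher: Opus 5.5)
Your proof is correct. The converse is the paper's argument in contrapositive form: both use Dirichlet's theorem to find a prime $\ell\neq p$ whose class modulo $p$ has multiplicative order larger than $n$. The paper lets the class $x$ range over all of $\F_p^\times$, concludes that every element has order at most $n$, and implicitly uses cyclicity; you simply pick a generator at the outset.

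The forward direction is where you genuinely differ. The paper builds an explicit element of order $p$. It takes the block matrix $M\in\GL_n(\Z)$ formed from the companion matrix of $(x^p-1)/(x-1)$ together with an identity block, which fits precisely because $p-1\le n$. It then argues that the reduction of $M$ modulo any prime $q\neq p$ still has order $p$ in $\GL_n(\F_q)$, so Lagrange's theorem gives $p\mid\#\GL_n(\F_q)$. You instead read the divisibility directly off the order formula: $\ell^{p-1}-1$ is one of the factors, and Fermat's little theorem applies.

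By Cauchy's theorem the two conclusions are equivalent, so nothing is lost. Your route is shorter and needs no check that the reduced matrix is nontrivial. The paper's route shows that $p-1\le n$ is exactly the condition for $\GL_n(\Z)$ to contain an element of order $p$. That is the same phenomenon behind the remark just before the lemma, that rationality forces all the values $i/p$ to appear among the parameters.
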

\begin{proof}
  Let $p$ be a prime number such that $p-1 = \phi(p) \le n$. Then
  there is an injective map $\psi:\Z[\zeta_p] \to \GL_n(\Z)$; for
  example, let $M$ be the $n \times n$ matrix made up of two blocks on the
  diagonal (and zero elsewhere). The first block (of size
  $(p-1)\times (p-1)$) being the companion matrix of $(x^p-1)/(x-1)$
  and the second one being the identity. Then
  $\Z[\zeta_p] \simeq \Z[M] \subset \GL_n(\Z)$.

  Since $n \ge 2$, it is always the case that $p \mid
  |\GL_n(\Z/p)|$. Let $q$ be a rational prime number different from
  $p$. Since $q$ does not ramify in $\Z[\zeta_p]/\Z$, the group
  $\GL_n(\Z/q)$ contains an element of order $p$ (the image under the
  reduction map of $\psi(\zeta_p)$), so $p \mid \# \GL_n(\Z/q)$ hence
  $p \mid S(n,\Q)$.

  Reciprocally, let $1 \le x \le p-1$ be any integer and let $r$ be
  its order in $\FF_p^\times$. By Dirichlet's theorem on arithmetic
  progressions, there exists a rational prime $q$ congruent to $x$
  modulo $p$. By hypothesis,
  $p \mid \#\GL_n(\Z/q) = q^\ast (q-1)(q^2-1)\cdots (q^n-1)$, so
  $x^i \equiv q^i \equiv 1 \pmod p$ for some $i \le n$, i.e. any
  element modulo $p$ has multiplicative order at most $n$, hence
  $p-1 \le n$.
\end{proof}
\begin{corollary}
  If $\hgmvec$ is a rational motive of rank $n$ and
  $p\in S_{\pwild}$ then $p\mid S(n,\Q)$.  Conversely, if
  $p \mid S(n,\Q)$ then there exist a rational $\hgmvec$
  with $p\in S_{\pwild}$.
\end{corollary}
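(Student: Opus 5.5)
The corollary combines the remark preceding the lemma with the lemma itself (taking $K=L=\Q$), so I plan to treat the two directions separately.

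For the direct implication, let $\hgmvec$ be rational of rank $n$ and let $p\in S_{\pwild}$. Since $K=\Q$, the primes in $S_{\pwild}$ are just the rational primes $p$ dividing $N$. So $p$ divides the denominator of some $\alpha_i$ or some $\beta_j$; say of $\alpha_i$. Write $\alpha_i=a/m$ in lowest terms with $p\mid m$. Rationality means $H=(\Z/N)^\times$, so the multiset $\{\alpha_1,\dots,\alpha_n\}\bmod\Z$ is stable under multiplication by every unit modulo $N$. Every unit modulo $m$ lifts to a unit modulo $N$, hence all fractions $a'/m$ with $\gcd(a',m)=1$ occur among the $\alpha$'s. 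That gives $\phi(m)\ge\phi(p)=p-1$ distinct coordinates, so $p-1\le \phi(m)\le n$. If $n>1$, the lemma now yields $p\mid S(n,\Q)$. The case $n=1$ forces $p=2$, and then one checks directly that $2\mid \#\GL_1(\Z/q)=q-1$ for every odd $q$. However, $S(1,\Q)=\gcd(1,2,4,\dots)=1$, so this edge case has to be excluded or handled by reading the lemma's hypothesis $n>1$ as part of the statement. I will state the corollary for $n>1$, since that matches the lemma.

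For the converse, suppose $p\mid S(n,\Q)$ with $n>1$. The lemma gives $p-1\le n$, and I will build an explicit rational datum. Take $\veca=(1/p,2/p,\dots,(p-1)/p,\,1/2,\dots)$, padding with $n-(p-1)$ further coordinates, and choose $\vecb$ disjoint from $\veca$ modulo $\Z$ and Galois-stable. Concretely:
\begin{itemize}
\item if $p$ is odd, pad $\veca$ with copies of $1/2$ and take $\vecb=(1,\dots,1)$;
\item if $p=2$, take $\veca=(1/2,\dots,1/2)$ and $\vecb=(1,\dots,1)$.
\end{itemize}
In both cases genericity holds because no $\alpha_i$ is an integer. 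Both multisets are unions of full Galois orbits, so $H=(\Z/N)^\times$ and the datum is rational. Finally $p\mid N$, so $p\in S_{\pwild}$.

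The main obstacle is the bookkeeping in the first direction, namely justifying that a Galois-stable multiset containing a fraction of exact denominator $m$ contains all $\phi(m)$ of its conjugates, counted as distinct coordinates. This uses surjectivity of $(\Z/N)^\times\to(\Z/m)^\times$, which holds since $m\mid N$. There is also the minor issue of $n=1$ noted above.
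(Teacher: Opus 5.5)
Your proposal is correct and follows essentially the paper's proof: the forward direction is reduced to $p-1\le n$ and the lemma, and the converse uses the same datum $\veca=(\frac{1}{p},\ldots,\frac{p-1}{p},\frac12,\ldots,\frac12)$, $\vecb=(1,\ldots,1)$. Two points where you are more careful than the paper. First, your $\phi(m)\ge p-1$ count is the right justification of $p-1\le n$; the paper's preceding remark instead claims all $\frac{i}{p}$ must occur, which fails for, e.g., $(\frac14,\frac34)$. Second, your $n=1$ caveat is genuine: $(\frac12),(1)$ is a rational rank-one datum with $2\in S_{\pwild}$, while $S(1,\Q)=1$.
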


\begin{proof}
  The first statement follows from the lemma, since if
  $p \in S_{\pwild}$, $p-1 \le n$. To prove the second statement,
  suppose that $p$ is a prime number with $p-1 \le n$. Define
  $\veca=(\frac{1}{p},\ldots,\frac{p-1}{p},\frac{1}{2},\ldots,\frac{1}{2})$
  and $\vecb=(1,\ldots,1)$. Then $\veca$, $\vecb$ are generic, the
  motive $\hgmvec$ is rational and
  $p \in S_{\pwild}$.
\end{proof}

\begin{remark}
  As described in (\ref{prop:iii}), the set of potentially wild primes
  consists of those primes of $K$ dividing the least common
  denominator of $\veca$ and
  $\vecb$. Theorem~\ref{thm:unramified-primes} should hold for all
  prime ideals that are not potentially wild, but we do not know how
  to prove this stronger statement when the rank is larger than $2$.
  The rank $2$ case is proved in Appendix~\ref{appendix:Elisa} (see
  Theorems~\ref{thm:HGM-0} and \ref{thm:HGM-1-infty}).
\end{remark}

\subsection{Hodge numbers and normalization}
\label{section:hodge}
The Hodge numbers of a rational HGM can be computed using a formula
conjectured by Corti and Golyshev (proved in general in
\cite{MR2824960,MR3862114,1907.02722}).  The formula can be applied to
arbitrary hypergeometric parameters by means of the {\it zig-zag
  diagram} (see \S 5 of \cite{MR4442789}), giving the dimensions of
the associated grading of the Hodge filtration (see \cite[Figure
5.1]{MR4442789} for a nice rank 5 example). Let us recall the
procedure. Assume $\veca,\vecb$ are generic.

\begin{alg}{\bf The zig-zag procedure}
  
\begin{algorithmic}[1]
  \STATE Set $S$ the sequence of ordered parameters of
  $\veca$ as elements in $(0,1]$ and $\vecb$ as elements in $[0,1)$.

  \STATE To each $s \in S$ associate the color red if $s \in \veca$
  and blue if $s \in \vecb$.

  \STATE $P \leftarrow 0$
  
  \FOR {$i = 1 \ldots 2n$}

  \STATE Draw a point with the color $S[i]$ at $(i,P)$.

  \IF {$S[i]$ is blue}

  \STATE $P \leftarrow P-1$

  \ELSE

  \STATE $P \leftarrow P+1$

  \ENDIF
  \ENDFOR
\end{algorithmic}
\label{algo:zigzag}
\end{alg}
Let $r$ be the number of parameters of $\veca, \vecb$ in $\Z$. Then
the Hodge polynomial of the motive $\hgmvec$ equals
  \[
\sum_{P}x^{-p_2}y^{p_2+r-1},
\]
where the sum runs over the blue points $P=(p_1,p_2)$ of the zig-zag
procedure. In \S\ref{section:rank2-hodge} we will prove the relation
between the Hodge numbers of Euler's curve and the output of the
zig-zag procedure for rank two hypergeometric motives.

\begin{remark}
  In fact, this procedure can be extended to give the full mixed
  Hodge numbers of the middle cohomology of the corresponding toric
  model in the rational case, see~\cite{1907.02722}.
\end{remark}

\begin{example}
\label{ex:ell-curve}
As a first example, consider the rational HGM with
parameters $\veca=(1/2,1/2)$, $\vecb=(0,0)$.
The zig-zag procedure gives the following picture

\begin{figure}[H]
\begin{center}
\begin{tikzpicture}[scale=0.65]
\draw[step=1.0,black, very thin] (0,0) grid (4,2);
\draw[black] (-2,0)--(-2,0) node[right]{$\  -2$};
\draw[black] (-2,1)--(-2,1) node[right]{$\  -1$};
\draw[black] (-2,2)--(-2,2) node[right]{$\ \ \,\,0$};

\draw[green, dashed] (0,2) -- (4,2);

\draw[blue] (0,2)--(0,2) node[above]{$0$};
\draw[blue] (1,2)--(1,2) node[above]{$0$};
\draw[red] (3,0)--(3,0) node[below]{$\frac{1}{2}$};
\draw[red] (2,0)--(2,0) node[below]{$\frac{1}{2}$};
\draw[blue] (4,1)--(4,1) node[right]{$1$};
\draw[blue] (4,2)--(4,2) node[right]{$1$};
\draw[very thick] (0,2) -- (1,1);
\draw[very thick] (1,1) -- (2,0);
\draw[very thick] (2,0) -- (3,1);
\draw[very thick] (3,1) -- (4,2);
\draw[fill=red] (2,0) circle (.1);
\draw[fill=red] (3,1) circle (.1);
\draw[fill=blue] (0,2) circle (.1);
\draw[fill=blue] (1,1) circle (.1);
\end{tikzpicture}
\end{center}
\end{figure}

There are two integral entries in $\veca, \vecb$ so $r=2$. The Hodge
polynomial then equals $y + x$ (as expected).  We will see below (see
Remark~\ref{rem:Legendre}) that $\HGM((1/2,1/2),(0,0)|z)$ is (a
quadratic twist by $\kro{-4}{\cdot}$ of) the first cohomology of
Legendre's family of elliptic curves
\[
  E_z: y^2 = x(1-x)(1-zx).
\]
\end{example}

\noindent {\bf Example \ref{example:1}.}(continued) Consider the hypergeometric
motive $\calH_z$ of the introduction, with parameters
$\veca=(1/8,7/8), \vecb=(3/8,5/8)$ defined over its
base field, the real quadratic field $K=\Q(\sqrt{2})\subset \Q(\zeta_8)$.
For $j \in (\Z/8)^\times$ the Hodge numbers of
$\calH_z^{\sigma_j}$ are obtained by applying the zig-zag procedure to
the parameters $\veca=(j/8,7j/8), \vecb=(3j/8,5j/8)$. The result is given in
Figure~\ref{zigzagfig-ex1}.
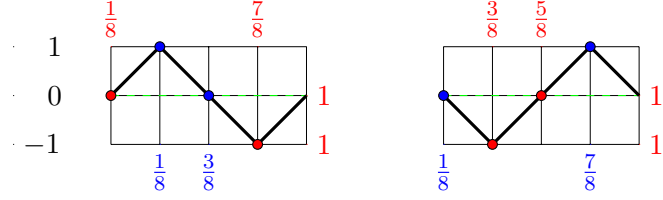
\begin{figure}
\begin{subfigure}{.3\textwidth}
\begin{center}
\begin{tikzpicture}[scale=0.65]
\draw[black] (-2,0)--(-2,0) node[right]{$-1$};
\draw[black] (-2,1)--(-2,1) node[right]{$\ \ \,0$};
\draw[black] (-2,2)--(-2,2) node[right]{$\ \ \,1$};
\draw[step=1.0,black, very thin] (0,0) grid (4,2);

\draw[green, dashed] (0,1) -- (4,1);

\draw[red] (0,2)--(0,2) node[above]{$\frac{1}{8}$};
\draw[blue] (1,0)--(1,0) node[below]{$\frac{1}{8}$};
\draw[blue] (2,0)--(2,0) node[below]{$\frac{3}{8}$};
\draw[red] (3,2)--(3,2) node[above]{$\frac{7}{8}$};
\draw[red] (4,1)--(4,1) node[right]{$1$};
\draw[red] (4,0)--(4,0) node[right]{$1$};
\draw[very thick] (0,1) -- (1,2);
\draw[very thick] (1,2) -- (2,1);
\draw[very thick] (2,1) -- (3,0);
\draw[very thick] (3,0) -- (4,1);
\draw[fill=red] (0,1) circle (.1);
\draw[fill=blue] (1,2) circle (.1);
\draw[fill=red] (3,0) circle (.1);
\draw[fill=blue] (2,1) circle (.1);
\end{tikzpicture}
\end{center}
\end{subfigure}\hspace{0em}
\begin{subfigure}{.3\textwidth}
\begin{center}
\begin{tikzpicture}[scale=0.65]
\draw[step=1.0,black, very thin] (0,0) grid (4,2);

\draw[green, dashed] (0,1) -- (4,1);

\draw[blue] (0,0)--(0,0) node[below]{$\frac{1}{8}$};
\draw[red] (1,2)--(1,2) node[above]{$\frac{3}{8}$};
\draw[blue] (3,0)--(3,0) node[below]{$\frac{7}{8}$};
\draw[red] (2,2)--(2,2) node[above]{$\frac{5}{8}$};
\draw[red] (4,0)--(4,0) node[right]{$1$};
\draw[red] (4,1)--(4,1) node[right]{$1$};
\draw[very thick] (0,1) -- (1,0);
\draw[very thick] (1,0) -- (2,1);
\draw[very thick] (2,1) -- (3,2);
\draw[very thick] (3,2) -- (4,1);
\draw[fill=red] (1,0) circle (.1);
\draw[fill=red] (2,1) circle (.1);
\draw[fill=blue] (0,1) circle (.1);
\draw[fill=blue] (3,2) circle (.1);
\end{tikzpicture}
\end{center}
\end{subfigure}
\caption{\label{zigzagfig-ex1} The Hodge numbers of $(\frac{1}{8},\frac{7}{8}),(\frac{3}{8},\frac{5}{8})$ and $(\frac{3}{8},\frac{5}{8}),(\frac{1}{8},\frac{7}{8})$}
\end{figure}
The Hodge polynomial in both cases is $x^{-1} + y^{-1}$.

\begin{example}
\label{example:non-ratnl}
As a further example of a motive not defined over $\Q$, let
$\veca=(\frac{1}{2},\frac{1}{2})$,
$\vecb=(0,\frac{1}{4})$. Here $N=4$, $r=1$ and $K=\Q(i)$. We find the
Hodge polynomials
$1+xy^{-1}$ and $1 + x^{-1}y$
for the
respective complex embeddings $K$, as illustrated in
Figure~\ref{zigzagfig-3}. The Hodge polynomial of the restriction of
scalars $\calH(z)$ to $\Q$ (the sum of
$\HGM(\veca,\vecb|z)^\sigma$ over the two complex
embeddings $\sigma$ of $K$) equals $x^{-1}y + 2 + xy^{-1}$.
\begin{figure}[h]
\begin{subfigure}{.3\textwidth}
\begin{center}
\begin{tikzpicture}[scale=0.65]
\draw[black] (-2,0)--(-2,0) node[right]{$-2$};
\draw[black] (-2,1)--(-2,1) node[right]{$-1$};
\draw[black] (-2,2)--(-2,2) node[right]{$\ \ \,0$};
\draw[black] (-2,3)--(-2,3) node[right]{$\ \ \,1$};
\draw[step=1.0,black, very thin] (0,0) grid (4,3);

\draw[green, dashed] (0,2) -- (4,2);

\draw[red] (3,3)--(3,3) node[above]{$\frac{1}{2}$};
\draw[red] (2,3)--(2,3) node[above]{$\frac{1}{2}$};
\draw[blue] (1,0)--(1,0) node[below]{$\frac{1}{4}$};
\draw[blue] (0,0)--(0,0) node[below]{$0$};
\draw[blue] (4,1)--(4,1) node[right]{$1$};
\draw[blue] (4,2)--(4,2) node[right]{$1$};
\draw[very thick] (0,2) -- (1,1);
\draw[very thick] (1,1) -- (2,0);
\draw[very thick] (2,0) -- (3,1);
\draw[very thick] (3,1) -- (4,2);
\draw[fill=red] (2,0) circle (.1);
\draw[fill=red] (3,1) circle (.1);
\draw[fill=blue] (0,2) circle (.1);
\draw[fill=blue] (1,1) circle (.1);
\end{tikzpicture}
\end{center}
\end{subfigure}\hspace{0em}
\begin{subfigure}{.3\textwidth}
\begin{center}
\begin{tikzpicture}[scale=0.65]
\draw[step=1.0,black, very thin] (0,0) grid (4,3);

\draw[green, dashed] (0,2) -- (4,2);

\draw[red] (2,3)--(2,3) node[above]{$\frac{1}{2}$};
\draw[red] (1,3)--(1,3) node[above]{$\frac{1}{2}$};
\draw[blue] (0,0)--(0,0) node[below]{$0$};
\draw[blue] (3,0)--(3,0) node[below]{$\frac{3}{4}$};
\draw[blue] (4,2)--(4,2) node[right]{$1$};
\draw[blue] (4,3)--(4,3) node[right]{$1$};
\draw[very thick] (0,2) -- (1,1);
\draw[very thick] (1,1) -- (2,2);
\draw[very thick] (2,2) -- (3,3);
\draw[very thick] (3,3) -- (4,2);
\draw[fill=red] (1,1) circle (.1);
\draw[fill=red] (2,2) circle (.1);
\draw[fill=blue] (0,2) circle (.1);
\draw[fill=blue] (3,3) circle (.1);
\end{tikzpicture}
\end{center}
\end{subfigure}
\caption{\label{zigzagfig-3} The Hodge numbers of $(\frac{1}{2},\frac{1}{2}),(0,\frac{1}{4})$ and $(\frac{1}{2},\frac{1}{2}),(0,\frac{3}{4})$}
\end{figure}
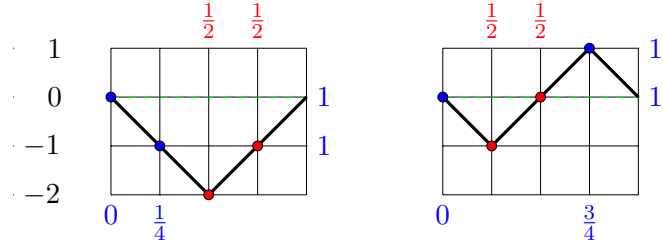

We can check this calculation numerically in Pari/GP using the third's
author package. Taking for example $z=3$ and $p=17$, we get:
\begin{verbatim}
? polrecognizep(hgmfrob(3,[1/2,1/2],[3/4,1],17)*hgmfrob(3,[1/2,1/2],[1/4,1],17))
x^4 - 10/17*x^3 + 18/17*x^2 - 10/17*x + 1
\end{verbatim}
This is the Euler factor $L_{17}(T)$ of $\calH(3)$. By a well-known
fact, its Newton polygon at $p=17$ should lie above the Hodge
polynomial. In fact they are actually equal in this case.  Concretely,
$L_{17}$ has roots of absolute value $\{-1,0,0,1\}$ with
multiplicities matching the Hodge vector $(1,2,1)$ of $\calH(z)$.
\end{example}

\begin{remark}
  Taking a Tate twist has the effect of
  increasing or decreasing the exponents of the Hodge polynomial by an even integer.
  When working with Galois representations, it is customary
  to take the minimal Tate twist that makes the Hodge numbers of all
  Galois conjugates non-negative (making them the Hodge numbers of an
  {\it effective} motive) because this is the standard normalization
  for automorphic forms. In particular, the resulting Euler factors
  have integral coefficients.  We will call this process the
  \emph{effective normalization}.

  In the present article (and also in the Pari/GP code written by the
  third author) we do not make any additional Tate twist when
  considering HGM's unless explicitly mentioned. The reader should
  bear this in mind when matching a HGM to an automorphic form.
\end{remark}
The effective normalization is the Tate twist that shifts the minimum
value of the zig-zag diagrams over all embeddings to zero. As an
illustration, we have the following.

\vspace{5pt}

\noindent {\bf Example~\ref{example:non-ratnl}.}(continued)
The value $r-1=0$ so the effective normalization corresponds to a Tate
twist by the minimum value of the zig-zag function over the two
diagrams in Fig.~\ref{zigzagfig-3} which equals $-1$. Therefore the effective
normalization is the Tate twist
$$
\HGM((\tfrac{1}{2},\tfrac{1}{2}),(\tfrac{1}{4},1)|z)(1)
$$
In terms of Euler factors (like $L_{17}(T)$ above) it amounts to
replacing $T$ by $qT$.

\begin{remark}
  Let $\veca$ and $\vecb$ be generic rational parameters and let $N$
  be their least common denominator.  Let $F = \Q(\zeta_N)$ and let
  $H$ be the subgroup of $(\Z/N)^\times$ defined in \ref{def:H}. Then
  for $\z-spec \in \Q$, the motive $\HGM(\veca,\vecb|\z-spec)$ is expected to
  be defined over $F^H$ and hence has $\phi(N)/|H|$ Hodge vectors
  (indexed by the embeddings of $F^H$ into $\CC$) instead of
  $\phi(N)$. This is consistent with the fact that if $j \in H$ then
  the Hodge vector of $\veca,\vecb$ is the same as the one of
  $j\veca,j\vecb$.
\end{remark} 

\section{Jacobi motives}
\label{section:Jacobi}
In this section we review some basic facts about Jacobi motives and set
up the notation we will use. The main reference are the articles
\cite{MR0029393,MR0051263}, \cite{zbMATH03948452}, \cite{MR935127} and \cite{MR3838691} (the software package
Magma \cite{MR1484478} contains an implementation of Jacobi motives
due to Mark Watkins).

Fix an integer $N>1$.  For a prime ideal $\id{p}$ in
$F:=\Q(\zeta_N)\subseteq \CC$ of norm $q$, where $\zeta_N$ is a primitive
$N$-st root of unity, let $\chip$ be the character
\begin{equation}
  \label{eq:gen-defi}
\chip: (\calO_F/\id{p})^\times \to \calO_F^\times.  
\end{equation}
of order $N$ satisfying
$$
\chip(x)\equiv x^{(q-1)/N} \bmod \id{p}.
$$
Extend the definition by setting
$\chip(x)=0$ if $\id{p} \mid x$. Here $\calO_F$ is the ring of
integers of $F$.

\begin{remark}
  \label{rem:char-extension}
  If $L$ is a finite extension of $F$ and $\widehat{\id{p}}$ is a
  prime ideal of $L$ dividing $\id{p}$ we can similarly define a
  character
  $\widehat{\chip} : (\Om_L/\widehat{\id{p}})^\times \to \CC^\times$
  by composing $\chip$ with the norm map. To avoid heavy notation, we
  will also denote by $\chip$ the character $\widehat{\chip}$.
\end{remark}

We define Gauss sums as functions on
$\frac1N\Z/\Z$ as follows.  Fix a non-trivial additive character
$\psi$ on $\FF_q$ and let
\begin{equation}
  \label{eq:gauss-sum-defi}
g(\psi,a,\id{p}) := \sum_{x \in \FF_q^\times} \psi(x)\chi_{\id{p}}^{Na}(x),  
\qquad \qquad
a \in\frac1N\Z/\Z.
\end{equation}
The dependence on the choice of additive character is
straightforward. For any non-zero $y\in \calO_F$ coprime to $\id{p}$
if $\psi'(x):=\psi(y^{-1}x)$ then
\begin{equation}
\label{gauss-sum-prop}
g(\psi',a,\id{p})=\chip^{Na}(y)g(\psi,a,\id{p}).
\end{equation}

\begin{definition}
  \label{def:jacobi-motive}
  Let ${\pmb \theta}=\sum_in_i\langle \theta_i\rangle \in \Z[\frac1{N}\Z/\Z]$ satisfying
\begin{equation}
\label{jacobi-condition}
\sum_i n_i\theta_i \equiv 0 \bmod \Z.
\end{equation}
The \emph{Jacobi sum} attached to ${\pmb \theta}$ for the prime ideal
$\id{p}$ is defined as
  \begin{equation}
    \label{eq:Jac-motive}
    \JacMot({\pmb \theta})(\id{p}) := (-1)^{\sum_in_i+1} 
    \prod_{i}g(\psi,\theta_i,\id{p})^{n_i}
  \end{equation}
\end{definition}
It is easy to verify, thanks to~\eqref{jacobi-condition}
and~\eqref{gauss-sum-prop}, that the definition is independent of the
choice of the additive character~$\psi$ and therefore there is no need
to include it in the notation.

By the main result of \cite{MR0051263} the map
$$
\id{p}\mapsto \JacMot({\pmb \theta})(\id{p})
$$
determines a Gr\"ossencharacter $\JacMot({\pmb \theta})$ of $F$. In
particular, it has an associated compatible system of $l$-adic Galois
representations where $\JacMot({\pmb \theta})(\id{p})$ equals the
trace of $\Frob_{\id{p}}$. This is an incarnation of the \emph{Jacobi
  motive} associated to ${\pmb \theta}$ (see \cite[\S
5]{zbMATH03948452}). We will denote this pure motive also
by~$\JacMot({\pmb \theta})$ if there is no risk of confusion.

By \cite[Theorem 1]{zbMATH03948452} (see also Remark 2.3.2 of loc. cit) the
infinity type of the Gr\"ossencharacter associated
to~$\JacMot({\pmb \theta})$ is given by
\begin{equation}
  \label{eq:hodge-numbers}
\sigma_j\mapsto \sum_in_i\{j\theta_i\}, \qquad \gcd(j,N)=1,
\end{equation}
where $\{\cdot\}$ denotes the fractional part of real numbers and
$\sigma_j\in \Gal(F/\Q)$ is the automorphism satisfying
$\sigma_j(\zeta_N)=\zeta_N^j$. The (motivic) weight of~$\JacMot({\pmb \theta})$ equals 
$$
w:=\sum_{i} n_i,
$$
where the sum only includes indexes $i$ for which
$\theta_i \not \in \Z$.

The motive~$\JacMot({\pmb \theta})$ appears
(up to a Tate twist) in the middle cohomology $H^\bullet(X,F)$ of a smooth
Fermat hypersurface~$X$ (see for example~\S10 of
\cite{zbMATH03948452}).
Moreover, if we define
\begin{equation}
  \label{eq:hodge}
  p:=\sum_i n_i \{\theta_i\},\qquad q:=\sum_i n_i \{-\theta_i\}
\end{equation}
then (up to a Tate twist) $\JacMot({\pmb \theta})\subseteq H^{p,q}(X,F)$.

It will sometimes be convenient to use an alternative notation for Jacobi
motives as a pair of tuples of rational numbers ${\pmb
  a}=(a_1,\ldots,a_r)$ and ${\pmb b}=(b_1,\ldots,b_s)$ 
with common denominator $N$ such that
$$
\sum_{i=1}^ra_i \equiv \sum_{i=1}^sb_i\bmod \Z,
$$
corresponding
to 
$$
{\pmb \theta}:=\sum_{i=1}^r\langle a_i\rangle - \sum_{i=1}^s\langle b_i\rangle.
$$
We will then simply write $\JacMot({\pmb a},{\pmb b})$ instead
of~$\JacMot({\pmb \theta})$.

\begin{example} Consider the Jacobi motive $\JacMot({\pmb a},{\pmb b})$ where
$$
\begin{array}{ll}
{\pmb a}&:=(1/10, 1/10, 1/10, 3/10, 13/30, 7/10, 23/30, 9/10)\\
{\pmb b}&:=( 1/5, 1/3, 2/5,2/3, 4/5,1/5, 3/10, 1/2).
\end{array}
$$
It is perhaps not immediately obvious but this motive is 
an Artin motive. But we can quickly check that we have
$$
 \sum_i\{ja_i\}-\sum_i \{jb_i\}=0, \qquad \qquad \gcd(j,30)=1,
$$
and in particular $p=q=0$ for all $j$. Computations with MAGMA show that the
associated Artin motive is given by a Dirichlet character of
$\Q(\zeta_5)$ of order $10$ and conductor
$2^2\cdot3\cdot(1-\zeta_5)^2$.
\end{example}

The Jacobi sums $\JacMot({\pmb \theta})$ belong to $F$ and for
 $\sigma_j \in \Gal(F/\Q)$ as before we have
$$
\sigma_j(\JacMot({\pmb \theta}))(\id{p})=\JacMot(j{\pmb \theta})(\id{p}),
$$
where
$$
j{\pmb \theta}:=\sum_in_i\langle j\theta_i\rangle.
$$

Let $T = \{\sigma \in \Gal(F/\Q) \; : \; \sigma(\JacMot({\pmb \theta})) =
\JacMot({\pmb \theta})\}$. Then the base field of the motive
$\JacMot({\pmb \theta})$ is $F^T$ (see \S 2.3 of
\cite{MR3838691}). Note its resemblance with
Conjecture~\ref{conjecture:field-of-defi}.
In some very particular instances, the Gr\"ossencharacter
$\JacMot({\pmb \theta})$ can be defined over a proper subextension of
$F^T$.

\begin{example} Consider the Jacobi motive $\JacMot(\pmb \theta)$ where
  \[
   \pmb \theta = \left\langle \frac{1}{3}\right\rangle +\left\langle
      \frac{2}{3}\right\rangle +\left\langle \frac{1}{5}\right\rangle
    +\left\langle \frac{4}{5}\right\rangle+ \left\langle
      \frac{7}{15}\right\rangle+ \left\langle
      \frac{8}{15}\right\rangle.
  \]
  The set $T$ of elements in $\left(\Z/15\right)^\times$ fixing
  $\theta$ equals $\{\pm 1\}$, so the base field of
  $\JacMot({\pmb \theta})$ is $\Q(\zeta_{15})^+$ (the maximal totally
  real subfield of $\Q(\zeta_{15})$). However the values of the Jacobi
  sum attached to our choice of parameters at a prime ideal $\id{p}$
  of $\Q(\zeta_{15})$ equals $\normid{p}^3$ (see
  Lemma~\ref{lemma:Tate-twist}). Then there is a second extension of
  the Gr\"ossencharacter to $\Gal_\Q$ given by the cubic power of the
  cyclotomic character.
\end{example}

\section{Finite Hypergeometric Sums}
\label{section:finite}

Let us recall Katz's definition (given in~\cite[p.258]{katz}) of the
finite version of a hypergeometric series. Let
$\pmb \alpha= (\alpha_1 , \ldots, \alpha_n), \vecb = (\beta_1
, \ldots, \beta_n) \in \Q^n$ be arbitrary vectors of rational numbers
 and let $N$ be the common denominator of the $\alpha$'s and
 $\beta$'s. For the remaining of the section we fix
$q\equiv 1 \bmod N$ a prime power
and a generator $\gen$ of $\widehat{\FF_q^\times}$. Following
\cite[p. 258]{katz} (but note that our parameter $z$ is the inverse of
Katz's though it matches the normalization in~\cite{BCM}), define for
$z \in \FF_q$ the exponential sum (here $x_i,y_j\in \FF_q^\times$):
\begin{align}
\text{Hyp}_q(\veca,\vecb|z) = \sum_{zx_1 \cdots x_n = y_1 \cdots y_n}
  \psi(x_1+ \cdots + x_n - y_1 - \cdots -
  y_n)\gen(\vecx)^{\veca (q-1)}\overline{\gen}(\vecy)^{\vecb (q-1)}, 
\end{align}
where
$\gen(\vecx)^{\veca(q-1)} = \gen(x_1)^{\alpha_1(q-1)} \cdots
\gen(x_n)^{\alpha_n(q-1)}$ and similarly with $\beta$. Katz relates
(using the Lefschetz trace formula) such finite hypergeometric sum to
the trace of Frobenius acting on some concrete hypergeometric
$\D$-module~\cite[Chap. 8.2]{katz}. 

It will be convenient to expand $\text{Hyp}_q$ in terms of characters of
$\FF_q^\times$. The calculation is straightforward (see \cite[\S
8.2.8]{katz}); we sketch it here for the reader's convenience. The
coefficient $c_\Char$ of $\Char \in \widehat{\FF_q^\times}$ equals
$$
c_\Char=\frac 1{q-1}\sum_{z\in \FF_q^\times} \overline \Char(z)
\text{Hyp}_q(\veca,\vecb|z).
$$
Interchanging order of summation we get
$$
c_\Char=\frac 1{q-1}
 \sum_{(\vecx,\vecy)\in \T_z}
  \psi(x_1+ \cdots + x_n - y_1 - \cdots -
  y_n)\gen(\vecx)^{\veca(q-1)}\overline{\gen}(\vecy)^{\vecb(q-1)}
  \Char(x_1\cdots x_ny_1^{-1}\cdots y_n^{-1})
$$
and hence
$$
c_\Char=\frac 1{q-1}\prod_{i=1}^n\sum_{x_i\in
  \FF_q^\times}\psi(x_i)\gen(x_i)^{(q-1)\alpha_i} \Char(x_i) 
\prod_{i=1}^n\sum_{y_i\in \FF_q^\times}\psi(-y_i){\overline \gen(y_i)}^{(q-1)\beta_i}
\overline \Char(y_i). 
$$
In terms of Gauss sums
\begin{equation}
\label{gsum}
g(\psi,\Char) := \sum_{x \in \FF_q^\times} \Char(x)\psi(x),
\end{equation}  
we finally have
$$
c_\Char=\frac 1{q-1}\prod_{i=1}^n g(\psi,\gen^{(q-1)\alpha_i} \Char)
\prod_{i=1}^n g(\psi^{-1},{\overline \gen}^{(q-1)\beta_i}\overline \Char)
$$
and therefore
$$
\text{Hyp}_q(\veca,\vecb|z) = \frac 1{q-1}\sum_\Char 
\prod_{i=1}^n g(\psi,\gen^{(q-1)\alpha_i} \Char)
\prod_{i=1}^n g(\psi^{-1},{\overline \gen}^{(q-1)\beta_i}\overline \Char) \Char(z).
$$
It is instructive to check the simplest non-trivial case where $n=1$,
$\alpha_1=1/2,\beta_1=1$ and $q=p$ is an odd prime. In this case, a
quick calculation shows that 
$$
\text{Hyp}(1/2,1\,|\,z)=g(\psi_{z-1},\epsilon),
$$
where $\psi_u(x):=\psi(ux)$ and
$\epsilon$ is the quadratic character of
$\FF_p^\times$. It follows that
$$
\text{Hyp}(1/2,1\,|\,z) = \epsilon(1-z)\sqrt{p^*}, \qquad \qquad
p^*:=(-1)^{(p-1)/2}p, 
$$
as is well known. In particular, the values $\text{Hyp}(1/2,1\,|\,\z-spec)$ as
$p$ varies, for fixed $\z-spec\in
\Q$ say, do not lie in any given number field. We therefore cannot
expect $\text{Hyp}(1/2,1\,|\,\z-spec)$ to be the trace of Frobenius of a motive.  To
achieve this we normalize the hypergeometric sum
$\text{Hyp}_q(\veca,\vecb|z)$ by dividing by the appropriate
constant so that the sum of its values over all $z\in
\F_q^\times$ equals $-1$. Concretely, we consider the following.
\begin{definition}
\label{defi:q-finite-hgm}
For $z \in \FF_q$, define the
finite hypergeometric sum $H_q(\veca,\vecb|z)$ by
\begin{equation}
  \label{eq:hpergeom-sum}
  H_q(\veca,\vecb|z):=\frac1{1-q} \sum_\Char
\frac{\JJ(\veca\Char,\vecb\Char)}{\JJ(\veca,\vecb)}\Char(z),
\end{equation}
where
$$
\JJ(\veca\Char,\vecb\Char):=\prod_{i=1}^n g(\psi,\gen^{(q-1)\alpha_i} \Char)
\prod_{i=1}^n g(\psi^{-1},{\overline \gen}^{(q-1)\beta_i}\overline \Char).
$$
\end{definition}
Note that for any integer $a$ coprime with $p$ we have
\begin{equation}
  \label{eq:additive-char-power}
g(\psi^a,\Char)=\overline\Char(a)g(\psi,\Char).
\end{equation}
Therefore, we may alternatively define
\begin{equation}
  \label{eq:j-alt}
  \JJ^*(\veca\Char,\vecb\Char)=\Char(-1)^n\prod_{i=1}^n g(\psi,\gen^{(q-1)\alpha_i} \Char)
\prod_{i=1}^n g(\psi,{\overline \gen}^{(q-1)\beta_i}\overline \Char).
\end{equation}
and then
$$
  H_q(\veca,\vecb|z)=\frac1{1-q} \sum_\Char
\frac{\JJ^*(\veca\Char,\vecb\Char)}{\JJ^*(\veca,\vecb)}\Char(z).
$$

To simplify the notation set $g(m):=g(\psi,\gen^m)$ for  $m \in
\Z$. Then, explicitly,
\begin{equation}
\label{eq:hpergeom-BCM}  
H_q(\veca,\vecb|z)=\frac{1}{1-q} \sum_{m=0}^{q-2} \prod_{i=1}^{n}
  \left(\frac{g(m+\alpha_i(q-1)) g(-m-\beta_i (q-1))}{g(\alpha_i (q-1))
      g(-\beta_i(q-1))}\right) \gen((-1)^n z)^m.
\end{equation}
The finite hypergeometric sum $H_q(\veca,\vecb|z)$ does
not depend on the choice of the additive character $\psi$, but it does
(in general) depend on the choice of the character $\gen$.
For our previous rank one example we have (see~\S\ref{section:rank1})
$$
H_p(1/2,1\,|\z-spec)=\epsilon(1-\z-spec),
$$
where $\epsilon$ is the quadratic character of $\F_q$,
matching
$$
\sum_{n\geq 0}\frac{(1/2)_n}{(1)_n}\z-spec^n=\frac1{\sqrt{1-\z-spec}}.
$$
Note that $\sum_{p\in \F_p^\times}H_p(1/2,1\,|\z-spec)=-\epsilon(1)=-1$ as desired.

\subsection{Properties of $H_q(\veca,\vecb|z)$}

\begin{proposition}
  \label{prop:props-hyperg-sum}
The hypergeometric sum~$H_q(\veca,\vecb|z)$ satisfies the following
basic properties 
\begin{enumerate}
\item 
(Ordering)
  $H_q(\veca,\vecb|z)$ is independent of the ordering of
  $\{\alpha_1,\ldots,\alpha_n\}$ and $\{\beta_1,\ldots,\beta_n\}$.
\item
(Inversion) For $z \in \FF_q^\times$ we have
$$
    H_q(\veca,\vecb|z) =
    H_q(-\vecb,-\veca|z^{-1}).
$$

\item
(Galois action)  Let $z \in \FF_q^\times$ and $\sigma \in \Gal_{\Q}$ with
  $\sigma(\zeta_N)=\zeta_N^j$ for some $j$ coprime with $N$ then
  $$
H_q(\veca,\vecb|z)^\sigma=H_q(j\veca,j\vecb|z)
$$

\item
(Coefficient Field)  Let $z \in \FF_q^\times$ and let $H$ be the subgroup of $(\Z/N)^\times$ defined in
  \ref{def:H}. We have
  $$
  H_q(\veca,\vecb|z) \in \Q(\zeta_N)^H.
$$
\item
(Twists) Let $\rho\in \Q$ and
$\rho\veca:=(\alpha_1+\rho,\ldots,\alpha_n+\rho)$ and 
$\rho\vecb:=(\beta_1+\rho,\ldots,\beta_n+\rho)$ then
$$
H_q(\rho\veca,\rho\vecb|z)=
(-1)^{n(q-1)\rho}\overline{\gen}(z)^{(q-1)\rho}\frac{\JJ(\veca,\vecb)}
{\JJ(\rho\veca,\rho\vecb)} H_q(\veca,\vecb|z),
$$
whenever both sides are well defined.
\item
  (Non-generic) If $\alpha_1\equiv \beta_1 \bmod \Z$ let $\vecg=(\alpha_2,\ldots,\alpha_n)$ and $\vecd=(\beta_2,\ldots,\beta_n)$. Then
$$
H_q(\veca,\vecb|z)=
q^{\delta}\left(\frac{\JJ((-\alpha_1)\vecg,(-\alpha_1)\vecd)}{\JJ(\vecg,\vecd)}+qH_q(\vecg,\vecd)\right),
$$
where 
$$
\delta=
\begin{cases}
\, \, \, \, 0 &   \text{if } \alpha_1 \in \ZZ\\
-1 &   \text{otherwise}
\end{cases}
$$

\end{enumerate}
\end{proposition}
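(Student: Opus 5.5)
All six properties follow from the explicit character expansion \eqref{eq:hpergeom-BCM}, i.e. from Definition~\ref{defi:q-finite-hgm}, together with standard Gauss sum identities. These are \eqref{eq:additive-char-power}, the reflection formula $g(\psi,\Char)g(\psi,\overline\Char)=\Char(-1)q$ for $\Char\neq 1$ (with $g(\psi,1)=-1$), and the behaviour of Gauss sums under Galois conjugation. I would treat each item separately, in the order listed.

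\emph{Ordering.} This is immediate, since $\JJ(\veca\Char,\vecb\Char)$ is a product over $i$ of factors depending only on $\alpha_i$ (respectively $\beta_i$), and such a product is symmetric.

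\emph{Inversion.} Substitute $\Char\mapsto\overline\Char$ in the sum defining $H_q(\veca,\vecb|z)$. The factor $g(\psi,\gen^{(q-1)\alpha_i}\overline\Char)$ becomes $g(\psi,\overline{\gen}^{(q-1)(-\alpha_i)}\overline\Char)$. Likewise $g(\psi^{-1},\overline\gen^{(q-1)\beta_i}\Char)$ becomes $g(\psi^{-1},\gen^{(q-1)(-\beta_i)}\Char)$, and $\Char(z)$ becomes $\Char(z^{-1})$. Exchanging $\psi$ and $\psi^{-1}$ via \eqref{eq:additive-char-power} produces a factor $\Char(-1)^{2n}=1$ in the numerator and the same factor in the denominator $\JJ$, so the sum becomes exactly $H_q(-\vecb,-\veca|z^{-1})$.

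\emph{Galois action.} First note that $H_q$ is independent of $\psi$: replacing $\psi$ by $\psi^a$ multiplies each $\JJ(\veca\Char,\vecb\Char)$ by $\prod_i\overline\gen^{(q-1)\alpha_i}\overline\Char(a)\,\gen^{(q-1)\beta_i}\Char(a)$. The $\Char$-parts cancel, and the remaining factor also appears in the denominator. Now let $\sigma$ act on $\Q(\zeta_N,\zeta_p)$. Extend it to $\tau$ with $\tau(\zeta_N)=\zeta_N^j$ and $\tau(\zeta_{q-1})=\zeta_{q-1}^{j'}$, where $j'\equiv j\bmod N$, and with $\tau$ acting on $\zeta_p$ by some $a$. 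Then $\tau g(\psi,\eta)=g(\psi^a,\eta^{j'})$. Applying $\tau$ sends the term indexed by $\Char$ to the term indexed by $\Char^{j'}$ in the sum for $H_q(j\veca,j\vecb|z)$, computed with $\psi^a$, and $\Char\mapsto\Char^{j'}$ is a bijection on characters. The choice of $j'$ matters because $\gen^{(q-1)j'\alpha_i}=\gen^{(q-1)j\alpha_i}$ as $N\alpha_i\in\Z$. By $\psi$-independence this gives the claim. The main subtlety here is that $H_q$ is a priori in $\Q(\zeta_{q-1},\zeta_p)$, and I must choose the extension $\tau$ coherently; that is the step needing most care.

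\emph{Coefficient field.} The same computation with $\sigma$ fixing $\zeta_N$ (i.e. $j=1$) shows that $H_q\in\Q(\zeta_N)$. If $j\in H$, then $j\veca$ and $j\vecb$ are permutations of $\veca,\vecb$ modulo $\Z$, and $H_q$ depends on the parameters only modulo $\Z$. So ordering and the Galois action give invariance under $H$.

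\emph{Twists.} Reindex by $\Char\mapsto\Char\,\overline\gen^{(q-1)\rho}$, which is allowed since $(q-1)\rho\in\Z$. The numerator for $\rho\veca,\rho\vecb$ becomes that for $\veca,\vecb$, while $\Char(z)$ picks up $\overline\gen(z)^{(q-1)\rho}$. The different normalizing denominators give the ratio $\JJ(\veca,\vecb)/\JJ(\rho\veca,\rho\vecb)$. The sign $(-1)^{n(q-1)\rho}$ comes from rewriting with $\JJ^*$, i.e. from the factor $\Char(-1)^n$ in \eqref{eq:j-alt}.

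\emph{Non-generic.} Here the $i=1$ factor is $g(\psi,\eta\Char)g(\psi^{-1},\overline{\eta\Char})$ with $\eta=\gen^{(q-1)\alpha_1}$. This equals $q$ if $\eta\Char\neq1$ and $1$ if $\eta\Char=1$, and it equals $q$ or $1$ at $\Char=1$ according as $\alpha_1\notin\Z$ or $\alpha_1\in\Z$; this produces $q^\delta$. Split off the single term $\Char=\overline\eta$ and complete the rest to a full sum. Completing means adding and subtracting that term, which yields $q\cdot(1-q)H_q(\vecg,\vecd|z)$ plus a correction proportional to $\JJ((-\alpha_1)\vecg,(-\alpha_1)\vecd)/\JJ(\vecg,\vecd)$. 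The $1/(1-q)$ prefactors combine to give the stated formula, though I expect the exact constants may need adjusting, and there the stated formula suppresses a factor $\overline\eta(z)$.
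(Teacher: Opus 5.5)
Your strategy matches the paper's item by item. You use $\Char\mapsto\overline\Char$ for inversion, Galois conjugation of Gauss sums plus reindexing for (3) and (4), the shift $\Char\mapsto\Char\,\overline{\gen}^{(q-1)\rho}$ for twists, and splitting off the special character for (6). Your explicit $\psi$-independence step is a useful addition, since the conjugating automorphism may move $\zeta_p$. No coherent choice of $\tau$ is needed: any extension works, because $j'\equiv j \bmod N$ holds automatically.

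In (6) your constants are already exact once $q^\delta$ is pulled out. Inside, the $i=1$ factor is $q$ for $\Char\neq\overline\eta$ and $1$ at $\Char=\overline\eta$. Completing the sum and dividing by $1-q$ gives exactly the stated formula, except for the factor $\overline\eta(z)=\overline{\gen}(z)^{(q-1)\alpha_1}$ on the correction term. You rightly noticed that this factor is missing, and the paper's proof drops it too.

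The step that fails is the last sentence of your twist argument. Your reindexing is already a complete computation: with $\Char'=\Char\gen^{(q-1)\rho}$ one has $\JJ(\rho\veca\Char,\rho\vecb\Char)=\JJ(\veca\Char',\vecb\Char')$ exactly. It therefore yields
\[
H_q(\rho\veca,\rho\vecb|z)=\overline{\gen}(z)^{(q-1)\rho}\,\frac{\JJ(\veca,\vecb)}{\JJ(\rho\veca,\rho\vecb)}\,H_q(\veca,\vecb|z),
\]
with no sign. Passing to $\JJ^*$ cannot add a sign on top of this. It does create $(-1)^{n(q-1)\rho}$ from $\Char(-1)^n$, because $\gen(-1)=-1$. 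But it also changes the normalizing ratio, since $\JJ^*(\veca,\vecb)/\JJ^*(\rho\veca,\rho\vecb)=(-1)^{n(q-1)\rho}\,\JJ(\veca,\vecb)/\JJ(\rho\veca,\rho\vecb)$, and the two signs cancel.

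So (5) is correct only if the ratio is read with $\JJ^*$; with $\JJ$ as printed it is off by $(-1)^{n(q-1)\rho}$. Concretely, take $n=1$, $\veca=(1/2)$, $\vecb=(1)$, $\rho=1/2$, $q\equiv 3\bmod 4$, and let $\epsilon$ be the quadratic character. Inversion and the rank one computation give $H_q((1),(3/2)|z)=\epsilon(1-z^{-1})=\epsilon(-z)\epsilon(1-z)$. Since $\JJ((1/2),(1))/\JJ((1),(3/2))=\epsilon(-1)$, the sign-free formula gives exactly this, while the printed formula gives its negative.

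The paper's proof has the same slip: its displayed identity for $\JJ(\rho\veca\Char,\rho\vecb\Char)$ holds for $\JJ^*$, not for $\JJ$. You should fix one normalization and derive the sign (or its absence) from it, rather than attach it afterwards.
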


\begin{proof} The first property is clear from its definition. To prove (2), it follows from \eqref{eq:j-alt} that
  \[
\JJ(\veca\Char,\vecb\Char)=\JJ(-\vecb\,\overline{\Char},-\veca\,\overline{\Char}).
  \]
  The result follows from the change of variables
  $\Char \to \overline{\Char}$ in~\eqref{eq:hpergeom-sum}.

\noindent (3)   Since $\sigma$ is a ring morphism,
  \begin{equation}
    \label{eq:proof-gal-action}
  \sigma(H_q(\veca,\vecb|z))=\frac{1}{1-q} \sum_{\Char} \prod_{i=1}^{n}
  \left(\frac{\sigma(g(\psi,\gen^{(q-1)\alpha_i}\Char))\sigma(g(\psi^{-1},\gen^{-(q-1)\beta_i}\Char^{-1})}{\sigma(g(\psi,\gen^{(q-1)\alpha_i}))\sigma(g(\psi^{-1},\gen^{-(q-1)\beta_i}))}\right) \sigma(\Char(z)).
  \end{equation}
  If $\sigma \in \Gal_\Q$, $\Char$ is a multiplicative character of
  $\FF_q^\times$ and $\psi$ is an additive character, then
  $$\sigma(g(\psi,\Char)) = g(\psi^\sigma,\Char^\sigma),$$ where
  $\Char^{\sigma}(x)=\sigma(\Char(x))$ (similarly for $\psi$).
  Let $b$
  be an integer (prime to $q-1$) satisfying
  $\sigma(\zeta_{q-1})=\zeta_{q-1}^b$, for $\zeta_{q-1}$ a primitive
  $(q-1)$-th root of unity. Then
\[
  \sigma(\gen^{m+a(q-1)}) = \gen^{bm + ba(q-1)}=\gen^{bm} \gen^{ba(q-1)}.
\]
The rational number $ba$ equals $ja$ up to translation by an
integer. Then changing the summation order (replacing $\Char$ by $\sigma(\Char)$),
the left hand side of (\ref{eq:proof-gal-action}) becomes
\[
  \frac{1}{1-q} \sum_{\Char} \prod_{i=1}^{n}
  \left(\frac{g(\psi,\gen^{(q-1)j\alpha_i}\Char))g(\psi^{-1},\gen^{-(q-1)j\beta_i}\Char^{-1})}{g(\psi,\gen^{(q-1)j\alpha_i})g(\psi^{-1},\gen^{-(q-1)j\beta_i})}\right) \Char(z).
\]
(4) For $z \in \FF_q$, the standard properties of Gauss sums imply
that the value $H_q(\veca,\vecb|z)$ is an element of
$\Q(\zeta_{q-1})$ (see also \cite[Proposition 3.2]{BCM}).  Note that
all elements of $(\Z/(q-1))^\times$ that are congruent to $1$ modulo
$N$ leave the sets $\{\alpha_1,\ldots,\alpha_n\}$ and
$\{\beta_1,\ldots,\beta_n\}$ stable under multiplication (up to
translation by integers), hence the second statement 
(and Galois theory) imply that actually
$H_q(\veca,\vecb|z) \in F=\Q(\zeta_N)$. Since
multiplication by elements of $H$ also fix the sets
$\{\alpha_1,\ldots,\alpha_n\}$ and $\{\beta_1,\ldots,\beta_n\}$, the
result follows.

\noindent (5) It follows from~\eqref{eq:j-alt} that
\[
\JJ(\rho\veca\Char,\rho\vecb\Char)=\JJ(\veca\Char\gen^{(q-1)\rho},\vecb\Char\gen^{(q-1)\rho})(-1)^{n(q-1)\rho}.
  \]
  Then
  \[
    H_q(\rho\veca,\rho\vecb|z)=\frac{1}{1-q}\sum_\Char \frac{\JJ(\rho\veca\Char,\rho\vecb\Char)}{\JJ(\rho\veca,\rho\vecb)}\Char(z)=(-1)^{n(q-1)\rho}\frac{\JJ(\veca,\vecb)}{\JJ(\rho\veca,\rho\vecb)}\overline{\gen}(z)^{(q-1)\rho}H_q(\veca,\vecb|z).
    \]
\noindent (6) If $\psi$ is an additive character of $\FF_q$ and $\chi$ is a multiplicative character, then 
\[
  g(\psi,\chi)g(\psi^{-1},\chi^{-1})=
  \begin{cases}
    q & \text{ if }\chi= 1,\\
    1 & \text{ otherwise}.
  \end{cases}
  \]
  If $\alpha_1$ and $\beta_1$ are integers, then
  \[
H_q(\veca,\vecb|z)=\frac{1}{1-q}\left(\sum_{\varphi \neq 1}\frac{q\JJ(\vecg\varphi,\vecd\varphi)}{\JJ(\vecg,\vecd)}+1\right)=q H_q(\vecg,\vecd) + 1.
\]
Similarly, if $\alpha_1\equiv \beta_1 \bmod \ZZ$ but they are not
integers, let $\varphi_0=\gen^{-(q-1)\alpha_1}$. Then
  \[
H_q(\veca,\vecb|z)=\frac{1}{1-q}\left(\sum_{\varphi \neq \varphi_0}\frac{\JJ(\vecg\varphi,\vecd\varphi)}{\JJ(\vecg,\vecd)}+\frac{\JJ(\vecg\varphi_0,\vecd\varphi_0)}{q\JJ(\vecg,\vecd)}\right)=H_q(\vecg,\vecd)+\frac{\JJ((-\alpha_1)\vecg,(-\alpha_1)\vecd)}{q\JJ(\vecg,\vecd)}.
\]  
\end{proof}

Recall that for $L$ a finite extension of $F=\Q(\zeta_N)$ and
$\idL{p}$ a prime ideal of $L$, we defined in
Remark~\ref{rem:char-extension} an order $N$ character
$\widehat{\chi_{\id{p}}}$ of $(\Om_L/\idL{p})^\times$.
\begin{definition}
  \label{defi:finite-hgm}
  Let $\veca,\vecb\in \Q^n$ be generic parameters and let $N$ be their
  least common denominator. Let $L$ be a finite extension of
  $F$ and let $\idL{p}$ be a prime ideal of $L$ of norm
  $q$ not dividing $N$. Let $\z-spec \in L$ satisfy
  $v_{\idL{p}}(\z-spec)=v_{\idL{p}}(\z-spec-1)=0$. The
  \emph{$\idL{p}$-hypergeometric sum}, denoted
  $H_{\idL{p}}(\veca,\vecb|\z-spec)$ is the value
  $H_q(\veca,\vecb|\z-spec)$ for $\gen$ a generator of
  $\widehat{\FF_q^\times}$ satisfying
  $\gen^{\frac{q-1}{N}}=\widehat{\chi_{\id{p}}}^{-1}$.
\end{definition}

The independence of $\gen$ follows from the following result.

\begin{lemma}
  \label{lemma:character-dependence}
  Let $\veca,\vecb\in \Q^n$ be generic parameters and let
  $N$ be their least common denominator. Let $\gen_1$ and $\gen_2$ be
  two generators of $\widehat{\FF_q^\times}$ such that
  $\gen_1^{\frac{q-1}{N}} = \gen_2^{\frac{q-1}{N}}$. Let $\z-spec \in \FF_q^\times$,
  $\z-spec \neq 1$. Then the values of $H_q(\veca,\vecb|z)$
 for $\gen_1$ and $\gen_2$ are the same.
\end{lemma}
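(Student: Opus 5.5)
The plan is to show that the only way the choice of $\gen$ enters Definition~\ref{defi:q-finite-hgm} is through the characters $\gen^{(q-1)\alpha_i}$ and $\gen^{(q-1)\beta_i}$, and that these are determined by $\gen^{\frac{q-1}{N}}$ alone. The key observation is that in formula~\eqref{eq:hpergeom-sum} the sum runs over \emph{all} characters $\Char$ of $\FF_q^\times$, a set that does not depend on $\gen$. The terms $\JJ(\veca\Char,\vecb\Char)$ and the normalising factor $\JJ(\veca,\vecb)$ involve only $\psi$, $\Char$, and the characters $\gen^{(q-1)\alpha_i}$, $\overline{\gen}^{(q-1)\beta_i}$.

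\textbf{Step 1.} Since $\gen_1$ generates the cyclic group $\widehat{\FF_q^\times}$, write $\gen_2=\gen_1^{b}$ with $b$ an integer coprime to $q-1$. The hypothesis $\gen_1^{\frac{q-1}{N}}=\gen_2^{\frac{q-1}{N}}$ says that $\gen_1^{(b-1)\frac{q-1}{N}}$ is trivial. Because $\gen_1$ has order exactly $q-1$, this forces $b\equiv 1 \bmod N$.

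\textbf{Step 2.} For each parameter $\alpha_i$, write $\alpha_i=k_i/N$ with $k_i\in\Z$; this is possible because $N$ is the common denominator. Then
\[
\gen_2^{(q-1)\alpha_i}=\gen_1^{b k_i\frac{q-1}{N}}=\gen_1^{k_i\frac{q-1}{N}}\cdot \gen_1^{(b-1)k_i\frac{q-1}{N}}=\gen_1^{(q-1)\alpha_i},
\]
since $N\mid b-1$ makes the exponent of the last factor a multiple of $q-1$. The same computation gives $\gen_2^{(q-1)\beta_i}=\gen_1^{(q-1)\beta_i}$. More simply, each of these characters is an integral power $k_i$ of the common character $\gen^{\frac{q-1}{N}}$.

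\textbf{Step 3.} Consequently $\JJ(\veca\Char,\vecb\Char)$ and $\JJ(\veca,\vecb)$ are literally the same expressions for both choices, for every character $\Char$. Summing over all $\Char$ against $\Char(\z-spec)$ gives identical values of $H_q(\veca,\vecb|\z-spec)$. Genericity is used only to ensure that $\JJ(\veca,\vecb)\neq 0$, so the expression is well defined; this holds because Gauss sums of characters are nonzero. If one prefers the explicit form~\eqref{eq:hpergeom-BCM}, which is indexed by $m$, the same conclusion follows from the reindexing $m\mapsto bm \bmod (q-1)$, a bijection because $\gcd(b,q-1)=1$. Under this reindexing, $\gen_2^m=\gen_1^{bm}$ matches the character $\gen_1^{bm}$, and Step 2 identifies the shifts by $\alpha_i(q-1)$ and $\beta_i(q-1)$.

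There is no real obstacle. The only point requiring care is the bookkeeping in Step 2, namely interpreting $\gen^{(q-1)\alpha}$ as $(\gen^{\frac{q-1}{N}})^{N\alpha}$ so that it is manifestly independent of the choice of generator.
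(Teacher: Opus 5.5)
Your proposal is correct and follows essentially the same argument as the paper. The paper writes $\gen_2=\gen_1^{j}$ with $j\equiv 1 \bmod N$ and observes that the $\gen_2$-value equals the $\gen_1$-value for the parameters $j\veca, j\vecb\equiv \veca,\vecb \bmod \Z^n$, which is exactly your Step 2 identification $\gen_2^{(q-1)\alpha_i}=\gen_1^{(q-1)\alpha_i}$.
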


\begin{proof}
  Let $j \in (\Z/(q-1))^\times$ be such that $\gen_2=\gen_1^j$. The
  hypothesis $\gen_1^{\frac{q-1}{N}} = \gen_2^{\frac{q-1}{N}}$ implies that
  $j \equiv 1 \pmod N$. The value $H_q(\veca,\vecb|z)$
  taking $\gen_2$ as a generator equals
  \[
\frac{1}{1-q}\sum_\varphi \frac{\prod_{i=1}^ng(\psi,\gen_1^{(q-1)j\alpha_i}\varphi)g(\psi^{-1},\overline{\gen_1}^{(q-1)j\beta_i}\overline{\varphi})}{\prod_{i=1}^n g(\psi,\gen_1^{(q-1)j\alpha_i})g(\psi^{-1},\overline{\gen_1}^{(q-1)j\beta_i})}\varphi(z),
\]
which matches the value of the finite hypergeometric series for the
generator $\gen_1$ with parameters $j\veca$, $j\vecb$. Since
$j\equiv 1 \pmod N$, $\veca = j\veca \pmod{\Z^n}$ and the same is true
for $\vecb$ hence the result.
\end{proof}

\subsection{Relation with the $p$-adic Gamma function}
\label{subsection:gammap}
For the reader's convenience, we will use bold letters for
$p$-adic functions.
Let $p$ be a prime number. Recall the following definition (see \cite{zbMATH03482465}).

\begin{definition}
  The \emph{$p$-adic Gamma function} is the continuous function
  $\Gammap:\ZZ_p \to \QQ_p$ that at a positive integer $n$ takes the
  value
  \begin{equation}
    \label{eq:p-adic}
\Gammap(n)=(-1)^n\prod_{\stackrel{i=1}{p\nmid i}}^{n-1} i.    
  \end{equation}
\end{definition}

As proved in \cite[Lemma 1]{zbMATH03482465}, the $p$-adic Gamma
function satisfies the relation
\[
  \Gammap(n+p^rm) \equiv \Gammap(n) \pmod{p^r},
\]
hence condition~(\ref{eq:p-adic}) determines it uniquely.
An important property of the $p$-adic Gamma function is that it determines
an analytic function.
\begin{theorem}[Morita]
  Set $Q=8$ if $p=2$ and $Q=1$ otherwise. Then the $p$-adic $\Gammap$
  function is an analytic function from $Q\ZZ_p \to \QQ_p$.
\end{theorem}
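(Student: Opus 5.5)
The plan is to use the Mahler expansion of $\Gammap$ together with Amice's analyticity criterion. By continuity, and since the positive integers are dense in $\ZZ_p$, we may write
\[
\Gammap(x)=\sum_{n\geq 0} a_n\binom{x}{n},\qquad a_n=\sum_{k=0}^n(-1)^{n-k}\binom{n}{k}\Gammap(k).
\]
Amice's criterion says that a continuous function $f=\sum a_n\binom{x}{n}$ is given by a convergent power series on every disc $x_0+Q\ZZ_p$ once
\[
\frac{|a_n|}{|n!|}\,|Q|^{n}\longrightarrow 0 .
\]
Actually we need a slightly stronger decay, $|a_n|/|n!|\leq C\rho^n$ with $\rho<|Q|^{-1}$, which is what we will aim for. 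The theorem then reduces to a growth estimate on the $a_n$. The definition \eqref{eq:p-adic} determines $\Gammap$ through the values $\Gammap(k)$, equivalently through the functional equation $\Gammap(x+1)=-x\Gammap(x)$ if $p\nmid x$ and $\Gammap(x+1)=-\Gammap(x)$ otherwise.

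To estimate the $a_n$, we pass to the exponential generating function
\[
\sum_{n}a_n\frac{t^n}{n!}=e^{-t}\sum_{k}\Gammap(k)\frac{t^k}{k!}.
\]
For the right-hand sum we plan to use Dwork's identity
\[
\sum_{k\geq0}(-1)^{k+1}\Gammap(k+1)\frac{t^k}{k!}=\frac{1-t^p}{1-t}\,\exp\!\Big(t+\frac{t^p}{p}\Big).
\]
We will prove this identity from the functional equation. Comparing coefficients, the left side satisfies a first-order differential recursion coming from $\Gammap(k+1)=-k\Gammap(k)$, corrected at multiples of $p$. The right side is the unique solution with the same recursion and initial value, which one checks by differentiating $\exp(t+t^p/p)$.

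Substituting $-t$ and multiplying by $e^{-t}$, the factors $e^{\pm t}$ cancel. The generating function of the $a_n$ is then essentially a polynomial in $t$ times $\exp(\pm t^p/p)$, up to an integration or shift to move from $\Gammap(k+1)$ to $\Gammap(k)$. The coefficient of $t^n$ in $\exp(t^p/p)$ is $1/(p^m m!)$ when $n=pm$, which has valuation $-m-v_p(m!)\geq -m\,p/(p-1)$. This gives $v_p(a_n/n!)\geq -\frac{n}{p-1}+O(\log n)$.

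The main obstacle is making this estimate sharp enough to beat the radius required by Amice's criterion. The bound $v_p(a_n/n!)\geq -n/(p-1)$ corresponds to analyticity on discs of radius $|p|^{1/(p-1)}$ around each point. Since $|p|\leq |p|^{1/(p-1)}$, with strict inequality for $p$ odd, this is enough for $Q=1$: the function is then locally analytic on every disc $x_0+p\ZZ_p$. For $p=2$ the exponent $1/(p-1)=1$ is borderline: the estimate only gives radius exactly $|2|$, and the $O(\log n)$ error terms together with the sign $(-1)^{k+1}$ spoil convergence on the boundary. Here we will instead use the relation $\Gammap(x+2)=\Gammap(x)\cdot(\text{polynomial})$ to separate residue classes and re-expand. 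Coarser bounds on $\exp(t^2/2)$ then give analyticity on discs of radius $|8|$, which is why $Q=8$ appears.

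Finally, analyticity on each disc $x_0+Q\ZZ_p$, together with the finitely many cosets covering $\ZZ_p$, gives the statement in the sense used later in the paper.
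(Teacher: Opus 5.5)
The overall architecture (Mahler expansion, Dwork's generating function, Amice's criterion) is the right one. Done correctly, it would give a self-contained proof where the paper only cites Morita. As written, however, the key estimate carries no information, for two reasons.

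First, the identity you quote is false for the values. Since $(-1)^{k+1}\Gammap(k+1)=\prod_{i\le k,\,p\nmid i}i=k!/(p^m m!)$ with $m=\lfloor k/p\rfloor$, one has
\[
\sum_{k\ge0}(-1)^{k+1}\Gammap(k+1)\frac{t^k}{k!}=\frac{1-t^p}{1-t}\exp\Big(\frac{t^p}{p}\Big).
\]
You can check this against your version on the coefficient of $t$: it is $1$, not $2$. The form with $\exp(t+t^p/p)$ is the identity for the signed Mahler coefficients: if $\Gammap(x+1)=\sum_n a_n\binom{x}{n}$, then $\sum_n(-1)^{n+1}a_n t^n/n!=\frac{1-t^p}{1-t}\exp(t+t^p/p)$. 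So the $e^{\pm t}$ coming from the Mahler transform does not cancel. It combines with $\exp(t^p/p)$, and that combination is the whole point.

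Second, the bound you end up with, $v_p(a_n/n!)\ge -n/(p-1)+O(\log n)$, holds for \emph{every} continuous $f:\ZZ_p\to\ZZ_p$. Indeed, Mahler coefficients of such $f$ are integral and $v_p(n!)<n/(p-1)$. So this bound cannot yield analyticity. Accordingly, your version of Amice's criterion is wrong for discs of radius $<1$. The correct statement is that $f$ is analytic on every coset $x_0+p^h\ZZ_p$ if and only if $v_p(a_n)-v_p(\lfloor n/p^h\rfloor!)\to\infty$, equivalently $v_p(a_n/n!)+\sum_{k=1}^h\lfloor n/p^k\rfloor\to\infty$. For $h=1$ you must beat $-n/p$, not $-n/(p-1)$.

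The missing ingredient is Dwork's lemma: $\exp(t+t^p/p)=E_p(t)\exp\big(-\sum_{k\ge2}t^{p^k}/p^k\big)$, where $E_p$ is the Artin--Hasse exponential, whose coefficients lie in $\ZZ_p$. Hence the coefficients of $\exp(t+t^p/p)$, and therefore the $a_n/n!$, have valuation $\ge-\frac{2p-1}{p^2(p-1)}n$.

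For odd $p$: $\frac{2p-1}{p^2(p-1)}<\frac1p$ exactly when $p^2-3p+1>0$, i.e. $p\ge3$. This gives analyticity on each coset $x_0+p\ZZ_p$.

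For $p=2$: the exponent is $\frac34=\frac12+\frac14$. This is borderline for $h=2$ and is beaten by $\frac12+\frac14+\frac18$ for $h=3$, which is exactly where $Q=8$ comes from. No residue-class trick is needed, and bounds on $\exp(t^2/2)$ alone only reproduce the trivial estimate.

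Finally, for odd $p$ the statement cannot mean a single power series on all of $\ZZ_p$: the functional equation together with Strassmann's theorem rules that out. Your last paragraph should therefore claim analyticity on the cosets $x_0+p\ZZ_p$, not on ``$x_0+Q\ZZ_p$'' with $Q=1$.
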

\begin{proof}
  See \cite[Theorem 3]{zbMATH03482465}.
\end{proof}
Let $q=p^f$ for a positive integer $f$. The following functions play
an important role.
\begin{definition}
  Let $\star \in \{0,\infty\}$. The function
  $\{ \,\cdot \, \}^\star:\Q/\Z \to \Q$ is defined by
\begin{equation}
  \label{eq:bracket}
\{x\}^\infty:= x - [x], \qquad \text{and} \qquad \{x\}^0:= 1-\{-x\}^\infty,  
\end{equation}
where $[x]$ denotes the floor of $x$ (so $\{x\}^\infty \in [0,1)$
while $\{x\}^0 \in (0,1]$). Consider the following functions

  \begin{eqnarray}
    \label{eq:gammap}
\Gammap_q^\star: \ZZ_{(p)}/\ZZ \to \QQ_p, & &  \Gammap_q^\star(x):=\prod_{i=0}^{f-1}\Gammap(\{p^ix\}^\star).\\
  \label{eq:eta-defi}
\eta_q^\star:\Q/\ZZ \to \QQ, & &   \eta_q^\star(x):=\sum_{i=0}^{f-1}\{p^ix\}^\star.  \\
  \label{eq:eta-m-defi}
\eta_{q,m}^\star:\Q/\ZZ \to \QQ,& &   \eta^\star_{q,m}(x):=\eta_q^\star\left(x+\frac{m}{1-q}\right) - \eta_q^\star(x).
\end{eqnarray}
If $\vecx \in (\Q/\Z)^n$, we extend the last map linearly component-wise, namely
  \[
    \eta_{q,m}^\star(\vecx) = \sum_{i=1}^n \eta_{q,m}^\star(x_i).
\]
\end{definition}

\begin{definition}
  Let $x \in \QQ/\Z$ be such that its denominator is not divisible by
  $p$. The (finite) \emph{$q$-orbit} of $x$ is the set
  \[
    \orbit_q(x):=\{x,qx,q^2x,\ldots\} \subset \QQ/\Z.
  \]
  We denote by $\len_q(x)$ its size.
\end{definition}
\begin{remark}
  If $x \in \Q/\Z$ and its $q$-orbit has $r$ elements, then
  $(q^r-1)x \in \ZZ$. In particular, the $q$-orbit of $x$ has a unique
  element precisely when $\denominator(x) \mid q-1$.
\label{remark:orbit-size}
\end{remark}

\begin{lemma}
  Let $x \in \Q/\Z$ be such that its denominator is not divisible by
  $p$.  Then
  \begin{equation}
    \label{eq:gamma-orbit}
    \prod_{u \in \orbit_q(x)} \Gammap_q^\star(u) = \Gammap_{q^{\len_q(x)}}^\star(x).
  \end{equation}
  Similarly,
  \begin{equation}
    \label{eq:eta-orbit}
    \sum_{u \in \orbit_q(x)} \eta_{q,m}^\star(u) = \eta_{q^{\len_q(x)},m}^\star(x).
  \end{equation}
  \label{lemma:gamma-prod}
\end{lemma}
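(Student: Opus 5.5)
Both identities reduce to regrouping a double sum over Frobenius powers. Write $r:=\len_q(x)$ and $Q:=q^r=p^{fr}$. By Remark~\ref{remark:orbit-size} we have $q^r x\equiv x \bmod \Z$, so the elements $x,qx,\ldots,q^{r-1}x$ of $\orbit_q(x)$ are pairwise distinct in $\Q/\Z$, and together they exhaust the orbit. The product over $u\in\orbit_q(x)$ is therefore the product over $u=q^kx$ for $k=0,\ldots,r-1$.

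For \eqref{eq:gamma-orbit}, expand each factor using the definition \eqref{eq:gammap}:
\[
\prod_{k=0}^{r-1}\Gammap_q^\star(q^kx)=\prod_{k=0}^{r-1}\prod_{i=0}^{f-1}\Gammap(\{p^iq^kx\}^\star)=\prod_{k=0}^{r-1}\prod_{i=0}^{f-1}\Gammap(\{p^{i+fk}x\}^\star).
\]
As $(k,i)$ ranges over $\{0,\ldots,r-1\}\times\{0,\ldots,f-1\}$, the exponent $j=i+fk$ ranges bijectively over $\{0,\ldots,fr-1\}$. The product is thus $\prod_{j=0}^{fr-1}\Gammap(\{p^jx\}^\star)$, which is $\Gammap^\star_{Q}(x)$ by definition since $Q=p^{fr}$.

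We need the functions $\{\cdot\}^\star$ to be well defined on $\Q/\Z$. This holds because each is a function of $x$ modulo $\Z$: $\{x\}^\infty$ lies in $[0,1)$ and $\{x\}^0$ lies in $(0,1]$, and both are unchanged by integer shifts. Consequently the value $q^kx$ may be replaced by any integer translate without changing the terms.

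For \eqref{eq:eta-orbit}, the same reindexing gives $\sum_{u\in\orbit_q(x)}\eta_q^\star(u)=\eta_{Q}^\star(x)$. The translated term needs more care, and this is the main point to check. Set $y:=\frac{m}{1-q}$ and consider $\sum_k\eta_q^\star(q^kx+y)$; for the identity we want this to equal $\eta_Q^\star\bigl(x+\frac{m}{1-Q}\bigr)$. The key computation is that $\frac{m}{1-Q}$ and $\frac{m}{1-q}$ differ in the way needed. We have $\frac{m}{1-q}\equiv m\frac{1+q+\cdots+q^{r-1}}{1-Q}$, and since the relevant $p^j$-multiples are taken modulo $\Z$, what matters is the behaviour of $y$ under multiplication by $q$. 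Here $q y = y - m\equiv y \bmod \Z$, so $y$ is fixed by $q$. Hence $\{p^{i}q^k(x+y)\}^\star=\{p^iq^kx+p^iy\}^\star$. Reindexing then yields $\sum_{j=0}^{fr-1}\{p^jx+p^{j\bmod f}y\}^\star$.

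On the other side, $y':=\frac{m}{1-Q}$ satisfies $Qy'\equiv y'$, and the terms $\{p^j(x+y')\}^\star$ involve $p^jy'$. These do not in general match $p^{j\bmod f}y$ termwise. So the literal identity seems to require interpreting $\eta^\star_{Q,m}$ with its own normalization $\frac{m}{1-Q}$. In that case the sum regroups correctly only when $m$ is replaced by $m(1+q+\cdots+q^{r-1})$, i.e.\ one must use that $\frac{m}{1-q}=\frac{m(1+q+\cdots+q^{r-1})}{1-Q}$ exactly.

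I would therefore verify the stated identity by writing $\frac{m}{1-q}=\frac{m'}{1-Q}$ with $m'=m\frac{Q-1}{q-1}$, and checking that in the paper's intended use (sums over $m$ modulo $q-1$, corresponding to characters of $\F_Q$ factoring through the norm) the $m$ on the right is understood in this sense. Once this convention is fixed, the $\eta^\star_{\cdot,m}$ identity follows from the same bijection $(k,i)\mapsto i+fk$, applied to both the translated and untranslated sums and then subtracted. Pinning down this normalization of $m$ is the step I expect to need care; everything else is the reindexing.
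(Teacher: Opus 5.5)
Your reindexing $(k,i)\mapsto i+fk$ is exactly the paper's proof of the $\Gammap$ identity. For the $\eta$ identity the paper only says it is ``an additive version of the same proof''.

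Your hesitation at that point is justified, and you should state the result as a correction rather than as a convention still to be checked. Read literally, with $\eta^\star_{Q,m}$ using the shift $\frac{m}{1-Q}$ where $Q=q^{\len_q(x)}$, the identity is false. Take $p=2$, $q=4$, $x=1/5$ (so $Q=16$), $m=1$ and $\star=\infty$. The left side is $\eta^\infty_{16}(13/15)-\eta^\infty_{16}(1/5)=3-2=1$, while the right side is $\eta^\infty_{16}(2/15)-\eta^\infty_{16}(1/5)=1-2=-1$.

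What your argument does prove is
\[
\sum_{u\in\orbit_q(x)}\eta^\star_{q,m}(u)=\eta^\star_Q\bigl(x+\tfrac{m}{1-q}\bigr)-\eta^\star_Q(x)=\eta^\star_{Q,m'}(x), \qquad m'=m\tfrac{Q-1}{q-1}.
\]
This is precisely the form used in the proof of Theorem~\ref{thm:algebricity}, so nothing downstream is affected.
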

\begin{proof}
  By definition of the $q$-Gamma function
  \[
    \prod_{u \in \orbit_q(x)} \Gammap_q^\star(u) = \prod_{i=0}^{\len_q(x)-1} \,\prod_{j=0}^{f-1}\Gammap^\star(\{p^j q^ix\}) = \prod_{i=0}^{f\len_q(x)-1}\Gammap^\star(\{p^ix\})=\Gammap_{q^{\len_q(x)}}^\star(x).
  \]
The second statement is an additive version of the same proof.
\end{proof}

\begin{definition}
  Let $p$ be a prime number, $q$ a $p$-th power and  $n$  a positive
  integer. Let $x \in \ZZ_{(p)}/\ZZ$. For $\star \in \{0,\infty\}$, define the
\emph{$p$-adic  Pochammer symbol}
  \begin{equation}
    \label{eq:pochammer}
  (x)_{q,n}^\star:=\frac{\Gammap_q^\star(x+\frac{n}{1-q})}{\Gammap_q^\star(x)}.
  \end{equation}
\end{definition}

Let $\Om$ denote the ring of integers of $\Q_p(\zeta_{q-1})$ and
let $\id{p}$ be its maximal ideal. For the rest of the section we let
$\F_q:=\Om/\id{p}$.

Let $\Teich$ be the
\emph{Teichmuller character}
\begin{equation}
  \label{eq:Teich}
\Teich:\F_q^\times \to \Om^\times  
\end{equation}
characterized by $\pi\circ\Teich=\Id$, where  $\pi:\Om \to \F_q$ is the reduction map.
(This character  is the $p$-adic version of the character $\chip$ of
\eqref{eq:gen-defi} for $N=q-1$).

\begin{definition}
  Let $\veca,\vecb \in (\Q/\ZZ)^n$. Let $p$ be a prime
  number not dividing the denominator of $\veca$ nor
  $\vecb$ and let $q = p^f$. The \emph{$q$-adic finite
    hypergeometric sum} is the function defined 
for   $z \in \F_q$ by
  \begin{equation}
    \label{eq:hyperg-geom}
    \hgmpadic_q(\veca,\vecb|z):=\frac{1}{1-q}\sum_{m=0}^{q-2} \frac{(\alpha_1)_{q,m}^\infty \cdots (\alpha_n)_{q,m}^\infty}{(\beta_1)_{q,m}^0\cdots (\beta_n)_ {q,m}^0}(-p)^{\eta_{q,m}^\infty(\veca)-\eta_{q,m}^0(\vecb)}\Teich(z)^m.
  \end{equation}
\end{definition}
\begin{remark}
  The routine ``hgm'' of the Pari/GP package described in
  Example~\ref{example:1} computes, given as input the specialization
  $\z-spec$ (a rational number), the parameters $\veca,\vecb$ and a
  prime number $p$, the $p$-adic number
  $\hgmpadic_q(\veca,\vecb|\z-spec)$ of \eqref{eq:hyperg-geom}. By
  default, the output is an element of the $p$-adic field $\Q_p$
  computed with a default precision of 20 digits.
\end{remark}

\begin{definition}
  Let $\veca \in (\Q/\Z)^n$ and let $q=p^f$, with $p$ not
  dividing the denominator of the coordinates of $\veca$. The
  vector $\veca$ is called \emph{$q$-stable} if 
  $\{q\alpha_1,\ldots,q\alpha_n\} = \{\alpha_1,\ldots,\alpha_n\}$.
\end{definition}
We will prove in Theorem~\ref{thm:algebricity} that
$\hgmpadic_q(\veca,\vecb|z)$ is algebraic for $q$-stable parameters
$\veca$ and $\vecb$. For that purpose, we start with some preliminaries.

Let $\psip:\FF_p \to \ZZ_p[\zeta_p]^\times$ be a non-trivial
additive character.  For $a \in \frac{1}{q-1}\ZZ/\ZZ$, define the
\emph{$p$-adic Gauss sum} 
\begin{equation}
  \label{eq:p-adic-Gauss}
  \gaussp(\psip,a,q):=\sum_{u \in \FF_q^\times} \Teich(u)^{-a(q-1)}\psip(\trace_{\FF_q/\FF_p}u). 
\end{equation}

Let $M=\Q(\zeta_{q-1})$ and let $L=M(\zeta_p)$. Let $\id{p}$ be a
prime ideal of $M$ dividing $p$ and let $\id{q}$ be a prime ideal of
$L$ dividing $\id{p}$.  Let $\iota:L \hookrightarrow L_{\id{q}}$ be
the natural inclusion, where $L_{\id{q}}$ denotes the completion of
$L$ at $\id{q}$. Let $\psi:\FF_q \to \Z[\zeta_p]^\times$ be the
additive character satisfying $\iota \circ \psi = \psip \circ \trace_{\FF_q/\FF_p}$.  Let
$\gen$ be a generator of $\widehat{\FF_q^\times}$ such that
$\iota \circ \gen = \Teich^{-1}$.  Then for $a \in \frac{1}{q-1}\ZZ/\ZZ$
  \begin{equation}
    \label{eq:gauss-comparison}
    \gaussp(\psip,a,q)=\iota(g(\psi,a,\id{p})),    
  \end{equation}
  where $g(\psi,a,\id{p}) \in L$ is the complex Gauss sum defined in
  \eqref{eq:gauss-sum-defi}.

  Set $\zeta_p := \psi(1)$ and let $\pi \in \overline{\QQ}$ satisfy
  \[
    \pi^{p-1}=-p, \qquad \pi \equiv (\zeta_p-1) \pmod{(\zeta_p-1)^2}.
  \]

\begin{theorem}[Gross-Koblitz]
  Let $a \in \frac{1}{q-1}\ZZ$. Then
  \begin{equation}
    \label{eq:GK-infty}
    \gaussp(\psip,a,q)=-\pi^{(p-1)\eta_q^\infty(a)} \Gammap_q^\infty(a),
  \end{equation}
  and
  \begin{equation}
    \label{eq:GK-0}
   \frac{q}{\gaussp(\psip^{-1},-a,q)}= -\pi^{(p-1)\eta_q^0(a)}\Gammap_q^0(a).
  \end{equation}
\label{thm:GK}
\end{theorem}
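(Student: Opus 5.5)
The statement is the Gross--Koblitz formula. The natural route is Dwork's splitting function combined with Stickelberger/Morita-type congruences, and then an induction on the digits of the numerator of $a$.

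First, reduce to the case $q=p$ using the Hasse--Davenport product relation. In fact the bracketed quantities in \eqref{eq:GK-infty} already factor over the $p$-power Frobenius orbit: $\eta_q^\infty(a)=\sum_{i=0}^{f-1}\{p^ia\}^\infty$ and $\Gammap_q^\infty(a)=\prod_{i}\Gammap(\{p^ia\}^\infty)$. So what we need is the $p$-adic factorisation of $\gaussp(\psip,a,q)$ into pieces indexed by the digits $a_i$ of $-(q-1)a=\sum a_ip^i$, with $0\le a_i\le p-1$.

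Next, write $\psip(\trace\, u)=\prod_i\theta(\Teich(u)^{p^i})$, where $\theta(X)=\exp(\pi X-\pi X^p)$ is Dwork's splitting function. This function is overconvergent and satisfies $\theta(1)=\zeta_p$, which holds by the choice $\pi\equiv\zeta_p-1$. Expanding $\theta$ as a power series and applying orthogonality of Teichm\"uller characters gives an exact expression for $\gaussp$ as a $(q-1)$-adically convergent sum of coefficients. Dwork's approach (equivalently, Boyarsky's $p$-adic Mellin transform) then identifies this sum with $-\pi^{\sum a_i}\prod_i\Gammap(\{p^i a\}^\infty)$. The main obstacle is this identification. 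I would first try Coleman's or Robert's proof: show that both sides, viewed as functions of $a$, satisfy the same functional equations, namely the Gauss sum multiplication formulae on one side and $\Gammap(x+1)=-x\Gammap(x)$ together with the Gauss multiplication formula on the other. Then use that they agree modulo increasing powers of $\pi$ via Stickelberger's congruence $\gaussp\equiv -\pi^{s(a)}/\prod a_i!$. Since $\Gammap$ is continuous (Morita), the values extend, and a density argument over $a\in\frac1{q^k-1}\Z$ with $k\to\infty$ gives equality.

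Finally, derive \eqref{eq:GK-0} from \eqref{eq:GK-infty}. Use $g(\psi,\chi)g(\psi^{-1},\chi^{-1})=q$ for $\chi\neq1$ (and the obvious adjustment when $a\in\Z$) to get $q/\gaussp(\psip^{-1},-a,q)=\gaussp(\psip,a,q)$ up to the sign $\chi(-1)$. Then apply the reflection formula $\Gammap(x)\Gammap(1-x)=\pm1$ to convert $\{x\}^\infty$ into $\{x\}^0=1-\{-x\}^\infty$. The signs cancel because $\Teich(-1)^{a(q-1)}=(-1)^{\sum a_i}$, which matches $\pi^{(p-1)}$ raised to the appropriate parity.
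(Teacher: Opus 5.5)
Your argument for \eqref{eq:GK-infty} has a genuine gap, in two places.

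\textbf{The reduction to $q=p$ does not exist.} Hasse--Davenport lifting only covers characters of the form $\chi\circ N_{\F_q/\F_p}$, i.e.\ $a\in\frac{1}{p-1}\ZZ$. The Hasse--Davenport product formula relates Gauss sums over the same field. So nothing expresses $\gaussp(\psip,a,q)$, for a character of order not dividing $p-1$, through Gauss sums over $\F_p$. The splitting into factors $\pi^{(p-1)\{p^ia\}^\infty}\Gammap(\{p^ia\}^\infty)$ is the content of the theorem, not a preliminary step. Also, $(p-1)\eta_q^\infty(a)$ is the digit sum of $(q-1)\{a\}^\infty$, not of $-(q-1)a$. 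For $a\notin\ZZ$ the latter gives $(p-1)f$ minus the correct exponent.

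\textbf{The core identification is never carried out, and your mechanism cannot carry it out.} Stickelberger's congruence only fixes the leading unit modulo $\pi$. Letting $q\to q^k$ adds no precision: $-\gaussp(\psip,a,q^k)=(-\gaussp(\psip,a,q))^k$, while $\eta^\infty_{q^k}(a)=k\,\eta^\infty_q(a)$ and $\Gammap^\infty_{q^k}(a)=\Gammap^\infty_q(a)^k$. So the ratio $R_{q^k}(a)$ of the two sides of \eqref{eq:GK-infty} is just $R_q(a)^k$.

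The relations available on the Gauss-sum side do not determine $R$. Take any $c\in 1+p\ZZ_p$ and set $R_q(a)=c^{2(p-1)\eta^\infty_q(a)-(p-1)f}$ for $a\notin\ZZ$, $R_q(0)=1$. This satisfies all of the following:
\begin{itemize}
\item $R_{q^k}=R_q^k$, $R_q(pa)=R_q(a)$ and $R_q(a)R_q(-a)=1$;
\item the distribution relation $\prod_{j=0}^{m-1}R_q(a+\tfrac jm)=R_q(ma)\prod_{j=0}^{m-1}R_q(\tfrac jm)$, which is what comparing the Hasse--Davenport product formula with the Gauss multiplication formula for $\Gammap$ gives;
\item $R\equiv 1\pmod{\pi}$.
\end{itemize}

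Nor can the continuity/density characterization of $\Gammap$ be used. It applies to a function on a dense subset of $\ZZ_p$ satisfying $G(x+1)=-xG(x)$, and the Gauss sums are no such function. You would first have to build such a function from the coefficients of $\theta$, then prove that it interpolates the Gauss sums and satisfies this functional equation. That construction is the real content of the Gross--Koblitz formula, and it is absent from your sketch.

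\textbf{What the paper does.} The paper does not reprove the formula. It quotes \eqref{eq:GK-infty} from \cite[Theorem 1.7]{zbMATH03630880} with $N=q-1$ and transports it via \eqref{eq:gauss-comparison}; the minus sign reflects the different normalization of Gauss sums. If you rely on the literature, that citation plus a careful check of conventions is what is needed: the sign of the Gauss sum, the character $\Teich^{-(q-1)a}$, and the digits of $(q-1)\{a\}^\infty$.

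\textbf{The derivation of \eqref{eq:GK-0}.} This is close to the paper's argument but introduces a spurious sign. For $\chi\neq1$ one has $g(\psi,\chi)g(\psi^{-1},\chi^{-1})=q$ exactly; the factor $\chi(-1)$ belongs to $g(\psi,\chi)g(\psi,\chi^{-1})$. Hence for $a\notin\ZZ$, $q/\gaussp(\psip^{-1},-a,q)=\gaussp(\psip,a,q)$ with no sign.

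No reflection formula is needed either. Since $p^ia\notin\ZZ$ for every $i$, $\{p^ia\}^0=\{p^ia\}^\infty$. Hence $\eta_q^0(a)=\eta_q^\infty(a)$ and $\Gammap_q^0(a)=\Gammap_q^\infty(a)$, and \eqref{eq:GK-0} is literally \eqref{eq:GK-infty}. For $a\in\ZZ$, $\gaussp(\psip^{-1},0,q)=-1$ and $-\pi^{(p-1)f}\Gammap(1)^f=-(-p)^f(-1)^f=-q$, so both sides equal $-q$. This is exactly the paper's argument.

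Reflection would matter only if you applied \eqref{eq:GK-infty} at $-a$, after writing $\gaussp(\psip^{-1},-a,q)=(-1)^{(q-1)a}\gaussp(\psip,-a,q)$. As written, you mix the two routes.
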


\begin{proof}
  The first result follows from \cite[Theorem 1.7]{zbMATH03630880} and
  \eqref{eq:gauss-comparison}, taking (in Gross-Koblitz's notation)
  $N=q-1$.
  If we replace $\Gammap_q^\infty$ and $\eta_q^\infty$ by their
  $\Gammap_q^0$ and $\eta_q^0$ counterparts, we find that
  \[
    \pi^{(p-1)\eta_q^0(a)}\Gammap_q^0(a) =
    \begin{cases}
      q & \text{ if }a \in \ZZ,\\
      \gaussp(\psip,a,q) & \text{ if }a \not \in \ZZ.
    \end{cases}
\]
The formula follows from the well known relation
  \[
    \gaussp(\psip,a,q)\cdot \gaussp(\psip^{-1},-a,q) =
    \begin{cases}
      1 & \text{ if }a \in \ZZ,\\
      q & \text{ if }a \not \in \ZZ.
    \end{cases}
  \]
\end{proof}
\begin{remark}
  The reason for the minus sign in the formulas in the Theorem
  (compared to \cite[Theorem 1.7]{zbMATH03630880}) is due to a
  different choice of normalization for Gauss sums.
\end{remark}
\begin{theorem}
  Let $\veca,\vecb \in \QQ^n$ be $q$-stable
  vectors. Consider the decomposition of each set as a disjoint union
  of orbits (after relabeling)
  \[
   \{\alpha_1,\ldots,\alpha_n\}= \orbit_q(\alpha_1) \cup \cdots \cup \orbit_q(\alpha_u),\qquad \{\beta_1,\ldots,\beta_n\}= \orbit_q(\beta_1) \cup \cdots \cup \orbit_q(\beta_v).
 \]
 To ease notation, let $l_i^\infty = \len_q(\alpha_i)$ and
 $l_i^0 = \len_q(\beta_i)$ be the size of the respective orbits.  Then
we have  for $z \in \FF_q$:
\begin{enumerate}
\item
The following formula holds
 \begin{equation}
   \label{eq:Hq-algebricity}
   \hgmpadic_q(\veca,\vecb|z) = \frac{1}{1-q}\sum_{m=0}^{q-2} \prod_{i=1}^u \frac{\gaussp(\psip,\alpha_i+\frac{m}{1-q},q^{l_i^\infty})}{\gaussp(\psip,\alpha_i,q^{l_i^\infty})} \prod_{i=1}^v \frac{\gaussp(\psip^{-1},-\beta_i-\frac{m}{1-q},q^{l_i^0})}{\gaussp(\psip^{-1},-\beta_i,q^{l_i^0})}   \Teich(z)^m.
 \end{equation}
\item Let $L$ be a finite extension of $F=\Q(\zeta_N)$ and let
  $\id{p}$ be a prime ideal of $L$ of norm $q$. Let
  $\iota: L \hookrightarrow L_{\id{p}}$ be the
  natural map to its completion at $\id{p}$. The global and local 
  hypergeometric sums are related as follows. For $\z-spec \in
  L$ coprime with $\id{p}$ we have
  \begin{equation}
    \label{eq:p-adic-global}
    \iota(H_{\id{p}}(\veca,\vecb|\z-spec)) =
    \hgmpadic_q(\veca,\vecb|\overline{\z-spec}),
  \end{equation}
where $\overline{\z-spec}\in (\calO/\id{p})^\times$ is the reduction of $\z-spec$
modulo $\id{p}$.  In particular,
$\hgmpadic_q(\veca,\vecb|\overline{\z-spec})$ is algebraic over~$\Q$; 
  more precisely, $\hgmpadic_q(\veca,\vecb|\overline{\z-spec}) \in \iota(F^H)$.
\end{enumerate}
  \label{thm:algebricity}
\end{theorem}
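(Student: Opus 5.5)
Part (1) will follow from the Gross--Koblitz formula (Theorem~\ref{thm:GK}) applied orbit by orbit. Fix $m$ and an orbit $\orbit_q(\alpha_i)$ of length $l=l_i^\infty$. By Lemma~\ref{lemma:gamma-prod}, the product of the factors $(u)^\infty_{q,m}$ over $u\in\orbit_q(\alpha_i)$ equals $\Gammap^\infty_{q^l}(\alpha_i+\frac{m}{1-q})/\Gammap^\infty_{q^l}(\alpha_i)$. For this we use that $\frac{m}{1-q}$ is $q$-stable modulo $\Z$: since $q\cdot\frac{m}{1-q}\equiv\frac{m}{1-q}$, the orbit of $\alpha_i+\frac{m}{1-q}$ is the translate of the orbit of $\alpha_i$. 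Likewise, the sum of the $\eta^\infty_{q,m}(u)$ over the orbit equals $\eta^\infty_{q^l,m}(\alpha_i)$, where $\frac{m}{1-q}$ is written as $\frac{m'}{1-q^l}$ with $m'=m(1+q+\dots+q^{l-1})$.

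Equation~\eqref{eq:GK-infty} for $q^l$ then gives
\[
\frac{\gaussp(\psip,\alpha_i+\frac m{1-q},q^l)}{\gaussp(\psip,\alpha_i,q^l)}=\pi^{(p-1)\eta^\infty_{q^l,m}(\alpha_i)}\frac{\Gammap^\infty_{q^l}(\alpha_i+\frac m{1-q})}{\Gammap^\infty_{q^l}(\alpha_i)}.
\]
Here the minus signs cancel and $\pi^{p-1}=-p$. The $\beta$ orbits are handled the same way with~\eqref{eq:GK-0}: the ratio $q^l/\gaussp(\psip^{-1},\cdot)$ inverted gives the denominators $(\beta)^0_{q,m}$ with exponent $-\eta^0$, and the factors $q^l$ cancel between numerator and denominator. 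Multiplying over all orbits and using $q$-stability, so that the orbits partition the multisets, gives~\eqref{eq:Hq-algebricity}. One small point is that $\eta^\star$ differences are integers, so $(-p)^{\eta}$ is well defined; this holds because the orbit sums of fractional parts change by integers.

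For part (2), the parameters are $q$-stable because $N\mid q-1$, so every orbit has length one. Then~\eqref{eq:Hq-algebricity} involves only Gauss sums over $\F_q$ with arguments in $\frac1{q-1}\Z$. Write $H_{\id p}$ via~\eqref{eq:hpergeom-BCM} with $\gen$ normalized as in Definition~\ref{defi:finite-hgm}, and apply $\iota$ term by term using~\eqref{eq:gauss-comparison}, which requires $\iota\circ\gen=\Teich^{-1}$. The main obstacle is bookkeeping of the normalizations. Definition~\ref{defi:finite-hgm} prescribes $\gen^{(q-1)/N}=\chi_{\id p}^{-1}$, while $\iota\chi_{\id p}$ is the $N$-th power-level Teichm\"uller, i.e.\ $\chi_{\id p}(x)\equiv x^{(q-1)/N}$. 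So we may choose $\gen$ with $\iota\circ\gen=\Teich^{-1}$, which is compatible, and Lemma~\ref{lemma:character-dependence} shows that this choice does not affect $H_{\id p}$.

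We then match the characters. The term $\Char=\gen^m$ corresponds to $\Teich(z)^{-m}$, and reindexing $m\to -m$, together with the sign conventions $\gen^{(q-1)\alpha}\Char\leftrightarrow$ argument $\alpha+\frac{m}{1-q}$ in~\eqref{eq:p-adic-Gauss}, matches the $m$-th term. Any factor $\Char(-1)^n$ coming from~\eqref{eq:j-alt} has to be checked against the $\psip^{-1}$ appearing in the $\beta$ factors, and these are consistent. For algebraicity, $H_{\id p}\in F^H$ by Proposition~\ref{prop:props-hyperg-sum}(4), and hence $\hgmpadic_q(\veca,\vecb|\overline{\z-spec})\in\iota(F^H)$. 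If $L\neq F$, we use Remark~\ref{rem:char-extension}: the residue field has $q$ elements and the same argument applies verbatim.
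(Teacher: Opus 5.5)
Your proposal is correct and is essentially the paper's proof: Gross--Koblitz applied orbit by orbit through Lemma~\ref{lemma:gamma-prod} for (1), and for (2) the length-one orbits together with the choice $\iota\circ\gen=\Teich^{-1}$ (legitimate by Lemma~\ref{lemma:character-dependence}) and Proposition~\ref{prop:props-hyperg-sum}(4). Your rewriting $\frac{m}{1-q}=\frac{m'}{1-q^l}$ is in fact more careful than \eqref{eq:eta-orbit} as printed.

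One side remark is wrong, though. The orbit exponents $\eta^\infty_{q^l,m'}(\alpha_i)$ are in general not integers: for $q=p=5$, $\alpha_1=\tfrac12$, $m=1$ one gets $\{\tfrac14\}-\{\tfrac12\}=-\tfrac14$. This is exactly why these factors must be kept as $\pi^{(p-1)\eta}$ orbit by orbit. What is true is that every $\alpha$-contribution and every $\beta$-contribution to $\eta^\infty_{q,m}(\veca)-\eta^0_{q,m}(\vecb)$ is congruent to $\frac{m}{1-q}(1+p+\cdots+p^{f-1})$ modulo $\Z$, and there are $n$ of each. So only the total exponent $E$ is an integer, and the identity $\pi^{(p-1)E}=(-p)^E$ should be applied once, at the end.
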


\begin{proof}
1)  Since $q(\alpha_i + \frac{m}{1-q}) = q\alpha_i + \frac{m}{1-q}$ in
  $\Q/\Z$, the decomposition in $q$-orbits of
  $\{\alpha_1+\frac{m}{1-q},\ldots,\alpha_n+\frac{m}{1-q}\}$ mimics
  that of $\{\alpha_1,\ldots,\alpha_n\}$. Then using Lemma~\ref{lemma:gamma-prod}
  \[
(\alpha_1)_{q,m}^\infty \cdots (\alpha_n)_{q,m}^\infty=\prod_{i=1}^n\frac{\Gammap_q^\infty(\alpha_i+\frac{m}{1-q})}{\Gammap_q^\infty(\alpha_i)} = \prod_{i=1}^u \frac{\Gammap_{q^{l_i^\infty}}^\infty(\alpha_i + \frac{m}{1-q})}{\Gammap^\infty_{q^{l_i^\infty}}(\alpha_i)}.
\]
By Remark~\ref{remark:orbit-size}, both $(q^{l_i^\infty}-1)\alpha_i$ and
$(q^{l_i^\infty}-1)(\alpha_i + \frac{m}{1-q})$ are integers, so 
Theorem~\ref{thm:GK} implies that the product equals
\[
\prod_{i=1}^u \frac{\gaussp(\psip,\alpha_i+\frac{m}{1-q},q^{l_i^\infty}) }{\gaussp(\psip,\alpha_i,q^{l_i^\infty}) } \pi^{(1-p)\left(\eta_{q^{l_i^\infty}}^\infty(\alpha_i+\frac{m}{1-q})-\eta_{q^{l_i^\infty}}^\infty(\alpha_i)\right)}.
\]
The same argument applies to the values $(\beta_i)_{q,m}^0$ using
(\ref{eq:GK-0}). Then the result follows from 
Lemma~\ref{lemma:gamma-prod} and the fact that $\pi^{1-p}=(-p)$. 

2) Since by assumption $\zeta_N\in L$ we have $N \mid q-1$. Hence, the
sets $\{\alpha_1,\ldots,\alpha_n\}$ and $\{\beta_1,\ldots,\beta_n\}$
decompose into $n$-disjoint $q$-orbits of length $1$ (so
$l_i^\infty=l_i^0=1$). As $\iota(\chi_{\id{p}})=\Teich^{(q-1)/N}$,
$\iota(\chi_{\id{p}}^{N\alpha_i}(x))=\Teich^{(q-1)\alpha_i}(x)$ the
right hand side of (\ref{eq:Hq-algebricity}) matches
the~$\iota(H_{\id{p}}(\veca,\vecb|z))$. The algebricity statement
follows from \eqref{eq:gauss-comparison} and
Proposition~\ref{prop:props-hyperg-sum} (4).
\end{proof}
\begin{remark}
  If $\veca$ or $\vecb$ be are not $q$-stable then most likely
  $H_q(\veca,\vecb|z)$ (as defined in~\eqref{eq:hyperg-geom}) is
  \emph{not} algebraic.
\end{remark}
\begin{remark}
\label{p-hyperg-sum}
If $L$ is a finite extension of $K$ and $\id{p}$ is a prime ideal of
$L$ of norm $q$, then the parameters $\veca$ and $\vecb$ are
$q$-stable. Hence by Theorem~\ref{thm:algebricity} we may extend the
definition of $\id{p}$-hypergeometric sums to $\xi \in L$ as
 \[
H_{\id{p}}(\veca,\vecb|\z-spec):= \frac{1}{1-q}\sum_\varphi \prod_{i=1}^u\frac{g(\psi,\chi_{\id{q}}^{-N\alpha_i}\varphi,\id{q})}{g(\psi,\chi_{\id{q}}^{-N\alpha_i},\id{q})}\prod_{i=1}^v\frac{g(\psi^{-1},\chi_{\id{q}}^{N\beta_i}\overline{\varphi},\id{q})}{g(\psi^{-1},\chi_{\id{q}}^{N\beta_i},\id{q})}\,\varphi(\z-spec),
   \]
   where the sum runs over characters of $\FF_q^\times$. We expect
   this sum to be the trace of $\Frob_{\id{p}}$ of a motive defined
   over $L$ (see Conjecture~\ref{conjecture:field-of-defi}). 
\end{remark}

\subsection{Hypergeometric character sums}
\label{subsection:HMsums}
The result of the present section will be used to relate the number of
points of a variety to a finite hypergeometric sum. Although the
definition of Gauss sums depend on the choice of an additive
character, the products/quotients considered in the present section
will not, so we omit writing the dependence to easy notation.  Recall
the following well known definition.

\begin{definition} Let $\Char, \eta$ be two multiplicative characters on
  $\FF_q^\times$. The Jacobi sum attached to them is defined by
  \[
J(\Char,\eta) = \sum_{x \in \FF_q}\Char(x)\eta(1-x),
\]
where $\Char(0)=\eta(0)=0$.
\label{defi:Jacobi}
\end{definition}
\begin{lemma}
  Let $\Char, \eta$ be characters of $\FF_q$, with $\eta$ non-trivial. Then
  \[
J(\Char,\eta)  = \Char(-1) \frac{g(\Char^{-1}\eta^{-1})g(\Char)}{g(\eta^{-1})}.
    \]
\label{lemma:Jacobi}
  \end{lemma}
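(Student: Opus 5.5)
The plan is to expand both sides and compare them term by term. Everything rests on the classical factorization of a product of Gauss sums, $g(\Char)g(\eta)=J(\Char,\eta)g(\Char\eta)$, valid whenever $\Char\eta$ is non-trivial. This factorization is proved by writing out $g(\Char)g(\eta)=\sum_{x,y}\Char(x)\eta(y)\psi(x+y)$ and splitting according to whether $s=x+y$ is zero or not. On the part with $s\neq 0$, substitute $x=st$, $y=s(1-t)$; this produces $J(\Char,\eta)g(\Char\eta)$. The part with $s=0$ contributes $\eta(-1)\sum_x(\Char\eta)(x)$, which vanishes because $\Char\eta$ is non-trivial.

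To reach the shape stated in the lemma, I would apply the factorization to the pair $(\Char,\Char^{-1}\eta^{-1})$. The product of these two characters is $\eta^{-1}$, which is non-trivial by hypothesis, so
\[
g(\Char)g(\Char^{-1}\eta^{-1})=J(\Char,\Char^{-1}\eta^{-1})\,g(\eta^{-1}).
\]
Dividing by $g(\eta^{-1})$ is legitimate, since $|g(\eta^{-1})|^2=q$ for non-trivial $\eta$.

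It then remains to show $J(\Char,\Char^{-1}\eta^{-1})=\Char(-1)J(\Char,\eta)$. For this I would use the change of variables $x\mapsto x/(x-1)$, which is a bijection of $\FF_q\setminus\{1\}$ and sends $1-x$ to $-1/(x-1)$. Under it, $\Char(x)(\Char^{-1}\eta^{-1})(1-x)$ becomes $\Char(x)\,\Char(x-1)^{-1}\,(\Char^{-1}\eta^{-1})(1-x)^{-1}$. Collecting the terms gives $\Char(-1)\Char(x)\eta(1-x)$, up to signs that I will track carefully. The boundary terms at $x=0$ and $x=1$ must be checked separately, using the conventions $\Char(0)=\eta(0)=0$.

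The main obstacle is this bookkeeping, not any conceptual step. Three things need care. First, the sign $\Char(-1)$ has to emerge correctly. Second, the degenerate case where $\Char$ is trivial needs attention: there $\Char(0)$ is conventionally $0$, and $g(1)=-1$ with the paper's sum over $\FF_q^\times$. Third, the normalization $g(\Char)=\sum_{x\in\FF_q^\times}$ must be used consistently, since the factorization identity depends on it. If the substitution argument becomes messy, I would fall back on a Fourier-style check instead: expand $J(\Char,\eta)g(\eta^{-1})$ directly as a double sum and recognize $\Char(-1)g(\Char)g(\Char^{-1}\eta^{-1})$.
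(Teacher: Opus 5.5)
Your argument is correct, and it takes a somewhat different route from the paper's.

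\textbf{The paper's route.} It starts from the classical formula $J(\varphi,\eta)=g(\varphi)g(\eta)/g(\varphi\eta)$. It then moves Gauss sums across with the reflection formula $g(\phi)g(\phi^{-1})=\phi(-1)q$, applied to $\phi=\eta$ and $\phi=\varphi\eta$. This requires $\varphi\eta$ (and, as the paper sets it up, $\varphi$) to be non-trivial, so the cases $\varphi=1$ and $\varphi\eta=1$ are checked by hand afterwards.

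\textbf{Your route.} You apply the factorization $g(\chi_1)g(\chi_2)=J(\chi_1,\chi_2)g(\chi_1\chi_2)$ to the pair $(\varphi,\varphi^{-1}\eta^{-1})$. Its product $\eta^{-1}$ is non-trivial by hypothesis. You then replace the reflection formula by the substitution identity $J(\varphi,\varphi^{-1}\eta^{-1})=\varphi(-1)J(\varphi,\eta)$.

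\textbf{What your route buys.} It is uniform in $\varphi$. Under the paper's conventions, $\chi(0)=0$ for every character including the trivial one, and $g$ is summed over $\mathbb{F}_q^\times$, so $g(1)=-1$.
\begin{itemize}
\item Your derivation of the factorization goes through verbatim whenever $\chi_1\chi_2\neq 1$, even if one factor is trivial. Extending the sums to $\mathbb{F}_q$ and the step $\chi_1(st)=\chi_1(s)\chi_1(t)$ both rely only on that convention.
\item The substitution identity holds for every $\varphi$, because the boundary terms at $x=0,1$ vanish on both sides.
\end{itemize}
So the degenerate cases you flag, $\varphi=1$ and also $\varphi=\eta^{-1}$, need no separate treatment. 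Likewise, no extra signs arise in the substitution, since $\varphi(1-x)/\varphi(x-1)=\varphi(-1)$ exactly.

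\textbf{What the paper's route buys.} It only quotes textbook Gauss-sum identities and needs no change of variables. The price is the case split at the end.
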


\begin{proof}
  If $\Char$ and $\Char \eta$ are non-trivial, then the left hand side
  is the usual Jacobi sum, and its value equals
  \[
J(\Char,\eta) = \frac{g(\Char)g(\eta)}{g(\Char\eta)}.
\]
For $\varphi$ a non-trivial character, the equality
$g(\varphi) = \varphi(-1)\frac{q}{g(\varphi^{-1})}$ applied to $\varphi = \Char \eta$ and $\eta$ implies that
\[
J(\Char,\eta) = \Char(-1)\frac{g(\Char)g(\Char^{-1}\eta^{-1})}{g(\eta^{-1})}.
\]
If $\Char = 1$, the left hand side equals $-1$ (because $\Char(0)=0$),
which equals the right hand side as well. At last, if $\Char \eta = 1$,
then the left hand side equals $J(\Char,\Char^{-1}) = -\Char(-1)$, which
also equals the value of the right hand side, since $g(1) = -1$.
\end{proof}

Let $\varepsilon_1,\eta_1,\ldots,\varepsilon_{n-1}, \eta_{n-1},\chi$
be characters of $\FF_q^\times$ (extended to $\FF_q$ by setting their
value at $0$ to be $0$). For $z \in \FF_q$ let
\begin{equation}
  \label{eq:H}
  H(z)=\sum_{x_1,\ldots,x_{n-1}} \prod_{i=1}^{n-1}\varepsilon_i(x_i)\eta_i(1-x_i)\chi^{-1}(1-zx_1\cdots x_{n-1}).
\end{equation}

\begin{theorem}
\label{thm:hypergeometric-char-sum}
Keep the previous notation, and set $\varepsilon_n=\chi$ and
$\varepsilon_n \eta_n = 1$. Let $\veca,\vecb$ be rational
numbers such that $\gen^{(q-1)\alpha_i}=\varepsilon_i$ and
$\gen^{(q-1)\beta_i}=\varepsilon_i\eta_i$ for $i=1,\ldots,n$.  Then
$H(z)$ equals
  \[
    H(z) = (-1)^{n-1}\left(\prod_{i=1}^{n-1} \varepsilon_i(-1)\right) \JacMot((\veca,-\vecb),(\veca-\vecb)) H_q(\veca,\vecb|z).
\]
\end{theorem}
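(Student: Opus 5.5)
We compute the Mellin transform of $H(z)$: expand $H$ in multiplicative characters of $\FF_q^\times$ and match the coefficients with those of $H_q(\veca,\vecb|z)$ from Definition~\ref{defi:q-finite-hgm}.

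\textbf{Step 1: expand the last factor.} For $w\in\FF_q$ write the inner factor as
\[
\chi^{-1}(1-w)=\frac{1}{q-1}\sum_{\Char}c_\Char\,\Char(w),\qquad c_\Char=\sum_{w\in\FF_q^\times}\overline{\Char}(w)\chi^{-1}(1-w)=J(\overline\Char,\chi^{-1}).
\]
This holds for $w\neq 0$. At $w=0$ the left side is $1$, while the right side vanishes because $\Char(0)=0$; this boundary term has to be tracked, and we return to it at the end. By Lemma~\ref{lemma:Jacobi} (using $\chi^{-1}\neq 1$, which follows from genericity since $\alpha_n\not\equiv\beta_n=0$), we get $c_\Char=\Char(-1)\,g(\Char\chi)g(\overline\Char)/g(\chi)$.

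\textbf{Step 2: substitute and factor.} Put $w=zx_1\cdots x_{n-1}$. The sum over $x_1,\dots,x_{n-1}$ then factors as
\[
\frac{1}{q-1}\sum_\Char c_\Char\,\Char(z)\prod_{i=1}^{n-1}J(\varepsilon_i\Char,\eta_i).
\]
Apply Lemma~\ref{lemma:Jacobi} once more. Genericity says $\alpha_i\not\equiv\beta_i$, so $\eta_i\neq1$, and
\[
J(\varepsilon_i\Char,\eta_i)=\varepsilon_i(-1)\Char(-1)\,\frac{g(\overline{\varepsilon_i\eta_i}\,\overline\Char)\,g(\varepsilon_i\Char)}{g(\overline{\eta_i})}.
\]
Now use $\varepsilon_i=\gen^{(q-1)\alpha_i}$ and $\varepsilon_i\eta_i=\gen^{(q-1)\beta_i}$, together with the $i=n$ factor, where $\varepsilon_n=\chi$ and $\varepsilon_n\eta_n=1$ (that is, $\beta_n=0$). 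Up to the sign $\Char(-1)^n$, the summand becomes
\[
\prod_{i=1}^n g(\gen^{(q-1)\alpha_i}\Char)\,g(\overline\gen^{(q-1)\beta_i}\overline\Char),
\]
which is exactly $\JJ^*(\veca\Char,\vecb\Char)$ of \eqref{eq:j-alt}. The remaining constant is $\prod_i\varepsilon_i(-1)$ divided by $g(\chi)\prod_{i<n}g(\overline\eta_i)$.

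\textbf{Step 3: identify the constant.} Write the normalization as $\JJ^*(\veca,\vecb)=\prod_i g(\alpha_i)g(-\beta_i)$. The constant should then equal $\JJ^*(\veca,\vecb)/\big(g(\chi)\prod_{i<n} g(\alpha_i-\beta_i)^{-1}\dots\big)$. Rewriting $1/g(\overline\eta_i)$ via $g(\eta)g(\overline\eta)=\eta(-1)q$, and collecting the $g(\alpha_i)$, $g(-\beta_i)$ and $g(\alpha_i-\beta_i)$ factors, this constant is the Jacobi motive $\JacMot((\veca,-\vecb),(\veca-\vecb))$ of Definition~\ref{def:jacobi-motive}, up to the sign $(-1)^{\sum n_i+1}$ and the factor $\frac{1}{1-q}$ versus $\frac{1}{q-1}$. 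Together these should produce the $(-1)^{n-1}\prod\varepsilon_i(-1)$ in the statement. One thing to check here is that the $i=n$ pair contributes $g(\alpha_n)g(0)/g(\alpha_n)$, and $g(0)=g(1)=-1$.

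\textbf{Step 4 (main obstacle).} The hard part is the sign and constant bookkeeping, together with the $w=0$ correction from Step 1, i.e.\ the terms with $zx_1\cdots x_{n-1}=0$. These terms vanish when $z\neq0$, since then $x_i=0$ kills $\varepsilon_i(x_i)$, provided each $\varepsilon_i$ is nontrivial. When some $\varepsilon_i$ is trivial (an integral $\alpha_i$), the terms do not vanish automatically. I would first try to show that such terms are absorbed: the edge cases of Lemma~\ref{lemma:Jacobi} are already built to handle $\Char=1$ and $\Char\eta=1$. If that fails, I would treat $z=0$ and integral $\alpha_i$ separately, comparing with Proposition~\ref{prop:props-hyperg-sum}.
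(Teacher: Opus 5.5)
Your Steps 1--2 are the paper's argument. The paper introduces $x_n:=zx_1\cdots x_{n-1}$ and inserts $\frac{1}{q-1}\sum_\Char\Char(zx_1\cdots x_{n-1}x_n^{-1})$, which is the same as your character expansion of $\chi^{-1}(1-w)$; it then factors the sum into Jacobi sums and applies Lemma~\ref{lemma:Jacobi}. What is missing is the endgame, namely the two points you leave open.

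\emph{Step~4.} Your worry about integral $\alpha_i$ is a misreading of the convention. In \eqref{eq:H}, and in Lemma~\ref{lemma:Jacobi}, every character, the trivial one included, is extended by $0$ at $0$. So for $z\neq 0$ all terms with some $x_i=0$ vanish, and nothing needs absorbing. At $z=0$ no comparison with Proposition~\ref{prop:props-hyperg-sum} can help, because the identity is false there: $H(0)=\prod_{i<n}J(\varepsilon_i,\eta_i)\neq 0$, while $H_q(\veca,\vecb|0)=0$ since $\Char(0)=0$ for every $\Char$. The statement has to be read for $z\in\FF_q^\times$, which is the only case used later (e.g.\ in Theorem~\ref{thm:Trace-match}). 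The paper's orthogonality step carries the same implicit restriction.

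\emph{Step~3.} This should be carried out rather than asserted, and it is short. Since $\overline{\eta_i}=\gen^{(q-1)(\alpha_i-\beta_i)}$ and $\overline{\varepsilon_i\eta_i}=\gen^{-(q-1)\beta_i}$, the Gauss sums are already attached to $\alpha_i$, $-\beta_i$ and $\alpha_i-\beta_i$. The rewriting via $g(\eta)g(\overline\eta)=\eta(-1)q$ is therefore unnecessary and only moves you away from the shape of $\JacMot$. The $-1$ from $\frac1{q-1}$ versus $\frac1{1-q}$ cancels the factor $g(1)=-1$ in the $i=n$ term of $\JJ^*(\veca,\vecb)$. This gives, for $z\neq 0$,
\[
H(z)=\Bigl(\prod_{i=1}^{n-1}\varepsilon_i(-1)\Bigr)\,P\,H_q(\veca,\vecb|z),\qquad P:=\prod_{i=1}^{n-1}\frac{g(\varepsilon_i)\,g(\overline{\varepsilon_i\eta_i})}{g(\overline{\eta_i})},
\]
which is exactly where the paper's proof stops.

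Now take $\pmb\theta=\sum_{i=1}^n\bigl(\langle\alpha_i\rangle+\langle-\beta_i\rangle-\langle\alpha_i-\beta_i\rangle\bigr)$. Then $\sum n_i=n$, and the $i=n$ block contributes $g(0)=-1$. Reading $g(\theta)$ as $g(\psi,\gen^{(q-1)\theta})$, the Gauss-sum product in $\JacMot$ is therefore $-P$.
\begin{itemize}
\item If $\JacMot$ is normalised by $(-1)^{\sum n_i}$, the stated sign $(-1)^{n-1}$ comes out exactly. This is also the normalisation under which Lemma~\ref{lemma:Tate-twist} holds as stated.
\item With the $(-1)^{\sum n_i+1}$ printed in Definition~\ref{def:jacobi-motive}, the same computation gives $(-1)^{n}$.
\end{itemize}
So the sign you expect to ``work out'' is precisely the convention-dependent step. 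You need to fix the normalisation of $\JacMot$ and verify the sign explicitly.
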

\begin{proof}
Add one more variable $x_n$ to the definition of $H(z)$ defined by
\[
x_n:=zx_1\cdots x_{n-1}.
  \]
Then
\[
H(z) = \frac{1}{q-1} \sum_{\Char} \sum_{x_1,\ldots,x_n} \prod_{i=1}^{n-1} \varepsilon_i(x_i)\eta_i(1-x_i)\chi^{-1}(1-x_n) \Char(zx_n^{-1}x_1\cdots x_{n-1}),
\]
where the first sum ranges over all characters $\Char$ of $\FF_q^\times$ (we
are abusing notation, declaring $x_n^{-1}$ to be $0$ if $x_n$ equals
$0$).  Interchanging sum and product, we get
\begin{equation}
  \label{eq:H-2}
  H(z) = \frac{1}{q-1}\sum_\Char\left( \prod_{i=1}^{n-1}  \left(\sum_{x_i}\varepsilon_i\Char(x_i)\eta_i(1-x_i)\right)\cdot \sum_{x_n} \Char^{-1}(x_n)\chi^{-1}(1-x_n)\right)\Char(z).  
\end{equation}
Applying Lemma~\ref{lemma:Jacobi} we get that
\begin{eqnarray*}
  H(z) &=& \frac{1}{q-1}\sum_\Char \left(\prod_{i=1}^{n-1} \varepsilon_i(-1)\Char(-1)\frac{g(\varepsilon_i\Char)g(\varepsilon_i^{-1}\eta_i^{-1}\Char^{-1})}{g(\eta_i^{-1})}\right) \cdot \frac{g(\Char \chi)g(\Char^ {-1})}{g(\chi)} \Char(-1) \Char(z)\\
  &=&\prod_{i=1}^{n-1} \frac{\varepsilon_i(-1)g(\varepsilon_i)g(\varepsilon_i^{-1}\eta_i^{-1})}{g(\eta_i^{-1})} \left(\frac{(-1)}{(q-1)}\sum_\Char\left( \prod_{i=1}^n \frac{g(\varepsilon_i \Char)g(\varepsilon_i^{-1}\eta_i^{-1}\Char^{-1})}{g(\varepsilon_i)g(\varepsilon_i^{-1}\eta_i^{-1})}\right)\Char(-1)^n\Char(z)\right),
\end{eqnarray*}
as claimed (with the convention $\varepsilon_n=\chi$ and
$\varepsilon_n \eta_n = 1$), the extra $-1$ coming from the fact that $g(1)=-1$.
\end{proof}

\section{Zeta function of  Superelliptic curves}
\label{section:superelliptic}
For this section let $N$ be a positive integer and $L$ be a number
field or a local field containing the $N$-th roots of unity. Let
$f(x) \in L[x]$. Let $\C$ be the superelliptic curve with equation
\begin{equation}
  \label{eq:superelliptic}
  \C: y^N = f(x).  
\end{equation}
Assume that the curve $\C$ is irreducible and the model is
\emph{minimal} in the sense that all irreducible factors of $f(x)$
have degree less than $N$. Let $\widehat \C$ be a smooth projective
model.
The group $\mubb_N$ of $N$-th roots of unity in $L$ acts on the
$L$-rational points $\C(L)$ of $\C$ by
\[
\zeta\cdot (x,y) = (x,\zeta y), \qquad \zeta \in  \mubb_N.
\]

\subsection{The new part}\label{section:new-part}
Let $\ell$ be a prime number.
There is a natural decomposition
$$
 H^1_{\text{\'et}}(\Cnsp,\Q_\ell) =
 H^1_{\text{\'et}}(\Cnsp,\Q_\ell)^{\rm new}\oplus H,
$$
where $H$ is the sum of the contributions of the curves
 \[
\C_d:\quad y^d = f(x),
 \]
for $d$ a proper divisor of $N$.

Let $F:=\Q(\zeta_N)$ and let $\lambda$ be a prime ideal of $F$. Define
  \[
H^1_{\text{\'et}}(\Cnsp,F_\lambda)^{\rm new}:= H^1_{\text{\'et}}(\Cnsp,\Q_\ell)^{\rm new}\otimes_{\Q_\ell}F_\lambda.
\]
The action of the group $\mubb_N$ on the curve $\C$ induces a
decomposition
\begin{equation}
  \label{eq:etale-decomposition}
  H^1_{\text{\'et}}(\Cnsp,F_\lambda)^{\rm new} := 
\bigoplus_{\chi}H^1_{\text{\'et},\chi}(\Cnsp,F_\lambda),  
\end{equation}
where $H^1_{\text{\'et},\chi}(\Cnsp,F_\lambda)$ is the $\chi$-eigenspace
for $\chi: \mubb_N \to F_\lambda^\times$ a character of order $N$.  

Let $\Om$ be the discrete valuation ring consisting of the ring of
integers of $L$ (when $L$ is a local field) or a completion of its
ring of integers at a prime ideal (when $L$ is a global field). Let $\id{p}$
be its maximal ideal and $k$ its residue field, a finite field of
characteristic $p$ with $q$ elements (so $N \mid q-1$). Note that 
$\Frob_{\id{p}}$ preserves the
decomposition~\eqref{eq:etale-decomposition} as  $L$ contains $\mubb_N$.

For the rest of the section we make the following assumptions.
\begin{ass}
Keeping the previous notation, the
polynomial $f$, the integer $N$ and the residual characteristic $p$
satisfy the following properties:
\begin{enumerate}
\item $p \nmid N$,
  
\item $p \nmid \Disc\left(\frac{f}{\gcd(f,f')}\right)$,
\item The leading coefficient $c$ of $f$ is a unit in $\Om$,
\item The curve $\C$ is irreducible over $\overline{L}$.
\end{enumerate}
\label{assumption}  
\end{ass}
These assumptions assure that the reduction of
$\widehat \C$ modulo $\id{p}$ is smooth.
\begin{remark}
  \label{remark:assumption}
  For $\C$ the Euler curve \eqref{curva2} specialized at
  $z=\xi\in \PP^1(L)\setminus \{0,1,\infty\}$, a prime ideal $\id{p}$
  satisfies Assumption~\ref{assumption} precisely when
  $\id{p} \in S_g$ (see~\ref{section:HGM}). 
\end{remark}

\begin{definition}
  \label{defi:N-defi}
  Let $\omega$ be a character of $(\Om/\id{p})^\times$. The
  \emph{counting function} $\Count(\omega)$ is defined by
\begin{equation}
  \label{eq:N-defi}
  \Count(\omega) := \sum_{x \in \F_q} \omega(f(x)) +
  \begin{cases}
  \omega(c) & \text{ if }N \mid \deg(f),\\
  0 & \text{ otherwise}.  
  \end{cases}
  \end{equation}
\end{definition}
Fix a generator $\zeta$ of $\mubb_N \subset F \subset L$.  Let $k \in \FF_q$
be the reduction of $\zeta$ modulo $\id{p}$ and let
$\alpha \in \overline{\FF_q}$ be such that $\alpha^{q-1}= k$. Then
$\varepsilon:=\alpha^N \in \FF_q$ (since it is a root of $x^{q-1}-1$)
and generates $\FF_q^\times$.
\begin{lemma}
 \label{lemma:generator}
  The character $\chi_{\id{p}}$ (defined in \eqref{eq:gen-defi}) satisfies
  \begin{equation}
    \label{eq:gen-rel}
    \chi_{\id{p}}(\varepsilon) = \zeta.      
  \end{equation}
 \end{lemma}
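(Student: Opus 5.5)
The plan is to reduce the claim to the defining congruence of $\chip$, which by \eqref{eq:gen-defi} reads $\chip(x)\equiv x^{(q-1)/N}\bmod \id{p}$, with values in $\mubb_N\subset\calO_F^\times$. Since $p\nmid N$ by Assumption~\ref{assumption}, reduction modulo $\id{p}$ is injective on $\mubb_N$: for distinct $\zeta^i,\zeta^j$ the difference $\zeta^i-\zeta^j$ divides $N$, because $N=\prod_{k=1}^{N-1}(1-\zeta^k)$. Hence it suffices to show that $\varepsilon^{(q-1)/N}$ reduces to the same element of $\FF_q$ as $\zeta$, namely $k$.

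Before that, I will check the facts about $\varepsilon$ that the statement takes for granted. Since $N\mid q-1$, the element $\varepsilon=\alpha^N$ satisfies $\varepsilon^{q-1}=\alpha^{N(q-1)}=k^N=1$, so $\varepsilon\in\FF_q^\times$. The computation itself is then immediate:
\[
\varepsilon^{(q-1)/N}=\alpha^{N\cdot (q-1)/N}=\alpha^{q-1}=k .
\]
Applying the defining congruence with $x=\varepsilon$ gives $\chip(\varepsilon)\equiv k\equiv \zeta \bmod \id{p}$. Both sides are $N$-th roots of unity, so the injectivity from the first step yields $\chip(\varepsilon)=\zeta$.

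The only point needing care is the identification of $\FF_q$ with $\calO_F/\id{p}$, or more generally with $\Om/\id{p}$ when $L\supsetneq F$. In the latter case $\chip$ is understood via composition with the norm, as in Remark~\ref{rem:char-extension}. The congruence $x^{(q_F-1)/N}$ composed with the norm $x\mapsto x^{1+q_F+\cdots}$ becomes $x^{(q-1)/N}$, so the same computation goes through in that case.

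The remark that $\varepsilon$ generates $\FF_q^\times$ is not needed for \eqref{eq:gen-rel}. If one wants it, the argument would be that $\alpha$ has order exactly $N(q-1)$, because $k$ has order $N$, so $\alpha^N$ has order $q-1$. I do not expect any real obstacle; the only thing to get right is the bookkeeping of the residue field identification.
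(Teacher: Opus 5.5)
Your proof is correct and follows the paper's argument, which is the single chain $\chip(\varepsilon)=\chip(\alpha^N)\equiv(\alpha^N)^{(q-1)/N}=\alpha^{q-1}=k\equiv\zeta \pmod{\id{p}}$. You additionally make explicit two points the paper leaves implicit: the injectivity of reduction on $\mubb_N$ (via $N=\prod_{j=1}^{N-1}(1-\zeta^j)$ and $p\nmid N$), which is needed to pass from congruence to equality, and the norm bookkeeping when $L\supsetneq F$.

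One correction to your aside. The fact that $\alpha^{q-1}=k$ has order $N$ does not force $\alpha$ to have order $N(q-1)$. For example, with $q=7$ and $N=3$, half of the six solutions $\alpha$ have order $9$, so $\varepsilon=\alpha^3$ has order $3$ and need not generate $\FF_q^\times$. As you say, this is not needed: the lemma itself shows that $\varepsilon$ generates $\FF_q^\times/(\FF_q^\times)^N$, and that is all the proof of Theorem~\ref{thm:trace-equality} uses.
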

\begin{proof} By definition
$  \chip(\varepsilon)= \chip(\alpha^N) \equiv (\alpha^N)^{(q-1)/N} = \alpha ^{q-1} = \kappa \equiv \zeta \pmod{\id{p}}.$
\end{proof}

For $\lambda$ a prime ideal of $F$ define the group morphism
$\chil : \mubb_N \to F_\lambda^\times$ to be the
restriction of the map $\iota:F \hookrightarrow F_\lambda$ to
$\mubb_N$.

\begin{theorem}
  Let $\lambda$ be a prime ideal of $F$. Let $\C$ be the
  superelliptic curve given by (\ref{eq:superelliptic}) for $N>1$ and
  suppose Assumption~\ref{assumption} holds.  Then the trace of
  $\Frob_{\id{q}}$ on $H_{et,\chil}^{1}(\Cnsp, F_\lambda)$ equals
  $-\iota_\lambda(\Count(\chip))$.
\label{thm:trace-equality}
\end{theorem}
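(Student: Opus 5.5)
The plan is to count points on the smooth reduction of $\Cnsp$ over $\FF_{q}$ and decompose the count according to characters of $\mubb_N$, then compare with the Lefschetz trace formula applied to each isotypic piece. By Assumption~\ref{assumption}, $\Cnsp$ has good reduction at $\id{p}$, so the $\ell$-adic cohomology (with $\ell$ below $\lambda$, $\ell\neq p$) is unramified and smooth base change identifies it with the cohomology of the special fibre $\widetilde{\C}$. Since $\mubb_N\subset L$, the automorphism $\zeta\cdot(x,y)=(x,\zeta y)$ commutes with $\Frob_{\id{p}}$, and the trace of $\Frob_{\id{p}}$ on the $\chil$-eigenspace can be written as $\frac1N\sum_{\zeta\in\mubb_N}\chil(\zeta)^{-1}\trace(\Frob_{\id{p}}\circ\zeta^*\mid H^1)$. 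By the Lefschetz fixed point formula for the twisted Frobenius $\Frob\circ\zeta$ (on $H^0$ and $H^2$ it acts as $1$ and $q$), one gets $\trace(\Frob\circ\zeta^*\mid H^1)=1+q-\#\{P\in\widetilde{\C}(\overline{\FF_q}) : \Frob(P)=\zeta^{-1}P\}$ (up to the correct sign convention of the action), so everything reduces to counting twisted points.

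Next I would compute these twisted counts on the affine model plus the points above $x=\infty$. An affine point $(x,y)$ with $x\in\FF_q$, $y^q=\zeta^{-1} y$ and $y^N=f(x)$ exists iff $f(x)^{(q-1)/N}$ equals the reduction of $\zeta^{-1}$ (or $f(x)=0$, contributing one point when $y=0$). Using the defining congruence $\chip(u)\equiv u^{(q-1)/N}$ and Lemma~\ref{lemma:generator} to fix the identification of $\chip$ with $\chil$ under $\iota_\lambda$, the number of such points over $x$ is $N\cdot[\chip(f(x))=\zeta^{-1}]$ (plus the contributions at roots of $f$, where the singular points are resolved; here the minimality of the model and the squarefree-part hypothesis (2) are needed to see that points above roots of $f$ are fixed by all of $\mubb_N$ or permuted freely in a way that only affects the trivial and non-primitive characters). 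Summing over $\zeta$ with weight $\chil(\zeta)^{-1}$ and using orthogonality gives $\sum_{x\in\FF_q}\chip(f(x))$ for the finite part, with the $1+q$ and the totally fixed points cancelling because $\chil$ is nontrivial. The points at infinity: if $N\mid\deg f$ there are $N$ points above $\infty$ permuted by $\mubb_N$, and Frobenius acts on them via the $N$-th root of the leading coefficient $c$, producing the extra term $\chip(c)$; if $N\nmid\deg f$ the points at infinity have nontrivial stabilizer and their contribution is killed for characters of exact order $N$ (this is also where restricting to the new part matters).

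Finally I would check that the contributions of the old part $H$ (the curves $\C_d$, $d\mid N$ proper) do not appear, since they correspond to characters of order dividing $d$, so restricting to a primitive character $\chil$ lands exactly in the new part. Combining, the trace equals $-\iota_\lambda(\Count(\chip))$. The main obstacle I expect is the careful bookkeeping at the singular/branch points (roots of $f$ with multiplicity and the points at infinity) in the desingularization $\Cnsp$, together with pinning down the sign/inverse conventions so that it is $\chip$ and not $\overline{\chip}$ that appears; I would handle the latter by testing on $y^2=x$-type examples and the former by working with the normalization locally, where over each root of $f$ of multiplicity $m$ the fibre has $\gcd(m,N)$ points with stabilizer of order $N/\gcd(m,N)$.
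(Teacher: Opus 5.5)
Your proposal is correct and follows essentially the paper's route. Both arguments apply Lefschetz to the twisted Frobenius $\Frob_q\circ\sigma$ with $\sigma\in\mubb_N$, and then use character orthogonality to isolate the $\chil$-eigenspace; the only difference is packaging, since the paper rewrites the twisted fixed points as $\FF_q$-points of the twisted curves $y^N=\varepsilon^i f(x)$ and inverts the resulting system in $i$, whereas you apply the projector $\frac1N\sum_\zeta \chil(\zeta)^{-1}\zeta^*$ directly and count fibre by fibre.

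Your stabilizer argument should be stated as in your last paragraph, not as ``fixed by all of $\mubb_N$ or permuted freely''. Above a root of multiplicity $m$ there are $\gcd(m,N)$ points, each with stabilizer $\mubb_{N/\gcd(m,N)}$, which is nontrivial because minimality gives $N\nmid m$, so these points do not contribute to a primitive eigenspace. Above $\infty$ the points are freely permuted exactly when $N\mid\deg f$, which gives the $\chip(c)$ term. Stated this way, the argument fills in what the paper leaves as ``not hard to verify''.
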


\begin{proof}
It is not hard to verify that
  \begin{equation}
    \label{eq:ap-formula}
    \trace(\Frob)|_{H^1_{\text{\'et}}(\Cnsp,F_\lambda)^{\text{new}}} =
    -\sum_{\omega} \sum_{x \in k} \omega(f(x)) - 
    \begin{cases}
      \sum_{\omega}\omega(c) & \text{ if }N\mid \deg(f),\\
      0 & \text{ otherwise},
      \end{cases}
    \end{equation}
    where $\omega$ runs over characters of $k^\times$ of order $N$ and
    $c$ denotes the leading coefficient of $f$.  For
    $\chi:\mubb_N \to F_\lambda^\times$ a character of order $N$, let
    $A_\chi$ be the matrix of $\Frob_{\id{p}}$ acting on $\HetL$.  By
    \eqref{eq:ap-formula}
  \begin{align*}
     \sum_{\chi} \text{Tr}(A_\chi)=-\sum_{\omega} \iota_\lambda(\Count(\omega)).
  \end{align*}
  Let $X/\F_q$ be a non-singular variety and let
  $\sigma \in \Aut_{\F_q}(X)$. By Lefschetz's trace formula
  \begin{equation}
    \label{eq_2}
    \# \{x \in X(\overline{\F_q}): \Frob_q(\sigma(x)) = x\} = \sum_{i=1}^{2 \dim(X)} (-1)^i \text{Tr}(\Frob_q \circ \ \sigma )|_{H_{\text{\'et}}^i(X,F_\lambda)}.
  \end{equation}

  Let $\sigma_i$ be the automorphism of $\Cnsp$ defined by
  $(x,y) \mapsto (x, \zeta^i y)$. Then \eqref{eq_2} with
  $\sigma = \sigma_i$ gives
  \begin{equation}
  \label{eq:points-twist}
    \# \{(x,y) \in \Cnsp(\F_q): x \in \F_q, y^{q-1} = k^{-iq}\} = 1 +
    q - \sum_{\chi} \chi(\zeta)^i\operatorname{Tr}(A_\chi) + \text{``old contribution''},
  \end{equation}
where \emph{``old contribution''} comes from the curves $\C_d$ for
a proper divisors $d\mid N$.

Since $k \in \FF_q$, $k^q = k$ and
  \begin{equation*}
    \{(x,y) \in \Cnsp(\F_q): x \in \F_q, y^{q-1} = k^{-iq}\} =  \{(x,y) \in \Cnsp(\F_q): x \in \F_q, y^{q-1} = k^{-i}\}.
  \end{equation*}
  Recall that $\alpha \in \overline{\F_q}$
  satisfies $\alpha^{q-1}=k$ and $\varepsilon := \alpha^N$. Then the
  map $y \leftrightarrow \tilde{y}:=y\alpha^i$ gives a bijection
  between the sets
  \begin{equation}
    \label{eq_3}
    \{(x,y) \in \Cnsp(\F_q) \ \text{fixed by } \Frob_q \circ\  \sigma_i \} \leftrightarrow \{(x,\tilde{y}) \in \F_q^2: \varepsilon^{-i} \tilde{y}^N = f(x)\},
  \end{equation}
  Indeed,
  \[
    y^{q-1} = k^{-i} \Leftrightarrow
    \frac{\tilde{y}^{q-1}}{(\alpha^{q-1})^i} = k^{-i} \Leftrightarrow
    \tilde{y}^q = \tilde{y}.
\]
The elements of the right hand side of \eqref{eq_3} are the
$\F_q$-points of the twisted curve
  \begin{align*}
    \bfC_i:  y^N = \varepsilon^{i}f(x).
  \end{align*}
  Then the left hand side of \eqref{eq:points-twist} equals
  \begin{align*}
    \# \Cnsp_i(\F_q) = 1 + q + \sum_{\omega}\omega
    (\varepsilon^i) \Count(\omega) + \text{``old contribution''}
  \end{align*}
so for all $i$
  \[
    \sum_{\chi} \chi(\zeta)^i \trace(A_\chi) = -\sum_{\omega} \iota_\lambda(\omega(\varepsilon))^i\iota_\lambda(\Count(\omega)).
  \]
  It follows that
  $\trace(A_{\chil}) = -\iota_\lambda(\Count(\omega))$ for
  $\omega$ the character of $\FF_q$ satisfying
  $\iota_\lambda(\omega(\varepsilon))=\chil(\zeta) =
  \iota_\lambda(\zeta)$, which by Lemma~\ref{lemma:generator}
  corresponds to $\omega=\chip$.
\end{proof}

\section{Rank two Hypergeometric Motives}
\label{section:hgm-2}
The main result of \cite{BCM} provides an explicit formula relating
the function $H_p(\vec{\alpha},\vec{\beta}|\z-spec)$ to the number of points
of a non-singular projective variety $V$ when the parameters are
rational. Such relations allow to realize the motive
$\HGM(\vec{\alpha},\vec{\beta}|\z-spec)$ in $V$. A nice instance of their
result is the following result (see for example~\cite{MR1407498}).

\begin{theorem}
  \label{thm:Ono}
  Let $p$ be an odd prime power and $\z-spec \in \FF_p$ and
  $\z-spec \neq 0,1$. Let $E_{\z-spec}$ be the Legendre elliptic curve
with affine equation
  \[
    E_{\z-spec}: y^2=x(x-1)(x-\z-spec)
  \]
  Then the set of $\F_p$-rational points (including the one at infinity) equals
  \[
|E_{\z-spec}(\FF_p)| = p+1 - (-1)^{(p-1)/2}H_p((1/2,1/2),(1,1)|\z-spec).
    \]
\end{theorem}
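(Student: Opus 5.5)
The plan is to reduce the point count of $E_{\z-spec}$ to the character sum $H(z)$ of \eqref{eq:H} with $n=2$, and then apply Theorem~\ref{thm:hypergeometric-char-sum}. Throughout, $q=p$ and $\phi$ denotes the quadratic character of $\FF_p^\times$, extended by $\phi(0)=0$.

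\emph{Step 1: count points.} Since $E_{\z-spec}$ has a single point at infinity and $\z-spec\neq 0,1$,
\[
|E_{\z-spec}(\FF_p)| = p+1+\sum_{x\in\FF_p}\phi\bigl(x(x-1)(x-\z-spec)\bigr).
\]
The term $x=0$ vanishes. For $x\neq 0$, substitute $x\mapsto 1/x$. This uses the identity
\[
\tfrac1x\bigl(\tfrac1x-1\bigr)\bigl(\tfrac1x-\z-spec\bigr)=x^{-3}(1-x)(1-\z-spec x)
\]
and $\phi(x^{-3})=\phi(x)$. The sum therefore becomes
\[
\sum_{x}\phi(x)\phi(1-x)\phi(1-\z-spec x),
\]
which is exactly $H(\z-spec)$ of \eqref{eq:H} with $n=2$ and $\varepsilon_1=\eta_1=\chi=\phi$ (note $\chi^{-1}=\phi$).

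\emph{Step 2: apply the theorem.} Take $\varepsilon_2=\chi=\phi$ and $\varepsilon_2\eta_2=1$. The conditions $\gen^{(p-1)\alpha_i}=\varepsilon_i$ and $\gen^{(p-1)\beta_i}=\varepsilon_i\eta_i$ are then satisfied by $\veca=(1/2,1/2)$ and $\vecb=(1,1)$. Theorem~\ref{thm:hypergeometric-char-sum} gives
\[
H(\z-spec)=-\phi(-1)\,\JacMot\bigl((\veca,-\vecb),(\veca-\vecb)\bigr)\,H_p\bigl((1/2,1/2),(1,1)|\z-spec\bigr).
\]
It remains to evaluate the Jacobi motive factor from Definition~\ref{def:jacobi-motive}. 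In ${\pmb\theta}=\langle\tfrac12\rangle+\langle\tfrac12\rangle+2\langle 0\rangle-2\langle -\tfrac12\rangle$ the two Gauss sums $g(\tfrac12)$ cancel, leaving a sign times $g(\psi,0)^2=1$. Hence the factor is $\pm1$, and by my count it is $-1$. Combining Step 1 with this yields $|E_{\z-spec}(\FF_p)|=p+1\pm\phi(-1)H_p(\ldots)$, where $\phi(-1)=(-1)^{(p-1)/2}$.

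\emph{Main obstacle.} The main difficulty is the sign bookkeeping. This includes the $(-1)^{\sum n_i+1}$ convention in $\JacMot$, the convention $g(\text{trivial})=-1$, the factor $\Char(-1)^n$ in $\JJ^*$ versus $\JJ$, and whether the integral $\beta$'s are taken as $0$ or $1$ in $\langle\cdot\rangle$. A naive pass through these conventions gives the sign $+$, whereas the statement asserts $-$. I would therefore recheck each convention against a direct computation. The simplest check is the degenerate case $\varepsilon_1$ trivial, or a numerical example such as $p=5$, $\z-spec=2$, counting points by hand. This fixes the overall sign, and in particular confirms that the $\JacMot$ factor contributes the extra $-1$ needed to match $p+1-(-1)^{(p-1)/2}H_p$.
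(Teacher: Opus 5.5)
Your Step~1 is correct. Routing the character sum through Theorem~\ref{thm:hypergeometric-char-sum} is also a legitimate self-contained approach; the paper itself gives no argument and simply quotes the result from \cite{MR1407498}. But the proof has a genuine gap at the only non-routine point, the sign, and your proposed resolution points the wrong way.

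With Definition~\ref{def:jacobi-motive} taken literally you do get ${\pmb\theta}=2\langle 0\rangle$ and $\JacMot=(-1)^{3}g(\psi,0)^2=-1$. Inserting this into your own display gives $H(\xi)=+\phi(-1)H_p$, i.e.\ $|E_\xi(\F_p)|=p+1+\phi(-1)H_p$. That is the opposite of the claim. So $\JacMot=-1$ is not ``the extra $-1$ needed''; the statement requires this factor to be $+1$.

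Re-checking conventions cannot settle this, because the two results you are combining disagree by a sign. What the proof of Theorem~\ref{thm:hypergeometric-char-sum} actually establishes is
\[
H(z)=\prod_{i=1}^{n-1}\frac{\varepsilon_i(-1)\,g(\varepsilon_i)\,g(\overline{\varepsilon_i\eta_i})}{g(\overline{\eta_i})}\cdot H_q(\veca,\vecb|z).
\]
With the normalization $(-1)^{\sum n_i+1}$ of Definition~\ref{def:jacobi-motive}, this constant equals $(-1)^{n}\prod_{i<n}\varepsilon_i(-1)\,\JacMot((\veca,-\vecb),(\veca-\vecb))$, not $(-1)^{n-1}\prod_{i<n}\varepsilon_i(-1)\,\JacMot(\ldots)$. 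Elsewhere the paper effectively uses the sign $(-1)^{\sum n_i}$: Example~\ref{rem:Legendre} calls this very Jacobi motive trivial, and Lemma~\ref{lemma:Tate-twist} also needs that sign. So your suggested numerical test would expose the inconsistency rather than fix a convention. For $p=5$ and $\xi=2$ one finds $H(2)=2$, $|E_2(\F_5)|=8$ and $H_5=-2$, whereas the literal combination predicts $H(2)=-2$.

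The clean fix is to avoid $\JacMot$ altogether. With $\varepsilon_1=\eta_1=\phi$, the constant above is $\phi(-1)g(\phi)g(1)/g(\phi)=-\phi(-1)$. Equivalently, expand the condition $w=\xi x$ in characters and apply Lemma~\ref{lemma:Jacobi} together with $g(\phi)^2=\phi(-1)p$:
\[
H(\xi)=\frac{1}{p-1}\sum_{\Char}J(\phi\Char,\phi)\,J(\overline{\Char},\phi)\,\Char(\xi)=\frac{1}{p(p-1)}\sum_{\Char}g(\phi\Char)^2\,g(\overline{\Char})^2\,\Char(\xi).
\]
Now unwind Definition~\ref{defi:q-finite-hgm}, using $g(\psi^{-1},\overline{\Char})^2=g(\overline{\Char})^2$, $g(1)^2=1$ and $\JJ(\veca,\vecb)=\phi(-1)p$. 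This gives
\[
H_p((1/2,1/2),(1,1)|\xi)=\frac{\phi(-1)}{p(1-p)}\sum_{\Char}g(\phi\Char)^2g(\overline{\Char})^2\Char(\xi)=-\phi(-1)H(\xi).
\]
Hence $|E_\xi(\F_p)|=p+1+H(\xi)=p+1-(-1)^{(p-1)/2}H_p((1/2,1/2),(1,1)|\xi)$, as stated.
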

Equivalently,
\begin{equation}
  \label{eq:Ono}
  H_p((1/2,1/2),(1,1)|\z-spec)= (-1)^{(p-1)/2} a_p(E_{\z-spec}),  
\end{equation}
where $a_p(E_{\z-spec})$ is the trace of the Frobenius endomorphism
acting on $E_{\z-spec}$.

Our definition of the geometric realization of the hypergeometric motive
(Definition~\ref{defi:HGM}) yields an analogue of~\eqref{eq:Ono}. The
hypergeometric motive 
appears in $H^1$ of the associated Euler
curve~(Definition~\ref{def:Euler-curve}) tensored with a Jacobi motive
and the explicit relation between the corresponding trace of Frobenius
and the finite hypergeometric sum is a particular instance of
Theorem~\ref{thm:Trace-match} below.
\begin{definition}
  \label{def:Euler-curve}
  Let $(a,b)$,$(c,d)$ be rational numbers, and let $N$ be their least
  common denominator. Define the quantities
\begin{equation}
  \label{eq:ABCD-parameters}
A=(d-b)N, \qquad B=(b+1-c)N, \qquad C=(1+a-d)N, \qquad D=(d-1)N.  
\end{equation}
Assume $(A,B,C,D)$ are non-negative. Define the  Euler curve attached
to~$(a,b),(c,d)$ as the affine curve over $\Q(z)$ with equation  
\begin{align}
\label{curva2}
\C:y^N = x^A(1-x)^B(1-zx)^C z^D.
\end{align}
\end{definition}

\begin{remark}
  \label{rem:Euler-translation} The Euler curve depends only on
  $(A,B,C,D)\bmod N$ so it maybe defined for any rationals
  $(a,b),(c,d)$ with common denominator $N$.
\end{remark}

\begin{definition}
  \label{defi:condition}
  Let $(a,b), (c,d)$ be a pair of generic rationals parameters. The
  parameters satisfy condition $\irr$ if the Euler curve~(\ref{curva2})
  is irreducible over $\overline{\Q(z)}$.
\end{definition}
\begin{lemma}
  Let $(a,b)$,$(c,d)$ be generic rational numbers and let $N$ be their least
  common denominator.
Then condition $\irr$ holds if and only if
  \begin{equation}
    \label{eq:dag}
N= \lcm\{\den(d-b),\den(b-c),\den(a-d)\}. 
  \end{equation}
  \label{lemma:equiv-irr}
\end{lemma}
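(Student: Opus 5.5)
The plan is to reduce the irreducibility of the Euler curve $\C: y^N = x^A(1-x)^B(1-zx)^Cz^D$ over $\overline{\Q(z)}$ to a Kummer-theoretic statement. By Kummer theory over the field $\overline{\Q(z)}(x)$, which contains all $N$-th roots of unity, the polynomial $y^N - g$ with $g = x^A(1-x)^B(1-zx)^Cz^D$ is irreducible if and only if the class of $g$ in $\overline{\Q(z)}(x)^\times/(\overline{\Q(z)}(x)^\times)^N$ has order exactly $N$. When it is reducible, the curve splits into $N/m$ components, where $m$ is that order.

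Since $z$ is a constant in $\overline{\Q(z)}$, the factor $z^D$ is an $N$-th power there and can be discarded. The elements $x$, $1-x$ and $1-zx$ are non-associate primes of $\overline{\Q(z)}[x]$, so the order of $g$ modulo $N$-th powers is the least $m$ with $N \mid mA$, $N \mid mB$ and $N \mid mC$. This least $m$ equals
\[
\lcm\{N/\gcd(N,A),\,N/\gcd(N,B),\,N/\gcd(N,C)\}=\lcm\{\den(A/N),\den(B/N),\den(C/N)\}.
\]

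From \eqref{eq:ABCD-parameters} we have $A/N = d-b$, $B/N \equiv b-c$ and $C/N \equiv a-d \pmod{\Z}$. Denominators are insensitive to integer shifts, so the order is $\lcm\{\den(d-b),\den(b-c),\den(a-d)\}$. Irreducibility, i.e.\ condition $\irr$, is therefore equivalent to this $\lcm$ being equal to $N$, which is \eqref{eq:dag}. Remark~\ref{rem:Euler-translation} covers the case of negative exponents: we may reduce $A,B,C$ modulo $N$ without changing the curve up to birational equivalence over $\overline{\Q(z)}$, and without changing the denominators.

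The point needing the most care is the Kummer criterion itself, namely that $y^N-g$ is irreducible exactly when $g$ is not a $p$-th power for any prime $p \mid N$. The additional clause about $-4g$ being a fourth power when $4 \mid N$ does not arise here, because $-4$ is a fourth power in an algebraically closed field. The independence of the three linear factors modulo $N$-th powers follows from unique factorization in $\overline{\Q(z)}[x]$, using that $z \neq 0,1$ generically.

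Genericity is not actually needed for the argument, though it guarantees that $N$ is the relevant common denominator. One remark on that point: $N$ is the lcm of the denominators of $a,b,c,d$, whereas \eqref{eq:dag} involves only differences. This is consistent, since the lcm of the difference denominators always divides $N$, so the condition is precisely that equality holds.
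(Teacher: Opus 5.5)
Your proof is correct and follows essentially the same route as the paper. You discard the factor $z^D$ (the paper does this via the substitution $y\mapsto \sqrt[N]{z^D}\,y$ over $\overline{\Q(z)}$), and you reduce irreducibility to $\gcd(A,B,C,N)=1$; that is the same as your statement that the Kummer order $N/\gcd(N,A,B,C)$ of $x^A(1-x)^B(1-zx)^C$ equals $N$. You then rewrite this condition in terms of the denominators of $d-b$, $b-c$, $a-d$, as the paper does. Your Kummer-theoretic argument (independence of $x$, $1-x$, $1-zx$ modulo $N$-th powers by unique factorization) supplies the irreducibility criterion that the paper states without proof.
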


\begin{proof}
  By making the change of variables (over $\overline{\Q(z)}$)
  $y = \sqrt[N]{z^D}y$, it is enough to study the curve
  \[
   y^N = x^A(1-x)^B(1-zx)^C,
 \]
 which is irreducible if and only of
 $\gcd(A,B,C,N)=1 = \gcd((d-b)N,(b-c)N,(a-d)N,N)$. The later equality holds if and
 only if
 \[
    \lcm\{\den(d-b),\den(b-c),\den(a-d)\}=N=\lcm\{\den(a),\den(b),\den(c),\den(d)\}.
  \]
\end{proof}

\begin{remark}
  If $r$ denotes the quotient of $N$ by
  $\lcm\{\den(d-b),\den(b-c),\den(a-d)\}$, then the curve $\C$
  decomposes as the union of $r$ irreducible components defined over
  $\Q(\zeta_r)[\sqrt[r]{z}]$. This phenomena does not appear 
 when $d=1$ since then condition $\irr$ always holds.
\label{remark:reducible}
\end{remark}
\begin{example}
  \label{example:reducible}
Continuing with Example~\ref{example:1}, let $a=1/8, b=7/8, c=3/8,
d=5/8$ so $N=8$. The Euler curve is defined by  
  \[
\C: \quad y^8 = x^6(1-x)^4(1-zx)^4z^{5}.
\]
Over $\Q(w)$, where $w^2=z$, it becomes the union of the curves
\[
\C_{\pm}: \quad y^4=\pm wx^3(1-x)^2(1-w^2x)^2.
  \]
These curves have genus two with hyperelliptic model
$$
y^2=wx(x^2\mp w)(x^2\mp w^{-1})
$$
The map
$$
\iota: \quad (x,y)\mapsto (x^{-1},yx^{-3})
$$
yields an involution of these curves and it is not to hard to see that
the quotient of $\C_{\pm}$ by $\iota$ equals
$$E_{\pm}: y^2=x^3-4wx^2\mp w(w-1)^2x.$$
For $d \in \ZZ$, let $\theta_d$ be the quadratic character of the
extension $\Q(\sqrt{d})/\Q$.  It can be verified (using the involution
$(x,y) \to (x^{-1},-yx^{-3})$) that actually
$$
\Jac(\C_\pm)\simeq E_{\pm}(w) \oplus E_{\pm}(w)\otimes \theta_{-1}.
$$
(See Examples~\ref{ex:reducible}
and~\ref{example:genericity-condition} for more on this curve.)
\begin{remark}
The point $P:=(0,0)$ is a $2$-torsion point on $E_\pm(w)$ and 
$$
E_{\pm}(w)/\langle P\rangle \simeq E_{\mp}(w) \otimes \theta_{-2}.
$$
Over the cyclotomic extension $\Q(\zeta_8)$
$$
\Jac(\C_\pm)\simeq E_{+}(w) \oplus E_{+}(w).
$$
\end{remark}
If we now take $w=3$, so $z=9$ as in Example~\ref{example:1}, the
curve $E_+(w)$ specializes to the elliptic curve
$$
E: \quad y^2=x^3-60x+176,
$$
of conductor $576$ and CM by $\Q(\sqrt{-12})$. We can quickly verify
for small primes  $p$ split in $\Q(\sqrt 2)$ that the trace of
Frobenius $\Frob_p$ on $H^1(E)$ and on $\calH_9$ indeed agree (the
latter may be computed in $\Z_p$ as
\begin{verbatim}
? e=ellinit([0,0,0,-60,176]);
? forprime(p=5,100,if(kronecker(2,p)==1,
    print(p,"  ",ellap(e,p),"  ",recognizep(hgm(9,[1/8,-1/8],[3/8,-3/8],p)*p))))

7  -4  -4
17  0  0
23  0  0
31  -4  -4
41  0  0
47  0  0
71  0  0
73  -10  -10
79  -4  -4
89  0  0
97  14  14
\end{verbatim}
with the GP code already mentioned).
\end{example}

\subsection{Hypergeometric motive definition}
In this section we give a definition of a hypergeometric motive for
general parameters in the case of rank two. For this we will use the
Euler curve. Many issues arise, however, if the Euler curve is reducible.

We proceed as follows. First, we consider the case where the Euler
curve is irreducible, see Definition~\ref{defi:HGM}, and $(a,b),(c,d)$
are generic and rational but otherwise arbitrary.

Second, we give an alternative definition, see Definition~\ref{defi:general-HGM},
which includes the case where the Euler curve is reducible, as a twist
of the motive attached to the (irreducible) Euler curve for parameters
$(a-d,b-d),(c-d,1)$ (in analogy with formula~\eqref{eq:ui-def}).

The fact that both definitions essentially agree when the Euler curve
is irreducible is not entirely obvious; we prove it in
Theorem~\ref{thm:isogenous}. Neither is it clear why the definitions
are independent of the ordering of $a,b,c,d$ since this is not true
for either the corresponding Euler curve or Jacobi motive. However, we
will prove that this is the case in Proposition~\ref{prop:symmetries}.

We start by studying the analogue of the character~$(-1)^{(p-1)/2}$
in~\eqref{eq:Ono}. 

\begin{definition}
  \label{defi:twist}
  Let $N$ be a positive integer, and let $\twist_N$ be the character
  of $\Gal(\overline \Q/\Q(\zeta_N))$ (of order dividing $2$)
  corresponding to the abelian extension $\Q(\zeta_{2N})/\Q(\zeta_N)$.
\end{definition}

\begin{lemma}
  \label{lemma:char-values}
  Let $\id{p}$ be a prime ideal of $F=\Q(\zeta_N)$ not dividing $2N$. Then 
\begin{equation}
  \label{eq:quad-char}
  \twist_N(\id{p})=\chip(-1)=(-1)^{(\normid{p}-1)/N},
\end{equation}
where $\chip$ is defined in~\eqref{eq:gen-defi}.
\end{lemma}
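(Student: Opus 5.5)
The plan is to prove the two equalities separately: the second by computing $\chip(-1)$ directly from its defining congruence, and the first by computing how Frobenius at $\id{p}$ acts on $\zeta_{2N}$.

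\emph{Second equality.} By \eqref{eq:gen-defi}, $\chip(-1)\equiv(-1)^{(q-1)/N}\bmod \id{p}$, where $q=\normid{p}$. Both $\chip(-1)$ and $(-1)^{(q-1)/N}$ lie in $\{\pm1\}$. Since $\id{p}\nmid 2$, the elements $1$ and $-1$ are distinct modulo $\id{p}$, so the congruence forces $\chip(-1)=(-1)^{(q-1)/N}$.

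\emph{First equality.} Let $q=\normid{p}$; since $\id{p}\nmid N$ we have $q\equiv 1\pmod N$. First consider even $N$, where $\Q(\zeta_{2N})\ne\Q(\zeta_N)$. Because $\id{p}\nmid 2N$, the prime $\id{p}$ is unramified in $\Q(\zeta_{2N})$, and $\Frob_{\id{p}}$ acts on $\zeta_{2N}$ by $\zeta_{2N}\mapsto\zeta_{2N}^{q}$. Writing $q=1+N\cdot\frac{q-1}{N}$, we get
\[
\zeta_{2N}^{q}=\zeta_{2N}\cdot(\zeta_{2N}^{N})^{(q-1)/N}=\zeta_{2N}\,(-1)^{(q-1)/N}.
\]
So Frobenius is trivial on $\Q(\zeta_{2N})$ exactly when $(q-1)/N$ is even. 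This gives $\twist_N(\id{p})=(-1)^{(q-1)/N}$.

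Now consider odd $N$. Here $\Q(\zeta_{2N})=\Q(\zeta_N)$, so $\twist_N$ is trivial. On the other side, $q\equiv1\pmod N$ and $q$ is odd (since $\id{p}\nmid 2$), so $q\equiv 1\pmod{2N}$. Hence $(q-1)/N$ is even and $(-1)^{(q-1)/N}=1$, so the equality holds in this case as well.

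The only point needing care is this split between $N$ even and $N$ odd; everything else is a routine computation.
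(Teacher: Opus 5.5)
Your proof is correct and follows essentially the paper's route. You get the second equality from the defining congruence of $\chip$, and you make explicit that $\id{p}\nmid 2$ separates $\pm1$ modulo $\id{p}$. You get the first by showing that $(\normid{p}-1)/N$ is even exactly when $\Frob_{\id{p}}$ is trivial on $\Q(\zeta_{2N})$. The only cosmetic difference is that you read this off directly from $\zeta_{2N}^{q}=\zeta_{2N}(-1)^{(q-1)/N}$, whereas the paper passes through $t=v_2(N)$, the Chinese remainder theorem, and the cyclotomic splitting criterion $\normid{p}\equiv 1 \pmod{2N}$.
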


\begin{proof}
  The second equality follows from the fact that
  $\chip(-1)\equiv (-1)^{(\normid{p}-1)/N} \bmod\id{p}$ by definition. Let
  $t=v_2(N) \ge0$. Let $\normid{p}=p^r$ for $p$ a rational prime
  number, so $r$ is the order of $p$ modulo $N$. Then
  \[
  (-1)^{(\normid{p}-1)/N}=(-1)^{(p^r-1)/2^t}.
\]
Then the right hand side equals $1$ if $p^r \equiv 1 \pmod{2^{t+1}}$
and $-1$ otherwise. By the Chinese reminder theorem it equals $1$ if
$p^r \equiv 1 \pmod{2N}$ and $-1$ otherwise; note that
$p^r \equiv 1 \pmod{2N}$ precisely when $\id{p}$ splits in
$\Q(\zeta_{2N})/F$.
\end{proof}

\begin{corollary}
  Suppose that $t = v_2(N) \ge 1$. Let $\kappa$ be a primitive
  Dirichlet character of conductor $2^{t+1}$ thought as a character of
  $\Gal(\overline{\Q}/\Q)$. Then the quadratic character $\twist$ equals the
  restriction of $\kappa$ to $\Gal(\overline{\Q}/{F})$.
 \label{lemma:char-extension}
\end{corollary}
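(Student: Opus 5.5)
The plan is to compare the two characters through their values on Frobenius elements. Both $\twist$ and $\kappa|_{\Gal(\overline{\Q}/F)}$ are characters of $\Gal(\overline{\Q}/F)$. By Chebotarev density, two such characters coincide as soon as they agree on $\Frob_{\id{p}}$ for all prime ideals $\id{p}$ of $F$ outside a finite set. I will therefore fix $\id{p}\nmid 2N$, lying over the rational prime $p$, and write $\normid{p}=p^r$, where $r$ is the order of $p$ modulo $N$.

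First, restricting a Galois character of $\Q$ to $\Gal(\overline{\Q}/F)$ and evaluating at $\Frob_{\id{p}}$ gives the value at $\Frob_p^{r}$. Hence $\kappa|_{\Gal(\overline{\Q}/F)}(\Frob_{\id{p}})=\kappa(p)^r=\kappa(p^r)=\kappa(\normid{p})$, where $\kappa$ is now read as a Dirichlet character modulo $2^{t+1}$.

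Second, by Lemma~\ref{lemma:char-values} and the argument in its proof, $\twist(\id{p})=1$ exactly when $\normid{p}\equiv 1\pmod{2^{t+1}}$, and $\twist(\id{p})=-1$ otherwise. Since $2^t\mid N$ and $N\mid \normid{p}-1$, we always have $\normid{p}\equiv 1\pmod{2^t}$. So $\normid{p}$ lies in the kernel $U$ of the reduction $(\Z/2^{t+1})^\times\to(\Z/2^t)^\times$, and $U=\{1,1+2^t\}$ has order $2$.

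The remaining step, which is the crux, is to show that $\kappa(1+2^t)=-1$, so that $\kappa|_U$ is the nontrivial character of $U$. This holds because $\kappa$ is primitive of conductor $2^{t+1}$: if $\kappa$ were trivial on $U$, it would factor through $(\Z/2^t)^\times$, contradicting primitivity. For $t=1$ the modulus $2^t=2$ should be read as the trivial group, so the argument still applies; there $\kappa=\kro{-4}{\cdot}$.

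Combining the two steps, $\kappa(\normid{p})=1$ if and only if $\normid{p}\equiv 1 \pmod{2^{t+1}}$, which holds if and only if $\twist(\id{p})=1$. Since both characters take values in $\{\pm1\}$ on these Frobenius elements, they agree on $\Frob_{\id{p}}$ for every $\id{p}\nmid 2N$, and Chebotarev density gives $\twist=\kappa|_{\Gal(\overline{\Q}/F)}$.
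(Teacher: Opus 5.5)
Your argument is correct, but it takes a different route from the paper.

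\textbf{The paper's route.} The paper argues only with fixed fields. Since $\kappa$ factors through $\Gal(\Q(\zeta_{2^{t+1}})/\Q)$, the kernel of $\kappa|_{\Gal(\overline{\Q}/F)}$ cuts out the compositum of $F$ with the fixed field of $\kappa$. The paper identifies this compositum with $\Q(\zeta_{2N})$; it writes an intersection where the compositum $F\cdot\Q(\zeta_{2^{t+1}})=\Q(\zeta_{2N})$ is meant.

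\textbf{Your route.} You evaluate both characters on Frobenius elements, reduce to the two-element group $\{1,1+2^t\}\subset(\Z/2^{t+1})^\times$, and finish with Chebotarev.

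\textbf{Comparison.} Your version isolates the one real input, primitivity of $\kappa$, in the concrete form $\kappa(1+2^t)=-1$. The paper's sketch uses primitivity only implicitly. Its sentence that the fixed field of $\kappa$ is $\Q(\zeta_{2^{t+1}})$ is accurate only for $t=1$; for $t=2$ the primitive characters cut out $\Q(\sqrt{2})$ and $\Q(\sqrt{-2})$.

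A correct fixed-field proof needs exactly your observation. The fixed field $L_\kappa$ of $\kappa$ is not contained in $\Q(\zeta_{2^t})=F\cap\Q(\zeta_{2^{t+1}})$. Hence $FL_\kappa$ is a nontrivial subextension of the quadratic extension $\Q(\zeta_{2N})/F$, and so it equals $\Q(\zeta_{2N})$.

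Once repaired this way, the fixed-field route avoids Chebotarev and Frobenius computations and identifies the kernels directly. Your route additionally gives the explicit formula $\twist(\id{p})=\kappa(\normid{p})$ for every $\id{p}\nmid 2N$.
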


\begin{proof}
  The fixed field of the character $\kappa$ corresponds to the Galois
  extension $\Q(\zeta_{2^{t+1}})$. Then the
  restriction of $\kappa$ to $\Gal(\overline{\Q}/F)$ has as fixed
  field $\Q(\zeta_{2N})=\Q(\zeta_N) \cap \Q(\zeta_{2^{t+1}})$.
\end{proof}

Let $\Euler$ denote the $\chi_0$ isotypical component of the de Rham
cohomology group~$H^1(\Cnsp,F)=H^1(\Cnsp,\Q)\otimes_{\Q}F$, where
$\chi_0:\mubb\rightarrow F^\times$ is the identity character. It is
known to be of rank two (see \cite{MR2005278} formula (16)).

\begin{definition}
  \label{defi:HGM}
  Let $(a,b),(c,d)$ be a pair of generic rational numbers satisfying
  condition $\irr$ and let $N$ be their least common denominator. The
  hypergeometric motive with parameters $(a,b),(c,d)$ is defined as 
\begin{equation}
  \label{eq:motive-definition}
  \hgm := \Euler \otimes \JacMot((-a,-b,c,d),(c-b,d-a))^{-1} \twist_N^{(d-b)N}.
\end{equation}
\end{definition}

\begin{example}
  \label{rem:Legendre}  
  Let $\veca=(1/2,1/2)$ and $\vecb=(1,1)$ as in Example~\ref{example:Legendre}. Formula
  \eqref{eq:ABCD-parameters} gives the values $A=1$, $B=1$,
  $C=1$, $D=0$ and $N=2$. Then Euler's curve corresponds to
  Legendre's elliptic curve
  \[
  E_z: y^2 = x(1-x)(1-zx).
\]
The Jacobi motive $\JacMot((-1/2,-1/2,1,1)(1/2,1/2))$ is trivial. Then
the hypergeometric motive $\HGM((1/2,1/2),(1,1)|z)$ corresponds to the
quadratic twist by the character $\kro{-4}{\cdot}$ of Legendre's
elliptic curve (as expected from~\eqref{eq:Ono}).
\end{example}

Formula~\eqref{eq:motive-definition} involves three different motives,
with a priory different fields of definition (see
Example~\ref{example:Jacobi}).  This is the reason why we cannot prove
a full version of Conjecture~\ref{conjecture:field-of-defi} (but we
can under some extra hypotheses, for example, conditions that make the
Jacobi motive to be defined over~$\Q$). To prove cases not covered by
our assumptions, one probably needs to study other geometric varieties
where the HGM appears. Let us illustrate the situation with some
examples.

\begin{example}
  Let $a=\frac{1}{15}, b=\frac{4}{15}, c=\frac{1}{8}$ and
  $d=\frac{3}{8}$. The Jacobi motive for these parameters
  is~$\JacMot(\pmb \theta)$, where
  \[
  \pmb \theta=\left\langle \frac{14}{15} \right\rangle + \left\langle \frac{11}{15}\right\rangle + \left\langle \frac{1}{8} \right\rangle + \left\langle \frac{3}{8}\right\rangle - \left\langle \frac{103}{120}\right\rangle - \left\langle \frac{37}{120}\right\rangle.
 \]
 Since $19$ is the only element in $\left(\Z/120\right)^\times$ that
 fixes (under multiplication) the sets
 $\{\frac{14}{15},\frac{11}{15},\frac{1}{8},\frac{3}{8}\}$ and
 $\{\frac{103}{120},\frac{37}{120}\}$ (in $\Q/\Z$), the Jacobi motive
 is defined over $F_0:=\Q(\zeta_{120})^{\sigma_{19}}$. The base field
 $K$  of~$\HGM((\frac{1}{15},\frac{4}{15}), (\frac{1}{8},\frac{3}{8})|z)$,
however, is that fixed by $H=\langle 49,91\rangle$. In
 particular, $K \subsetneq F_0$.

 On the other hand, if we take
 $a=\frac{1}{8}, b=\frac{7}{8}, c=\frac{3}{8}$ and $d=\frac{5}{8}$
 then the Jacobi motive is defined over~$\Q$ (since multiplication by
 any odd integer fixes the sets
 $\{\frac{1}{8},\frac{3}{8},\frac{5}{8},\frac{7}{8}\}$ and
 $\{\frac{1}{2}\}$), while the base field
 of the corresponding hypergeometric motive is~$\Q(\sqrt{2})$.
\label{example:Jacobi}
\end{example}

\begin{definition}
  Let $\alpha=\frac{a}{N}$ be a rational number, with $a,N$ coprime
  integers. Define 
  $$
\eta_\alpha:\Gal(\overline{\Q(z)}/\Q(z,\zeta_N)) \to
\overline{\QQ}^\times,
$$
to be the character that factors through
$\Gal(\QQ(\sqrt[N]{z},\zeta_N)/\QQ(z,\zeta_N))$ and whose value at
$\sigma$ equals
  \[
\eta_{\alpha}(\sigma) = \left(\frac{\sigma(\sqrt[N]{z})}{\sqrt[N]{z}}\right)^a.
    \]
\end{definition}

\begin{remark}
  The character $\eta_\alpha$ is a realization of the hypergeometric motive
  $\HGM(\alpha,1|1-z)$ studied in \S \ref{section:rank1}.
\end{remark}

Let $\z-spec \in F=\Q(\zeta_N)$, $\z-spec \neq 0$ and consider the
specialization of $\eta_{\alpha}$ at $z=\z-spec$. Let $\id{p}$ be a
prime ideal of $F$ of norm $q$, not dividing $N$ and such that
$v_{\id{p}}(\z-spec)=0$. Let $\widetilde{\id{p}}$ be a prime ideal of
$\Q(\sqrt[N]{\z-spec},\zeta_N)$ dividing $\id{p}$.  It follows from
its definition that
\[
  \Frob_{\widetilde{\id{p}}}(\sqrt[N]{\z-spec}) \equiv (\sqrt[N]{\z-spec})^{q}
  \pmod{\widetilde{\id{p}}},
\]
hence
\[
  \frac{\Frob_{\widetilde{\id{p}}}(\sqrt[N]{\z-spec})}{\sqrt[N]{\z-spec}} \equiv
  \z-spec^{\frac{q-1}{N}} \pmod{\id{p}}.
\]

This implies that the specialization of $\eta_{\alpha}$ at $\z-spec$ satisfies
\begin{equation}
  \label{eq:eta-weil}
\eta_{\alpha}(\Frob_{\id{p}})= \chip(\z-spec)^a,  
\end{equation}
where $\chip$ is the character defined in \eqref{eq:gen-defi}.

\begin{definition}
  Let $(a,b),(c,d)$ be generic rational numbers, and let $N$ be their
  least common denominator. Set
  $F=\Q(\zeta_N)$. The hypergeometric motive $\hgm$ is defined by
  \begin{equation}
    \label{eq:motive-general-definition}
   \HGM((a-d,b-d),(c-d,1)|z)\otimes
   \JacMot((-a,-b,c,d),(d-a,d-b,c-d))^{-1} \eta_d(z),     
  \end{equation}
  where the hypergeometric motive $\HGM((a-d,b-d),(c-d,1);z)$ (base
  changed to $F(z)$ if necessary) is defined in~\ref{defi:HGM}.
\label{defi:general-HGM}
\end{definition}

This definition of~$\hgm$ has the disadvantage of making more
difficult to understand its descent to smaller fields.  In this
regard, it is not clear whether there is a more suitable definition of
the motive when condition $\irr$ is not
satisfied.

\begin{example} \label{ex:reducible} Continuing with Example
  \ref{example:1}, take $a=1/8, b=7/8, c=3/8, d=5/8$, so $N=8$. The
  values $\tilde{a}=a-d=1/2$, $\tilde{b}=b-d=1/4$ and
  $\tilde{c}=c-d=3/4$ have denominator $4$ so
  $\HGM((a-d,b-d),(c-d,1);z)$ has base
  field~$\Q(i)\subsetneq F:=\Q(\zeta_8)$.  Also, the curve $\C$ is
  reducible (as described in detail in
  Example~\ref{example:reducible}). The Euler curve of  parameters
  $(\tilde{a},\tilde{b}),(\tilde{c},1)$ has the affine   equation
  \[
    \widetilde{\C}:y^4=x^3(1-x)^2(1-zx)^2.
  \]
  It is an irreducible curve isomorphic to a twist of an irreducible
  component of $\C$. 
  Note that $\HGM((1/2,1/4),(3/4,1)|z)$ is defined over its base
  field~$\Q(i)$ whereas $\HGM((1/8,7/8),(3/8,5/8)|z)$ has base
  field~$\Q(\sqrt 2)$ (see Example~\ref{example:base-field-switch} and
  Corollary~\ref{coro:extension-motive}).
\end{example}

\begin{theorem}
  \label{thm:Trace-match}
  Let $(a,b),(c,d)$ be generic rational parameters and let $N$ be
  their least common denominator. Let $L$ be a finite extension of
  $F$. Let $\z-spec \in L$ with $\z-spec \neq 0,1$. Let $\id{p}$ be a
  prime ideal of $L$ satisfying that $\id{p} \nmid N$ and
  $v_{\id{p}}(\z-spec(\z-spec-1))=0$. Then the trace of the Frobenius
  automorphism $\Frob_{\id{p}}$ acting on $\hgms$ equals
  $H_{\id{p}}((a,b),(c,d)|\z-spec)$.
\end{theorem}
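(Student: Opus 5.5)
We first treat the case where $(a,b),(c,d)$ satisfy condition $\irr$ and the motive is given by Definition~\ref{defi:HGM}; the general case then reduces to it via Definition~\ref{defi:general-HGM}. By Remark~\ref{remark:assumption}, the hypotheses on $\id{p}$ are exactly Assumption~\ref{assumption} for the specialized Euler curve $\C_{\z-spec}: y^N=\z-spec^D x^A(1-x)^B(1-\z-spec x)^C$. We may first reduce $(A,B,C,D)$ modulo $N$ (Remark~\ref{rem:Euler-translation}), so that the model is minimal. Theorem~\ref{thm:trace-equality} then says that the trace of $\Frob_{\id{p}}$ on the $\chi_0$-component equals $-\Count(\chip)$.

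Next we evaluate $\Count(\chip)$. The degree of $f$ is $A+B+C$, and by construction $A+B+C \equiv N(a+1-c) \pmod N$. We need to check whether the point-at-infinity correction $\chip(c_f)$ appears, in which case the leading coefficient is $\pm\z-spec^{D+C}$. Apart from this correction, $\Count(\chip)=\chip(\z-spec)^D\sum_x \chip(x)^A\chip(1-x)^B\chip(1-\z-spec x)^C$. This is exactly the sum $H(\z-spec)$ of~\eqref{eq:H} with $n=2$, $\varepsilon_1=\chip^A$, $\eta_1=\chip^B$ and $\chi^{-1}=\chip^C$.

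We then apply Theorem~\ref{thm:hypergeometric-char-sum}. Matching $\gen^{(q-1)\alpha_i}=\varepsilon_i$ with $\gen^{(q-1)/N}=\chip^{-1}$ from Definition~\ref{defi:finite-hgm} gives a parameter set equal to $(a-d,b-d),(c-d,1)$ up to sign conventions and integer shifts. The theorem expresses $H(\z-spec)$ as $-\varepsilon_1(-1)\,\JacMot(\cdots)(\id{p})\,H_{\id{p}}((a-d,b-d),(c-d,1)|\z-spec)$. The twist property, Proposition~\ref{prop:props-hyperg-sum}(5) with $\rho=d$, converts this to $H_{\id{p}}((a,b),(c,d)|\z-spec)$. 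The conversion produces a factor $\overline{\gen}(\z-spec)^{(q-1)d}$, which cancels against $\chip(\z-spec)^D$ (recall $D=(d-1)N$), and a ratio of Jacobi-type Gauss-sum products. The sign $\varepsilon_1(-1)=\chip(-1)^A$ is identified with $\twist_N^{(d-b)N}(\id{p})$ by Lemma~\ref{lemma:char-values}.

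Finally, all Gauss-sum factors must be collected and shown to equal $\JacMot((-a,-b,c,d),(c-b,d-a))(\id{p})$ exactly. Tensoring with its inverse and with the twist in~\eqref{eq:motive-definition} then leaves $H_{\id{p}}((a,b),(c,d)|\z-spec)$. For the general Definition~\ref{defi:general-HGM}, we apply the $\irr$ case to the parameters $(a-d,b-d),(c-d,1)$, which always satisfy $\irr$. We then multiply by $\eta_d(\Frob_{\id{p}})=\chip(\z-spec)^{Nd}$ using~\eqref{eq:eta-weil}, and by the inverse Jacobi motive. Proposition~\ref{prop:props-hyperg-sum}(5) again shows that the product is $H_{\id{p}}((a,b),(c,d)|\z-spec)$. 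All identities are taken after extending $\chip$ to $L$ via the norm (Remark~\ref{rem:char-extension}).

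The main obstacle is the bookkeeping in the last two paragraphs: keeping the signs $\chip(-1)$ straight, choosing between $\psi$ and $\psi^{-1}$, handling the reduction of $A,B,C,D$ modulo $N$, and treating the infinity term when $N\mid A+B+C$. These must combine into precisely the stated Jacobi motive and the twist $\twist_N^{(d-b)N}$. I would first verify the bookkeeping on the Legendre case, Example~\ref{rem:Legendre}, against~\eqref{eq:Ono}, and then carry out the general computation using the Gauss-sum relation $g(\varphi)g(\varphi^{-1})=\varphi(-1)q$.
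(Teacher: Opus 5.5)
Your proposal is correct and is essentially the paper's proof. Both go through Theorem~\ref{thm:trace-equality} and then Theorem~\ref{thm:hypergeometric-char-sum} with the shifted data $(a-d,b-d),(c-d,1)$. Both then use the twist identity: the paper packages it as Lemma~\ref{lemma:hypergeom-fin-rel}, which is exactly your Proposition~\ref{prop:props-hyperg-sum}(5) with $\rho=d$. This produces $\twist_N^{(d-b)N}(\id{p})\,\JacMot((-a,-b,c,d),(c-b,d-a))(\id{p})\,H_{\id{p}}((a,b),(c,d)|\xi)$, and the general Definition~\ref{defi:general-HGM} is handled the same way via $\eta_d(\Frob_{\id{p}})=\chip(\xi)^{dN}$.

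One item on your obstacle list is vacuous. Genericity gives $a\not\equiv c \bmod \Z$, so $N\nmid A+B+C$ and the point-at-infinity term in $\Count(\chip)$ never occurs; this is exactly how the paper disposes of it.
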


Before proving the statement, recall that we gave two different
definitions of $\hgms$ depending on whether $\irr$ holds or not. The
theorem holds for both definitions.
We need the following auxiliary result.

\begin{lemma}
  \label{lemma:hypergeom-fin-rel}
  Let $(a,b),(c,d)$ be rational generic parameters and let $N$ be
  their least common denominator. Let $\id{p}$ be a prime ideal of $L$
  not dividing $N$ and such that $v_{\id{p}}(\z-spec(\z-spec-1))=0$. Then
  \[
    H_{\id{p}}((a-d,b-d),(c-d,1)|\z-spec) = \gent(\z-spec)^{-dN}\JacMot((-a,-b,c,d),(d-a,d-b,c-d)) H_{\id{p}}((a,b),(c,d)|\z-spec).    
\]
\end{lemma}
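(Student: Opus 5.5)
The plan is to compare the two character expansions term by term, using the twist property, Proposition~\ref{prop:props-hyperg-sum}(5). Take $\rho=-d$. Then $\rho\veca=(a-d,b-d)$ and $\rho\vecb=(c-d,0)$, and $(c-d,0)$ agrees with $(c-d,1)$ modulo $\Z$. Since $H_q$ only depends on the parameters modulo $\Z$, part (5) gives
\[
H_{\id{p}}((a-d,b-d),(c-d,1)|\z-spec)=(-1)^{-2(q-1)d}\,\overline{\gen}(\z-spec)^{-(q-1)d}\,\frac{\JJ(\veca,\vecb)}{\JJ(\rho\veca,\rho\vecb)}\,H_{\id{p}}((a,b),(c,d)|\z-spec).
\]
With $n=2$ the sign $(-1)^{2(q-1)d}$ is $1$, because $(q-1)d$ is an integer (as $N\mid q-1$).

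Next I will convert the character factor. By Definition~\ref{defi:finite-hgm}, $\gen^{(q-1)/N}=\chip^{-1}$. Hence $\overline{\gen}(\z-spec)^{-(q-1)d}=\gen(\z-spec)^{(q-1)d}=\chip(\z-spec)^{-dN}$, which is the character factor in the lemma.

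The remaining step is the constant. I need to show that $\JJ(\veca,\vecb)/\JJ(\rho\veca,\rho\vecb)$ equals $\JacMot((-a,-b,c,d),(d-a,d-b,c-d))(\id{p})$. Writing out the Gauss sums, with $g(\psi,x,\id{p})$ the Gauss sum of $\chip^{Nx}$ and $\gen^{(q-1)x}=\chip^{-Nx}$, I get
\[
\JJ(\veca,\vecb)=g(\psi,-a)\,g(\psi,-b)\,g(\psi^{-1},c)\,g(\psi^{-1},d).
\]
The same expansion for the twisted data gives
\[
\JJ(\rho\veca,\rho\vecb)=g(\psi,d-a)\,g(\psi,d-b)\,g(\psi^{-1},c-d)\,g(\psi^{-1},0).
\]
Here $g(\psi^{-1},0)=-1$. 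I will then replace $\psi^{-1}$ by $\psi$ using \eqref{gauss-sum-prop}, which introduces factors $\chip^{N c}(-1)$, $\chip^{N d}(-1)$ and $\chip^{N(c-d)}(-1)$. The quotient becomes $\pm\,\chip(-1)^{N(2d)}$ times $\prod g(\psi,\theta_i)^{n_i}$ for $\pmb\theta=\langle -a\rangle+\langle -b\rangle+\langle c\rangle+\langle d\rangle-\langle d-a\rangle-\langle d-b\rangle-\langle c-d\rangle$. This $\pmb\theta$ satisfies \eqref{jacobi-condition}, since $-a-b+c+d-(d-a)-(d-b)-(c-d)=0$.

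The main obstacle is the sign bookkeeping. The prefactor in \eqref{eq:Jac-motive} is $(-1)^{\sum n_i+1}=(-1)^{4-3+1}=1$, and the $-1$ from $g(\psi^{-1},0)$ must be matched against it. The leftover $\chip(-1)^{2dN}=1$ is harmless. A degenerate case is when one of $d-a$, $d-b$ or $c-d$ is an integer. This cannot happen, because $\veca,\vecb$ are generic and $c\ne d$ modulo $\Z$ is not needed there; in any case Gauss sums of the trivial character only contribute $-1$, which I will check separately. With the signs checked, the claimed identity follows.
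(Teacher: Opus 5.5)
Your route is the same as the paper's: \cite[Theorem 3.4]{BCM}, which the paper cites, is exactly the twisting identity of Proposition~\ref{prop:props-hyperg-sum}(5) with $\rho=-d$. Your intermediate computations are all correct: the character factor $\chi_{\id{p}}(\xi)^{-dN}$, the two expansions of $\JJ$, the sign $\chi_{\id{p}}(-1)^{2dN}=1$, and the prefactor $(-1)^{4-3+1}=1$. The gap is the last step. Nothing cancels the factor $g(\psi^{-1},0,\id{p})=-1$. You found yourself that the prefactor in \eqref{eq:Jac-motive} is $+1$, so it cannot absorb a $-1$, and your computation actually yields
\[
\frac{\JJ(\veca,\vecb)}{\JJ(\rho\veca,\rho\vecb)}=-\frac{g(\psi,-a,\id{p})\,g(\psi,-b,\id{p})\,g(\psi,c,\id{p})\,g(\psi,d,\id{p})}{g(\psi,d-a,\id{p})\,g(\psi,d-b,\id{p})\,g(\psi,c-d,\id{p})}=-\JacMot(\pmb\theta)(\id{p}).
\]
The sentence ``with the signs checked, the claimed identity follows'' glosses over exactly the one point where your argument does not close.

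More careful bookkeeping cannot fix this, because with \eqref{eq:Jac-motive} taken literally the stated identity is off by $-1$. Take $(a,b),(c,d)=(\tfrac12,\tfrac12),(1,1)$. Then $(a-d,b-d),(c-d,1)\equiv(a,b),(c,d)$ modulo $\Z$, $\chi_{\id{p}}(\xi)^{-dN}=1$, and $\pmb\theta=\langle 0\rangle$. Hence $\JacMot(\pmb\theta)(\id{p})=g(\psi,0,\id{p})=-1$, and the lemma would force $H_{\id{p}}((\tfrac12,\tfrac12),(1,1)|\xi)=0$. That is false: by Theorem~\ref{thm:Ono} this sum is $\pm a_p(E_\xi)$, and for instance $a_5=-2$ when $\xi=2$.

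The lemma is correct for the normalization $\JacMot(\pmb\theta)(\id{p})=\prod_i\bigl(-g(\psi,\theta_i,\id{p})\bigr)^{n_i}$, i.e.\ prefactor $(-1)^{\sum_i n_i}$. This is the normalization that makes $\JacMot$ multiplicative, and it is the one under which the Jacobi motive in Example~\ref{rem:Legendre} is trivial, as asserted there. The paper's own proof has the same issue: its displayed factor contains $g(1)=-1$ in the denominator and is identified with the Jacobi motive without comment. You should either fix this normalization explicitly or state the result with the sign.

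A minor point: genericity forces $d-a,d-b\notin\Z$ but not $c-d\notin\Z$. This is harmless, since the computation never divides by zero.
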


\begin{proof} Let $q = \normid{p}$ and let $\gen$ be a generator of
  the character group $\widehat{\FF_q^\times}$ so that
  $\gen^{(q-1)/N}=\gent^{-1}$. Then \cite[Theorem 3.4]{BCM} implies
  that
  \[
    H_{\id{p}}((a-d,b-d),(c-d,1)|\z-spec) = \gen(\z-spec)^{d(q-1)}\frac{g(\gent^{-aN})g(\gent^{cN})}{g(\gent^{(d-a)N})g(\gent^{(c-d)N})} \frac{g(\gent^{-bN})g(\gent^{dN})}{g(\gent^{(d-b)N})g(1)} H_{\id{p}}((a,b),(c,d)|\z-spec).
  \]
  From its definition (Definition~\ref{def:jacobi-motive}) the middle
  factor equals the stated Jacobi motive.  As a side remark, the first
  statement of \cite[Theorem 3.4]{BCM} is correct, but the second one
  is not. The numerator should be the denominator and vice-versa
  (since the function $S_q$ is the product of $H_q$ with a Gauss sum
  involving the same parameters, as follows from (3.1) of loc. cit.)
\end{proof}

\begin{proof}[Proof of Theorem~\ref{thm:Trace-match}]
  Suppose that $\irr$ holds and let $\C$ denote the Euler curve as
  defined in \eqref{curva2}.  The hypothesis on $\id{p}$
  implies (by Remark~\ref{remark:assumption}) that
  Assumption~\ref{assumption} holds.  The genericity condition on the
  parameters imply that $a-c, b-c, a-d, b-d \not \in \ZZ$, so
  $N\nmid A$, $N\nmid B$, $N\nmid C$ and
  $N\nmid \deg(f(x))=A+B+C=(a-c)N$. By
  Theorem~\ref{thm:trace-equality} the trace of Frobenius on $\Het$
  equals
  $$-\Count(\chip;\z-spec) = -\chip(\z-spec)^D \sum_{x \in
    \FF_q}\chip(x^A(1-x)^B(1-\z-specx)^C),$$ 
  where $A=(d-b)N$, $B=(b-c)N$, $C=(a-d)N$ and $D=dN$. Set
  $\alpha_1=(d-b)N$, $\alpha_2=(d-a)N$, $\beta_1=(d-c)N$ and
  $\beta_2=0$. Then Theorem~\ref{thm:hypergeometric-char-sum} implies
  that
  \begin{equation}
    \label{eq:12}
    -\Count(\chip;\z-spec)=\chip(\z-spec)^D\chip(-1)^A\JacMot((\alpha_1,-\beta_1),(\alpha_1-\beta_1))H_q(\veca,\vecb|\z-spec).
  \end{equation}
  By definition (\ref{defi:finite-hgm}) the value
  $H_{\id{p}}(\veca,\vecb|\z-spec)$ equals the value $H_q(\veca,\vecb|\z-spec)$
  choosing a generator $\gen$ satisfying $\gen^{(q-1)/N}=\chip^{-1}$,
  hence
\[
H_q(\veca,\vecb|\z-spec)=  H_{\id{p}}(((a-d)N,(b-d)N),((c-d)N,1)|\z-spec).
\]
Then~\eqref{eq:12} equals
  \[
\gent(\z-spec)^{dN}\gent^{(d-b)N}(-1)\JacMot((d-b,c-d),(c-b)) H_{\id{p}}\left((a-d,b-d),(c-d,1)|\z-spec\right).
\]
Using Lemma~\ref{lemma:hypergeom-fin-rel} this value equals
\[
\gent^{(d-b)N}(-1)\JacMot((-a,-b,c,d),(c-b,d-a)) H_{\id{p}}\left((a,b),(c,d)|\z-spec\right),
\]
which matches the motive of Definition~\ref{defi:HGM} (recall that $\twist_N(\id{p})=\gent(-1)$ by Lemma~\ref{lemma:char-values}). 

To prove that $\hgms$ also matches
\eqref{eq:motive-general-definition}, we apply the proven result to the
parameters $(a-d,b-d),(c-d,0)$ and are led to prove the equality
\[
H_q((a-d,b-d),(c-d,1)|\z-spec) \JacMot((-a,-b,c,d),(d-a,d-b,c-d))^{-1}\eta_d(\z-spec)=H_q((a,b),(c,d)|\z-spec),
  \]
which follows from Lemma~\ref{lemma:hypergeom-fin-rel} since $\eta_d(\Frob_{\id{p}})(\z-spec) = \gent(\z-spec)^{dN}$.
\end{proof}

\begin{lemma}
  Let $L$ be a finite extension of $F$ and let $\z-spec \in L$, with
  $\z-spec \neq 0,1$. The coefficient field of the motive $\hgms$
  (i.e. the field extension of $\Q$ generated by the trace of
  Frobenius' elements) is contained in $K$.
  \label{lemma:coefficient field}
\end{lemma}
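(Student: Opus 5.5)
By Theorem~\ref{thm:Trace-match}, for every prime $\id{p}$ of $L$ with $\id{p}\nmid N$ and $v_{\id{p}}(\z-spec(\z-spec-1))=0$, the trace of $\Frob_{\id{p}}$ on $\hgms$ equals $H_{\id{p}}((a,b),(c,d)|\z-spec)$. These primes form a cofinite set, so their Frobenius elements are dense in the Galois group (Chebotarev). The plan is therefore to show that $H_{\id{p}}(\veca,\vecb|\z-spec)\in K$ for each such $\id{p}$, and then to deduce that every trace of Frobenius lies in $K$ by density and continuity.

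First, I need $H_{\id{p}}(\veca,\vecb|\z-spec)\in K=F^H$. Since $L\supseteq F\supseteq K$, the prime $\id{p}$ has norm $q\equiv 1 \bmod N$, so $\veca,\vecb$ are $q$-stable. Theorem~\ref{thm:algebricity}(2), via Remark~\ref{p-hyperg-sum}, identifies the global value with the sum of Definition~\ref{defi:finite-hgm}. By Definition~\ref{defi:finite-hgm} this value is $H_q(\veca,\vecb|\overline{\z-spec})$ for the generator $\gen$ with $\gen^{(q-1)/N}=\chip^{-1}$. Proposition~\ref{prop:props-hyperg-sum}(4) then gives $H_q(\veca,\vecb|\overline{\z-spec})\in\Q(\zeta_N)^H=K$.

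One point needs care: the proof of (4) uses the Galois action (3), under which a $\sigma$ with $\sigma(\zeta_N)=\zeta_N^j$ sends the value to the value with parameters $j\veca,j\vecb$ computed with the \emph{same} $\gen$. For $j\in H$ the multisets are unchanged modulo $\Z$, so the value is fixed. Lemma~\ref{lemma:character-dependence} guarantees the value depends only on $\gen^{(q-1)/N}$, so the answer is well defined.

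Second, I pass from the primes above to all Frobenius elements. The trace of the $\lambda$-adic representation is a continuous function, and Frobenius elements at primes outside any finite set are dense. Hence traces at the remaining unramified primes lie in the closure of $K$ intersected with the algebraic traces. The coefficient field is generated by the algebraic traces of Frobenius, and at good primes these are given by the formula above, so the field they generate is contained in $K$.

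If the definition intends only good primes, the second step is unnecessary. The main obstacle is bookkeeping rather than substance: confirming that the $\gen$-normalization in Definition~\ref{defi:finite-hgm} is compatible with the Galois-action argument in Proposition~\ref{prop:props-hyperg-sum}(3), so that $\sigma\in H$ genuinely fixes the normalized value. A related subtlety for the non-$\irr$ definition~\eqref{eq:motive-general-definition} is already absorbed by Theorem~\ref{thm:Trace-match}, which covers both definitions.
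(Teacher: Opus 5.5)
Your main argument is correct and is the paper's proof. Theorem~\ref{thm:Trace-match} identifies the trace of $\Frob_{\mathfrak{p}}$ at every prime $\mathfrak{p}\nmid N$ with $v_{\mathfrak{p}}(\xi(\xi-1))=0$ with $H_{\mathfrak{p}}((a,b),(c,d)|\xi)$, and the Galois-action property of Proposition~\ref{prop:props-hyperg-sum} places this value in $F^H=K$ (the paper cites part (3); your use of part (4), with your check that both sides use the same $\varpi$, amounts to the same thing).

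Your optional density step, however, does not prove what it claims. Continuity only puts the traces at the remaining unramified primes in the closure $K_\lambda$ of $K$ in $F_\lambda$. An element of $F$ lying in $K_\lambda$ is fixed only by the decomposition group of $\lambda$ in $F/K$, not necessarily by all of $H$. That step would therefore need extra input, such as $\lambda$-independence of those traces with $\lambda$ varying. The paper avoids this by taking the coefficient field from the primes covered by Theorem~\ref{thm:Trace-match}.
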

\begin{proof}
  Follows from Theorem~\ref{thm:Trace-match} and Proposition~\ref{prop:props-hyperg-sum}, part (3).
\end{proof}

As a corollary we get the following slightly softer version of
Conjecture~\ref{conjecture:field-of-defi} (basically where the
field~$K$ is replaced by $F$).

\begin{corollary}
  \label{coro:conjecture}
  Let $\veca=(a,b), \vecb=(c,d)$ be rational generic parameters with
  denominator $N$. Let $F=\Q(\zeta_N)$ and $K=F^H$, with $H$ as in
  Definition~\ref{def:H}. Then
  \begin{enumerate}
  \item The  motive $\HGM(\veca,\vecb|z)$ has a realization $\calH(z)$ 
over $F(z)$.
\item The representation of the motive restricted to
  $\Gal(\overline{K(z)}/\overline{F}(z))$ is isomorphic to the
  geometric Galois representation~\eqref{eq:geometric-rep}.

\item Let $L$ be a finite extension of $F$.  The specialization
  $\calH(\z-spec)$ for generic $\z-spec\in L$ has coefficient field
  $K$.
\item For primes $\id{p}$ of $L$ of good reduction the trace of
  Frobenius on $\calH(\z-spec)$ is given by the finite hypergeometric
  sum $H_{\id{p}}(\veca,\vecb\,|\,\z-spec)$.
\end{enumerate}
\end{corollary}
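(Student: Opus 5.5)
We take Definitions~\ref{defi:HGM} and \ref{defi:general-HGM} as the realization $\calH(z)$ and verify the four items one at a time. For (1), every ingredient is defined over $F(z)$. In the $\irr$ case these are the Euler curve \eqref{curva2}, whose $\mubb_N$-action is defined over $F$ because $\zeta_N\in F$; the idempotent cutting out the $\chi_0$-isotypic part; the Jacobi motive, which is a Gr\"ossencharacter of $F$; and the quadratic character $\twist_N$ of $\Gal(\overline\Q/F)$. In the general case we add the character $\eta_d$, which lives on $\Gal(\overline{\Q(z)}/\Q(z,\zeta_N))$, and the motive attached to $(a-d,b-d),(c-d,1)$. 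The latter satisfies $\irr$, since $\lcm\{\den(1-(b-d)),\den(b-c),\den(a-d)\}$ equals its own common denominator, so Lemma~\ref{lemma:equiv-irr} applies. Item (4) is exactly Theorem~\ref{thm:Trace-match}, applied to the primes of $S_g$ where good reduction holds by Remark~\ref{remark:assumption}.

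For (2), the Jacobi motive and $\twist_N$ are constant in $z$, so they become trivial on $\Gal(\overline{F(z)}/\overline F(z))$. The character $\eta_d$ restricts to the rank one local system $z^{d}$, which is scalar. The geometric part of $\calH(z)$ is therefore the $\chi_0$-eigenspace of $H^1$ of the Euler curve over $\overline F(z)$, possibly twisted by $z^d$. Its rank is two by \cite{MR2005278}. Its local monodromies can be read off from the periods: a basis of periods is given by Euler integrals of the form \eqref{eq:integr-repn}, and these satisfy $D(\veca;\vecb)$ (multiplied by $z^{1-\beta}$ when $\beta_2\ne1$, matching \eqref{eq:ui-def}). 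By the comparison theorem, the topological monodromy of this local system is the monodromy of $D(\veca;\vecb)$. Rigidity (Remark~\ref{remark:remark:rigidity}) then fixes the representation up to isomorphism: one checks that $M_0$ and $M_\infty$ have the eigenvalues of Proposition~\ref{prop:monodromy-eigenvalues} and that $M_1$ is a pseudo-reflection. By the uniqueness in Corollary~\ref{coro:extension}, the $\ell$-adic geometric representation is isomorphic to \eqref{eq:geometric-rep}.

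For (3), Lemma~\ref{lemma:coefficient field} gives that the coefficient field is contained in $K$. For the reverse inclusion, take $\sigma_j\in\Gal(F/\Q)$ with $j\notin H$. We must produce a prime $\id p$ for which $H_{\id p}(\veca,\vecb|\z-spec)\ne H_{\id p}(j\veca,j\vecb|\z-spec)$, and Proposition~\ref{prop:props-hyperg-sum}(3) identifies the latter value with $\sigma_j$ applied to the former. Suppose instead that the traces agreed for all $\id p$. By Chebotarev, $\calH(\z-spec)$ and $\calH(\z-spec)^{\sigma}$ would then have isomorphic semisimplifications, and we want to conclude that their geometric monodromies agree. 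For generic $\z-spec$ in the Hilbert irreducibility sense, the specialization preserves the image of the arithmetic representation, so the restriction to an open subgroup of the specialized representation recovers the geometric local system at $z$. From the agreement we would get that the local systems for $\veca,\vecb$ and for $j\veca,j\vecb$ are isomorphic, hence have conjugate $M_0$ and $M_\infty$. By Proposition~\ref{prop:monodromy-eigenvalues} this forces $j\{\alpha_i\}\equiv\{\alpha_i\}$ and $j\{\beta_i\}\equiv\{\beta_i\}$ modulo $\Z$, that is, $j\in H$, a contradiction.

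The main obstacle is the specialization step in (3). Passing from the coincidence of traces for one specialized $\z-spec$ back to an isomorphism of the local systems at $z$ needs a Hilbert irreducibility argument applied to the $\ell$-adic image, and this is exactly what restricts us to generic $\z-spec$. I would first try to apply Hilbert irreducibility to the finite quotient mod $\ell^m$ that already distinguishes the two monodromy traces.
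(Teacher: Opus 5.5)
Your treatment of (1) and (4) matches the paper's: the definitions, plus Theorem~\ref{thm:Trace-match} at the primes of $S_g$. Your check that $(a-d,b-d),(c-d,1)$ again satisfies $\irr$ supplies a detail the paper leaves implicit.

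For (2) you take a genuinely different route. The paper obtains Theorem~\ref{thm:motive-function-field} from stable-model computations at $z=0,1,\infty$ (Theorems~\ref{thm:HGM-0} and~\ref{thm:HGM-1-infty}), which identify the inertia images with conjugates of $M_0,M_1,M_\infty$, and then applies rigidity. You instead identify the period local system directly with the solution sheaf of $D(\veca;\vecb)$, and then pass to the \'etale side by the comparison theorem. Indeed, the period of $dx/y$ is $z^{1-d}\,{}_2F_1(1+a-d,1+b-d;1+c-d\,|\,z)$, which is exactly $u_2$ of \eqref{eq:ui-def}. This is more classical, and rigidity is then superfluous: the period map is a nonzero morphism of rank-two local systems into an irreducible one, hence an isomorphism. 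You do have to track one convention. Periods compute homology, so you must check whether the $\chi_0$-part of $H^1$ carries the monodromy of $(\veca,\vecb)$ or of its conjugate $(-\veca,-\vecb)$. The paper's route buys more, since the same local computation gives inertia at specialized primes (Corollary~\ref{coro:specialized-inertia}).

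For (3) you prove more than the paper does. The paper's proof only cites the preceding lemma, which gives the inclusion of the coefficient field in $K$. It obtains equality only under the extra hypotheses of Proposition~\ref{prop:coef-lowerbound}.

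Your reverse inclusion for $\xi$ outside a thin set is sound in substance, but your middle sentence is wrong as stated. The geometric group is not open in the arithmetic one, and a single specialization does not recover the local system at $z$. Applying Hilbert irreducibility to each representation separately would not suffice either.

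What works is the argument your last sentence points to. Let $\rho_1,\rho_2$ be the representations of $\calH(\veca,\vecb|z)$ and $\calH(j\veca,j\vecb|z)$ over $L(z)$.
\begin{itemize}
\item Since $j\notin H$, the eigenvalues of $M_0$ or of $M_\infty$ differ. Hence some geometric element $g$ (a loop or its square) has $\operatorname{tr}\rho_1(g)\neq\operatorname{tr}\rho_2(g)$, and these traces already differ modulo $\lambda^m$ for some $m$.
\item Apply Hilbert irreducibility to the finite extension of $L(z)$ cut out by the pair $(\rho_1,\rho_2)\bmod\lambda^m$. For $\xi$ outside a thin set, the specialized image still contains the image of $g$.
\item Chebotarev then gives a good prime $\id p$ of $L$ with $H_{\id p}(\veca,\vecb|\xi)\neq H_{\id p}(j\veca,j\vecb|\xi)$. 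The right-hand side equals $\sigma_j\bigl(H_{\id p}(\veca,\vecb|\xi)\bigr)$ by Proposition~\ref{prop:props-hyperg-sum}(3).
\end{itemize}
Only finitely many $j$ occur, so the exceptional set is thin. This is consistent with Example~\ref{example:genericity-condition}, where square values of $\xi$ drop the coefficient field.
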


\begin{proof}
  The first two statements follow from Definitions~\ref{defi:HGM} and \ref{defi:general-HGM} and
  Theorem~\ref{thm:motive-function-field}; the third statement follows
  from the last lemma, while the last one follows from Theorem~\ref{thm:Trace-match}.
\end{proof}

\begin{remark} Let $(a,b),(c,d)$ be rational generic parameters with
  denominator $N$. Let $\z-spec \in \Q$ and let $\id{p}$ be a prime
  ideal of $F$ not dividing $N$, but dividing $\z-spec$. Suppose that
  the monodromy at~$0$ is finite of order $m$ and that
  $m\mid v_{\id{p}}(\z-spec)$.  Then
  Theorem~\ref{thm:unramified-primes} implies that the motive $\hgms$
  is unramified at $\id{p}$, so the trace of the action of
  $\Frob_{\id{p}}$ on our motive is well defined
  but is not given by~$H_{\id{p}}((a,b),(c,d)|\z-spec)$. Nevertheless,
  there is an explicit formula, see Appendix~\ref{section:frob-trace}.

\label{remark:appendix}
\end{remark}

\subsection{The Galois representation}

The motive $\Euler$ has attached a family of $2$-dimensional Galois
representations indexed by prime ideals $\id{p}$ of $F$ (as defined by
Serre in \cite{MR1484415}). Denote by 
\begin{equation}
  \label{eq:Galois-rep}
\rho_{\hgm,\id{p}}: \Gal(\overline{\Q(z)}/{F(z)}) \to \GL_2(F_{\id{p}}),
\end{equation}
the representation of the hypergeometric motive $\hgm$.

\begin{theorem}
  \label{thm:motive-function-field}
  The Galois representation $\rho_{\hgm,\id{p}}$ restricted to
  $\Gal(\overline{\Q(z)}/{\overline{\Q}(z)})$ matches the representation given in
  Corollary~\ref{coro:extension}.
\end{theorem}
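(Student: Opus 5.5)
The plan is to use rigidity (Remark~\ref{remark:rigidity}): over $\overline{\Q}(z)$ both the Jacobi motive and the character $\twist_N^{(d-b)N}$ are constant. The Jacobi motive is a Gr\"ossencharacter of $F$, and $\twist_N$ is a character of $\Gal(\overline\Q/F)$. Hence, in the case where $\irr$ holds, the restriction of $\rho_{\hgm,\id{p}}$ to $G^{\rm geom}:=\Gal(\overline{\Q(z)}/\overline{\Q}(z))$ is just the geometric monodromy of the local system $R^1\pi_*$ of the Euler family, cut out by the $\chi_0$-isotypic new part. In the general case (Definition~\ref{defi:general-HGM}) there is one extra ingredient: the twist by $\eta_d(z)$ multiplies the local monodromies at $0$ and $\infty$ by $e^{\mp 2\pi i d}$. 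This is exactly the effect of the hypergeometric twist by $d$ on Proposition~\ref{prop:monodromy-eigenvalues}, since $(a-d,b-d),(c-d,1)$ shifted by $d$ gives $(a,b),(c,d)$. So I will reduce to the irreducible case with $\beta_2=1$, or more generally with $\irr$ holding.

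\textbf{Step 1: the monodromy of the Euler family.} Over $\CC$, the representation of $\pi_1(\PP^1\setminus\{0,1,\infty\},z_0)$ on $\Euler$ (Betti realization) is computed. Since the $\mubb_N$-action commutes with monodromy, the monodromy preserves the rank-two $\chi_0$-eigenspace. Its periods $\int_\gamma x^{A/N}(1-x)^{B/N}(1-zx)^{C/N}z^{D/N}dx/x$-type integrals satisfy $D(\veca;\vecb)$ by the Euler integral representation~\eqref{eq:integral-rep}, and the twisted Euler variety \eqref{eq:Euler-general} interpretation gives the exponents. By the comparison with Proposition~\ref{prop:twist}, the period map identifies the dual of this local system with the local system of solutions of $D(\veca;\vecb)$, possibly after the substitution in Lemma~\ref{lemma:change-parameters}.

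\textbf{Step 2: matching local monodromies.} Alternatively, and more robustly, I will compute the local monodromies $M_0,M_1,M_\infty$ of the $\chi_0$-part directly. Near $z=1$ the curve acquires a node or a cyclic singularity, and Picard--Lefschetz shows that $M_1-1$ has rank one with determinant $\exp(2\pi i(c+d-a-b))$. Near $z=0,\infty$ the branch points $1/z$ collide with $\infty$ or $0$, and the eigenvalues on the $\chi_0$-part are $e^{2\pi i c},e^{2\pi i d}$ and $e^{2\pi i a},e^{2\pi i b}$ respectively; the $z^{D}$ factor contributes $e^{2\pi i d}$ as a scalar. Genericity gives irreducibility (Proposition~\ref{prop:irreducible}), and then rigidity (Remark~\ref{remark:rigidity}, i.e.\ Beukers--Heckman/Levelt) shows the topological monodromy is isomorphic to $\rho$, conjugate over $F$.

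\textbf{Step 3: passage to $\ell$-adic.} The Artin comparison between \'etale and Betti cohomology is compatible with the identification $\pi_1^{\rm alg}\simeq\widehat{\pi_1(0,3)}$ of Theorem~\ref{thm:p1-comparison}. It identifies $\rho_{\hgm,\id{p}}|_{G^{\rm geom}}$ with the continuous extension of the Betti monodromy tensored with $F_{\id{p}}$. By the uniqueness in Corollary~\ref{coro:extension} (density of $\pi_1(0,3)$), this equals $\rho_{\id{p}}$.

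I expect the main obstacle to be Step 2 at the points $0$ and $\infty$. There the local monodromy may be non-semisimple (e.g.\ $c\equiv d$ is allowed), and one has to check that a Jordan block appears exactly when the two eigenvalues coincide. I would handle this by rigidity itself: irreducibility forces $M_0$ to be a regular element (for an irreducible rank-two hypergeometric system with $M_1$ a pseudo-reflection, $M_0$ and $M_\infty$ have cyclic vectors). Hence only the characteristic polynomials need to be matched, and these come from an explicit eigenvalue computation via a semistable model or a Puiseux expansion of the periods.
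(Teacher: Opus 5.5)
Your architecture is the paper's. The Jacobi motive and $\twist_N^{(d-b)N}$ are trivial on $\Gal(\overline{\Q(z)}/\overline{\Q}(z))$, so one only has to identify the three local monodromies and invoke rigidity. The paper gets them $\ell$-adically from the semistable models of Appendix~\ref{appendix:Elisa} (Theorems~\ref{thm:HGM-0} and~\ref{thm:HGM-1-infty}); you compute them topologically and transport via the comparison theorem. Your appeal to Levelt's uniqueness is a genuine simplification: it needs only the characteristic polynomials at $0,\infty$ and that $M_1$ be a pseudo-reflection. That makes the regularity discussion unnecessary and avoids the $\Sp_2$ analysis the paper uses when $c\equiv d$. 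You also address the non-$\irr$ case, which the appendix excludes.

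\textbf{The gap is in Step~2.} You assert eigenvalues $e^{2\pi i c},e^{2\pi i d}$ at $0$, $e^{2\pi i a},e^{2\pi i b}$ at $\infty$, and special eigenvalue $e^{2\pi i(c+d-a-b)}$ at $1$. Then $\det M_0\det M_1\det M_\infty=e^{4\pi i(c+d)}$, which is not $1$ in general; it already fails for Shimura's parameters $(\frac15,\frac45),(\frac35,1)$ of Example~\ref{example:shimura}. So no representation of $\pi_1$ has this local data, and the matching cannot succeed.

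The source is Proposition~\ref{prop:monodromy-eigenvalues} as printed. If $M_\infty$ has characteristic polynomial $q_\infty$, then it is $M_0^{-1}$, not $M_0$, that has characteristic polynomial $q_0$, as in Beukers--Heckman. You can read this off from the exponents $1-\beta_i$ of the solutions $u_i$ in Proposition~\ref{prop:twist}. It is also how $M_0$ is written in Appendix~\ref{appendix:Elisa}, with eigenvalues $e^{-2\pi i c},e^{-2\pi i d}$.

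This is not cosmetic. Inverting all three local monodromies gives $\rho^\vee$, the hypergeometric system with parameters $(-\veca,-\vecb)$, which is in general not isomorphic to $\rho$. Deciding which of the two the $\chi_0$-part realizes is exactly what the theorem asserts.

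So Step~2 must be carried out on one fixed eigenspace, with the single orientation convention under which Corollary~\ref{coro:extension} extends $\rho$. For example, on periods of $dx/y$ (continuation factors), the factor $z^{-d}$ gives $e^{-2\pi i d}$ at $0$. The cycles through $1/z$ and $\infty$ behave like $z^{(A+B)/N-1}z^{-d}$ and give $e^{-2\pi i c}$.

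The same convention must fix two further signs: the scalar you attribute to $z^{D}$, and the factor by which $\eta_d$ acts at $0$ and $\infty$ in your reduction of the non-$\irr$ case. Since $\theta(z^{-d}f)=z^{-d}(\theta-d)f$, solutions of $D(\veca;\vecb)$ are $z^{-d}$ times solutions of $D(\veca-d;\vecb-d)$, whereas $\eta_d$ is the character of $z^{d}$. Whether twisting by $\eta_d$ realizes the shift for $\rho$ or for $\rho^\vee$ is therefore the same duality question. It cannot be settled by appealing to Proposition~\ref{prop:monodromy-eigenvalues}.
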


\begin{proof}
  The Galois representation attached to the Jacobi motive does not
  depend on $z$ and its restriction to $\Gal(\overline{\Q(z)}/{\overline{\Q}(z)})$ is
  trivial. Then to prove the result it is enough to prove that the
  image under $\rho_{\hgm,\id{p}}$ of the inertia group at $z=0$
  (respectively $z=1$ and $z=\infty$) of $\overline{\Q}(z)$ is
  conjugate to the matrix $M_0$ (respectively $M_1$ and $M_\infty$),
  which is proved in Theorems~\ref{thm:HGM-0} and~\ref{thm:HGM-1-infty}.
\end{proof}

\begin{definition}
  \label{defi:isogenous} The hypergeometric motives $\hgm$ and
  $\HGM((a',b'),(c',d')|z)$ are called \emph{isogenous}, denoted
  by $\hgm \simeq \HGM((a',b'),(c',d')|z)$, if their Galois
  representations are isomorphic.
\end{definition}

\begin{lemma}
  Let $\rho_i:\Gal(\overline{\Q(z)}/F(z)) \to \GL_2(\overline{\Q_p})$
  for $i=1,2$ be two irreducible continuous representations satisfying
  that their restrictions to the subgroup
  $\Gal(\overline{\Q(z)}/\overline{\Q}(z))$ are irreducible and
  isomorphic.  Then there exists
  $\chi:\Gal(\overline{\Q}/F) \to \overline{\Q_p}^\times$ such that
  $\rho_1 \simeq \rho_2 \otimes \chi$.
\label{lemma:isom-condition}
\end{lemma}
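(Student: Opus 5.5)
The plan is the standard Schur's lemma argument. Write $G=\Gal(\overline{\Q(z)}/F(z))$ and let $G^{\rm geom}=\Gal(\overline{\Q(z)}/\overline{\Q}(z))$, a closed normal subgroup with $G/G^{\rm geom}\simeq \Gal(\overline{\Q}/F)$. Let $V_1,V_2$ be the spaces of $\rho_1,\rho_2$. By hypothesis the restrictions to $G^{\rm geom}$ are isomorphic, so we fix a $G^{\rm geom}$-equivariant isomorphism $\phi:V_2\to V_1$. After conjugating $\rho_2$ by $\phi$ we may assume $V_1=V_2=V$ and $\rho_1(h)=\rho_2(h)$ for all $h\in G^{\rm geom}$.

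For $g\in G$ set $c(g):=\rho_1(g)\rho_2(g)^{-1}\in \GL(V)$. The first step is to check that $c(g)$ commutes with $\rho_1(G^{\rm geom})$. For $h\in G^{\rm geom}$, normality gives $g^{-1}hg\in G^{\rm geom}$, and therefore
\[
c(g)\rho_1(h)c(g)^{-1}=\rho_1(g)\rho_2(g^{-1}hg)\rho_2(g)^{-1}\cdot\ldots
\]
More cleanly, since $\rho_1(g^{-1}hg)=\rho_2(g^{-1}hg)$, we get
\[
\rho_1(g)^{-1}\rho_1(h)\rho_1(g)=\rho_2(g)^{-1}\rho_2(h)\rho_2(g),
\]
which rearranges to $c(g)\rho_1(h)=\rho_1(h)c(g)$. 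Since $\rho_1|_{G^{\rm geom}}$ is absolutely irreducible over $\overline{\Q_p}$ (it is irreducible over an algebraically closed field), Schur's lemma gives $c(g)=\chi(g)\cdot I$ for a scalar $\chi(g)\in\overline{\Q_p}^\times$. Thus $\rho_1(g)=\chi(g)\rho_2(g)$.

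Next we check that $\chi$ is a continuous character of $\Gal(\overline{\Q}/F)$. For multiplicativity, $\rho_1(gg')=\chi(g)\rho_2(g)\chi(g')\rho_2(g')$, and $\chi(g')$ is scalar, so $\chi(gg')=\chi(g)\chi(g')$. Since $\chi(h)=1$ for $h\in G^{\rm geom}$, $\chi$ factors through $G/G^{\rm geom}\simeq\Gal(\overline{\Q}/F)$. For continuity, $\chi(g)=\tfrac12\operatorname{tr}(\rho_1(g)\rho_2(g)^{-1})$ is a continuous function of $g$.

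The only delicate point is the use of Schur's lemma. It needs irreducibility over $\overline{\Q_p}$, which the hypothesis provides. It also needs $\phi$ to be defined over $\overline{\Q_p}$; this holds because an isomorphism of representations is given by a matrix with entries in the coefficient field. Note that the irreducibility of $\rho_1,\rho_2$ on all of $G$ is not needed in this argument; it is automatic once the restrictions are irreducible.
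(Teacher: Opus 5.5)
Your proof is correct and complete. The paper gives no argument of its own here, only a pointer to Corollary 2.5 of \cite{Pac}, and your Schur's-lemma computation is the standard proof of exactly this fact, so it can stand in for the citation; the only blemish is the abandoned first display trailing off in $\cdots$, which you should delete in favour of the clean computation that follows it.
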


\begin{proof}
  See \cite[Corollary 2.5]{Pac}.
\end{proof}

\begin{theorem}
  \label{thm:isogenous}
Let $(a,b), (c,d)$ be a pair of generic rational numbers.
Then the following  properties hold:
\begin{enumerate}
\item
(Twist) With the above notation
$$
\hgm \simeq    \HGM((a-d,b-d),(c-d,1)|z)\otimes
   \JacMot((-a,-b,c,d),(d-a,d-b,c-d))^{-1} \eta_d(z)
$$
\item
\label{prop:symmetries}
(Permutation)  Let $(a,b),(c,d)$ be generic parameters. Then
  \[
    \HGM((a,b),(c,d)|z) \simeq \HGM((b,a),(c,d)|z) \simeq \HGM((a,b),(d,c)|z) \simeq \HGM((b,a),(d,c)|z).
    \]
\item
  \label{prop:change-parmeters-motive}
(Inverse)  The motives $\HGM((a,b),(c,d)|z)$ and
  $\HGM((-c,-d),(-a,-b)|z^{-1})$ are isomorphic.
\end{enumerate}
\end{theorem}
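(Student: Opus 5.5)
The plan is to reduce every part to an equality of Frobenius traces and then use Chebotarev together with Brauer–Nesbitt, since isogeny in Definition~\ref{defi:isogenous} only means that the Galois representations are isomorphic. Both sides of each claimed isogeny are motives over $F(z)$, or over a finite extension $L(z)$ after specialization. Their $\lambda$-adic representations are semisimple: genericity makes the geometric part irreducible by Proposition~\ref{prop:irreducible} and Theorem~\ref{thm:motive-function-field}. So it suffices to show that the traces of $\Frob_{\id{p}}$ agree for a density one set of primes $\id{p}$.

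Working over $F(z)$ is handled through specializations. For each $\z-spec\in L$ and each prime $\id{p}\nmid N$ with $v_{\id{p}}(\z-spec(\z-spec-1))=0$, I would compare the two sides. Since $\z-spec$ ranges over a Hilbert-dense set, the equality of traces for all such specializations determines the representation of $\Gal(\overline{\Q(z)}/F(z))$. Alternatively, Lemma~\ref{lemma:isom-condition} shows that the two representations differ by a character $\chi$ of $\Gal(\overline\Q/F)$, and trace equality at one generic specialization forces $\chi=1$.

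Part (1) is essentially the content of the proof of Theorem~\ref{thm:Trace-match}. If $\irr$ fails, the statement is Definition~\ref{defi:general-HGM} itself. If $\irr$ holds, the left side has trace $H_{\id{p}}((a,b),(c,d)|\z-spec)$ by Theorem~\ref{thm:Trace-match}. The right side has trace
\[
H_{\id{p}}((a-d,b-d),(c-d,1)|\z-spec)\,\JacMot((-a,-b,c,d),(d-a,d-b,c-d))(\id{p})^{-1}\chip(\z-spec)^{dN}.
\]
This uses Theorem~\ref{thm:Trace-match} for the shifted parameters, which satisfy $\irr$ because the shift does not change the Euler curve's $(A,B,C)$, together with \eqref{eq:eta-weil}. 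Lemma~\ref{lemma:hypergeom-fin-rel} then shows the two traces coincide.

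For part (2), by Theorem~\ref{thm:Trace-match} every ordering has trace $H_{\id{p}}$ evaluated at the corresponding ordered parameters. Proposition~\ref{prop:props-hyperg-sum}(1) says $H_q$ is independent of the ordering, so the traces agree. One subtlety is that genericity is symmetric but $\irr$ is not. Here I would rely on Theorem~\ref{thm:Trace-match} holding for both definitions, or on part (1), to handle a reordering that breaks $\irr$.

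For part (3), Proposition~\ref{prop:props-hyperg-sum}(2) gives
\[
H_{\id{p}}((a,b),(c,d)|\z-spec)=H_{\id{p}}((-c,-d),(-a,-b)|\z-spec^{-1}).
\]
The hypothesis $v_{\id{p}}(\z-spec(\z-spec-1))=0$ is invariant under $\z-spec\mapsto\z-spec^{-1}$, so the same good primes appear on both sides. The choice of $\gen$ attached to $\id{p}$ in Definition~\ref{defi:finite-hgm} depends only on $N$ and $\id{p}$, and $N$ is the same for both parameter sets, so the two sums are computed with the same normalization. Comparing traces via Theorem~\ref{thm:Trace-match} then gives the isomorphism.

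The main obstacle is the passage from trace equality at all specializations to an isomorphism of representations over the function field $F(z)$. The cleanest route is to restrict to the geometric part, where both sides are identified with the monodromy representation (Theorem~\ref{thm:motive-function-field}; in part (3) Lemma~\ref{lemma:change-parameters} matches the local monodromies). Lemma~\ref{lemma:isom-condition} then reduces the problem to a character of $\Gal(\overline\Q/F)$, and evaluating traces at one specialization $\z-spec$ with nonzero $H_{\id{p}}$ shows that this character is trivial on a density one set of Frobenii.
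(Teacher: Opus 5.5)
Your treatment of (1) and (2) follows the paper's argument. Both sides have the same irreducible geometric monodromy by Theorem~\ref{thm:motive-function-field}. Lemma~\ref{lemma:isom-condition} then reduces everything to a character $\chi$ of $\Gal(\overline{\Q}/F)$. Finally $\chi$ is killed by comparing Frobenius traces via Theorem~\ref{thm:Trace-match}, using Lemma~\ref{lemma:hypergeom-fin-rel} for (1) and ordering-independence of $H_{\id{p}}$ for (2).

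For (3) you take a different route. The paper argues geometrically, assuming $\irr$. The Euler curve of $(-c,-d),(-a,-b)$ at $z^{-1}$ is $y^N=x^A(1-x)^C(1-x/z)^Bz^{D-A}$. The substitution $x=zx'$, $y=y'$ identifies it with the Euler curve of $(a,b),(c,d)$, compatibly with $y\mapsto\zeta_N y$. Moreover $\twist_N^{(d-b)N}$ and $\JacMot((-a,-b,c,d),(c-b,d-a))$ are unchanged under $(a,b),(c,d)\mapsto(-c,-d),(-a,-b)$. This gives a genuine isomorphism of motives, which is why (3) is stated as an isomorphism and singled out in the remark after the theorem.

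Your route uses the inversion formula for $H_q$, Lemma~\ref{lemma:change-parameters} to match local monodromies, and then Lemma~\ref{lemma:isom-condition} with trace comparison. It is valid and does not need $\irr$. However, it only yields an isomorphism of $\lambda$-adic realizations, i.e.\ an isogeny in the sense of Definition~\ref{defi:isogenous}. It also relies on the appendix results behind Theorem~\ref{thm:motive-function-field}, whereas the paper's argument is an elementary change of variables.

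Two smaller points. First, your concern that $\irr$ is not symmetric is unfounded. The elements $d-b$, $b-c$, $a-d$ generate the same subgroup of $\Q/\Z$ as all pairwise differences of $a,b,c,d$. So by Lemma~\ref{lemma:equiv-irr}, condition $\irr$ is invariant under every permutation of the parameters.

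Second, in the step that kills $\chi$ you need a specialization $\xi$ whose traces are nonzero on a density-one set of primes, and the existence of such $\xi$ needs justification. For Legendre's family at $\xi=-1$ (CM by $\Z[i]$), the traces vanish at all $p\equiv 3\pmod 4$. There $\rho_\xi$ is isomorphic to its twist by the quadratic character of $\Q(i)$, so this specialization cannot detect $\chi$. Your first alternative avoids the issue: apply Chebotarev on $\PP^1_{\calO_F[1/N\ell]}\setminus\{0,1,\infty\}$, letting the residue $\bar\xi$ vary with $\id{p}$. That gives $\operatorname{tr}\rho_1=\operatorname{tr}\rho_2$ directly. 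The paper's proof glosses over the same point.
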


\begin{proof}
(1)  If the Euler curve is reducible (i.e.,~$\irr$ does not hold) the claim
  follows from definition. Otherwise, by
  Theorem~\ref{thm:motive-function-field} the restriction of both
  Galois representations to $\Gal(\overline{\Q(z)}/\overline{\Q}(z))$
  is isomorphic to their respectively monodromy representations. Using
  the description of the monodromy matrices given in
  \ref{prop:monodromy-eigenvalues} it is easy to verify that the two
  monodromy representations are indeed isomorphic. The genericity
  hypothesis on the parameters implies that the monodromy
  representations are irreducible (by
  Proposition~\ref{prop:irreducible}) so the lemma implies that the
  two representations are isomorphic up to a twist by a character
  $\chi$ independent of the variable $z$. If $\z-spec \in F$,
  $\z-spec \neq 0,1$, then the two specialized representations are
  isomorphic up to a twist by $\chi$. By Theorem~\ref{thm:Trace-match}
  for any prime ideal $\id{p}$ of $F$ not dividing $N$ nor
  $\z-spec(\z-spec-1)$, the trace of the Frobenius element
  $\Frob_{\id{p}}$ is the same for both representations, so the
  character $\chi$ is trivial.

(2)   All four motives have isomorphic monodromy representations (i.e. the
  restriction of their Galois representations to
  $\Gal(\overline{\Q(z)}/\overline{\Q}(z))$ are isomorphic). By
  Lemma~\ref{lemma:isom-condition} they are isomorphic up to the twist
  by a finite order character $\chi$ (independent of the parameter
  $z$). Let $\z-spec \in \Q$ be any number satisfying
  $\z-spec \neq 0,1$, and let $\id{p}$ be a prime ideal of $F$ not
  dividing $N \z-spec (\z-spec-1)$. Then Theorem~\ref{thm:Trace-match}
  implies that the trace of $\Frob_{\id{p}}$ on
  $\HGM(\veca,\vecb|\z-spec)$ equals
  $\HGM_{\id{p}}(\veca,\vecb|\z-spec)$, but it is clear from its
  definition that
  \[
    \HGM_{\id{p}}((a,b),(c,d)|\z-spec)=\HGM_{\id{p}}((b,a),(c,d)|\z-spec)= \HGM_{\id{p}}((a,b),(d,c)|\z-spec)= \HGM_{\id{p}}((b,a),(d,c)|\z-spec),
  \]
  so $\chi$ must be trivial.  

(3)    Let $N$ be the least common multiple of the denominators of
  $a,b,c,d$ and assume (for simplicity) that condition $\irr$
  holds. Keeping the previous notation, the Euler curve for the
  parameters $(-c,-d),(-a,-b)$ at the variable $z^{-1}$ has equation
  \[
y^N = x^A (1-x)^C(1-x/z)^B z^{D-A}.
\]
The change of variables $x=zx'$, $y=y'$ provide an isomorphism between
this curve and Euler's curve with parameters $(a,b),(c,d)$.  Clearly
the quadratic character $(-1)^{d-b}$ is invariant under the
substitution $(a,b),(c,d) \to (-c,-d),(-a,-b)$ and the same holds for
the Jacobi motive $\JacMot((-a,-b,c,d),(c-b,d-a))$ appearing in
the motive definition~(\ref{eq:motive-definition}).
\end{proof}

\begin{remark}
  It is likely that the isogenies in the theorem are actually
  isomorphisms but we have not attempted to do this except for (3).
\end{remark}

\subsection{Hodge numbers}
\label{section:rank2-hodge}
For rank two motives, we can give an explicit relation between the
Hodge vector of $\hgm$ and the output of the zig-zag procedure, since
the Hodge numbers of the Euler curve (and of its eigenspaces) as well as
the ones of the Jacobi motive are well known.
\begin{theorem}
  \label{thm:zig-zag-rank2}
  Let $(a,b),(c,d)$ be generic rational parameters. Let $r$ be the
  number of parameters among $a,b,c,d$ that are in $\Z$. Then:

(1) The motive $\hgm$ is pure of weight $r-1$.

(2) For $j$ coprime to $N$ let $\sigma_j\in \Gal(F/\Q)$ be the
automorphism sending $\zeta_N \to \zeta_N^j$. Then the Hodge
polynomial of $\hgm$ for the embedding $\sigma_j$ equals
  \[
\sum_{P}x^{-p_2}y^{p_2+r-1},
\]
where the sum runs over the blue points $P=(p_1,p_2)$ of the zig-zag procedure applied to the parameters $(aj,bj),(cj,dj)$.
\end{theorem}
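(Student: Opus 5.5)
Since the Hodge realization of $\hgm$ is a tensor product, its Hodge polynomial is the product of the Hodge polynomials of the factors, and the plan is to compute each factor and compare with the zig-zag output. First reduce to the case where $\irr$ holds: by Theorem~\ref{thm:isogenous}(1), $\hgm$ is isogenous to $\HGM((a-d,b-d),(c-d,1)|z)\otimes \JacMot(\cdots)^{-1}\eta_d(z)$. Here $\eta_d$ is a finite order character and does not change Hodge numbers. Moreover, translating all four parameters by $-d$ (a hypergeometric twist) shifts the zig-zag diagram uniformly, by the number of integer parameters crossed. So the twisted statement is equivalent to the original once we check that the Jacobi motive's Hodge type, computed via \eqref{eq:hodge-numbers}, exactly accounts for this shift. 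For the same reason we may pass between $(a,b),(c,d)$ and their conjugates by $\sigma_j$: $\sigma_j$ acts on the Euler curve eigenspaces and on Jacobi sums by replacing every parameter $\theta$ by $j\theta$ (see the Galois action formula for $\JacMot$ and \eqref{eq:conjugation}), so it suffices to treat $j=1$ with arbitrary generic parameters.

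With $\irr$ in force, use Definition~\ref{defi:HGM}. The quadratic character $\twist_N$ has Hodge type $(0,0)$. For the Euler curve $y^N=x^A(1-x)^B(1-zx)^Cz^D$, we use the classical count of holomorphic differentials in an eigenspace of a cyclic cover of $\PP^1$ (Chevalley--Weil / Bouw's formula). The $\chi_0$-eigenspace of $H^{1,0}$ has dimension
\[
-1+\{-A/N\}+\{-B/N\}+\{-C/N\}+\{(A+B+C)/N\},
\]
that is, $-1$ plus the sum of the fractional parts of the local exponents at $0,1,1/z,\infty$. Since the eigenspace has rank two, the $H^{0,1}$ part is the complement. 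With $A/N=d-b$, $B/N=b-c$, $C/N=a-d$, these fractional parts are $\{b-d\},\{c-b\},\{d-a\},\{a-c\}$. Genericity rules out integer values, so the $(1,0)$-dimension is $0,1$ or $2$ according to whether this sum is $1,2$ or $3$. Next, compute the infinity type of $\JacMot((-a,-b,c,d),(c-b,d-a))$ via \eqref{eq:hodge-numbers}:
\[
p=\{-a\}+\{-b\}+\{c\}+\{d\}-\{c-b\}-\{d-a\},
\]
with the weight equal to the number of non-integral entries counted with sign. Tensoring with its inverse shifts the curve's Hodge types $(1,0),(0,1)$ by $(-p,-q)$. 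This yields the Hodge polynomial of $\hgm$ and also its weight, $1-w(\JacMot)$. For part (1), check that this weight equals $r-1$ by counting integral entries among $\{a,b,c,d\}$ and $\{c-b,d-a\}$; genericity ensures $c-b,d-a\notin\Z$ whenever needed.

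Finally, compute the zig-zag output in closed form for rank two. Order the four numbers (with $a,b\in(0,1]$ and $c,d\in[0,1)$). The heights of the two blue points are determined by how many red points precede each blue one, and this can be expressed through the same fractional-part sums, using identities such as $\{x\}+\{-x\}=1$ for $x\notin\Z$ and $\{x-y\}=\{x\}-\{y\}+[\{y\}>\{x\}]$. Matching the two sides is then a finite case analysis over the relative orders of $a,b,c,d$ in $[0,1)$ — at most six interleavings up to the symmetries of Theorem~\ref{thm:isogenous}(2) — together with the boundary cases where some parameter is integral, which affect $r$ and the conventions $\{\cdot\}^0,\{\cdot\}^\infty$. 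I expect this bookkeeping to be the main obstacle, especially making the Tate-twist normalization of $\JacMot$ (the ``up to a Tate twist'' in \eqref{eq:hodge}) consistent with the normalization $x^{-p_2}y^{p_2+r-1}$. I would first fix that normalization on Example~\ref{ex:ell-curve} and Example~\ref{example:non-ratnl}, then run through the interleavings using the bracket identities.
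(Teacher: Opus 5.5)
Your strategy is the paper's. You use $\hgm\otimes\JacMot((-a,-b,c,d),(c-b,d-a))=\Euler\,\twist_N^{(d-b)N}$, take the Hodge numbers of the Euler eigenspace from the cyclic-cover formula and those of the Jacobi motive from \eqref{eq:hodge}, and compare with the zig-zag over the six interleavings; this comparison is exactly Table~\ref{zigzagfig-4}. Your preliminary reduction to the case where $\irr$ holds, via Definition~\ref{defi:general-HGM}, is sound. It covers a point the paper's proof passes over, since that proof only invokes Definition~\ref{defi:HGM}. One correction: the uniform shift of the zig-zag under translation by $-d$ is the height of the point $d$, not a count of integral parameters. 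That height is what the $p$-value of $\JacMot((-a,-b,c,d),(d-a,d-b,c-d))$ must match.

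However, the Euler-curve input as you wrote it makes the final comparison fail. Since $d-b,\,b-c,\,a-d,\,a-c\notin\Z$, your quantity satisfies
\[
-1+\{b-d\}+\{c-b\}+\{d-a\}+\{a-c\}=2-\bigl(\{d-b\}+\{b-c\}+\{a-d\}-\{a-c\}\bigr).
\]
So it is $2$ minus the paper's \eqref{eq:euler-hodge} at $j=1$. It counts forms $g(x)\,dx/y^{N-1}$, i.e.\ it computes the complex-conjugate eigenspace (the embedding $\sigma_{-1}$). It does not compute the eigenspace containing $dx/y$, whose period is the Euler integral \eqref{eq:integr-repn}.

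Paired with the Jacobi type \eqref{eq:hodge} at $\sigma_1$, this gives the wrong answer whenever the Euler eigenspace is unbalanced. Take case II with $(a,b)=(\tfrac1{10},\tfrac3{10})$, $(c,d)=(\tfrac15,\tfrac25)$. Your sum is $3$, so you get Euler type $2x$ and Jacobi type $xy$, hence $2y^{-1}$ for $\hgm$. But both blue points of the zig-zag sit at height $1$ and give $2x^{-1}$. Case V fails symmetrically.

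In cases I, III, IV and VI the Euler part is $x+y$, so the error is invisible there. For the same reason, calibrating on Examples~\ref{ex:ell-curve} and~\ref{example:non-ratnl} would not catch it, since the Euler part is $x+y$ in both. So the normalization you need to fix is not a Tate twist but a $p\leftrightarrow q$ swap ($\sigma_j$ versus $\sigma_{-j}$). Take $h^{1,0}=\{jA/N\}+\{jB/N\}+\{jC/N\}-\{j(A+B+C)/N\}$ for the eigenspace matched with $\sigma_j$, and \eqref{eq:hodge} at the same $j$. With that change your case analysis goes through as in the paper.
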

\begin{proof}
By definition, the hypergeometric motive $\hgm$ satisfies
\[
  \hgm \otimes \JacMot((-a,-b,c,d),(c-b,d-a))= \Euler  \twist_N^{(d-b)N}.
\]
The space of differentials of the first kind on $\C$ has an action of
$\mubb_N$ (the $N$-th roots of unity); the dimension of the
$\zeta_N^j$ eigenspace (for $\gcd(j,N)=1$) is given by the formula
\begin{equation}
  \label{eq:euler-hodge}
  \left\{\frac{jA}{N}\right\} + \left\{\frac{jB}{N}\right\} + \left\{\frac{jC}{N}\right\} - \left\{\frac{j(A+B+C)}{N}\right\},
\end{equation}
where $\{a\}$ denotes the fractional part of $a$ (see for example \S
4, formula (21) of \cite{MR0931211}). 

The motivic weight of the Jacobi motive equals $n=2-r$.  The Hodge
number (corresponding to the embedding $\sigma_j$) of the Jacobi
motive is given (see \eqref{eq:hodge}) by
\[
p:=\{-aj\} + \{-bj\} + \{cj\} + \{dj\} -\{(c-b)j\} - \{(d-a)j\}, \quad p+q=n.
\]
Table~\ref{table:Hodge} contains, for $a\leq b \in (0,1]$ and
$c \leq d \in [0,1)$, all possible outcomes of the zig-zag procedure
together with the hodge polynomial of the Jacobi motive and of Euler's
curve for the embedding $\sigma(\zeta_N)=\zeta_N$ in the case $r=0$.
The statement follows by comparing the last two columns.
\begin{table}[h]
\begin{tabular}{|c|c|c|c|c|c|}
\hline
  Case & Condition & $\JacMot$ & $\text{Euler}$ & $\hgm$ & ZigZag \\
  \hline
  I & $a\leq b<c\leq d$ & $x^2$ & $x+y$ & $x^{-1}+x^{-2}y$ &
\begin{tikzpicture}[scale=0.3]
\draw[step=1.0,black, very thin] (0,0) grid (4,2);

\draw[green, dashed] (0,0) -- (4,0);

\draw[very thick] (0,0) -- (1,1);
\draw[very thick] (1,1) -- (2,2);
\draw[very thick] (2,2) -- (3,1);
\draw[very thick] (3,1) -- (4,0);

\draw[fill=red] (0,0) circle (.2);
\draw[fill=red] (1,1) circle (.2);
\draw[fill=blue] (2,2) circle (.2);
\draw[fill=blue] (3,1) circle (.2);
\end{tikzpicture}\\
  \hline
  II & $a<c<b<d$ & $xy$ & $2y$ & $2x^{-1}$ & 
\begin{tikzpicture}[scale=0.3]
\draw[step=1.0,black, very thin] (0,0) grid (4,2);

\draw[green, dashed] (0,0) -- (4,0);

\draw[very thick] (0,0) -- (1,1);
\draw[very thick] (1,1) -- (2,0);
\draw[very thick] (2,0) -- (3,1);
\draw[very thick] (3,1) -- (4,0);
\draw[fill=red] (0,0) circle (.2);
\draw[fill=red] (2,0) circle (.2);
\draw[fill=blue] (1,1) circle (.2);
\draw[fill=blue] (3,1) circle (.2);
\end{tikzpicture} \\
  \hline
  III & $a<c \leq d<b$ & $xy$ & $x + y$ & $y^{-1}+x^{-1}$ & 
\begin{tikzpicture}[scale=0.3]
  \draw[step=1.0,black, very thin] (0,0) grid (4,2);

\draw[green, dashed] (0,1) -- (4,1);

\draw[very thick] (0,1) -- (1,2);
\draw[very thick] (1,2) -- (2,1);
\draw[very thick] (2,1) -- (3,0);
\draw[very thick] (3,0) -- (4,1);
\draw[fill=red] (0,1) circle (.2);
\draw[fill=red] (3,0) circle (.2);
\draw[fill=blue] (1,2) circle (.2);
\draw[fill=blue] (2,1) circle (.2);
\end{tikzpicture}\\
  \hline
  IV & $c\leq d<a \leq b$ & $y^2$ & $x+y$ & $xy^{-2}+y^{-1}$ & 
\begin{tikzpicture}[scale=0.3]
\draw[step=1.0,black, very thin] (0,0) grid (4,2);

\draw[green, dashed] (0,2) -- (4,2);

\draw[very thick] (0,2) -- (1,1);
\draw[very thick] (1,1) -- (2,0);
\draw[very thick] (2,0) -- (3,1);
\draw[very thick] (3,1) -- (4,2);
\draw[fill=red] (2,0) circle (.2);
\draw[fill=red] (3,1) circle (.2);
\draw[fill=blue] (0,2) circle (.2);
\draw[fill=blue] (1,1) circle (.2);
\end{tikzpicture}
  \\
\hline
  V & $c<a < d<b$ & $xy$ & $2x$ & $2y^{-1}$ & 
\begin{tikzpicture}[scale=0.3]
\draw[step=1.0,black, very thin] (0,0) grid (4,2);

\draw[green, dashed] (0,2) -- (4,2);

\draw[very thick] (0,2) -- (1,1);
\draw[very thick] (1,1) -- (2,2);
\draw[very thick] (2,2) -- (3,1);
\draw[very thick] (3,1) -- (4,2);
\draw[fill=red] (1,1) circle (.2);
\draw[fill=red] (3,1) circle (.2);
\draw[fill=blue] (0,2) circle (.2);
\draw[fill=blue] (2,2) circle (.2);
\end{tikzpicture} \\
\hline
  VI & $c<a \leq b<d$ & $xy$ & $x+y$ & $x^{-1}+y^{-1}$ &
\begin{tikzpicture}[scale=0.3]
\draw[step=1.0,black, very thin] (0,0) grid (4,2);

\draw[green, dashed] (0,1) -- (4,1);

\draw[very thick] (0,1) -- (1,0);
\draw[very thick] (1,0) -- (2,1);
\draw[very thick] (2,1) -- (3,2);
\draw[very thick] (3,2) -- (4,1);
\draw[fill=red] (1,0) circle (.2);
\draw[fill=red] (2,1) circle (.2);
\draw[fill=blue] (0,1) circle (.2);
\draw[fill=blue] (3,2) circle (.2);
\end{tikzpicture} \\
\hline
\end{tabular}
\caption{\label{zigzagfig-4} Possible Hodge diagrams in the rank $2$ case}
\end{table}

It is easy to verify that the values of the columns (namely
$\JacMot$, Euler and $\hgm$) remain the same while permuting $(a,b)$
and $(c,d)$ in cases I, III, IV and VI. In case II, when $b<c<a<d$ or
when $a<d<b<c$, Euler's curve has Hodge polynomial $2x$ while
$\JacMot$ has Hodge polynomial $x^2$, so the value for $\HGM$ stays the
same. A similar phenomena occurs in case V.

When $r$ is non-zero, the $p$-value of the Jacobi motive remains the
same, but the motivic weight decreases by $r$ hence the same occurs to $q$.
\end{proof}
\begin{example}
\label{example:shimura}
  Let $\C$ be the Euler curve with equation
  \[
\C: y^5=x(1-x)(1-tx),
\]
as studied by Shimura in~\cite{MR176113} (case (4) of the table in the
first page of his paper). It corresponds to the parameters
$\left(\frac{1}{5},\frac{4}{5}\right),\left(\frac{3}{5},1\right)$
(using formula~\eqref{eq:ABCD-parameters}). The field $\Q(\zeta_5)$
has four embeddings into $\CC$, parametrized (once we choose a fifth
root of unity in $\CC$) by the elements $i \in (\Z/5)^\times$.  It is
easy to verify that the values $i=1, 4$ give type V, the value $i=2$
gives type IV, while the value $i=3$ gives type VI (see
Figure~\ref{zigzagfig-4}). We summarize the individual Hodge numbers
for each motive (hypergeometric, Euler and Jacobi respectively) in the
Table~\ref{table:Hodge} (note that here $r=1$).

\begin{table}[h]
\begin{tabular}{|r||r||r|r|r|r|r||r|r|r|r|r||r|r|r|r|r|}
\hline
$i$&Case & \multicolumn{5}{|c||}{$\HGM^{j,-j}$} & \multicolumn{5}{|c||}{Euler$^{j,1-j}$} 
& \multicolumn{5}{|c|}{Jacobi$^{j,1-j}$} \\
  \hline
j&&  $-2$ & $-1$ & $0$ & $1$ & $2$ & $-2$ & $-1$ & $0$ & $1$ & $2$
& $-2$ & $-1$ & $0$ & $1$ & $2$\\
  \hline
\hline
  $1$&V& & & $2$ & & & & & &$2$ &
 & & & &$1$ &\\
  \hline
  $2$&IV& && $1$ & $1$ & & & &$1$ & $1$ & &
 & &$1$ & & \\
 
  \hline
  $3$&VI&  & $1$ &$1$ & & & & &$1$ & $1$ & &
& & &$1$ &  \\
  \hline
  $4$&V && & $2$ & & & & &$2$ &  & &
 & &$1$ & & \\
  \hline
\hline
Total&&$0$&$1$&$6$&$1$&$0$&$0$&$0$& $4$&$4$&$0$
& & &$2$ &$2$ & \\
\hline
\end{tabular}
\caption{Hodge values \label{table:Hodge}}
\end{table}

\begin{remark} We see that the
  $\HGM((\frac{1}{5},{4}{5}),(\frac{3}{5},1)|z)$ is the
  \emph{half-twist} of $H^1$ of the Euler curve $\C$, in the notation
  of van Geemen~\cite{VG1}, with respect to the CM type
  $\Sigma:=\{\sigma_1,\sigma_2\}$ of $\Q(\zeta_5)$.
\end{remark}

Similarly, we can study the case (6) of Shimura's table (in
\cite{MR176113}). It is given by Euler's curve with equation
\[
\C: y^7=x(1-x)(1-tx),
\]
corresponding to the parameters
$\left(\frac{1}{7},\frac{6}{7}\right),\left(\frac{5}{7},1\right)$. Now
the embeddings are parametrized by the elements $i \in
(\Z/7)^\times$. The values $i=1, 2, 5, 6$ yield zig-zag diagrams of
type
V, with Hodge polynomial $2$ while for $i=3$ we get type
IV with Hodge polynomial
 $1+xy^{-1}$
 and for $i=4$ type
 VI with polynomial
$1+x^{-1}y$. Summarizing, the Hodge polynomials are
$$
2, 2, 1+xy^{-1}, 1+x^{-1}y, 2, 2.
$$
\end{example}

\subsection{On the coefficient field of the hypergeometric motive}

\begin{lemma}
  \label{lemma:coef-field}
  Let $a,b,c,d$ be generic rational parameters such that
  $c \not\equiv d \bmod \Z$. Let $N$ be their least common denominator. Let
  $L$ be a finite extension of $F$. Let $\z-spec \in L$,
  $\z-spec \neq 0,1$, be such that there exists a prime ideal $\id{p}$
  of $L$ satisfying
  \begin{itemize}
    
  \item $\id{p}\nmid N$,
    
  \item  $v_{\id{p}}(\z-spec)>0$ and prime to $N$.
  \end{itemize}
  Then both $\zeta_{c+d}$ and $\zeta_{c} + \zeta_{d}$ belong to the
  coefficient field of $\hgms$. Similarly, if $a \not\equiv b$ and there
  exists a prime ideal $\id{p}$ of $L$ not dividing $N$ such that
  $v_{\id{p}}(\z-spec)<0$ and is prime to $N$ then both $\zeta_{a+b}$
  and $\zeta_{a} + \zeta_{b}$ belong to the coefficient field of
  $\hgms$.
\end{lemma}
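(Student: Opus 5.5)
The strategy is to use the action of inertia at the prime $\id{p}$. The image of inertia is a finite-order element whose eigenvalues are the eigenvalues of $M_0^{k_0}$, and these can be expressed through traces of Frobenius. Here $\zeta_x$ denotes $\exp(2\pi i x)$, and $k_0=v_{\id{p}}(\z-spec)$.

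By Theorem~\ref{thm:HGM-0} from the appendix (property (v)), the image of a generator of tame inertia $I_{\id{p}}$ under $\rho_{\id{q}}$ is conjugate to $M_0^{k_0}$, for a prime $\id{q}$ of residue characteristic prime to that of $\id{p}$. By Proposition~\ref{prop:monodromy-eigenvalues}, $M_0$ is conjugate to the companion matrix of $(t-\zeta_c)(t-\zeta_d)$. Since $c\not\equiv d \bmod \Z$, $M_0$ is diagonalizable with eigenvalues $\zeta_c,\zeta_d$. Consequently the generator acts with eigenvalues $\zeta_c^{k_0},\zeta_d^{k_0}$.

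Next we relate these eigenvalues to Frobenius traces. The image of inertia is a finite cyclic group, so it factors through a finite tame quotient. Take the fixed line decomposition: by Grothendieck's description of tamely ramified representations, $\rho(\Frob)\rho(\sigma)\rho(\Frob)^{-1}=\rho(\sigma)^{q}$. Because $N\mid q-1$, this commutation gives $\rho(\sigma)^q=\rho(\sigma)$. So $\rho(\Frob_{\id{p}})$ commutes with $\rho(\sigma)$.

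The cleanest route to traces uses the Chebotarev density theorem in the following form. For any element $g=\tilde F\sigma$ of the decomposition group, with $\sigma$ in inertia, the trace $\trace\rho(g)$ lies in the closure of the set of Frobenius traces. This holds because Frobenius elements at unramified primes are dense in $\Gal(\overline{L}/L)$ and the trace is continuous. Hence $\trace\rho(g)$ lies in the $\id{q}$-adic closure of the coefficient field $E$.

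We then apply this to $g=\sigma^j$ for all $j$, and to $\tilde F\sigma^j$. The traces of $\sigma^j$ are $\zeta_c^{jk_0}+\zeta_d^{jk_0}$. Since $k_0$ is prime to $N$, taking $j$ with $jk_0\equiv 1\pmod N$ gives that $\zeta_c+\zeta_d$ lies in the closure of $E$. The determinant of $\sigma^j$ likewise gives $\zeta_{c+d}$, since $\det\rho$ restricted to inertia is $\zeta_{c+d}^{k_0}$. Alternatively, $\zeta_{c+d}=\tfrac12\big((\zeta_c+\zeta_d)^2-(\zeta_c^2+\zeta_d^2)\big)$, where $\zeta_c^2+\zeta_d^2$ is the trace of $\sigma^{2j}$.

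To descend from the $\id{q}$-adic closure back to $E$ itself, we use that these elements lie in $F$, and that $E\subseteq K\subseteq F$ by Lemma~\ref{lemma:coefficient field}. We vary $\id{q}$ over all primes with the relevant embedding. An element $x\in F$ lying in $\overline{E}_{\id{q}}$ for all $\id{q}$ forces every $\tau\in\Gal(F/E)$ to fix $x$: choose $\id{q}$ split completely in $F$, so that $F_{\id{q}}=\Q_\ell$. This identification must be handled compatibly across embeddings, and doing so is the main obstacle.

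As a more robust alternative for this step, we would use $\Gal(F/E)$-equivariance directly. For $\tau\in\Gal(F/E)$, the conjugate motive ${}^\tau\HGM$ has the same Frobenius traces, hence an isomorphic semisimplification. It therefore has the same inertia eigenvalues, so $\{\zeta_c,\zeta_d\}^{\tau k_0}=\{\zeta_c,\zeta_d\}^{k_0}$, and raising to the power $k_0^{-1}$ shows $\tau$ fixes $\zeta_c+\zeta_d$ and $\zeta_{c+d}$.

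The statement at $\infty$ is the same argument with $M_\infty$, whose eigenvalues are $\zeta_a,\zeta_b$, using $k_\infty=v_{\id{p}}(\z-spec^{-1})$ prime to $N$. One could also deduce it from the inverse symmetry of Theorem~\ref{thm:isogenous}(3).
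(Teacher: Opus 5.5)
Your proposal is correct via your second route, and it has the same skeleton as the paper's proof. Corollary~\ref{coro:specialized-inertia}, the specialized form of Theorem~\ref{thm:HGM-0}, makes inertia at $\id{p}$ act through $M_0^{k_0}$, and one then shows that the symmetric functions of its eigenvalues lie in the coefficient field $E$.

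The routes differ in that last step. The paper cites \cite[Lemma 15]{MR4269428} to get $\zeta_{k_0(c+d)}$ and $\zeta_{-k_0c}+\zeta_{-k_0d}$ in $E$. It then removes $k_0$ by noting that $\Q(\zeta_c,\zeta_d)/\Q$ is abelian, so the field these generate is the $\sigma_{k_0}$-image of, hence equal to, the field for $k_0=1$. You prove the cited input directly instead. For $\tau\in\Gal(F/E)$, the representations $\tau\circ\rho_{\tau^{-1}\lambda}$ and $\rho_\lambda$ have equal traces on a dense set of Frobenius elements. In dimension $2$ this gives equal characteristic polynomials everywhere, in particular on inertia, so $\tau$ permutes $\{\zeta_c^{k_0},\zeta_d^{k_0}\}$. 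Raising to $k_0^{-1}\bmod N$ then replaces the abelian-field remark. Your version is self-contained, and it only involves primes above a single $\ell$, which you may take $\equiv 1\pmod{N}$, exactly as in Theorem~\ref{thm:HGM-0}.

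You should state explicitly that you use the inertia description pointwise. A fixed $\sigma\in I_{\id{p}}$ must have characteristic polynomial $\iota_\lambda(P_\sigma)$ for a single $P_\sigma\in F[t]$ independent of $\lambda$. The appendix does give this, since inertia acts through the Kummer character of $\sqrt[N]{\xi}$ with values in $\mu_N\subset F$. The bare statement that the image is generated by a conjugate of $M_0^{k_0}$ is not enough: it is invariant under $(\zeta_c,\zeta_d)\mapsto(\zeta_c^u,\zeta_d^u)$, which would make your conjugation argument empty.

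Drop your first route. If $\id{q}$ splits completely in $F$, then $E$ is dense in $F_{\id{q}}=\Q_\ell$, so lying in the closure of $E$ carries no information. Repairing it would require $\ell$ whose Frobenius in $\Gal(F/\Q)$ is $\tau$, which falls outside the range $\ell\equiv 1\pmod{N}$ of the appendix.

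The paragraph on the tame relation is never used and can be removed.
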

\begin{proof}
  Recall that the geometric representation attached to $\hgm$ has
  monodromy matrices
  $M_\infty$ conjugate to $\left(\begin{smallmatrix} \zeta_a & 0\\ 0 &
      \zeta_b\end{smallmatrix}\right)$ at $\infty$ if $a \not \equiv
  b\bmod \Z$ and
  $M_0$ conjugate to $\left(\begin{smallmatrix} \zeta{-c} & 0\\ 0 &
      \zeta_{-d}\end{smallmatrix}\right)$ at $0$ if $c \not \equiv d$.

  Let $\id{p}$ be a prime ideal of $L$ satisfying the hypothesis. Then
  by Corollary~\ref{coro:specialized-inertia} (see also \cite[Theorem
  1.2]{MR1116916}) the image of inertia $I_{\id{p}}$ is generated by
  $M_0^r$ (up to conjugation), so \cite[Lemma 15]{MR4269428} implies
  that the coefficient field contains both $\zeta_{r(c+d)}$ and
  $\zeta_{-rc} + \zeta_{-rd}$, which is equivalent to say that it
  contains $\zeta_{c+d}$ and $\zeta_{-c} + \zeta_{-d}$ (because the
  extension $\Q(\zeta_c,\zeta_d)/\Q$ is abelian). The second statement
  follows from a similar argument.
\end{proof}
\begin{proposition}
  \label{prop:coef-lowerbound}
With the hypothesis of the previous lemma, let $\z-spec \in L$ be
  such that there exists prime ideals $\id{p}$ and $\id{q}$ of $L$
  prime to $N$ satisfying:
  \begin{itemize}
    
  \item $v_{\id{p}}(\z-spec) >0$ and prime to $N$,
    
  \item $v_{\id{q}}(\z-spec) <0$ and prime to $N$.
  \end{itemize}
  Then if $\irr$ holds and $|H|=4$, the coefficient field of $\hgms$
  equals $F^H$.
\end{proposition}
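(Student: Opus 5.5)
The plan is to squeeze the coefficient field between an upper and a lower bound. Lemma~\ref{lemma:coefficient field} already gives the inclusion of the coefficient field of $\hgms$ in $K=F^H$. So it is enough to show that the coefficient field $E$ contains $F^H$. Equivalently, by Galois theory inside $F=\Q(\zeta_N)$, we must show that any $j\in(\Z/N)^\times$ fixing $E$ pointwise lies in $H$. Here $E\subseteq F$, since by Proposition~\ref{prop:props-hyperg-sum}(4) the traces lie in $F$.

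First I would apply Lemma~\ref{lemma:coef-field} at the two primes. At $\id{p}$ (where $v_{\id{p}}(\z-spec)>0$) it gives $\zeta_{c+d}\in E$ and $\zeta_c+\zeta_d\in E$, using $c\not\equiv d$. At $\id{q}$ (where $v_{\id{q}}(\z-spec)<0$) it gives $\zeta_{a+b}\in E$ and $\zeta_a+\zeta_b\in E$. For the second application I need $a\not\equiv b\bmod\Z$. I would either read this as part of ``the hypothesis of the previous lemma'' or treat the case $a\equiv b$ separately: the monodromy at $\infty$ is then unipotent, and one still gets $\zeta_{2a}$ and $2\zeta_a$.

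Now let $\sigma_j$ fix $E$. Then $\zeta_c^j+\zeta_d^j=\zeta_c+\zeta_d$ and $\zeta_c^j\zeta_d^j=\zeta_c\zeta_d$, since $\zeta_{c+d}=\zeta_c\zeta_d$. So $\{\zeta_c^j,\zeta_d^j\}$ and $\{\zeta_c,\zeta_d\}$ are the roots of the same quadratic, which gives $j\{c,d\}\equiv\{c,d\}\bmod\Z$. The same argument gives $j\{a,b\}\equiv\{a,b\}$. Hence $j\in H$, and therefore $\Gal(F/E)\subseteq H$, that is, $E\supseteq F^H$.

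The argument above does not seem to use $\irr$ or $|H|=4$, which suggests a subtlety I am missing. The likely culprit is that the quantities produced by Lemma~\ref{lemma:coef-field} need not be traces of Frobenius themselves. Looking at it more closely, that lemma concerns the coefficient field of the representation, which is determined only up to twist, and the inertia image is $M_0^r$ only up to the Jacobi-motive and $\twist_N$ twists. So the lemma may only determine $E$ up to a character twist. I expect the real content, and the main obstacle, is to control this twist.

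To handle it I would use $\irr$, so that Definition~\ref{defi:HGM} applies directly with $\Euler$ and $\z-spec$-independent twists. The goal is to check that the twist character contributes nothing outside $F^H$. Because $|H|=4$, the intermediate fields between $F^H$ and $F$ are few, namely those corresponding to subgroups of a group of order $4$. So it suffices to rule out each of the two or three index-$2$ overfields of $F^H$ inside $F$. For each such overfield I would exhibit an element of $H$ that moves $\zeta_c+\zeta_d$ or $\zeta_{c+d}$ unless the twist compensates. I would then show that compensation is impossible by comparing determinants, since the determinant of inertia at $\id{p}$ pins the twist's restriction.
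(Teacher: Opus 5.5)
Your first three paragraphs already form a complete proof. The overall structure matches the paper: an upper bound $E\subseteq F^H$ from the trace formula (Theorem~\ref{thm:Trace-match} with Proposition~\ref{prop:props-hyperg-sum}), and a lower bound from the two applications of Lemma~\ref{lemma:coef-field}. Your final step, however, is different and sharper.

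The paper concludes with a degree count. Since $M\subseteq F^H$, it gets $[F:M]\ge |H|=4$. Since $F=M(\zeta_a,\zeta_c)$, with $\zeta_a,\zeta_c$ roots of $X^2-(\zeta_a+\zeta_b)X+\zeta_{a+b}$ and $X^2-(\zeta_c+\zeta_d)X+\zeta_{c+d}$ over $M$, it gets $[F:M]\le 4$.

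You instead compute the fixer directly: $\sigma_j$ fixes $\zeta_{a+b},\zeta_a+\zeta_b,\zeta_{c+d},\zeta_c+\zeta_d$ if and only if $j\in H$. So these four numbers generate exactly $F^H$. This gives $E=F^H$ whenever both halves of Lemma~\ref{lemma:coef-field} apply, with no condition on $|H|$. The counting argument, by contrast, genuinely needs $|H|=4$. For the upper bound you also cite the appropriate result. The paper invokes Theorem~\ref{thm:extension2}, whose hypothesis $a+b,c+d\in\Z$ is not part of the statement.

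You should delete your last two paragraphs; the suspicion behind them is unfounded.
\begin{itemize}
\item Lemma~\ref{lemma:coef-field} is a statement about the coefficient field of $\hgms$ itself, not about the representation up to twist. The inertia description it rests on (Corollary~\ref{coro:specialized-inertia}) already accounts for the Jacobi motive and $\twist_N$, which are unramified at primes not dividing $N$.
\item $|H|=4$ is not idle: it forces $a\not\equiv b$ and $c\not\equiv d \bmod \Z$. If, say, $a\equiv b$, the injection of Lemma~\ref{lemma:H-bound} lands in $\Perm(\{c,d\})$, so $|H|\le 2$. This settles your case distinction and makes the second half of Lemma~\ref{lemma:coef-field} applicable.
\item $\irr$ is a standing assumption of Appendix~\ref{appendix:Elisa}, where the inertia at $\id{p}$ and $\id{q}$ is computed.
\end{itemize}

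The fallback plan is also internally inconsistent. Since $E\subseteq F^H$ is already known, what would need excluding are proper \emph{subfields} of $F^H$, not its overfields in $F$. And no element of $H$ can move $\zeta_c+\zeta_d$ or $\zeta_{c+d}$, because $H$ fixes them by definition.
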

\begin{proof}
  Let $M$ denote the coefficient field. By
  Theorem~\ref{thm:extension2}, $M$ is contained in $F^H$ hence
  $[F:M]\ge |H|$. The hypothesis on $\z-spec$ (and the last lemma) imply that
  $\Q(\zeta_{a+b},\zeta_a+\zeta_b,\zeta_{c+d},\zeta_c+\zeta_d) \subset
  M \subset F^H\subset F$, so $[F:M]\le 4$ (because
  $F=\Q(\zeta_a,\zeta_b,\zeta_c,\zeta_d)$). Since $|H|=4$, $M=F^ H$.
\end{proof}

\begin{example} 
  \label{example:genericity-condition}
  The last proposition suggests that some genericity condition on the
  specialization parameter $\z-spec$ is needed for the coefficient
  field to match $F^H$. Consider the case of parameters
  $(1/8,7/8),(3/8,5/8)$ (studied in Examples~\ref{example:1}
  and~\ref{example:reducible}). The group $H$ equals
  $\langle -1 \rangle \subseteq \left(\Z/8\right)^\times$, so for any
  specialization $\z-spec$ of the parameter, the motive is defined
  over $\Q(\sqrt{2})$ (by Theorem~\ref{thm:extension2}). But in
  Example~\ref{example:reducible} we showed that when $\z-spec$ is a
  square, the field of definition is $\Q$ (the motive actually
  corresponds to a elliptic curve over $\Q$).
\end{example}

\section{Extension to $K$}
\label{section:extension}
Let $H$ be the group \eqref{eq:H-defi} for generic parameters
$(a,b),(c,d)$. 
\begin{lemma}
  The group $H$ is a subgroup of $\Z/2 \times \Z/2$.
  \label{lemma:H-bound}
\end{lemma}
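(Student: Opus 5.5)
We need to show $H\subseteq (\Z/N)^\times$, the set of $j$ with $j\{a,b\}\equiv\{a,b\}$ and $j\{c,d\}\equiv\{c,d\} \bmod \Z$, embeds in $\Z/2\times\Z/2$. The plan is to map $H$ into the product of the permutation groups of the two $2$-element multisets and show this map is injective.

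First, each $j\in H$ permutes the set $\{a,b\}$ modulo $\Z$, giving $\pi_1(j)\in S_{\{a,b\}}\simeq \Z/2$. If $a\equiv b$, this group is trivial. Likewise $j$ permutes $\{c,d\}$ modulo $\Z$, giving $\pi_2(j)\in S_{\{c,d\}}\simeq\Z/2$. The assignment $j\mapsto(\pi_1(j),\pi_2(j))$ is a group homomorphism, since multiplication by $jj'$ acts on $\Q/\Z$ as the composition of the two actions.

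Second, for injectivity, suppose $j$ lies in the kernel. Then $ja\equiv a$, $jb\equiv b$, $jc\equiv c$ and $jd\equiv d$ modulo $\Z$. If $a\equiv b$, then $ja\equiv a$ holds automatically because $j$ fixes the multiset $\{a,a\}$. Hence $(j-1)x\in\Z$ for each $x\in\{a,b,c,d\}$. So $j\equiv 1$ modulo the denominator of each parameter, and therefore modulo their least common multiple $N$. Thus $j=1$ in $(\Z/N)^\times$.

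It follows that $H$ is isomorphic to a subgroup of $\Z/2\times\Z/2$. The only care needed is the degenerate case where $a\equiv b$ or $c\equiv d$, in which a factor collapses to the trivial group. The conclusion still holds, since the trivial group is a subgroup of $\Z/2$. Genericity is not actually needed for this argument, and there is no real obstacle.
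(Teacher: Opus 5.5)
Your proposal is correct and follows the paper's proof: you use the same homomorphism $H\to\Perm(\{a,b\})\times\Perm(\{c,d\})$ and show injectivity by checking that a kernel element fixes each of $a,b,c,d$ modulo $\Z$. The only difference is cosmetic. You conclude $j\equiv 1 \pmod N$ because $j\equiv 1$ modulo each denominator, hence modulo their least common multiple. The paper instead writes the parameters with common denominator $N$ and multiplies a B\'ezout relation $\alpha x_1+\beta x_2+\gamma x_3+\delta x_4\equiv 1 \pmod N$ by $j$. The two arguments are equivalent.
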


\begin{proof}
  For $S$ a finite set, let $\Perm(S)$ denote the group of bijective
  maps on $S$.  Set $S=\{a,b\} \times \{c,d\}$ (we are not assuming
  $a \neq b$ nor $c \neq d$). Then there is a group morphism
  \[
    \psi: H \to \Perm(\{a,b\}) \times \Perm(\{c,d\}),
  \]
  sending an element $h$ to the bijective map given by multiplication
  by $h$ on the sets $\{a,b\}$ and $\{c,d\}$. Since
  $\Perm(\{a,b\}) \times \Perm(\{c,d\}) \hookrightarrow \Z/2 \times
  \Z/2$, it is enough to prove that $\psi$ is injective. Let $N$ be
  the least common denominator of $a,b,c,d$. Let $i \in (\Z/N)^\times$
  be an element in the kernel of $\psi$, so $i$ satisfies 
  \[
    ia = a,\qquad ib=b,\qquad ic=c,\qquad id=d,
  \]
  as elements of $\Q/\Z$. Write  $a = \frac{\alpha}{N}$,
  $b=\frac{\beta}{N}$, $c=\frac{\gamma}{N}$, $d=\frac{\delta}{N}$ with
  $\gcd(\alpha,\beta,\gamma,\delta,N)=1$. Then there are integers
  $x_1,x_2,x_3,x_4$ such that the following congruence holds
  \[
\alpha x_1+\beta x_2 + \gamma x_3 + \delta x_4 \equiv 1 \pmod{N}.
\]
Multiplying by $i$ we get that
\[
i \equiv i\alpha x_1+i\beta x_2 + i\gamma x_3 + i\delta x_4 \equiv \alpha x_1+\beta x_2 + \gamma x_3 + \delta x_4 \equiv 1 \pmod N.
\]
\end{proof}
As stated in Conjecture~\ref{conjecture:field-of-defi}, it is expected
that $K=F^H$ is both the field of definition and the coefficient field
of the motive $\hgm$.

\begin{theorem}
\label{thm:extension2}
Let $(a,b),(c,d)$ be generic parameters such that $a+b$ and $c+d$ are
integers and $\irr$ holds. Then $\hgm$ descends to $F^H$ and for any
specialization its coefficient field is contained in $F^H$.
\end{theorem}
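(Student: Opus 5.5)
The plan is to prove descent by Galois descent from $F(z)$ to $K(z)$, where $K:=F^H$. The descent isomorphisms will come from explicit automorphisms (essentially involutions) of the Euler curve $\C$ that intertwine the $\mubb_N$-action with the Galois action.

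\emph{The group $H$.} The hypotheses $a+b,\,c+d\in\Z$ give $b\equiv -a$ and $d\equiv -c \bmod \Z$, so $-1\in H$. By Lemma~\ref{lemma:H-bound}, $H\subseteq \Z/2\times\Z/2$, so $H=\{\pm1\}$ or $H=\{\pm 1,\pm j\}$. In the second case multiplication by $j$ swaps exactly one of the pairs $\{a,b\}$, $\{c,d\}$ modulo $\Z$. For $\sigma=\sigma_j$ with $j\in H$, conjugating by $\sigma$ sends the $\chi_0$-isotypic part $\Euler$ to the $\chi_0^j$-isotypic part of $H^1(\Cnsp)$. By Remark~\ref{rem:Euler-translation}, this part is the $\chi_0$-part of the Euler curve with parameters $(ja,jb),(jc,jd)$, whose exponents $(jA,jB,jC,jD)$ reduced mod $N$ are a permutation of $(A,B,C,D)$ up to the sign changes forced by $b=-a$, $d=-c$.

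\emph{The automorphisms.} For each such $j$ I will write down an automorphism $\tau_j$ of $\C$, defined over $\Q(z)$ (or over $K(z)$ after adjoining a suitable constant), satisfying $\tau_j\circ[\zeta]=[\zeta^{j}]\circ\tau_j$. For $j=-1$ the natural candidate is
\[
\tau_{-1}(x,y)=\Bigl(\tfrac{1}{zx},\,\tfrac{R(x,z)}{y}\Bigr),
\]
with $R$ a monomial in $x$, $1-x$, $1-zx$ and $z$. The Möbius map $x\mapsto 1/(zx)$ swaps $0\leftrightarrow\infty$ and $1\leftrightarrow 1/z$. It therefore interchanges the exponents $B\equiv -(a+c)N$ and $C\equiv (a+c)N$, which are negatives of each other, and sends $A$ and $D$ to compatible values; solving for $R$ then makes $\tau_{-1}$ preserve $\C$. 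For the extra element $\pm j$ I would similarly choose the Möbius transformation permuting $\{0,1,1/z,\infty\}$ that realizes the swap $a\leftrightarrow b$ or $c\leftrightarrow d$, combined with $y\mapsto (\text{monomial})\cdot y^{\pm1}$. Pulling back along $\tau_j$ then gives isomorphisms $\phi_\sigma:\Euler^\sigma\to\Euler$. These are realized by correspondences (graph of $\tau_j$ composed with the isotypic idempotents), so they are isomorphisms of motives and not merely isogenies.

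\emph{Descent data.} I then check the cocycle condition $\phi_{\sigma\sigma'}=\phi_\sigma\circ\phi_{\sigma'}^{\sigma}$. Since $\tau_j^2$ is a power of $[\zeta]$, which acts on $\Euler$ by a scalar root of unity, the cocycle is a priori only correct up to a $\mubb_N$-valued $2$-cocycle. I expect this to be the main obstacle. My first attempt would be to rescale each $\tau_j$ by a constant $y\mapsto \lambda y$, with $\lambda$ an $N$-th root of unity or a square root of $\pm z$ or a unit, so that $\tau_j$ becomes a genuine involution. If that fails, the leftover scalar cocycle should be absorbed into the other two factors of Definition~\ref{defi:HGM}.

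For those factors, with $b=-a$ and $d=-c$ the Jacobi motive becomes $\JacMot((-a,a,c,-c),(a+c,-a-c))$. Its defining multiset is stable under $\theta\mapsto -\theta$ and under the swaps induced by $j\in H$, so by \S\ref{section:Jacobi} its base field contains $K$. Likewise $\twist_N^{(d-b)N}$ extends (Corollary~\ref{lemma:char-extension}) to a character over $\Q$ after possibly multiplying by a character trivial on the relevant inertia. I would try to choose these extensions so that the combined cocycle is trivial.

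Once the descent datum on $\hgm$ over $F(z)/K(z)$ is effective, the motive is defined over $K(z)=F^H(z)$. One can take restriction of scalars and cut out the descended piece by the idempotent built from the $\phi_\sigma$. Specializing at any $\z-spec$ then gives a motive over $K$. Finally, the claim that the coefficient field is contained in $F^H$ follows from Theorem~\ref{thm:Trace-match} together with Proposition~\ref{prop:props-hyperg-sum}(4), exactly as in Lemma~\ref{lemma:coefficient field}.
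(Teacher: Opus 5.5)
Your set-up is correct, and it uses the same geometric input as the paper, namely the maps $\iota_j$ of Proposition~\ref{prop:involutions} (your $\tau_{-j}$ is the paper's $\iota_j$). Specifically, $\Euler^{\sigma_j}$ is the $\chi_0^{j}$-part, and an automorphism with $\tau_j\circ[\zeta]=[\zeta^j]\circ\tau_j$ pulls it back isomorphically onto $\Euler$.

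\textbf{The gap.} The proof breaks exactly where you locate the main obstacle: the cocycle condition is never established, and without it there is no descent.

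\textbf{What is needed.} $H$ must lift to a subgroup of $\Aut_{\Q(z)}(\C)$. That is, you need $\tau_h$ defined over $\Q(z)$ with $\tau_h\circ[\zeta]=[\zeta^h]\circ\tau_h$, $\tau_h^2=1$ and $\tau_h\tau_{h'}=\tau_{hh'}$. Such $\tau_h$ are fixed by $\Gal(F/K)$, so $\phi_{\sigma\sigma'}=\phi_\sigma\circ\phi^{\sigma}_{\sigma'}$ reduces to this homomorphism property.

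\textbf{It does hold, by direct computation.} Normalize $b=-a$, $d=-c$ and take $A=(d-b)N$, $B=(b-c)N=-C$, $D=dN$, which Remark~\ref{rem:Euler-translation} allows. Then $f=x^A(1-x)^B(1-zx)^{-B}z^D$ and $f(1/(zx))=f(x)^{-1}$, so $\iota_{-1}(x,y)=(1/(zx),1/y)$ is an honest involution.

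Now let $ja=a+r$, $jc=-c+s$, and set $m(x)=(1-x)/(1-zx)$ and $R=x^{r-s}(1-x)^{-r-s}(1-zx)^{r+s}z^{-s}$. One checks three identities:
\begin{itemize}
\item $R^N=f(m(x))f(x)^j$, so $\iota_j$ preserves $\C$;
\item $R(m(x))\,f(x)^{(j^2-1)/N}=R(x)^j$, so $\iota_j^2=1$;
\item $R(1/(zx))=R(x)^{-1}$.
\end{itemize}
Together with $m(1/(zx))=1/(z\,m(x))$, the last identity gives $\iota_{-1}\iota_j=\iota_j\iota_{-1}$.

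\textbf{Comparison with the paper.} The paper packages the same facts differently. It passes to $\C/\iota_j$, or to the quotient by $\langle\iota_{-1},\iota_j\rangle$ when $|H|=4$ (which again needs the commutation). The new part of that quotient carries an action of $\Z[\zeta_N+\zeta_N^{-j}]$, and this produces the rank-two object over $F^H(z)$ directly, with coefficients in $F^H$.

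\textbf{Your fallbacks do not close the gap.}
\begin{itemize}
\item Rescaling $y$ by $\sqrt{\pm z}$ neither preserves $\C$ nor is defined over $F(z)$, so it gives no descent isomorphisms.
\item A leftover class cannot be ``absorbed'' into the other factors of Definition~\ref{defi:HGM}. By Lemma~\ref{lemma:Tate-twist} the Jacobi motive is a power of the norm, and $\twist_N$ is the restriction of the character $\kappa$ of $\Gal(\overline{\Q}/\Q)$ (Corollary~\ref{lemma:char-extension}). Both already descend to $\Q$, so their obstruction classes in $H^2(\Gal(F/K),F^\times)$ vanish. The class of the tensor product is therefore exactly that of $\Euler$, and has to be killed on the Euler curve itself.
\end{itemize}

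\textbf{Coefficient field.} Theorem~\ref{thm:Trace-match} only computes Frobenius traces at primes of extensions of $F$. If the coefficient-field assertion is meant for the descended motive over $F^H$, your last step does not control Frobenius at primes of $F^H$ that are not split in $F$. In the paper's construction this comes for free from the $\Z[\zeta_N+\zeta_N^{-j}]$-module structure of the quotient Jacobian.
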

We start proving some preliminary results.
\begin{lemma}
\label{lemma:Tate-twist}
Let $(a,b),(c,d)$ be generic rational parameters satisfying that $a+b$
and $c+d$ are integers. Let $N$ be their least common
denominator. Set
  \[
\delta =
\begin{cases}
  1 & \text{ if }a,c \not \in \ZZ,\\
  0 & \text{ otherwise}.
\end{cases}
\]
Let $\id{p}$ be a prime ideal of $\Q(\zeta_N)$ of norm $q$, not
dividing $N$. Then
  \[
\JacMot((-a,-b,c,d),(c-b,d-a))(\id{p}) = q^\delta.
\]
\end{lemma}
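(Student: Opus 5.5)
The plan is to compute the Jacobi sum directly from Definition~\ref{def:jacobi-motive}. Since $a+b\in\Z$ and $c+d\in\Z$, we have $b\equiv -a$ and $d\equiv -c \bmod \Z$. The formal combination becomes
\[
{\pmb\theta}=\langle -a\rangle+\langle a\rangle+\langle c\rangle+\langle -c\rangle-\langle c+a\rangle-\langle -c-a\rangle ,
\]
because $c-b\equiv c+a$ and $d-a\equiv -c-a$. So the Gauss sums pair up as $g(\psi,x)$ with $g(\psi,-x)$ for $x=a,c,a+c$. The basic tool is the classical identity
\[
g(\psi,x,\id{p})\,g(\psi,-x,\id{p})=\chip^{Nx}(-1)\,q \quad (x\notin\Z), \qquad g(\psi,0,\id{p})^2=1 ,
\]
which follows from $g(\psi^{-1},\chi)=\chi(-1)g(\psi,\chi)$ together with the relation $g(\psi,\chi)g(\psi^{-1},\chi^{-1})\in\{1,q\}$ already used in the proof of Theorem~\ref{thm:GK}.

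First I would record the sign prefactor. Here $\sum n_i=2$, so $(-1)^{\sum n_i+1}=-1$. Next I would apply the identity to each of the three pairs. Each pair contributes $q\,\chip^{Nx}(-1)$ if $x\notin\Z$, and $1$ if $x\in\Z$. The product of the characters is $\chip^{N(a+c-(a+c))}(-1)=1$, so all signs cancel except possibly those coming from integral $x$.

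The case analysis is as follows. Genericity gives $a\not\equiv c$ and $a\not\equiv d\equiv -c$, so $a+c\notin\Z$. If $a,c\notin\Z$, we get $q\cdot q/q=q$. If exactly one of $a,c$ is integral, we get $1\cdot q/q=1$. Both cannot be integral, since $a\not\equiv c$. This matches $q^\delta$ up to the overall sign.

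The main obstacle is the sign bookkeeping. We must check that the leading $(-1)^{\sum n_i+1}=-1$ is cancelled by the convention for trivial characters. I also need to verify that the character factors collapse as claimed. Two conventions matter here: the paper's definition sums over $\FF_q^\times$, giving $g(\psi,0)=-1$, and the relevant integral classes enter the formula. I would handle this by writing $\JacMot$ as a ratio of the form $g(\alpha)g(\beta)/g(\alpha+\beta)$, i.e.\ as genuine Jacobi sums $J$ via Lemma~\ref{lemma:Jacobi}. I would then carefully track the factor $\chip(-1)$ and the sign $-1$ in each case, so that the final value is exactly $q^\delta$.
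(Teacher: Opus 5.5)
Your argument is the same as the paper's proof. You replace $b$ by $-a$ and $d$ by $-c$, pair the Gauss sums as $g(\psi,x,\id{p})\,g(\psi,-x,\id{p})$ for $x=a,c,a+c$, and use $g(\chi)g(\overline{\chi})=\chi(-1)q$ (resp.\ $1$ for $\chi$ trivial). Genericity then gives $a+c\notin\Z$ and at most one of $a,c$ integral. Your evaluation $\prod_i g(\psi,\theta_i,\id{p})^{n_i}=q^\delta$ is correct.

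The gap is your last paragraph: the prefactor $(-1)^{\sum_i n_i+1}=-1$ cannot be cancelled, because your bookkeeping is already complete.
\begin{itemize}
\item If $\delta=1$, none of the six characters is trivial, so no convention for trivial characters enters.
\item If $\delta=0$, the trivial character contributes $g(\psi,0,\id{p})^2=(-1)^2=1$, which carries no sign.
\item Rewriting via Lemma~\ref{lemma:Jacobi} is an identity among the same Gauss sums, so it cannot change the value.
\end{itemize}
Hence with Definition~\ref{def:jacobi-motive} taken literally ($\sum_i n_i=4-2=2$, unaffected by cancellations in $\Z[\frac1N\Z/\Z]$) the value is $-q^\delta$. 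Your plan to ``track the signs so that the final value is exactly $q^\delta$'' cannot succeed.

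The lemma does hold for the normalization $\JacMot({\pmb\theta})(\id{p})=(-1)^{\sum_i n_i}\prod_i g(\psi,\theta_i,\id{p})^{n_i}=\prod_i\bigl(-g(\psi,\theta_i,\id{p})\bigr)^{n_i}$, and this is the one the paper actually uses. Three places show it:
\begin{itemize}
\item Example~\ref{rem:Legendre} is the case $a=b=\frac12$, $c=d=1$ ($\delta=0$) of this lemma. There the Jacobi motive is declared trivial, which is what Theorem~\ref{thm:Trace-match} needs to agree with~\eqref{eq:Ono}, whereas the printed definition gives $-1$.
\item The proof of Lemma~\ref{lemma:hypergeom-fin-rel} identifies a ratio containing the factor $1/g(1)=-1$ with a Jacobi motive having $\sum_i n_i=1$.
\item The proof of Theorem~\ref{thm:Trace-match} uses $\JacMot({\pmb\theta}_1)\JacMot({\pmb\theta}_2)=\JacMot({\pmb\theta}_1+{\pmb\theta}_2)$.
\end{itemize}
The last two also fail with the extra $+1$ in the exponent. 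The paper's own proof simply ignores the prefactor. Rather than promising a cancellation, state explicitly that you use this sign normalization, or note that the printed definition yields $-q^\delta$. With that, your computation is a complete proof.
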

\begin{proof}
  The hypothesis $a+b \in \ZZ$ (respectively $c+d \in \ZZ$) imply that
  we can replace $b$ by $-a$ (respectively $d$ by $-c$) in the Jacobi
  motive
  \[
\JacMot((-a,-b,c,d),(c-b,d-a))(\id{p})=\JacMot((-a,a,c,-c),(a+c,-a-c))(\id{p}).
    \]
    If $\chi$ is a character of $\FF_q^\times$,
    \[
g(\chi)g(\overline{\chi}) = \chi(-1)
\begin{cases}
  q & \text{ if } \chi\neq 1,\\
  1 & \text{ if } \chi = 1.
\end{cases}
\]
The genericity condition on the coefficients imply that
$a+c = a-d \not \in \ZZ$ and at most one of $a, c$ is an integer.
\end{proof}
It follows that if $a+b$ and $c+d$ are integers then the hypergeometric
motive $\hgm$ is (up to a Tate twist) the twist by $\twist_N$ (see
Definition~\ref{defi:twist}) of the motive coming from Euler's
curve. Assume that $t=v_2(N)>0$ (as otherwise $\twist_N$ is
trivial). Then Corollary~\ref{lemma:char-extension} implies that
$\twist_N$ matches the restriction to $\Gal(\overline{\Q}/F)$ of a
character $\kappa$ of $\Gal(\overline{\Q}/\Q)$ of conductor $2^{t+1}$.
\begin{lemma}
  If $(a,b),(c,d)$ are generic parameters such that $a+b$ and $c+d$
  are integers, then the character $\kappa$ is at most quadratic when
  restricted to $\Gal(\overline{\Q}/K)$.
  \label{lemma:char-values}
\end{lemma}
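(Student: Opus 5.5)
The plan is to reduce the statement to a computation with the group $H$. Since $a+b$ and $c+d$ are integers, the condition defining $H$ in~\eqref{eq:H-defi} becomes very rigid, and this forces every element of $H$ to be $\pm1$ modulo $2^t$, where $t=v_2(N)$.

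First, I would translate the claim into Dirichlet characters. We may assume $t\ge 1$, since otherwise $\twist_N$ is trivial and there is nothing to prove. The character $\kappa$ is a Dirichlet character modulo $2^{t+1}$. Its restriction to $\Gal(\overline{\Q}/K)$ is at most quadratic exactly when $\kappa^2$ is trivial on $\Gal(\overline{\Q}/K)$. Work in $\Q(\zeta_{M})$ with $M=\lcm(N,2^{t+1})=2N$. By Galois theory, $\Gal(\overline{\Q}/K)$ surjects onto the subgroup of $(\Z/2N)^\times$ consisting of the classes $j$ whose reduction modulo $N$ lies in $H$. Because $\kappa$ has conductor $2^{t+1}$, the character $\kappa^2$ takes values in $\{\pm1\}$-squares and factors through $(\Z/2^t)^\times$. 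Hence $\kappa^2(j)$ depends only on $j \bmod 2^t$, and $2^t\mid N$. It therefore suffices to show that every $j\in H$ satisfies $\kappa^2(j)=1$, which I would get by proving $j\equiv \pm1 \pmod{2^t}$.

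Next comes the structural step. Since $b\equiv -a$ and $d\equiv -c \bmod \Z$, an element $j\in(\Z/N)^\times$ lies in $H$ if and only if $j\{a,-a\}\equiv\{a,-a\}$ and $j\{c,-c\}\equiv\{c,-c\}$. This says $ja\equiv \pm a$ and $jc\equiv\pm c \bmod \Z$, that is, $j\equiv\pm1 \bmod \den(a)$ and $j\equiv \pm1\bmod \den(c)$, with the two signs possibly different. Moreover $N=\lcm(\den(a),\den(c))$, because $b,d$ contribute no new denominators. So $2^t$ divides $\den(a)$ or $\den(c)$. In either case $j\equiv\pm1\pmod{2^t}$, and therefore $\kappa^2(j)=\kappa(\pm1)^2=1$. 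This gives the lemma. It also recovers the bound $H\subseteq\Z/2\times\Z/2$ of Lemma~\ref{lemma:H-bound} in this special situation.

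The main point needing care is the first step. One must correctly identify the restriction of a character of $\Gal(\overline{\Q}/\Q)$ to $\Gal(\overline{\Q}/K)$ in terms of residue classes modulo $2N$, and check that $\kappa^2$ really only sees $j \bmod 2^t$. The latter holds because the $2$-part of $(\Z/2^{t+1})^\times$ is $\{\pm1\}\times\Z/2^{t-1}$, and squaring kills the kernel of reduction modulo $2^t$. The rest is the elementary congruence argument above.
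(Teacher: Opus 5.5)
Your proof is correct and is essentially the paper's argument written in terms of Dirichlet characters. The paper's identity $\Q(\zeta_{2^{t+1}})\cap F^H=\Q(\zeta_{2^t})^+$ amounts to your fact that every $j\in H$ satisfies $j\equiv\pm1\pmod{2^t}$, and its remark that $\Gal(\Q(\zeta_{2^{t+1}})/\Q(\zeta_{2^t})^+)$ has exponent $2$ is your remark that $\kappa^2$ factors through $(\Z/2^t)^\times$. Your version also proves explicitly the step the paper leaves as ``a simple Galois theory exercise'', namely the inclusion $\Q(\zeta_{2^t})^+\subseteq K$, using $b\equiv -a$, $d\equiv -c$ and $N=\lcm(\den(a),\den(c))$.
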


\begin{proof}
  We can assume that $t>0$ (as otherwise $\twist_N$ and $\kappa$ are
  trivial). Since $-1 \in H$ (complex conjugation), a simple Galois
  theory exercise shows that
  $\Q(\zeta_{2^{t+1}})\cap F^H=\Q(\zeta_{2^t})^+$.
  Then $\kappa$ restricted to $\Gal(\overline{\Q}/K)$ factors through
  $\Gal(\Q(\zeta_{2^{t+1}})/\Q(\zeta_{2^t})^+)$, a group isomorphic to
  $\Z/2 \times \Z/2$ if $t > 1$ and $\Z/2$ if $t=1$, hence $\kappa$ is
  at most quadratic.
\end{proof}
Our goal is to construct a motive $\mathfrak{M}$ defined over $K$
whose base change to $F$ is isomorphic to Euler's motive and define
$\hgm$ as in \eqref{eq:motive-definition}. Since $\kappa$ is (at most)
quadratic and the Jacobi motive corresponds to a Tate twist, for any
specialization $\z-spec$ of $z$, the coefficient field of $\hgms$ is
the same as that of $\mathfrak{M}$.  The motive $\mathfrak{M}$ appears
in the quotient of Euler's curve $\C$ by involutions that we now
describe.
\begin{proposition}
  \label{prop:involutions}
  Let $a,b,c,d$ be generic parameters such that $a+b=c+d=0$. For
  $j \in H$ define a morphism $\iota_j:\C \to \C$ by
  \begin{itemize}
  \item If $j=-1$, 
    \begin{equation}
      \label{eq:inv}
   \iota_{-1}(x,y)=\left(\frac{1}{zx},\frac{1}{y}\right).      
    \end{equation}
  \item If $ja=a+r$ and $jc=-c+s$, for some $r,s\in \ZZ$,
  \begin{equation}
    \label{eq:involution-2}
   \iota_{j}(x,y)=\left(\frac{x-1}{zx-1},\frac{x^{r-s}(1-x)^{-r-s}(1-xz)^{r+s}z^{-s}}{y^j}\right).    
  \end{equation}
\item If $ja=-a+r$ and $jc=c+s$, for $r,s\in \ZZ$,
  \begin{equation}
    \label{eq:involution-3}
   \iota_{j}(x,y)=\left(\frac{zx-1}{z(x-1)},\frac{x^{r-s}(1-x)^{-r-s}(1-xz)^{r+s}z^{-s}}{y^j}\right).    
  \end{equation}
  \end{itemize}
  The map $\iota_j$ is an involution of $\C$ and the new part of
  $\C/\iota_j$ has genus $\phi(N)/2$.
\end{proposition}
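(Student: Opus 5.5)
Since $a+b=c+d=0$, the parameters \eqref{eq:ABCD-parameters} become $A\equiv -(a+c)N$, $B\equiv (-a-c)N$ and $C\equiv(a+c)N \pmod N$. Set $e:=a+c$, so that, up to the harmless normalisation of Remark~\ref{rem:Euler-translation}, the Euler curve is
\[
\C:\ y^N = x^{-eN}(1-x)^{-eN}(1-zx)^{eN}z^{-cN}.
\]
The plan has two parts. First we check directly that each $\iota_j$ maps $\C$ to itself and squares to the identity. Then we compute the genus of the new part of the quotient by analysing the action on the $\chi$-eigenspaces of holomorphic differentials.

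For $\iota_{-1}$, substitute $x\mapsto 1/(zx)$. One has $1-1/(zx)=(zx-1)/(zx)$ and $1-z\cdot\frac{1}{zx}=(x-1)/x$. Plugging in, the powers of $z$ and $x$ recombine, and the right-hand side becomes the inverse of the original right-hand side up to an $N$-th power, which is absorbed by the exponent bookkeeping modulo $N$. Hence $y\mapsto 1/y$ is compatible, and $\iota_{-1}^2=\mathrm{id}$ is clear.

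For $\iota_j$ with $ja=a+r$, $jc=-c+s$, use the Möbius map $x\mapsto (x-1)/(zx-1)$. Write $x'=(x-1)/(zx-1)$ and compute
\[
1-x'=\frac{(z-1)x}{zx-1},\qquad 1-zx'=\frac{1-z}{zx-1}.
\]
Then $x'^{A}(1-x')^{B}(1-zx')^{C}z^{D}$ becomes, up to $N$-th powers, the $(-j)$-th power of the original right-hand side, because $jA\equiv$ the exponent that $x$ picks up, and similarly for the others. This uses $je\equiv a+r-c+s$, i.e. $j$ swaps the roles of $a+c$ and $a-c$ in a way dictated by $H$. The explicit monomial prefactor $x^{r-s}(1-x)^{-r-s}(1-xz)^{r+s}z^{-s}$ is exactly what corrects the non-reduced exponents. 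The $(z-1)$ factors should cancel because $A+B+C$ contributes symmetrically; I expect this cancellation to be the fiddly part, and I would verify it by tracking exponents of each linear factor separately. The third case is symmetric, via $x\mapsto (zx-1)/(z(x-1))$. Checking $\iota_j^2=\mathrm{id}$ reduces to $j^2\equiv 1 \bmod N$, which holds because $H\subseteq \Z/2\times\Z/2$ by Lemma~\ref{lemma:H-bound}, together with the fact that the Möbius maps are involutions.

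For the genus statement, note that $\iota_j$ normalises the $\mubb_N$ action: $\iota_j\circ\sigma_\zeta=\sigma_{\zeta^{-j}}\circ\iota_j$, or $\sigma_{\zeta^{-1}}$ when $j=-1$. Hence $\iota_j^*$ sends the $\chi^i$-eigenspace of $H^1$ to the $\chi^{\pm ij}$-eigenspace. The new part has dimension $2\phi(N)$, with each of the $\phi(N)$ characters of order $N$ contributing rank $2$.
\begin{itemize}
\item When $\iota_j^*$ permutes eigenspaces without fixed points, the invariants have dimension $\phi(N)$, so the genus of the new part of $\C/\iota_j$ is $\phi(N)/2$.
\item When some eigenspace is preserved (e.g. $j=-1$ combined with complex-conjugate pairing), compute the trace of $\iota_j^*$ on the new part using the holomorphic fixed-point formula, or by counting fixed points via Riemann–Hurwitz, and show that the trace is $0$.
\end{itemize}
In either case the invariant part has dimension $\phi(N)$. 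The main obstacle is this fixed-point count on the smooth model $\Cnsp$, including the points above $0,1,1/z,\infty$; I would handle it by Riemann–Hurwitz for $\C\to\C/\iota_j$ combined with the known genus of the new part.
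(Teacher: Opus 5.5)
Your plan (direct substitution for the involution property, then the commuting relation \eqref{eq:commuting-relation} to see how $\iota_j^*$ moves eigenspaces) is the natural way to carry out the paper's ``standard computation''. However, two steps fail as written.

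\textbf{The equation for $\C$.} With $a+b=c+d=0$ one has $A=(d-b)N=(a-c)N$, not $-(a+c)N$; your $B$, $C$, $D$ are right. On the curve you wrote, $x\mapsto 1/(zx)$ sends the right-hand side to its inverse times $z^{2aN}$. This is not an $N$-th power unless $2a\in\Z$, so \eqref{eq:inv} does not preserve that curve. Your own step ``$j$ swaps $a+c$ and $a-c$'' in fact needs the factor $x^{(a-c)N}$. Use the exact exponents $A=(d-b)N$, $B=(b-c)N$, $C=(a-d)N$, $D=dN$. Then all three substitutions hold as identities of rational functions, and this exactness is what you need for \eqref{eq:inv}: a leftover $g^N$ would force $y\mapsto g/y$, not $1/y$, so ``absorbed modulo $N$'' does not justify $y\mapsto 1/y$. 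The $(1-z)$-factors cancel because their exponent is $B+C=(a+b-c-d)N=0$. Also, $\iota_j^2$ fixes $x$, so a priori it is only some element of $\mubb_N$. To get the identity you must check that the $y$-exponents cancel. They do, once you write $j^2=1+kN$ and express $k\,aN$ and $k\,cN$ through $r,s,j$ using the defining relations for $r,s$.

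\textbf{The genus step.} This step is left open, and your fallback is wrong. Let $V_i$, for $i\in(\Z/N)^\times$, be the rank-two $\zeta_N^i$-eigenspaces of the new part of $H^1$. Then $\iota_j^*V_i=V_{\epsilon i}$, with $\epsilon=-1$ for \eqref{eq:inv} and $\epsilon=-j$ for \eqref{eq:involution-2}, \eqref{eq:involution-3}. Since $i$ is a unit, either no $V_i$ is preserved or all are, according as $\epsilon\not\equiv 1$ or $\epsilon\equiv 1\pmod N$.
\begin{itemize}
\item For \eqref{eq:inv}, $\epsilon\not\equiv 1$ as soon as $N>2$, which is needed anyway for $\phi(N)/2$ to be an integer. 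So $\iota_{-1}$ swaps $V_i$ and $V_{-i}$, and your example of a preserved eigenspace is incorrect.
\item For the other two formulas, used for $j\not\equiv -1$, $\epsilon=-j\not\equiv 1$.
\end{itemize}
Hence your first bullet covers every case and is already the whole proof, but you must verify this.

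The second bullet could never succeed. Suppose $\iota$ commuted with $\mubb_N$, as happens if you apply \eqref{eq:involution-2} to $j=-1$ when $2a\in\Z$. Being defined over $\Q(z)$, $\iota^*$ would commute with the geometric monodromy on each $V_i$, which is absolutely irreducible (Proposition~\ref{prop:irreducible}, Theorem~\ref{thm:motive-function-field}). By Schur, $\iota^*|_{V_i}=\pm 1$, with a sign independent of $i$ since $\iota^*$ is rational. The trace on the new part would then be $\pm 2\phi(N)\neq 0$, and the quotient's new part would have genus $0$ or $\phi(N)$. No fixed-point count could give trace $0$. This is also why \eqref{eq:inv} has to take precedence when $j\equiv -1$.
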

\begin{proof}
  Follows from a standard computation.
\end{proof}

\begin{proof}[Proof of Theorem~\ref{thm:extension2}] Let $N$ be the
  least common denominator of the parameters $a,b,c,d$. Let $j \in H$
  and let $\iota_j$ be the involution defined in
  Proposition~\ref{prop:involutions}. The new part of $\C/\iota_j$ has
  genus $\phi(N)/2$. Abusing notation, denote by $\zeta_N$ the map on
  $\C$ given by $\zeta_N(x,y)=(x,\zeta_N y)$. It follows easily from
  its definition that
  \begin{equation}
    \label{eq:commuting-relation}
    \iota_j \circ \zeta_N =
    \begin{cases}
      \zeta_N^{-j} \circ \iota_j & \text{ if }j \neq -1,\\
       \zeta_N^{-1} \circ \iota_j & \text{ if }j=-1.
    \end{cases}
  \end{equation}
  Then over the field $F^{\langle j\rangle}$ there is an action of the
  ring $\Z[\zeta_N + \zeta_N^{-j}]$ (respectively
  $\Z[\zeta_N+\zeta_N^{-1}]$) on $H^1(C/\iota,F)$. Looking at the
  $\chi_0$ isotypical component, we get a $2$-dimensional
  representation with coefficient field $F^{\langle j\rangle}$. If $H$
  is cyclic, this concludes the proof; when $H$ is not cyclic, take
  the quotient by two different involutions attached to elements on
  $H$ (getting a representation over $F^H$).
\end{proof}

The irreducibility assumption $\irr$ in the theorem is not a strong one. 

\begin{lemma}
  \label{lemma:non-irr}
  Let $(a,b), (c,d)$ be generic parameters such that $a+b$ and $c+d$
  are integers. If $\irr$ does not hold then $H = \{\pm 1\}$.
\end{lemma}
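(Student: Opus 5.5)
The plan is to translate both the failure of $\irr$ and membership in $H$ into congruences on numerators, and then solve those congruences. Since $a+b,c+d\in\Z$, write $b\equiv -a$ and $d\equiv -c \bmod \Z$, and put $a=\alpha/N$, $c=\gamma/N$ with $\gcd(\alpha,\gamma,N)=1$; here $N$ is the common denominator of $a,c$. Lemma~\ref{lemma:equiv-irr} says $\irr$ fails exactly when $\lcm\{\den(d-b),\den(b-c),\den(a-d)\}\neq N$. Up to sign these differences are $a-c$ and $a+c$, so $\irr$ fails exactly when some prime $p$ satisfies
\[
p\mid \alpha-\gamma \quad\text{and}\quad p\mid \alpha+\gamma \pmod{N}.
\]
Then $p\mid 2\alpha$ and $p\mid 2\gamma$. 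If $p$ is odd, $p$ divides $\gcd(\alpha,\gamma,N)=1$, which is impossible. Hence $p=2$, $N$ is even, and both $\alpha$ and $\gamma$ are odd. Conversely, these conditions make $\irr$ fail. So I first establish: \emph{$\irr$ fails if and only if $2\mid N$ and $\alpha,\gamma$ are both odd.}

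Next I describe $H$. Because the multisets $\{a,-a\}$ and $\{c,-c\}$ are stable under negation, $-1\in H$ always, so only $H\subseteq\{\pm1\}$ needs proof. An element $j\in(\Z/N)^\times$ lies in $H$ exactly when $j\alpha\equiv\varepsilon_1\alpha$ and $j\gamma\equiv\varepsilon_2\gamma \pmod N$ for some signs $\varepsilon_i$. If $\varepsilon_1=\varepsilon_2=\varepsilon$, I use the argument in the proof of Lemma~\ref{lemma:H-bound}. Choose integers $x_1,x_2$ and $\lambda$ with $\alpha x_1+\gamma x_2\equiv \lambda \pmod N$, where $\lambda$ is invertible mod $N$; this is possible since $\gcd(\alpha,\gamma,N)=1$. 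Multiplying by $j$ gives $j\lambda\equiv\varepsilon\lambda$, hence $j\equiv\varepsilon \pmod N$.

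The main obstacle is the mixed-sign case, say $j\alpha\equiv\alpha$ and $j\gamma\equiv-\gamma$; the other mixed case is symmetric via $j\mapsto -j$. Let $t=v_2(N)\ge1$. Since $\alpha$ and $\gamma$ are odd, reducing modulo $2^t$ gives $j\equiv 1$ and $j\equiv -1 \pmod{2^t}$. Therefore $2^t\mid 2$, so $t=1$. When $t\ge 2$ this already rules out mixed-sign elements and finishes the proof. When $t=1$, write $N=2m$ with $m$ odd. I would then try to use the odd part: modulo each prime power $q^e\,\|\,m$, the conditions read $(j-1)\alpha\equiv0$ and $(j+1)\gamma\equiv0$, and I would try to force $j\equiv\pm1$ globally from these.

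Here is the serious gap. This odd-part reduction does not obviously close, and a quick test suggests it cannot close without an extra hypothesis on $N$. Take $N=30$, $\alpha=5$, $\gamma=3$, that is $a=1/6$ and $c=1/10$. These parameters are generic ($a\pm c\notin\Z$), $\irr$ fails (the relevant lcm is $15\neq 30$), and $j=19$ satisfies $19\cdot5\equiv5$ and $19\cdot3\equiv-3 \pmod{30}$. Then $19\in H$ and $19\not\equiv\pm1$, so $H\neq\{\pm1\}$. Before writing the proof I would recheck this example. If it stands, the argument above proves the lemma only when $4\mid N$, and for $v_2(N)=1$ with $N>2$ the statement appears to need an additional hypothesis (such as $4\mid N$) or a weakened conclusion (such as $H\subseteq\Z/2\times\Z/2$ with the mixed elements described as above).
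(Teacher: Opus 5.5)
Your proposal is correct, and the obstacle you hit is a defect in the statement, not in your argument. Your counterexample stands. Take $a=1/6$, $b=5/6$, $c=1/10$, $d=9/10$. Then $N=30$ and the parameters are generic. The differences $d-b$, $b-c$, $a-d$ all have denominator $15$, so $\irr$ fails. Every unit mod $30$ is $\pm1$ mod $6$, and exactly $1,11,19,29$ are $\pm1$ mod $10$. Hence $H=\{1,11,19,29\}\cong\Z/2\times\Z/2\neq\{\pm1\}$, and the lemma is false as stated.

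The paper's proof follows your route:
\begin{itemize}
\item failure of $\irr$ gives $N=2N'$, hence $v_2(\den(a))=v_2(\den(c))$;
\item an element $j\neq\pm1$ of $H$ must satisfy $ja\equiv a$, $jc\equiv -c$, or the reverse.
\end{itemize}
It then asserts $v_2(j-1)=v_2(j+1)=v_2(\den(a))$. But only $\den(a)\mid j-1$ and $\den(c)\mid j+1$ follow, i.e.\ the inequalities $v_2(j-1)\ge v_2(\den(a))$ and $v_2(j+1)\ge v_2(\den(c))$. These contradict each other only when $v_2(\den(a))\ge 2$. For $j=19$ one has $v_2(18)=1$ but $v_2(20)=2$. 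Your treatment of the case $4\mid N$ is exactly the corrected version of that step.

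The case $v_2(N)=1$ also closes along the lines you began. Let $q^e$ be an odd prime power exactly dividing $N$. Then $q$ cannot divide both $j-1$ and $j+1$, nor both $\alpha$ and $\gamma$. So a mixed-sign element exists if and only if the odd parts $m_a,m_c$ of $\den(a),\den(c)$ are coprime. In that case it is the unique $j_0 \bmod N$ with $j_0$ odd, $j_0\equiv 1 \pmod{m_a}$ and $j_0\equiv -1\pmod{m_c}$.

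The correct statement is therefore: $H=\{\pm1\}$ unless $\den(a)=2m_a$ and $\den(c)=2m_c$ with $m_a,m_c>1$ odd and coprime, in which case $H=\{\pm1,\pm j_0\}$ has order $4$. The proof of Corollary~\ref{coro:extension-motive} uses this lemma to conclude that $H$ is cyclic, so these exceptional parameters are not covered there either.
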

\begin{proof}
  By Lemma~\ref{lemma:equiv-irr} the values
  $N:=\lcm\{\den(a),\den(c)\}$ and $N':=\lcm\{\den(a+c),\den(a-c)\}$
  are different so $N=2N'$. This implies that
  $v_2(\den(a))=v_2(\den(c))$. If $j \in H$ and $j \neq \pm1$ then
  $ja =a \pmod \Z$ and $jc = -c \pmod \Z$ or vice-versa. But then
  $v_2(j-1) = v_2(j+1) = v_2(\den(a))$, a contradiction.
\end{proof}

A result similar to Theorem~\ref{thm:extension2} can be proved under
the following hypothesis.
\begin{theorem}
  \label{thm:H=2}
  Let $(a,b),(c,d)$ be generic parameters such that $\irr$
  holds. Suppose furthermore that $H=\langle j \rangle$ where the
  action of multiplication by $j$ is not trivial on both sets $\{a,b\}$
  and $\{c,d\}$. Then $\hgm$ descends to $F^H$ and its coefficient
  field is contained in $F^H$.
\end{theorem}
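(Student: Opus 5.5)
Following the proof of Theorem~\ref{thm:extension2}, we construct an involution $\iota_j$ of the Euler curve $\C$ that intertwines the $\mubb_N$-action with the Galois twist by $j$. The $\chi_0$-isotypical part of the quotient then gives a rank $2$ motive defined over $F^{\langle j\rangle}=F^H$. Since $j\in H$ acts non-trivially on both $\{a,b\}$ and $\{c,d\}$, we have $ja\equiv b$, $jb\equiv a$, $jc\equiv d$ and $jd\equiv c \bmod \Z$. In particular $j^2\in H$ acts trivially on all four parameters, so $j^2=1$ by the injectivity proof of Lemma~\ref{lemma:H-bound}. The exponents of \eqref{curva2} are $A=(d-b)N$, $B=(b-c)N$, $C=(a-d)N$. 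Multiplying by $j$ sends them modulo $N$ to $jA\equiv (c-a)N=-(A+B+C)$, $jB\equiv (a-d)N=C$, $jC\equiv (b-c)N=B$. The $j$-conjugate Euler curve therefore has exponents $(-(A+B+C),C,B)$: the exponents at the branch points $x=0$ and $x=\infty$ are swapped, and so are those at $x=1$ and $x=1/z$.

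The key step is to find a M\"obius transformation $\mu$ of $\PP^1_x$, defined over $\Q(z)$, realizing this permutation of the branch points $\{0,\infty,1,1/z\}$, namely $0\leftrightarrow\infty$ and $1\leftrightarrow 1/z$. The map $\mu(x)=\frac{1}{zx}$ does exactly this and is an involution. We then pull back $x^A(1-x)^B(1-zx)^Cz^D$ along $\mu$ and compare with the $j$-th power of the original. Concretely, we look for $\iota_j(x,y)=\bigl(\frac{1}{zx},\, R(x)\,y^{j}\bigr)$ (or with $y^{-j}$), where $R(x)=x^{e_1}(1-x)^{e_2}(1-zx)^{e_3}z^{e_4}$ has integer exponents $e_i$ determined by solving linear congruences. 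These congruences are solvable precisely because $jA\equiv-(A+B+C)$, $jB\equiv C$, $jC\equiv B$, and because $jD\equiv cN \equiv (d-(d-c))N$, up to a term that is absorbed by the $z$-power. This is the same kind of computation as in \eqref{eq:inv}, \eqref{eq:involution-2} and \eqref{eq:involution-3}. We then check that $\iota_j\circ\iota_j=\mathrm{id}$, possibly after adjusting $R$ by a root of unity or a power of $z$, using $j^2\equiv1 \bmod N$. We also check the commutation relation $\iota_j\circ\zeta_N=\zeta_N^{\pm j}\circ\iota_j$.

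Given the involution, we argue as in the proof of Theorem~\ref{thm:extension2}. The commutation relation shows that on $H^1$ of $\C/\iota_j$ the ring $\Z[\zeta_N+\zeta_N^{\pm j}]$ acts, and $\iota_j$ descends the $\chi_0$-component from $F(z)$ to $F^{\langle j\rangle}(z)$, giving a $2$-dimensional motive $\mathfrak{M}$ with coefficients in $F^H$. It remains to handle the twisting factors in \eqref{eq:motive-definition}. The Jacobi motive $\JacMot((-a,-b,c,d),(c-b,d-a))$ is sent to itself by $\sigma_j$: multiplication by $j$ permutes the multiset $\{-a,-b,c,d\}$, and it maps $\{c-b,d-a\}$ to $\{d-a,c-b\}$. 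Hence $\JacMot$ is defined over $F^H$ as a Gr\"ossencharacter. Similarly, $\twist_N^{(d-b)N}$ is sent to $\twist_N^{(c-a)N}$, and the exponents $(d-b)N$ and $(c-a)N$ have the same parity modulo $2$ since $j$ is odd when $N$ is even. So this quadratic character also extends to $F^H$, possibly after a choice of extension as in Corollary~\ref{lemma:char-extension}. Tensoring gives the descent of $\hgm$. Finally, the coefficient-field statement follows from Theorem~\ref{thm:Trace-match} together with Proposition~\ref{prop:props-hyperg-sum}(4), which places the trace values in $F^H$.

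The main obstacle is the explicit construction and verification of $\iota_j$. Finding the right exponents of $R$ so that $\iota_j$ is well defined on the curve and is really an involution requires care, especially with the $z^D$ factor and with possible extra roots of unity. We also need to ensure that the descent data is compatible with the Hecke characters of the Jacobi and quadratic factors. If an exact involution fails, the fallback is to produce $\iota_j$ with $\iota_j^2$ equal to an element of $\mubb_N$ and then compose with a suitable power of $\zeta_N$.
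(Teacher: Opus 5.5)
Your outline is the paper's: an involution lifting $x\mapsto 1/(zx)$, the argument of Theorem~\ref{thm:extension2}, and $\sigma_j$-invariance of the Jacobi factor. But you run it on the untwisted curve $\C$ and treat $\twist_N^{(d-b)N}$ as a separate factor, and on $\C$ the involution you need can fail to exist.

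Write $f=x^A(1-x)^B(1-zx)^Cz^D$, $\mu(x)=1/(zx)$ and $j^2=1+mN$. Lifts of $\mu$ have the form $(x,y)\mapsto(\mu(x),\zeta' R(x)y^j)$ with $\zeta'$ an $N$-th root of unity. Such a lift is defined over $F^H(z)$ only if $\zeta'^{\,j-1}=1$. For $N$ even, using $y^{j^2}=y\,f^m$, the map $(x,y)\mapsto(\mu(x),\pm R(x)y^j)$ squares to $(x,y)\mapsto(x,(-1)^{Am}y)$. For $j=-1$ (the case of $\iota_{-1}$ in Theorem~\ref{thm:extension2}) one has $m=0$, which is why the analogy looks harmless.

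Concretely, take $(a,b),(c,d)=(1/8,3/8),(1/4,3/4)$. Then $N=8$, $H=\{1,3\}$, $F^H=\Q(\sqrt{-2})$, and $\irr$ holds because $d-b=3/8$. Here $\C:y^8=x^3(1-x)(1-zx)^3z^6$, the lift is $\iota(x,y)=\bigl(1/(zx),\,z^{-2}x^{-2}(1-zx)^{-1}y^3\bigr)$, and a direct check gives $\iota^2(x,y)=(x,-y)$. Changing the sign of $R$ changes nothing. Your fallback $[\zeta_8^k]\circ\iota$ is an involution only for $k$ odd, but it is defined over $\Q(\sqrt{-2})(z)$ only for $4\mid k$. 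So the quotient construction of Theorem~\ref{thm:extension2} is not available on $\C$ over $F^H(z)$.

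The same obstruction appears on the character side, which is why treating the factors separately cannot give coefficients in $F^H$. When $-1\notin H$, the extensions of $\twist_N^{(d-b)N}$ to $\Gal_{F^H}$ need not be quadratic. In the example, the character $\kappa$ of Corollary~\ref{lemma:char-extension} has conductor $16$ and $\kappa(3)=\pm i$. Hence both extensions of $\twist_8$ to $\Gal_{\Q(\sqrt{-2})}$ have order $4$ and take values outside $F^H$. You can still extend the Galois representation of $H^1_{\chi_0}(\C)$ using the $(\pm i)$-eigenspaces of $\iota^*$ on the sum of the $\chi_0$- and $\chi_0^{\,j}$-components. In general, though, that extension is not $F^H$-rational either; only its product with the character is.

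The paper avoids this by passing to the twisted curve $\widetilde{\C}:y^N=\widetilde f:=(-1)^{(b-d)N}f$. Then $\hgm=H^1_{\chi_0}(\widetilde{\C},F)^{\rm new}\otimes\JacMot((-a,-b,c,d),(c-b,d-a))^{-1}$, with no character factor left over. On $\widetilde{\C}$ the same $R$ defines the lift, but $y^{j^2}=y\,\widetilde f^{\,m}$ now contributes an extra factor $(-1)^{Am}$. So the lift is a genuine involution over $\Q(z)$, and the argument of Theorem~\ref{thm:extension2} goes through.
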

\begin{proof}
Consider the \emph{twisted} Euler curve
  \begin{equation}
    \label{eq:euler-twisted}
    \widetilde{\C}:y^N=(-1)^{(b-d)N}x^A(1-x)^B(1-xz)^Cz^D,
  \end{equation}
  where the exponents are as in \eqref{eq:ABCD-parameters}. 
Then
  \[
    \hgm = H^1_{\chi_0}(\widetilde{\C},F)^{\text{new}} \otimes \JacMot((-a,-b,c,d),(c-b,d-a))^{-1}.
  \]
  The action of multiplication by $j$ fixes both the Jacobi motive and
  the twisted Euler curve. Then, as in the previous case, we can define
  an involution $\iota_j$ and the proof follows mutatis mutandis that
  of Theorem~\ref{thm:extension2}.
\end{proof}

When the group $H$ is cyclic, we can prove a weaker general version of
the result.
\begin{theorem}
  \label{thm:extension1}
  Let $(a,b),(c,d)$ be generic parameters such that $|H| \le 2$. Then
  for all $\z-spec \in K$, $\z-spec \neq 0,1$, the Galois
  representation of $\hgms$ extends to $\Gal_{K}$ and its
  coefficient field is contained in a quadratic extension of $K$.
\end{theorem}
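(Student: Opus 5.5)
The plan is the standard Galois-descent argument for a representation whose isomorphism class is stable under conjugation. Fix $\z-spec \in K$ with $\z-spec\neq 0,1$ and a prime $\lambda$. Let
\[
\rho:=\rho_{\hgms,\lambda}:\Gal(\overline{\Q}/F)\to \GL_2(\overline{F_\lambda})
\]
be the Galois representation of the specialized motive. By Corollary~\ref{coro:conjecture} it exists over $F$.

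\textbf{Step 1: irreducibility.} For a generic $\z-spec$ (one with a prime of $F$ not dividing $N$ where $\z-spec$ has valuation prime to $N$, as in Lemma~\ref{lemma:coef-field}), the image of inertia is conjugate to a power of a monodromy matrix. Together with Proposition~\ref{prop:irreducible}, this shows that $\rho$ is irreducible, and in fact absolutely irreducible. This is the input I would extract from the irreducibility result for specializations (Theorem~\ref{thm:irred-spec}). If that fails for special $\z-spec$, then $\rho$ is a sum of characters and the extension is easier, by inducing characters.

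\textbf{Step 2: conjugates of $\rho$.} Let $\tau\in \Gal_K$ and let $j\in H$ be its image in $\Gal(F/\Q)$. Consider the conjugate
\[
\rho^\tau(g):=\rho(\tilde\tau g\tilde\tau^{-1}), \qquad \tilde\tau \text{ a lift of } \tau.
\]
Its Frobenius traces are $\operatorname{tr}\rho(\Frob_{\tau(\id{p})})$. By Theorem~\ref{thm:Trace-match} this equals $H_{\tau(\id{p})}((a,b),(c,d)|\z-spec)$. Since $\z-spec\in K$ is fixed by $\tau$, transport of structure (the character $\chi_{\tau\id{p}}=\chi_{\id p}^{\tau}$, which is a power $j$ of $\chi_{\id p}$) gives
\[
H_{\tau(\id{p})}((a,b),(c,d)|\z-spec)=H_{\id{p}}(j^{-1}(a,b),j^{-1}(c,d)|\z-spec)=H_{\id p}((a,b),(c,d)|\z-spec).
\]
The last equality holds because $j\in H$ preserves the multisets modulo $\Z$, and by Proposition~\ref{prop:props-hyperg-sum}(1) the ordering does not matter. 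By Chebotarev and Brauer--Nesbitt, $\rho^\tau\simeq\rho$ for every $\tau\in\Gal_K$.

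\textbf{Step 3: extension.} Since $|H|\le 2$, the group $\Gal(F/K)$ is cyclic of order at most $2$, generated by $\tau$. Choose $A\in\GL_2$ with
\[
\rho^\tau = A\rho A^{-1}.
\]
Schur's lemma, applied to $\rho$ absolutely irreducible, makes $A$ unique up to scalars. Then $A\,\rho(\tilde\tau^2)^{-1}$ commutes with $\rho$ and is therefore a scalar. Rescaling $A$ by a square root of that scalar, we may define the extension by $\tilde\tau\mapsto A^{-1}$ (with the appropriate convention). Because $\Gal(F/K)$ is cyclic, the obstruction class in $H^2(\Gal(F/K),\overline{F_\lambda}^\times)$ vanishes, so the extension to $\Gal_K$ exists.

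\textbf{Step 4: coefficient field.} Extensions of $\rho$ to $\Gal_K$ differ by the characters of $\Gal(F/K)$, which has order at most $2$. For an extension $\tilde\rho$, the traces at primes of $K$ split in $F$ are traces of $\rho$, which lie in $K$ by Lemma~\ref{lemma:coefficient field}. At inert primes $\mathfrak q$, the relation
\[
\operatorname{tr}\tilde\rho(\Frob_{\mathfrak q})^2 = \operatorname{tr}\rho(\Frob_{\mathfrak q}^2)+2\det\tilde\rho(\Frob_{\mathfrak q})
\]
shows that the trace lies in an extension of degree at most $2$. I also need $\det\tilde\rho$ to take values in $K$: the determinant is a Jacobi-type character times a power of the cyclotomic character, and it extends with values in $K$ up to a quadratic character. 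One then argues, as in \cite{MR4269428}, that the traces generate a field of degree at most $2$ over $K$: the "inner twist" by the nontrivial character of $\Gal(F/K)$ relates $\tilde\rho$ and its Galois conjugates.

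\textbf{Main obstacle.} I expect this last step to be the hardest: proving that all traces lie in a single quadratic extension of $K$, rather than that each trace is merely quadratic over $K$ individually. I would handle it by scaling $A$ so that the scalar $\rho(\tilde\tau^2)A$ is a square in an explicit quadratic extension of $K$, namely $K$ adjoined a square root of a specific determinant value. That controls the field of definition of $\tilde\rho$ directly.
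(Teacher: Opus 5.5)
Your construction of the extension follows the paper. You split into the absolutely irreducible and reducible cases. You prove $\rho^\tau\simeq\rho$ by reducing, through Theorem~\ref{thm:Trace-match} and $\xi\in K$, to $H_{\id{q}}((a,b),(c,d)|\xi)=H_{\id{q}}((ja,jb),(jc,jd)|\xi)$ for $j\in H$. You then conclude by cyclicity of $\Gal(F/K)$; your Step~3 just reproves Lemma~\ref{lemma:extension}. One slip there: with your convention $\rho^\tau=A\rho A^{-1}$, it is $A^{2}\rho(\tilde\tau^{2})^{-1}$ that is scalar, and one sends $\tilde\tau\mapsto A$.

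Around this core there are real gaps. First, the argument of Step~1 is false. The image of a single inertia group is cyclic, and neither it nor Proposition~\ref{prop:irreducible} (which concerns the whole geometric monodromy) forces the specialization to be irreducible. The paper itself contains a counterexample: for $(\frac{1}{5},\frac{4}{5}),(\frac{3}{5},1)$ and $\xi=\frac{1}{2}$ (Example~\ref{example:shimura-2}), $H$ is trivial and $v_{\id{p}}(\xi)=-1$ is prime to $N=5$ at the prime above $2$. Yet the motive is a sum of two Hecke characters. So only Theorem~\ref{thm:irred-spec} is available, and the reducible case carries real weight.

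Second, your reducible case (``by inducing characters'') is incomplete. Writing $\rho^{\mathrm{ss}}\simeq\chi_1\oplus\chi_2$, Step~2 only yields $\{\chi_1^\tau,\chi_2^\tau\}=\{\chi_1,\chi_2\}$. Induction is right when $\tau$ swaps the characters. If $\tau$ fixes each $\chi_i$, then $\Ind_{\Gal_F}^{\Gal_K}\chi_1$ restricts to $\chi_1\oplus\chi_1$, which is not $\rho^{\mathrm{ss}}$ unless $\chi_1=\chi_2$. In that case you must extend each $\chi_i$ separately to $\Gal_K$ (possible because $\Gal(F/K)$ is cyclic) and take the sum, as the paper does.

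Third, Step~4 is not a proof, as you acknowledge. The paper's proof stops once the extension is built and gives no argument for the coefficient-field assertion, so there is nothing there to match. But your proposed repair does not work as stated. $A$ and the scalar $A^{2}\rho(\tilde\tau^{2})^{-1}$ are $\lambda$-adic quantities, and asking them to be squares in a quadratic extension of $K$ says nothing about Frobenius traces.

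The mechanism that does work, in the irreducible case, is the inner-twist argument you mention only in passing. Assume the extensions can be chosen to form a compatible system, so that ${}^\sigma\tilde\rho$ makes sense for $\sigma\in\Gal(\overline{\Q}/K)$. Since the traces of $\rho$ lie in $K$, ${}^\sigma\tilde\rho|_{\Gal_F}\simeq\rho$, hence ${}^\sigma\tilde\rho\simeq\tilde\rho\otimes\epsilon_\sigma$ with $\epsilon_\sigma\in\{1,\epsilon\}$, where $\epsilon$ is the quadratic character of $F/K$. Moreover $\tilde\rho\not\simeq\tilde\rho\otimes\epsilon$, since otherwise $\tilde\rho$ would be induced from $\Gal_F$ and $\rho$ would be reducible. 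So $\sigma\mapsto\epsilon_\sigma$ is a homomorphism to $\{\pm1\}$, and its kernel fixes a field of degree at most $2$ over $K$ containing all traces. In the induced reducible case the traces already lie in $K$, since they vanish at inert primes. The case of two $\tau$-invariant characters needs its own argument.
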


Let us start with some auxiliary needed results.

\begin{theorem}
  \label{thm:irred-spec}
  Let $(a,b),(c,d)$ be generic rational parameters. For $\z-spec$ outside a
  thin set of $K$, the Galois representation $\rho_{\hgm,\id{p}}$ of
  $\Gal_F$ is absolutely irreducible.
\end{theorem}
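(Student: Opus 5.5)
The plan is to combine Hilbert's irreducibility theorem with the irreducibility of the geometric monodromy (Proposition~\ref{prop:irreducible}). The starting point is the representation $\rho_{\hgm,\id{p}}$ of $\Gal(\overline{\Q(z)}/F(z))$ from~\eqref{eq:Galois-rep}. By Theorem~\ref{thm:motive-function-field}, its restriction to $\Gal(\overline{\Q(z)}/\overline{\Q}(z))$ is the $\id{p}$-adic extension of the monodromy representation. Since the parameters are generic, that representation is absolutely irreducible: the Zariski closure of the discrete monodromy group $\langle M_0,M_1,M_\infty\rangle$ acts irreducibly on $\overline{F_{\id{p}}}^2$, and the image is dense in its closure. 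Hence the image $G:=\rho_{\hgm,\id{p}}(\Gal(\overline{\Q(z)}/F(z)))$ is an absolutely irreducible compact $\id{p}$-adic Lie subgroup of $\GL_2(F_{\id{p}})$.

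The core step is to transfer irreducibility to specializations via Hilbert irreducibility. The difficulty is that the image is infinite in general, so the classical theorem does not apply directly. I would use the standard reduction of Serre (Topics in Galois Theory, \S3; or Lectures on the Mordell--Weil theorem, \S9.6): for a closed subgroup $G\subseteq\GL_2(\Om_{\id{p}})$ there is an open normal subgroup $U$ (for instance the kernel of reduction modulo $\id{p}^2$, or modulo $\id{p}^3$ when $p=2$) such that any closed subgroup of $G$ surjecting onto $G/\Phi(G\cap U)$, with $\Phi$ the Frattini subgroup, is all of $G$. Here $G\cap U$ is pro-$p$ and finitely generated, so its Frattini quotient is finite. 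This gives a finite Galois extension $M/F(z)$, cut out by the reduction of $\rho_{\hgm,\id{p}}$ modulo a suitable power of $\id{p}$ (or by the Frattini quotient). By Hilbert's irreducibility theorem over the number field $F$, and after transferring along $K\subseteq F$ (thin sets of $F$ restrict to thin sets of $K$), for $\z-spec$ outside a thin set of $K$ the decomposition group at $z=\z-spec$ surjects onto $\Gal(M/F(z))$. The specialized image is then a closed subgroup of $G$ surjecting onto this finite quotient, so by the Frattini argument it equals $G$. One must also exclude $\z-spec\in\{0,1\}$, which is a finite, hence thin, set, and take care that specialization is defined because the representation is unramified away from $0,1,\infty$.

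Finally, the specialized representation of $\Gal_F$ has image $G$, which is absolutely irreducible, so $\rho_{\hgm,\id{p}}$ specialized at $\z-spec$ is absolutely irreducible. I expect the main obstacle to be the Frattini/open-subgroup lemma for $\ell$-adic images, especially the choice of $U$ when $p=2$ and the requirement that $G\cap U$ be pro-$p$. As a fallback I would use a cruder argument. Absolute irreducibility is detected by any subgroup whose image mod $\id{p}^k$ is large enough, and the monodromy around $\infty$ contains $M_\infty$ with distinct eigenvalues $\zeta_a\neq\zeta_b$ and no common invariant line with $M_0$. It therefore suffices that the specialization preserves the image modulo a fixed power $\id{p}^k$ for which no line is stable modulo $\id{p}^k$, and this requires only the finite-level Hilbert irreducibility theorem.
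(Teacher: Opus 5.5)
Your proof is correct, but its main step takes a different route from the paper's.

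\textbf{The paper's route.} After fixing a stable lattice, the paper takes an $n$ for which the reduction of $\rho_{\hgm,\id{p}}$ modulo $\id{p}^n$ is ``irreducible''. It applies Hilbert's irreducibility theorem to the single finite extension $L(z)/F(z)$ cut out by that reduction, and concludes that the specialization stays irreducible. This is exactly your fallback.

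\textbf{Your main route.} You use the Frattini argument to show that, outside a thin set, the specialized image equals the whole image $G$. For an open normal pro-$p$ subgroup $U$, the subgroup $\Phi(G\cap U)$ is open, normal in $G$, and contained in $\Phi(G)$. Hence any closed subgroup surjecting onto the finite quotient $G/\Phi(G\cap U)$ is all of $G$.

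\textbf{What each buys.} Your route costs some structure theory of compact $p$-adic Lie groups (finite generation of open pro-$p$ subgroups). In return it proves a stronger statement, namely that the full image survives specialization, and absolute irreducibility is then immediate. The paper's route needs a fact it leaves implicit: absolute irreducibility is detected at a finite level. One way to see this is Burnside plus Nakayama. The $\Om_{\id{p}}$-span of $G$ contains $\id{p}^k M_2(\Om_{\id{p}})$ for some $k$. Any subgroup with the same image modulo $\id{p}^{n}$, for $n>k$, then also has span containing $\id{p}^k M_2(\Om_{\id{p}})$.

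\textbf{Two minor remarks.} First, any open normal pro-$p$ subgroup $U$ works in your argument, so the specific levels $\id{p}^2$ and $\id{p}^3$ are unnecessary. Second, in your fallback, genericity does not force $\zeta_a\neq\zeta_b$. For the parameters $(1/2,1/2),(1,1)$, both $M_0$ and $M_\infty$ are non-semisimple. So the fallback should rest on finite-level detection of irreducibility, not on the eigenvalues of $M_\infty$.
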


\begin{proof}
  By Theorem~\ref{thm:motive-function-field} the Galois representation
  $\rho_{\hgm,\id{p}}$ extends the monodromy representation $\rho$, an
  absolutely irreducible representation (by \cite[Proposition
  3.3]{BH}). After the choice of a stable lattice we can assume that
  our representation $\rho_{\hgm,\id{p}}$ takes values in
  $\GL_2(\Om_{\id{p}})$. Then there exists a positive integer $n$ such
  that the reduction
 \[
   \rho_{\hgm,\id{p},n}: \Gal(\overline{\Q(z)}/F(z)) \to
   \GL_2(\Om_{\id{p}}/\id{p}^n)
  \]
  is irreducible. The field fixed by its kernel is a finite extension
  $L(z)/F(z)$. Then Hilbert's irreducibility theorem (see for example
  \S 3.4 of \cite{MR2363329}) implies that for $\z-spec$ outside a thin
  set, $\Gal(L(\z-spec)/F) = \Gal(L(z)/F(z))$, so the representation
  $\rho_{\hgms,\id{p}}$ is irreducible.
\end{proof}

\begin{example}
  Let $F=\Q(\zeta_3)$, where $\zeta_3$ is a third root of unity.  The
  motive $\HGM((1/3,2/3),(1,1)|z)$ matches (as discovered in
  \cite{Cohen}) the motive attached to the rational
  elliptic curve with equation
  \[
    E_z: y^2+xy+\frac{z}{27}y=x^3.
  \]
  For $z=9/8$ the elliptic curve has complex multiplication by
  $\Z[\zeta_3]$ over $F=\Q(\sqrt{-3})$ (as it is easy to verify). The
  map: $x=u^2x'+r$, $y=u^3y'+u^2sx'+t$, with
  \[
    u = \zeta_3, \quad r=-\frac{\zeta_3+2}{12},\quad
    s=\frac{\zeta_3-1}{2},\quad t=\frac{\zeta_3+2}{24}.
  \]
  is an endomorphism of the curve defined over $F$. It follows that
  the Galois representation attached to
$$
\HGM((1/3,2/3),(1,1)|9/8)
$$
is reducible.
\label{example:reducible2}
\end{example}

\begin{example}[Shimura's Example~\ref{example:shimura} continued]
\label{example:shimura-2}
 Take $\z-spec=1/2$, corresponding to the curve
  \[
    \C:y^5=x(1-x)(1-x/2).
  \]
  The map $\iota(x,y)=(2-x,-y)$ is an involution of $\C$. The quotient
  curve can be computed using~\cite{MR1484478}.
\begin{verbatim}
A<x,y>:=AffineSpace(Rationals(),2);
C:=Curve(A,y^5-x*(1-x)*(1-x/2));
G:=AutomorphismGroup(C);
CG,prj := CurveQuotient(G);
\end{verbatim}
It is given by the hyperelliptic model
\[
 \C_1:y^2=x(x^5-8).
\]
This implies that the corresponding hypergeometric motive at
$\z-spec=1/2$ is reducible.  To obtain an isogeny complement to the
Jacobian of $\C_1$ in the Jacobian of $\C$ we use the code developed
in~\cite{MR3904148} and we find the hyperelliptic curve
\[
  \C_2:y^2=x(x^5-16).
\]
The two curves $\C_1$ and $\C_2$ are actually isomorphic over the
field $\Q(\sqrt[5]{2})$, an isomorphism being
$(x,y) \to (\sqrt[5]{2}x,\sqrt[5]{8}y)$. Their Jacobians are not
isogenous over $F=\Q(\zeta_5)$; this can be verified by computing
their Frobenius polynomial at $11$
\begin{verbatim}
P<x> := PolynomialRing(RationalField());
C := HyperellipticCurve(x*(x^5-8));
D := HyperellipticCurve(x*(x^5-16));
LPolynomial(ChangeRing(C,GF(11))); 
> 121*x^4 + 121*x^3 + 51*x^2 + 11*x + 1
LPolynomial(ChangeRing(D,GF(11)));
> 121*x^4 - 99*x^3 + 41*x^2 - 9*x + 1
\end{verbatim}
Both curves have an order $5$ automorphism sending
$(x,y) \to (\zeta_5^2x,\zeta_5y)$ so correspond to abelian surfaces
with complex multiplication by $\Z[\zeta_5]$. Hence, the motive
$\HGM((\frac{1}{5},\frac{4}{5}),(\frac{3}{5},1)|\frac{1}{2})$
corresponds to the sum of two (distinct) Hecke characters.

A result of Bailey (see \cite{MR185155}) gives the following identity 
\[
{}_2F_1\left(a,1-a;c|\frac{1}{2}\right) = \frac{\Gamma(\frac{c}{2})\Gamma(\frac{c+1}{2})}{\Gamma(\frac{c+a}{2})\Gamma(\frac{1+c-a}{2})}.
\]
A similar formula is expected to hold motivically: the left hand side
corresponding to a hypergeometric motive and the right hand side to a
Jacobi motive. Concretely, taking
$$
a_1=1/5, c_1=2/5,  \qquad a_2=6/5, c_2=2/5
$$
we could expect
\begin{equation}
  \label{eq:shim-hgm}
  \HGM\left(\left(\frac{1}{5},\frac{4}{5}\right),\left(\frac{3}{5},1\right)\left|\frac{1}{2}\right.\right) = \JacMot\left(\left(\frac{1}{5},\frac{7}{10}\right),\left(\frac{4}{5},\frac{1}{10}\right)\right) \oplus \JacMot\left(\left(\frac{1}{5},\frac{7}{10}\right),\left(\frac{3}{10},\frac{3}{5}\right)\right).
\end{equation}
We verified that the Euler factors of both sides coincide for primes
up to 500 (it seems like a nice exercise to provide a proof).
\end{example}

\vspace{2pt}

\noindent {\bf Question:} Let $S$ denote the set of values where the
specialization of the Galois representation is reducible. Besides the
proven statement on $S$ being thin, can more be said? Is it finite?

\vspace{2pt}

\begin{lemma}
  \label{lemma:extension}
  Let $L/K$ be a cyclic extension of number fields, and let
  $\rho:\Gal_L \to \GL_d(\overline{\Q_p})$ be an irreducible
  continuous Galois representation. Then the representation $\rho$
  extends to a representation
  $\tilde{\rho}:\Gal_K \to \GL_d(\overline{\Q_p})$ if and only if for
  all $\tau \in \Gal(L/K)$, the representation
  $\rho^\tau:\Gal_L \to \GL_d(\overline{\Q_p})$ defined by
  $\rho^\tau(\sigma):= \rho(\tau \sigma \tau^{-1})$ is isomorphic to
  $\rho$.
\end{lemma}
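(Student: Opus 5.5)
Necessity is immediate: if $\tilde\rho$ extends $\rho$, pick a lift $\tilde\tau\in\Gal_K$ of $\tau$. Then $\rho^\tau(\sigma)=\tilde\rho(\tilde\tau)\rho(\sigma)\tilde\rho(\tilde\tau)^{-1}$, so $\rho^\tau\simeq\rho$. (Changing the lift changes $\rho^\tau$ only by an inner conjugation by $\rho(\Gal_L)$, so the isomorphism class of $\rho^\tau$ is well defined.) The substance is sufficiency, and I would follow the classical Clifford-theory argument for cyclic quotients.

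Let $n=[L:K]$ and let $\tau$ generate $\Gal(L/K)$. Fix a lift $t\in\Gal_K$ of $\tau$. By hypothesis there is $A\in\GL_d(\overline{\Q_p})$ with
\[
A\rho(\sigma)A^{-1}=\rho(t\sigma t^{-1})\qquad\text{for all } \sigma\in\Gal_L .
\]
Iterating gives $A^n\rho(\sigma)A^{-n}=\rho(t^n\sigma t^{-n})$. Since $t^n\in\Gal_L$, we also have $\rho(t^n)\rho(\sigma)\rho(t^n)^{-1}=\rho(t^n\sigma t^{-n})$. Hence $\rho(t^n)^{-1}A^n$ commutes with $\rho(\Gal_L)$. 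Because $\rho$ is absolutely irreducible (irreducible over $\overline{\Q_p}$), Schur's lemma gives $A^n=\lambda\rho(t^n)$ for some scalar $\lambda\in\overline{\Q_p}^\times$. Rescaling $A$ by an $n$-th root of $\lambda$, which exists in the algebraically closed field $\overline{\Q_p}$, we may assume $A^n=\rho(t^n)$.

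Every element of $\Gal_K$ can be written uniquely as $t^i\sigma$ with $0\le i<n$ and $\sigma\in\Gal_L$. Define $\tilde\rho(t^i\sigma):=A^i\rho(\sigma)$. Checking that this is a homomorphism is the routine step. One writes $t^i\sigma t^j\sigma'=t^{i+j}(t^{-j}\sigma t^j)\sigma'$, then uses the intertwining relation to move $A^j$ past $\rho(\sigma)$, and uses $A^n=\rho(t^n)$ whenever $i+j\ge n$. This is exactly where the normalization of $A$ is needed.

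Continuity is clear, since $\Gal_L$ is open of finite index and $\tilde\rho$ restricted to each coset $t^i\Gal_L$ is the continuous map $A^i\rho$. The image lies in $\GL_d(E)$ for some finite extension $E$ of $\Q_p$: the entries of $A$ and the $n$-th root of $\lambda$ generate a finite extension, and $\rho$ already takes values in a finite extension by compactness and a Baire argument. This is harmless for the statement, which allows values in $\overline{\Q_p}$.

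I expect the only delicate point to be the Schur step. It relies on absolute irreducibility, so I would read ``irreducible'' in the statement as irreducible over $\overline{\Q_p}$, which is how it is used in Theorem~\ref{thm:irred-spec}. It also requires taking an $n$-th root of the scalar $\lambda$, and this is the reason $\overline{\Q_p}$ coefficients are used.
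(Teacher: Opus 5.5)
Your proof is correct and complete. Necessity follows by conjugating with $\tilde\rho(\tilde\tau)$; sufficiency comes from an intertwiner $A$ for a generator, Schur's lemma giving $A^n=\lambda\rho(t^n)$, rescaling so that $A^n=\rho(t^n)$, and setting $\tilde\rho(t^i\sigma)=A^i\rho(\sigma)$. The paper gives no argument of its own: it cites \cite[Lemma 3.4]{MR3968906} for quadratic $L/K$ and asserts that the same proof works for cyclic extensions, and what you wrote is that standard argument carried out for general cyclic degree $n$.
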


\begin{proof}
  See \cite[Lemma 3.4]{MR3968906} (even when the result is stated for
  $L/K$ quadratic, the proof works the same for cyclic extensions).
\end{proof}

\begin{remark}
  The result is only valid for cyclic extensions; it is not hard to
  construct examples of abelian extensions $L/K$ with Galois group
  $\Z/2 \times \Z/2$ where the last result does not hold. It is
  however true for abelian extensions that a twist of our representation does
  extend.
\label{rem:irreducibility}
\end{remark}

\begin{proof}[Proof Theorem~\ref{thm:extension1}]
  Start assuming that $\rho_{\hgms,\id{p}}$ is absolutely irreducible
  (by Theorem~\ref{thm:irred-spec} and Remark~\ref{rem:irreducibility}
  this occurs for $\z-spec$ outside a thin set). Since $H$ is
  cyclic, Lemma~\ref{lemma:extension} implies that it is enough to
  prove that for all $\sigma \in H$, the representation
  $\rho_{\hgms,\id{p}}^\sigma$ is isomorphic to $\rho_{\hgms,\id{p}}$,
  or equivalently (since $\rho_{\hgms,\id{p}}$ is irreducible) that
  for all prime ideals $\id{q}$ of $F$ on a density one set both
  representations have the same trace at $\Frob_{\id{q}}$. By
  Theorem~\ref{thm:Trace-match} and
  Proposition~\ref{prop:props-hyperg-sum}, if
  $\sigma(\zeta_N)=\zeta_N^j$, it amounts to verify that
  \[
    H_{\id{q}}((a,b),(c,d)|\z-spec) = H_{\id{q}}((ja,jb),(jc,jd)|\z-spec),
  \]
  which follows from the definition of $H$.

  If the representation is reducible, then there exists Hecke
  characters $\chi_1,\chi_2$ of $\Gal_F$ such that (up to semisimplification)
  \[
\rho_{\hgms,\id{p}}^{\text{ss}} \simeq \chi_{1,\id{p}} \oplus \chi_{2,\id{p}}.
\]
Let $\sigma$ be the non-trivial element of $\Gal(F/K)$. Either
${}^\sigma \chi_1 = \chi_1$ or ${}^\sigma \chi_1 = \chi_2$. In the
first case, both characters $\chi_i$, $i=1,2$, extend to $\Gal_K$, so
the extension is defined as the sum of the two extended characters. In
the second case, the extension is defined as the induced
representation $\Ind_{\Gal_F}^{\Gal_K}\chi_1$ (which is isomorphic to
the induction of $\chi_2$).
\end{proof}

\begin{corollary}
  \label{coro:extension-motive}
  If $a+b$ and $c+d$ are integers, then for $\z-spec \in K$,
  $\z-spec \neq 0,1$, the Galois representation attached to $\hgms$
  extends to $\Gal_K$.
\end{corollary}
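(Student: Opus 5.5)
We want: if $a+b,c+d\in\Z$, then for $\z-spec\in K$, $\z-spec\neq 0,1$, the Galois representation of $\hgms$ extends to $\Gal_K$. The plan is to split according to whether $\irr$ holds.

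\emph{Case $\irr$ holds.} We invoke Theorem~\ref{thm:extension2}. It says $\hgm$ descends to $F^H=K$, i.e.\ there is a motive over $K(z)$ whose base change to $F(z)$ is $\hgm$. Concretely, its proof produces the motive $\mathfrak{M}$ as the $\chi_0$-part of the new cohomology of the quotient of $\C$ by the involutions $\iota_j$, $j\in H$; these are defined over $K(z)$ by Proposition~\ref{prop:involutions}. By Lemma~\ref{lemma:Tate-twist}, the Jacobi factor in \eqref{eq:motive-definition} is a power of the cyclotomic character, which is defined over $\Q$. The character $\twist_N$ is the restriction of $\kappa$, which is defined on $\Gal_\Q$ by Corollary~\ref{lemma:char-extension}. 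Hence each factor of \eqref{eq:motive-definition} extends to $\Gal_K$, and so does their tensor product. Specializing $z=\z-spec\in K$, which is legitimate because the descended family is defined over $K(z)$, gives the extension of the specialized representation.

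\emph{Case $\irr$ fails.} Lemma~\ref{lemma:non-irr} gives $H=\{\pm1\}$, so $|H|\le 2$, and Theorem~\ref{thm:extension1} applies directly. It yields that for every $\z-spec\in K$, $\z-spec\neq 0,1$, the representation extends to $\Gal_K$. The underlying mechanism is Lemma~\ref{lemma:extension} for the cyclic extension $F/K$, with the conjugate-invariance checked on traces via Theorem~\ref{thm:Trace-match} and Proposition~\ref{prop:props-hyperg-sum}(3). The reducible case is handled by extending characters or by inducing.

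The main delicate point is the first case, where one must check that the descent in Theorem~\ref{thm:extension2} is compatible with specialization. This requires $\mathfrak{M}\otimes F$ to be isomorphic to the Euler motive as a family over $F(z)$, not just at generic points. I would argue this by noting that the involutions and the $\mubb_N$ action are all defined over $K(z)$ and commute as in \eqref{eq:commuting-relation}, so the isotypic decomposition specializes. If this is awkward, a fallback is available: when $H$ is cyclic, use Theorem~\ref{thm:extension1} directly. When $H\simeq\Z/2\times\Z/2$, apply Lemma~\ref{lemma:extension} twice along a tower $F\supset F^{\langle j\rangle}\supset K$. At each step, check trace invariance using $H_{\id{q}}((a,b),(c,d)|\z-spec)=H_{\id{q}}((ja,jb),(jc,jd)|\z-spec)$, and deal with reducible specializations as in the proof of Theorem~\ref{thm:extension1}.
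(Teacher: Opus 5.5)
Your proposal is correct and follows the paper's proof exactly: Theorem~\ref{thm:extension2} when $\irr$ holds, and otherwise Lemma~\ref{lemma:non-irr} gives $H=\{\pm1\}$, so Theorem~\ref{thm:extension1} applies. One caution about your fallback for $H\simeq\Z/2\times\Z/2$, which is not needed but would not work as stated: at the second step of the tower, the traces at primes of $F^{\langle j\rangle}$ inert in $F$ are not given by Theorem~\ref{thm:Trace-match}, so you only get $\tilde\rho^\tau\simeq\tilde\rho$ up to the quadratic character of $F/F^{\langle j\rangle}$, which is exactly the obstruction flagged in Remark~\ref{rem:irreducibility}.
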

\begin{proof}
  If $\irr$ holds, the result follow from
  Theorem~\ref{thm:extension2}. Otherwise, Lemma~\ref{lemma:non-irr}
  implies that $H$ is cyclic, so the result follows from
  Theorem~\ref{thm:extension1}.
\end{proof}

\section{Hypergeometric Motives over totally real fields}
\label{section:HGM-tot-real}
Our understanding of $L$-series coming from Galois representations is
still very limited. There are a few instances where some general
results are known, including odd $2$-dimensional representations of
weight two over totally real fields. In this case, it is known (see
for example \cite[Theorem 1.1.1]{Snowden}) that the $L$-series extends
meromorphically to the whole complex plane and satisfies a functional
equation (it is furthermore expected that the extension is
holomorphic). In this section we study hypergeometric motives defined
over totally real fields.

According to Conjecture~\ref{conjecture:field-of-defi} the motive
$\hgm$ is defined over a totally real field precisely when $-1 \in H$; i.e.
when
\[
\{a,b\} = \{-a,-b\}, \qquad \{c,d\} = \{-c,-d\}.
  \]
  Equivalently, the motive is defined over a totally real field when
  one of the following (non-disjoint) cases occurs:
  \begin{itemize}
  \item $a = -b$ and $c=-d$, or equivalently, $a+b \in \Z$ and $c+d \in \Z$,
    
  \item $a = -b$ (respectively $c=-d$), $c=-c$ and $d=-d$ (respectively
    $a=-a$ and $b=-b$).
    
  \item $a=-a$, $b=-b$, $c=-c$ and $d=-d$.
  \end{itemize}
  The last case (for generic parameters) corresponds to the motive with parameters
  $(1/2,1/2),(1,1)$ or $(1,1)(1/2,1/2)$. It is a rational motive
  corresponding to Legendre's family of elliptic curves
  \[
    E_z:y^2=x(1-x)(1-zx).
  \]
  In this case modularity for specializations $z\neq 0,1$ in $\Q$ is
  known by the work of Wiles and others (see~\cite{Wiles}
  and~\cite{MR1839918}). We consider the remaining cases.  For a
  rational number $a$, the relation $a \equiv -a \pmod \Z$ implies
  that either $a\in \Z$ or $a\in \frac{1}{2}+\Z$, so we can separate (up to a
  quadratic twist obtained by adding $1/2$ to all parameters) totally
  real motives into two disjoint cases (that we study separately):
  \begin{enumerate}
    
  \item Motives with parameters $(a,b), (c,d)$ with $a+b \in \Z$ and $c + d \in \Z$,
    
  \item Motives with parameters $(a,-a), (1/2,1)$ or $(1/2,1), (c,-c)$.
  \end{enumerate}

  \subsection{Case (1)} We start by computing the Hodge polynomials as
  explained in \S\ref{section:hodge} (using
  Theorem~\ref{thm:zig-zag-rank2}). If two of the parameters are integers, 
\begin{figure}[h]
\begin{center}
\begin{tikzpicture}[scale=0.65]
\draw[step=1.0,black, very thin] (0,0) grid (4,2);
\draw[green, dashed] (0,2) -- (4,2);
\draw[very thick] (0,2) -- (1,1);
\draw[very thick] (1,1) -- (2,0);
\draw[very thick] (2,0) -- (3,1);
\draw[very thick] (3,1) -- (4,2);
\draw[fill=red] (2,0) circle (.1);
\draw[fill=red] (3,1) circle (.1);
\draw[fill=blue] (0,2) circle (.1);
\draw[fill=blue] (1,1) circle (.1);
\end{tikzpicture}
\end{center}
\caption{\label{fig:totreal-1} Zigzag outcome when $a,b \in \ZZ$ or $c,d\in \ZZ$}
\end{figure}
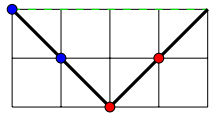
then for all embeddings $\sigma:K \hookrightarrow \CC$, the zigzag
procedure gives the diagram in Figure~\ref{fig:totreal-1} and
the Hodge polynomial equals $x+y$. Then for any $\z-spec \in K \setminus\{0,1\}$, the Galois
representation of $\Gal_K$ attached to $\hgms$ has Hodge-Tate weights
$\{0,1\}$.

When no parameter is an integer, the zig-zag procedure gives the
diagram in Figure~\ref{fig:totreal-2}.
\begin{figure}[h]
\begin{subfigure}{.3\textwidth}
\begin{center}
\begin{tikzpicture}[scale=0.65]
\draw[step=1.0,black, very thin] (0,0) grid (4,2);
\draw[green, dashed] (0,1) -- (4,1);
\draw[red] (0,2)--(0,2) node[above]{$a$};
\draw[red] (3,2)--(3,2) node[above]{$-a$};
\draw[blue] (1,0)--(1,0) node[below]{$c$};
\draw[blue] (2,0)--(2,0) node[below]{$-c$};
\draw[very thick] (0,1) -- (1,2);
\draw[very thick] (1,2) -- (2,1);
\draw[very thick] (2,1) -- (3,0);
\draw[very thick] (3,0) -- (4,1);
\draw[fill=red] (0,1) circle (.1);
\draw[fill=red] (3,0) circle (.1);
\draw[fill=blue] (1,2) circle (.1);
\draw[fill=blue] (2,1) circle (.1);
\end{tikzpicture}
\end{center}
\end{subfigure}\hspace{0em}
\begin{subfigure}{.3\textwidth}
\begin{center}
\begin{tikzpicture}[scale=0.65]
\draw[step=1.0,black, very thin] (0,0) grid (4,2);
\draw[green, dashed] (0,1) -- (4,1);
\draw[blue] (0,2)--(0,2) node[above]{$c$};
\draw[blue] (3,2)--(3,2) node[above]{$-c$};
\draw[red] (1,0)--(1,0) node[below]{$a$};
\draw[red] (2,0)--(2,0) node[below]{$-a$};
\draw[very thick] (0,1) -- (1,0);
\draw[very thick] (1,0) -- (2,1);
\draw[very thick] (2,1) -- (3,2);
\draw[very thick] (3,2) -- (4,1);
\draw[fill=red] (1,0) circle (.1);
\draw[fill=red] (2,1) circle (.1);
\draw[fill=blue] (0,1) circle (.1);
\draw[fill=blue] (3,2) circle (.1);
\end{tikzpicture}
\end{center}
\end{subfigure}\hspace{0em}
\caption{\label{fig:totreal-2} The zigzag procedure in case $(1)$ for $a<c$
  and $a>c$}
\end{figure}
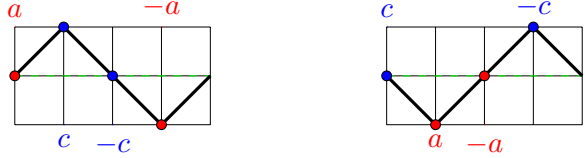
In this case, the Hodge polynomial equals $x^{-1}+y^{-1}$. Then
for any $\z-spec \in K \setminus\{0,1\}$ the Galois representation of
$\Gal_K$ attached to $\hgms$ has Hodge-Tate weights $\{-1,0\}$.

In both cases, the $L$-series attached to $\hgms$ extends
meromorphically to the whole complex plane and satisfies a functional
equation (by \cite[Theorem 1.1.1]{Snowden}).  As already mentioned, it
is expected that the $L$-series is actually holomorphic, and comes (up
to a Tate twist) from a parallel weight $2$ Hilbert modular form defined
over $K$. In some cases one can use modularity lifting theorems and
congruences to prove modularity of all specializations of the motive
(as done in \cite{GP} for the parameters
$(\frac{1}{2p},-\frac{1}{2p}),(1,1)$ for example). In other cases
(like the motive corresponding to the parameters
$(3/8,-3/8),(1/8,-1/8)$ described in the introduction) one can prove
modularity of the motive for some particular specializations (like
$\z-spec=3$) using the Faltings-Serre method.

\subsection{Case (2)} 
The zig-zag procedure gives (for all embeddings
$\sigma:K \hookrightarrow \CC$) the diagram shown in
Figure~\ref{fig:totreal-3}. Its Hodge polynomial is $2$, so it
should correspond to a parallel weight $1$ modular form.
\begin{figure}[h]
\begin{center}
\begin{tikzpicture}[scale=0.65]
\draw[step=1.0,black, very thin] (0,0) grid (4,1);
\draw[green, dashed] (0,1) -- (4,1);
\draw[red] (1,0)--(1,0) node[below]{$a$};
\draw[red] (3,0)--(3,0) node[below]{$-a$};
\draw[blue] (2,1)--(2,1) node[above]{$\frac{1}{2}$};
\draw[blue] (0,1)--(0,1) node[above]{$0$};
\draw[very thick] (0,1) -- (1,0);
\draw[very thick] (1,0) -- (2,1);
\draw[very thick] (2,1) -- (3,0);
\draw[very thick] (3,0) -- (4,1);
\draw[fill=red] (1,0) circle (.1);
\draw[fill=red] (3,0) circle (.1);
\draw[fill=blue] (0,1) circle (.1);
\draw[fill=blue] (2,1) circle (.1);
\end{tikzpicture}
\end{center}
\caption{\label{fig:totreal-3} The zigzag procedure in case $(2)$}
\end{figure}
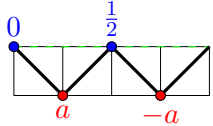
 This is indeed the case, since such motives have
finite monodromy as studied by Schwarz (in \cite{MR1579568}),
corresponding to dihedral projective image. A detailed description
will be given in a subsequent article by the last two authors.
  
In both cases, we can prove a stronger version of
Theorem~\ref{thm:irred-spec}.

\begin{theorem}
  \label{thm:real-irred}
  Let $(a,b),(c,d)$ be rational generic parameters, and
  let $\z-spec \in K$, $\z-spec \neq 0,1$. Suppose that $a+b$ and $c+d$ are
  integers. Then the Galois representation of the motive $\hgms$
  extended to $\Gal_K$ is irreducible.
\end{theorem}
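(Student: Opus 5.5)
We argue by contradiction: suppose that for some $\z-spec\in K\setminus\{0,1\}$ the $2$-dimensional representation $\rho:=\rho_{\hgms,\id{p}}$ of $\Gal_K$ is reducible. Since $a+b,c+d\in\Z$, we have $-1\in H$. So $K$ is totally real, and it is the field of definition obtained in Theorem~\ref{thm:extension2} or Corollary~\ref{coro:extension-motive}. Its semisimplification is then a sum $\chi_1\oplus\chi_2$ of two characters of $\Gal_K$. Because $\rho$ is a geometric $\id{p}$-adic representation (a twist of $H^1$ of a curve by a finite-order character and a power of the cyclotomic character, by Lemma~\ref{lemma:Tate-twist}), each $\chi_i$ is de Rham. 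Hence each $\chi_i$ is the Galois character of an algebraic Hecke character of $K$. Since $K$ is totally real, every such character has the form $\epsilon_i\,\chi_{\mathrm{cyc}}^{n_i}$ with $\epsilon_i$ of finite order and $n_i\in\Z$ independent of the embedding.

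The Hodge--Tate weights now give the contradiction, and the computation of Case~(1) above supplies them. If two parameters are integers, the Hodge polynomial is $x+y$ at every embedding, so the weights are $\{0,1\}$ and $\rho$ is pure of weight $1$. If no parameter is an integer, the Hodge polynomial is $x^{-1}+y^{-1}$, so the weights are $\{-1,0\}$, again distinct. In either case $\{n_1,n_2\}=\{0,1\}$ up to a common shift, so the Frobenius eigenvalues at a good prime $\id{q}$ of norm $q$ are a root of unity times $1$ and a root of unity times $q$. This contradicts purity: by the Weil conjectures for the Euler curve (Theorem~\ref{thm:trace-equality}), both eigenvalues have absolute value $q^{1/2}$, up to the common Tate twist. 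The one point needing care is that purity survives the twists in Definition~\ref{defi:HGM}. It does, since $\twist_N$ has finite order and the Jacobi motive is $q^{\delta}$ by Lemma~\ref{lemma:Tate-twist}.

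The main obstacle is justifying that the constituents are algebraic Hecke characters that are \emph{parallel} powers of the cyclotomic character. That $\chi_i$ is Hodge--Tate (a subquotient of a de Rham representation) is standard. The conclusion that an algebraic Hecke character of a totally real field is a finite-order character times an integral power of the norm is the classical fact (Weil, Serre) that the infinity type must factor through the unit obstruction. I would quote it, noting that the totally real hypothesis is exactly what makes it apply; over $F$ it fails, cf.\ Example~\ref{example:shimura-2}.

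Alternatively, one can avoid Hecke characters by arguing at a single prime. Using Theorem~\ref{thm:Trace-match}, $\chi_1(\Frob_{\id{q}})+\chi_2(\Frob_{\id{q}})=H_{\id{q}}$. For primes $\id{q}$ of $K$ split completely in a suitable extension, this forces one eigenvalue to be a $\id{q}$-adic unit and the other to have valuation that of $q$. This contradicts $|\lambda|=q^{1/2}$ at all complex places, since $\lambda/\bar\lambda$ would have to be a root of unity.
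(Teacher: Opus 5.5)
Your main argument is correct and is essentially the paper's proof. Over the totally real field $K$ the constituents must be finite-order twists of integral powers of the cyclotomic character, the Hodge--Tate weights in Case~(1) differ by one, and this contradicts Weil purity. The paper phrases the last step as the trace bound $(q+1)\phi(N) > 2\phi(N)\sqrt{q}$ on the new part of the Euler curve, at a Chebotarev-chosen prime where the finite-order characters are trivial. You instead compare absolute values of eigenvalues directly, which works at any good prime, and you also give the de Rham justification that the paper leaves implicit.

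Drop the closing ``alternative'' paragraph. Its claim that $\lambda/\bar\lambda$ must be a root of unity is exactly the Hecke-character input over a totally real field; it fails for the CM constituents of Example~\ref{example:shimura-2}, so that paragraph does not actually avoid the step it claims to avoid.
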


\begin{proof}
  Recall that any Hecke character of the id\`ele group of a totally
  real field is (up to a finite order character) a multiple of the
  norm character (the reason is that if $d=[K:\Q]$, the compatibility
  relation at the units of $K$ impose $d-1$ relations among the $d$
  possible characters at the archimedean places). Suppose then that
  the representation is reducible, namely
  \begin{equation}
    \label{eq:red-dec}
    \rho_\lambda =
    \begin{pmatrix}
      \chi_\ell^a \varepsilon_1 & * \\
      0 & \chi_\ell^b \varepsilon_2
    \end{pmatrix},
  \end{equation}
  for some $a,b \in \ZZ$ and $\varepsilon_1,\varepsilon_2$ finite
  order characters of the id\`ele group of $K$, where $\chi_\ell$
  denotes the cyclotomic character (the precise values of $a$ and $b$
  can be given using the zig-zag procedure as previously explained).
  The representation $\rho_\lambda$ is also reducible while restricted to $\Gal_F$
  and the same holds for any twist, so the $2$-dimensional
  representation coming from
  $H_{\text{\'et}}^{1,\zeta_N^i}(\widehat{\C},\Q_\ell)$ also looks
  like~\eqref{eq:red-dec} (since by Lemma~\ref{lemma:Tate-twist}, the
  Jacobi motive is a power of the cyclotomic character).

  The Hodge-Tate weights of the Galois representation attached to the
  new part of Euler's curve are $\{0,\ldots,0,1,\ldots,1\}$, each one
  with multiplicity $\frac{\phi(N)}{2}$. From the decomposition 
  \[
      H^1_{\text{\'et}}(\Cnsp,F_\lambda)^{\rm new} := 
\bigoplus_{\chi}H^1_{\text{\'et},\chi}(\Cnsp,F_\lambda),  
  \]
  it follows that for each $i$ prime to $N$ the representation of
  $H^1_{\text{\'et},\chi}(\Cnsp,F_\lambda)$ has a decomposition
  like~\eqref{eq:red-dec} for some characters
  $\varepsilon_1^{(\chi)}, \varepsilon_2^{(\chi)}$ (but the exponents
  $a$, $b$, are independent of $\chi$, since they are Galois conjugate of
  each other). In particular, $a=0$ and $b=1$ or vice-versa.

  Let $\id{q}$ be a prime ideal of $F$ not dividing $N \ell$ such that
  $v_{\id{q}}(\z-spec(\z-spec-1))=0$ and also 
  $\varepsilon_1^{(\chi)}(\id{q})=\varepsilon_2^{(\chi)}(\id{q})=1$ for all
  order $N$ character $\chi$ (there are infinitely many such prime ideals by
  Chebotarev density theorem). Then the trace of $\Frob_{\id{q}}$
  acting on the new part of the Jacobian equals $(\normid(q)+1)\phi(N)$.

  On the other hand, Weil's conjectures imply that such trace cannot
  be larger than $2 \phi(N) \sqrt{\normid(q)}$, which is a
  contradiction, since $x+1 > 2\sqrt{x}$ if $x>1$.
\end{proof}

\section{Congruences}
\label{section:congruences}
Congruences between Galois representation have many applications in
number theory. In Langlands' program they play a crucial role in the
proof of results like base change, Serre's conjectures, potential
modularity, the Shimura-Taniyama conjecture, etc. In the study of
Diophantine equations, they also play a crucial role, like in Wiles'
proof of Fermat's last theorem. Hypergeometric motives have the
property that they satisfy many congruences.

\begin{definition}
  Let $\ell$ be a rational prime. Define on $\Q/\Z$ the relation
  defined by the condition that the denominator of $a-b$ is an
  $\ell$-th power. Denote it by $a \sim_\ell b$.
\end{definition}
The following result is easy to verify.
\begin{lemma}
  The relation $\sim_\ell$ is an equivalence relation.
\end{lemma}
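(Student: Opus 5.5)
We must show that $\sim_\ell$ is reflexive, symmetric and transitive on $\Q/\Z$. First we make the relation well defined. For $x\in\Q/\Z$ we take $\den(x)$ to be the denominator of any representative in lowest terms; this is independent of the representative, since adding an integer does not change the reduced denominator. We read ``is an $\ell$-th power'' as ``is a power of $\ell$'', that is $\ell^k$ with $k\ge 0$, matching the relation $\sim_p$ in the introduction. So $a\sim_\ell b$ means $\den(a-b)=\ell^k$ for some $k\geq 0$.

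Reflexivity and symmetry are immediate. For reflexivity, $a-a=0$ has denominator $1=\ell^0$. For symmetry, $b-a=-(a-b)$, and $\den(-x)=\den(x)$.

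For transitivity we use that the set of elements of $\Q/\Z$ whose denominator is a power of $\ell$ is a subgroup: it is the $\ell$-primary torsion subgroup $\Z[1/\ell]/\Z$. To check this directly, suppose $x=u/\ell^{k}$ and $y=v/\ell^{m}$. Then
\[
x+y=\frac{u\ell^{m}+v\ell^{k}}{\ell^{k+m}},
\]
whose reduced denominator divides $\ell^{k+m}$ and is therefore again a power of $\ell$. Now if $a\sim_\ell b$ and $b\sim_\ell c$, write $a-c=(a-b)+(b-c)$. This is a sum of two elements of the subgroup, so it lies in the subgroup, which gives $a\sim_\ell c$.

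The argument has no real obstacle. The only points needing care are fixing the meaning of ``$\ell$-th power'' as ``power of $\ell$'' (under the literal reading of $n$-th powers of integers, transitivity could fail) and noting that denominators are well defined modulo $\Z$.
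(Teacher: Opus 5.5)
Your proof is correct. The paper gives no argument at all (it only says the lemma ``is easy to verify''), and your check is exactly the intended routine verification: reflexivity and symmetry directly, and transitivity because the classes with $\ell$-power denominator form the subgroup $\Z[1/\ell]/\Z$ of $\Q/\Z$.

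Your reading of ``$\ell$-th power'' as ``power of $\ell$'' is also the right one. It matches $\sim_p$ in the introduction and the use of $\omega^{(q-1)/\ell^r}$ in the proof of Theorem~\ref{thm:congruences}. Under the literal reading transitivity does fail: take $\ell=2$ and $a=\tfrac12$, $b=\tfrac14$, $c=0$.
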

Extend the definition to $(\Q/\Z)^n$ component-wise.

\begin{theorem}
  \label{thm:congruences}
  Let $\ell$ be a prime number, and let $(a,b),(c,d)$ and
  $(a',b'),(c',d')$ be two pairs of generic rational parameters. Let
  $F$ be the composite of the fields of definition of the motives
  attached to both parameters. Then if $(a,b) \sim_\ell (a',b')$ and
  $(c,d) \sim_\ell (c',d')$, the base change of both motives to $F$
  specialized at $\z-spec \in F$, $\z-spec \neq 0,1$, are congruent
  modulo $\id{l}$, for $\id{l}$ any prime ideal of their coefficient
  field dividing $\ell$.
  \end{theorem}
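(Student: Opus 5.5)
We compare traces of Frobenius and then invoke Brauer--Nesbitt together with Chebotarev. By Theorem~\ref{thm:Trace-match}, for every prime ideal $\id{p}$ of $F$ with $\id{p}\nmid N N'\ell$ and $v_{\id{p}}(\z-spec(\z-spec-1))=0$, the trace of $\Frob_{\id{p}}$ on the two specialized motives is $H_{\id{p}}((a,b),(c,d)|\z-spec)$ and $H_{\id{p}}((a',b'),(c',d')|\z-spec)$ respectively. Here $N,N'$ are the two common denominators, and $\id{p}$ is taken to satisfy $NN'\mid q-1$. Both sums lie in the ring of integers of the (composite) coefficient field after the appropriate normalisation. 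Since such primes have density one, it suffices to prove the congruence
\[
H_{\id{p}}((a,b),(c,d)|\z-spec)\equiv H_{\id{p}}((a',b'),(c',d')|\z-spec) \pmod{\id{l}}
\]
for all of them. Brauer--Nesbitt then gives isomorphic semisimplified residual representations. Determinants are handled in the same way, since they are also given by Jacobi sums and powers of the norm.

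To compare the sums, expand both via \eqref{eq:hpergeom-BCM} in terms of Gauss sums $g(m+\alpha_i(q-1))$, using the same generator $\gen$ for both; this is allowed by Lemma~\ref{lemma:character-dependence}. Write $a'=a+u$ with $u$ of $\ell$-power denominator, and similarly for $b,c,d$. Then each character $\gen^{(q-1)a'}\Char$ differs from $\gen^{(q-1)a}\Char$ by a character $\gen^{(q-1)u}$ of $\ell$-power order. The key elementary fact is that a character $\omega$ of $\ell$-power order takes values in $\mu_{\ell^\infty}$, and these are $\equiv 1$ modulo any prime above $\ell$. Hence
\[
g(\psi,\omega\varphi)=\sum_x \omega(x)\varphi(x)\psi(x)\equiv g(\psi,\varphi)\pmod{\id{l}'},
\]
for $\id{l}'$ a prime above $\id{l}$ in the field containing $\zeta_p$ and $\zeta_{q-1}$. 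Applying this termwise, the numerators $\JJ(\veca\Char,\vecb\Char)$ of the two sums are congruent for every $\Char$, and so are the normalising denominators $\JJ(\veca,\vecb)$.

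The main obstacle is division. The sums involve dividing by Gauss sums and by $1-q$, and Gauss sums are not $\ell$-adic units in general, since $|g|^2=q$ and $\ell$ may equal the residue characteristic $p$. I would first reduce to the case $\ell\neq p$, i.e.\ $\id{p}\nmid\ell$. This is harmless because only a density-one set of primes is needed. In that case all Gauss sums of nontrivial characters are $\id{l}$-units, as is $g(1)=-1$. For $1-q$, I would avoid dividing by it directly and instead use the geometric form. Via Theorem~\ref{thm:hypergeometric-char-sum}, the trace equals $-\Count(\chip)$ times a Jacobi factor, namely the character sum $\sum_x\chip^{A}(x)\chip^{B}(1-x)\chip^{C}(1-\z-spec x)\chip(\z-spec)^D$, multiplied by an explicit ratio of Gauss sums. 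The congruence $\omega\equiv1$ then applies directly to this character sum with no denominators, and the Gauss-sum prefactors are units whose ratios are congruent by the argument above.

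A remaining subtlety is the genericity/non-integrality bookkeeping. Trivial characters on one side may become nontrivial $\ell$-power characters on the other, which affects the $g(1)$ versus $q/g$ relations. I expect these to cancel, since $q\equiv 1$ modulo $\ell$ whenever the $\ell$-power characters are nontrivial (their order divides $q-1$). I would check this case by case.
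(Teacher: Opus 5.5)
Your proof is correct, and it is more careful than the paper's. The key fact is the same in both: a character of $\ell$-power order is $\equiv 1$ modulo every prime above $\ell$.

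The paper applies this fact directly to the expansion \eqref{eq:hpergeom-BCM}. It then concludes from the termwise congruence $g(m+\alpha(q-1))\equiv g(m+\beta(q-1))\pmod{\id{l}}$ alone. As you observed, that is not enough as written, for two reasons:
\begin{enumerate}
\item[(i)] As soon as the two parameter sets differ modulo $\Z$, $\ell$ divides $\lcm(N,N')$ and hence $q-1$. The prefactor $\frac{1}{1-q}$ then destroys any congruence modulo $\id{l}$.
\item[(ii)] The quotients by Gauss sums are harmless only when $\id{p}\nmid\ell$.
\end{enumerate}

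Your fix addresses both points. You discard the finitely many $\id{p}\mid\ell$. You then write the trace as the denominator-free sum $\chip(\xi)^{D}\sum_{x}\chip\bigl(x^A(1-x)^B(1-\xi x)^C\bigr)$, multiplied by a ratio of Gauss sums and a root of unity. The prefactors are then $\id{l}$-units with congruent ratios, so the termwise comparison becomes legitimate.

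It is cleanest to obtain this expression purely from character-sum identities, rather than from the point count on the Euler curve: apply Theorem~\ref{thm:Trace-match}, then Theorem~\ref{thm:hypergeometric-char-sum} and Lemma~\ref{lemma:hypergeom-fin-rel}. This also covers the case where $\irr$ fails for one of the two pairs, so that Definition~\ref{defi:general-HGM} rather than Definition~\ref{defi:HGM} applies. You did not address that case.

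Three smaller remarks.
\begin{itemize}
\item Your last worry, about trivial versus nontrivial characters, is unfounded. The congruence $g(\psi,\omega\Char)\equiv g(\psi,\Char)$ holds even when one of $\Char,\omega\Char$ is trivial, since both are sums over $\FF_q^\times$. Genericity also keeps the three characters in the Euler sum nontrivial for both pairs. No case analysis is needed.
\item You need $\lcm(N,N')\mid q-1$, not $NN'\mid q-1$. The latter does not hold on a density-one set of primes.
\item The determinant claim you invoke for Brauer--Nesbitt is only needed when $\ell=2$; for odd $\ell$, traces determine the characteristic polynomials of $2$-dimensional representations. For $\ell=2$ you assert the claim without proof. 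The paper does not treat determinants at all, so on this point both arguments establish only the congruence of traces.
\end{itemize}
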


\begin{proof}
  By Theorem~\ref{thm:Trace-match} the trace of the hypergeometric
  motive at a prime $\id{p}$ of good reduction matches a finite
  hypergeometric sum. From its very definition (equation
  \eqref{eq:hpergeom-BCM}) the finite hypergeometric sum is a sum and
  product/quotients of Gauss sums of the form $g(m+ \alpha(q-1),\id{p})$, where
  $\alpha$ is one of the parameters (and $q=p^r$ is the order of
  the residue field $\Om_F/\id{p}$).  To prove the statement it is
  enough to verify that if $\alpha \sim_\ell \beta$ then
  $g(m+ \alpha(q-1),\id{p}) \equiv g(m+ \beta(q-1),\id{p}) \pmod{\id{l}}$. Recall
  that if $\omega$ denotes a generator of the multiplicative group
  $\FF_q^\times$, then
  \begin{equation}
    \label{eq:congruence}
g(m+\alpha(q-1)) =\sum_{x \in \Om_F/\id{p}} \omega^m(x)\omega^{\alpha(q-1)}(x)\psi(x),
\end{equation}
where $\psi$ is an additive character of $\FF_q^\times$.
The last sum is an element of $\Z[\zeta_p,\zeta_{q-1}]$.  Let $\id{l}$
be a prime ideal in this ring dividing $\ell$. Then
\[
\frac{\omega^{\alpha(q-1)}(x)}{\omega^{\beta(q-1)}(x)}=\omega^{(\alpha-\beta)(q-1)}(x) = \omega^{(q-1)/\ell^r}(x),
\]
for some non-negative integer $r$. In particular, it is a root of
unity whose order divides $\ell^r$, so congruent to $1$ modulo
$\id{l}$. This implies that $g(m+\alpha(q-1)) \equiv g(m+\beta(q-1)) \pmod{\id{l}}$
as claimed.
\end{proof}
Some explicit examples of lowering/raising the level using congruences
between hypergeometric motives were given at the introduction.

\appendix

\section{Tame inertia of an Hypergeometric Motive}
\label{appendix:Elisa}
\begin{center} by Elisa Lorenzo Garc\'ia and Ariel Pacetti\end{center}
\smallskip

Let $\mathcal{O}$ be a Henselian discrete valuation ring, let $\pi$ be
a local uniformizer, let $K$ be its field of fractions and let $k$ its
residue field. For the present applications, have in mind the case in
which $\mathcal{O}=k(z)$ is the coordinate ring of $\mathbb{P}^1_k$.
Keeping the article's notation, let $(a,b)$, $(c,d)$ be a pair of
generic parameters and let $N$ be their least common denominator. Set
\begin{equation}
  \label{eq:exponents}
  A=(d-b)N, \qquad B=(b-c)N,\qquad C=(a-d)N,\qquad D=dN.
\end{equation}
Define Euler's curve $\bfC$ over $\Om$ by
\begin{equation}
  \label{eq:curve-equation}
  \bfC: y^N = x^A(1-x)^B (1-\pi x)^C\pi^D.
\end{equation}
Assume for the rest of the appendix that the following two conditions hold:
\begin{itemize}
\item The curve $\bfC$ is irreducible (i.e. $\irr$ holds).
\item The residual characteristic of $K$ ($\car(k)$) does not divide $N$.
  
\item $K$ contains the $N$-th roots of unity.
\end{itemize}
Fix $\mu$ a root of unity in $K$ (a generator of $\mubb_N$) and
$\zeta$ a primitive $N$-th root of unity in $\CC$.  
Let $\Yst$ denote the semistable model of $\bfC$ and let $\Ab$ be the abelian
variety $\text{Pic}^0(\Yst)$. Let $\Ab_0$ denote its special fiber. It contains a
toric part $T_0$ and an abelian part $\B_0$ so that
\[
\Ab_0^{0}/T_0 \simeq \B_0
\]
(see \cite{Neron}, Example 8, page 246 and also \cite{Grothendieck}
Expose IX, equation (2.1.2)). Let $n = \dim \Ab$, $\mu = \dim T_0$ and
$a = \dim \B_0$ so that $n = \mu + a$. 

Let $\ell \equiv 1 \pmod N$ be a prime number different from the
characteristic of $K$ (so $\QQ_\ell$ contains the $N$-th roots of
unity). Let $T_\ell(\Ab)$ denote the Tate module of the generic fiber of
$\Ab$. Since $\Yst$ is semistable, the action of the inertia group $I_K$
on $T_\ell(\Ab)$ is unipotent with echelon rank $2$, i.e. there exists a
subspace $U$ such that $I_K$ acts trivially on $U$ and on
$T_{\ell}(\Ab)/U$ (see \cite[Proposition 3.5]{Grothendieck}, page 350).

Let $\Sp_2$ denote the $\QQ_\ell[I_K]$-module of dimension $2$
where the action of the inertia group $I_K$ factors through the
maximal pro-$\ell$-group $I_K(\ell)$, and a generator $g$ acts like
the matrix
$\left(\begin{smallmatrix} 1 & 1 \\ 0 & 1\end{smallmatrix}\right)$.
Let $\Upsilon = (V,E)$ denote the dual graph of of $\overline{\Yst}$
(the special fiber of $\Yst$). Then $\mu$ (the rank of the toric part
$T_0$) equals the rank of $\text{H}^1(\Upsilon,\ZZ)$ and there is an
isomorphism of $\QQ_\ell[I_K]$-modules
\begin{equation}
  \label{eq:rep-splitting}
  T_\ell(\Ab) \simeq \left(\left(\text{H}^1(\Upsilon,\ZZ)\otimes_{\ZZ}\QQ_\ell\right) \otimes \Sp_2\right) \oplus \bigoplus_{\tilde{Y} \in V} T_\ell(\text{Pic}^0(\tilde{Y})),
\end{equation}
where the sum runs over the irreducible components of
$\overline{\Yst}$. To describe the action of $I_K$ on
$T_\ell(\Ab)$ it suffices to compute the semistable model of $\bfC$ and
its dual graph.

The $\irr$ assumption and the generic condition on the parameters imply that
\begin{enumerate}
\item[(i)] $\gcd(N,A,B,C)=1$,
    
\item[(ii)] $N \nmid A$, $N \nmid B$, $N \nmid C$ and $N \nmid A+B+C$.
\end{enumerate}
A simple change of variables transforms
equation~(\ref{eq:curve-equation}) into
\begin{equation}
  \label{eq:curve2}
   \pi^{A+B-D} y^N= x^A(\pi-x)^B(1-x)^C.
\end{equation}
Up to a twist (coming from the change of variables
$y \to \pi^{(A+B-D)/N}y$), it is enough to study the curve
\begin{equation}
  \label{eq:curve3}
  \widetilde{\bfC}: y^N= x^A(\pi-x)^B(1-x)^C.
\end{equation}
The defining polynomial has now roots $0,1,\pi$, so its cluster
picture looks like
$ {\clusterpicture \Root {1} {first} {r1}; \Root {} {r1} {r2}; \Root
  {3} {r2} {r3}; \ClusterLDName c1[][][] = (r1)(r2); \ClusterLDName
  c4[][][] = (c1)(r3); \endclusterpicture} $ (see \cite{DDMM19}). Then
the decorated graph of $(\mathcal{X},\mathcal{D})$ equals
  \begin{figure}[H]
\begin{tikzpicture}[xscale=1.2,yscale=1.2,
  l1/.style={shorten >=-1.3em,shorten <=-0.5em,thick}]

  \draw[l1] (1,0.00)--(3.00,0.00) node[scale=0.8,above left=-0.17em] {} node[scale=0.5,blue,below right=-0.4pt,yshift=-.1cm] {$X_1$};
\filldraw[black] (2,0) circle (2pt) node[scale=0.5,red,yshift=-.5cm] {$0$};
\filldraw[black] (2.5,0) circle (2pt) node[scale=0.5,red,yshift=-0.5cm] {$t$};

\draw[l1] (1,0.00)--node[right=-3pt] {} (1,1.2) node[xshift=.2cm, yshift=.4cm,font=\tiny, scale=0.8, blue] {$X_2$};
\filldraw[black] (1,0.6) circle (2pt) node[scale=0.5,red,xshift=0.5cm] {$1$};
\filldraw[black] (1,1.2) circle (2pt) node[scale=0.5,red,xshift=0.5cm] {$\infty$};
\end{tikzpicture}
\end{figure}
Define the following quantities:
\begin{itemize}
\item $d_1:=\gcd(N,C,A+B)$, $d_2:=\gcd(N,A,B)$.
    
\item $e:=\gcd(N,A+B)$, $t:=\frac{e}{d_1d_2}$.
\end{itemize}
It is clear that $d_1\mid e$ and $d_2 \mid e$. Condition (i) implies
that $d_1$ and $d_2$ are prime to each other, hence $t \in \ZZ$. If
$d \mid N$, we denote by $\zeta_d$ the root $d$-th root of unity
$\zeta^{N/d}$.

The special fiber $\overline{\Yst}$ has $d_1$ components over
$\overline{X_1}$ (setting $x_1=x$, $y_1=y$) given by
\begin{equation}
  \label{eq:C1-comp2}
  \bfC_1^{(i)}: y_1^{\frac{N}{d_1}}=\zeta_{2d_1}^{B+2i} x_1^{\frac{A+B}{d_1}}(1-x_1)^{\frac{C}{d_1}}, \qquad i=0,\dots,d_1-1,
\end{equation}
corresponding to the irreducible components of the curve
\[
  \bfC_1: y_1^N = (-1)^B x_1^{A+B}(1-x_1)^C.
  \]
  Over $\overline{X_2}$ the special fiber has $d_2$ components,
  setting $x_2=\pi x$ and $y=\pi^{\frac{A+B}{N}}y_2$, they are given by the
  equation
\begin{equation}
  \label{eq:C1-comp1}
  \bfC_2^{(j)}: y_2^{\frac{N}{d_2}} = \zeta_{d_2}^i x_2^{\frac{A}{d_2}}(1-x_2)^{\frac{B}{d_2}}, \qquad j=0,\ldots,d_2-1,
\end{equation}
corresponding to the irreducible components of the curve
\[
\bfC_2: y_2^N=x_2^A(1-x_2)^B.
  \]
\begin{figure}[h]
\begin{tikzpicture}[xscale=.8,yscale=.8,
  l1/.style={shorten >=-1.3em,shorten <=-0.5em,thick}]
  \draw[l1] (0,1)--node[right=-3pt] {} (5,1) node[xshift=-4.5cm, yshift=.2cm,font=\tiny, scale=0.8, blue] {$\bfC_1^{(d_1)}$};
  \draw[l1] (0,3)--node[right=-3pt] {} (5,3) node[xshift=-4.5cm, yshift=.2cm,font=\tiny, scale=0.8, blue] {$\bfC_1^{(1)}$};
  \node at (0.7,.5) [font=\tiny, scale=0.8, blue,below] {$\bfC_2^{(1)}$};
  \node at (4.7,.5) [font=\tiny, scale=0.8, blue,below] {$\bfC_2^{(d_2)}$};
  \filldraw[red] (.3,3) circle (1pt) node[scale=0.5,red,yshift=-1em] {};
  \filldraw[red] (.7,3) circle (1pt) node[scale=0.5,red,yshift=-1em] {};
  \filldraw[red] (1.1,3) circle (1pt) node[scale=0.5,red,yshift=-1em] {};
  \filldraw[red] (1.5,3) circle (1pt) node[scale=0.5,red,yshift=-1em] {};
  \filldraw[red] (1.9,3) circle (1pt) node[scale=0.5,red,yshift=-1em] {};
  \filldraw[red] (.3,1) circle (1pt) node[scale=0.5,red,yshift=-1em] {};
  \filldraw[red] (.7,1) circle (1pt) node[scale=0.5,red,yshift=-1em] {};
  \filldraw[red] (1.1,1) circle (1pt) node[scale=0.5,red,yshift=-1em] {};
  \filldraw[red] (1.5,1) circle (1pt) node[scale=0.5,red,yshift=-1em] {};
  \filldraw[red] (1.9,1) circle (1pt) node[scale=0.5,red,yshift=-1em] {};

  \filldraw[red] (2.9,3) circle (1pt) node[scale=0.5,red,yshift=-1em] {};
  \filldraw[red] (3.3,3) circle (1pt) node[scale=0.5,red,yshift=-1em] {};
  \filldraw[red] (3.7,3) circle (1pt) node[scale=0.5,red,yshift=-1em] {};
  \filldraw[red] (4.1,3) circle (1pt) node[scale=0.5,red,yshift=-1em] {};
  \filldraw[red] (4.4,3) circle (1pt) node[scale=0.5,red,yshift=-1em] {};
  \filldraw[red] (2.9,1) circle (1pt) node[scale=0.5,red,yshift=-1em] {};
  \filldraw[red] (3.3,1) circle (1pt) node[scale=0.5,red,yshift=-1em] {};
  \filldraw[red] (3.7,1) circle (1pt) node[scale=0.5,red,yshift=-1em] {};
  \filldraw[red] (4.1,1) circle (1pt) node[scale=0.5,red,yshift=-1em] {};
  \filldraw[red] (4.4,1) circle (1pt) node[scale=0.5,red,yshift=-1em] {};

  \draw (0.1,3.5) .. controls (0.5,2.5) and (0.5,2.5).. (0.7,3) ;
  \draw (0.7,3) .. controls (0.9,3.5) and (0.9,3.5).. (1.1,3) ;
  \draw (1.1,3) .. controls (1.3,2.5) and (1.3,2.5).. (1.5,3) ;
  \draw (1.5,3) .. controls (1.7,3.5) and (1.7,3.5).. (1.9,3) ;
  \draw (1.9,3) .. controls (1.9,2) and (1.9,2) .. (1.9,1);
  \draw (0.1,0.5) .. controls (0.5,1.5) and (0.5,1.5).. (0.7,1) ;
  \draw (0.7,1) .. controls (0.9,0.5) and (0.9,0.5).. (1.1,1) ;
  \draw (1.1,1) .. controls (1.3,1.5) and (1.3,1.5).. (1.5,1) ;
  \draw (1.5,1) .. controls (1.7,0.5) and (1.7,0.5).. (1.9,1) ;

  \draw (2.9,3) .. controls (2.9,2) and (2.9,2) .. (2.9,1);
  \draw (2.9,3) .. controls (3.1,3.5) and (3.1,3.5).. (3.3,3) ;
  \draw (3.3,3) .. controls (3.5,2.5) and (3.5,2.5).. (3.7,3) ;
  \draw (3.7,3) .. controls (3.9,3.5) and (3.9,3.5).. (4.1,3) ;  
  \draw (4.1,3) .. controls (4.3,2.5) and (4.3,2.5).. (4.5,3.5) ;

  \draw (2.9,1) .. controls (3.1,0.5) and (3.1,0.5).. (3.3,1) ;
  \draw (3.3,1) .. controls (3.5,1.5) and (3.5,1.5).. (3.7,1) ;
  \draw (3.7,1) .. controls (3.9,0.5) and (3.9,0.5).. (4.1,1) ;  
  \draw (4.1,1) .. controls (4.3,1.5) and (4.3,1.5).. (4.5,.5) ;

\end{tikzpicture}
\caption{Intersection of Special Fiber of $\Yst$}
\label{pic:case1}
\end{figure}
The components $\C_1^{(i)}$ and $\C_2^{(j)}$ intersect at $e$
different points as in Figure~\ref{pic:case1}.

Denote by $\widetilde{\Ab}$ the Jacobian of $\widetilde{\bfC}$
and by $\widetilde{\Ab}^{\text{new}}$ its new part.  Let $\eta$ denote the character
of $\Gal_{K}$ corresponding to the extension $K(\sqrt[N]{t})/K$, and
let $\theta$ be the character corresponding to the (at most quadratic)
extension $K(\sqrt[N]{-1})/K$.
\begin{theorem}
  \label{thm:hgm}
  Let $\ell$ be a prime number different from the characteristic of
  $K$. 
  \begin{itemize}
  \item If $N \nmid A+B$, set $r_i = \frac{\varphi(N)}{\varphi(N/d_i)}$ for $i=1,2$. Then
    \[
      T_{\ell}(\widetilde{\Ab}^{\text{new}}) \simeq T_{\ell}(\Ab_1)^{r_1} \oplus
      T_{\ell}(\Ab_2)^{r_2}\otimes \eta^{-(A+B)},
    \]
    where $\Ab_1$ and $\Ab_2$ are abelian varieties with complex multiplication over $K$.
  \item Otherwise, as $\QQ_\ell[I_K]$-modules
    \[
      T_{\ell}(\widetilde{\Ab}^{\text{new}}) \simeq \bigoplus_{i=1}^{\varphi(N)}
      \Sp_2\otimes \theta ^B.
      \]
  \end{itemize}

\end{theorem}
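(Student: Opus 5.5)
We plan to read off the inertia action from the semistable model described just before the statement, via the decomposition \eqref{eq:rep-splitting}. There are two ingredients: the dual graph $\Upsilon$ of the special fiber $\overline{\Yst}$, which determines the toric part, and the Jacobians of the components $\bfC_1^{(i)}$, $\bfC_2^{(j)}$, which determine the abelian part. After that we will project onto the new part.

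First we compute $\Upsilon$. It is bipartite, with $d_1$ vertices over $\overline{X_1}$ and $d_2$ vertices over $\overline{X_2}$, and each pair of vertices is joined by $e/(d_1d_2)=t$ edges. The intersection points lie over the node $\overline{X_1}\cap\overline{X_2}$. Their number $e=\gcd(N,A+B)$ comes from the local equation $y^N=u\,x^{A+B}$ at the node. Since $\gcd(d_1,d_2)=1$, each pair of components meets in exactly $t$ points. Hence
\[
\operatorname{rank} H^1(\Upsilon,\Z)=e-d_1-d_2+1.
\]
Next we decompose both sides of \eqref{eq:rep-splitting} under the $\mubb_N$-action and keep only the eigenspaces for characters of exact order $N$. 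The new part of $T_\ell$ of a component $\bfC_k^{(i)}$ is the part where $\mubb_N$ acts through order-$N$ characters. The action of $\mubb_N$ on the set of components is transitive (multiplication by $\zeta$ permutes them cyclically), so the order-$N$ characters on $\bigoplus_i T_\ell\operatorname{Pic}^0(\bfC_k^{(i)})$ are those induced from order-$N/d_k$ characters of the stabilizer. This gives the multiplicity $r_k=\varphi(N)/\varphi(N/d_k)$ of the new part of a single component. Similarly, the $\mubb_N$-module $H^1(\Upsilon,\Q_\ell)$ is the virtual representation $\Q_\ell[\mubb_N/\mubb_e]-\Q_\ell[\mubb_N/\mubb_{d_1}]-\Q_\ell[\mubb_N/\mubb_{d_2}]+1$. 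It therefore contains a character of exact order $N$ only when $e=N$, that is, when $N\mid A+B$.

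Case $N\nmid A+B$. Here the toric new part vanishes, so inertia acts on the new part through the components alone, and it acts on these only through a finite quotient. The component curves $\bfC_1$ and $\bfC_2$ are Fermat-type quotients $y^{N/d}=x^{\cdot}(1-x)^{\cdot}$, whose Jacobians have CM by cyclotomic fields. We take $\Ab_1,\Ab_2$ to be their new parts, lifted to $K$ as CM abelian varieties. The twist $\eta^{-(A+B)}$ on the $X_2$ part comes from the substitution $x_2=\pi x$, $y=\pi^{(A+B)/N}y_2$. That substitution is only defined over $K(\sqrt[N]{\pi})$, so descending back to $K$ twists the $\mubb_N$-eigenspaces by the Kummer character of $\sqrt[N]{\pi}$ raised to the power $A+B$; this is what we expect $\eta$ to be. On the $X_1$ side the coordinates are unchanged, so there is no twist.

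Case $N\mid A+B$. Here $d_1=\gcd(N,C)$ and $d_2=\gcd(N,A,B)$. Condition (ii) gives $N\nmid A+B+C$, so $N\nmid C$. I would check that $d_1=d_2=1$ in this case, so that both component curves are rational in their new part, and then verify the genus: on the new part the abelian contribution from components is zero, and the toric rank in order-$N$ eigenspaces is $\varphi(N)$. Pairing this with $\Sp_2$ gives $\varphi(N)$ copies of $\Sp_2$, filling the $2\varphi(N)$-dimensional new part. The residual factor $\theta^B$ comes from the constant $(-1)^B$ in $\bfC_1$, which affects how Frobenius and inertia act on the edge labels when $\sqrt[N]{-1}\notin K$.

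I expect the main obstacle to be this careful bookkeeping of twists: pinning down exactly which Kummer characters ($\eta$ versus $\theta$) arise, and with which exponent. For this step I would compute the action of a generator of tame inertia on $\sqrt[N]{\pi}$ directly, using the explicit coordinate changes above.
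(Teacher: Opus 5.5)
Your plan follows the paper's proof. You read off the inertia action from \eqref{eq:rep-splitting} on the semistable model and split according to whether $N\mid A+B$. You get the multiplicities $r_i$ by inducing from the stabilizer $\mubb_{N/d_i}$ of a component, which is what the paper's vectors $w_i$ do. The twist $\eta^{-(A+B)}$ comes from the substitution $y=\pi^{(A+B)/N}y_2$, and $\theta^B$ from the constant $(-1)^B$. Your explicit $\mubb_N$-decomposition of $H^1(\Upsilon,\Q_\ell)$ is a useful addition. The paper only uses implicitly that the toric part contributes nothing new when $N\nmid A+B$; equivalently, $r_1\varphi(N/d_1)+r_2\varphi(N/d_2)=2\varphi(N)$ already exhausts the new part.

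One step you propose would fail, although nothing depends on it: when $N\mid A+B$ it is not true that $d_1=d_2=1$.
\begin{itemize}
\item \emph{Counterexample.} Take $(N,A,B,C)=(6,2,4,3)$, coming from $(a,b),(c,d)=(\tfrac12,\tfrac23),(0,1)$. It satisfies (i), (ii) and $N\mid A+B$, yet $d_1=3$ and $d_2=2$, so $\Upsilon$ is the complete bipartite graph on $3+2$ vertices.
\item \emph{Repair.} If $N\mid A+B$ then $x_1^{A+B}$ is an $N$-th power, so each $\bfC_1^{(i)}$ is a cyclic cover of $\PP^1$ branched only at $x_1=1,\infty$, hence rational. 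The same holds for each $\bfC_2^{(j)}$, writing $x_2^A(1-x_2)^B=(x_2/(1-x_2))^A(1-x_2)^{A+B}$. This genus-zero observation is how the paper argues.
\item \emph{Alternative repair.} Your own count, $\varphi(N)$ copies of $\Sp_2$ already filling the $2\varphi(N)$-dimensional new part, forces the abelian new part to vanish.
\end{itemize}

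Two smaller points. First, the edges form the $\mubb_N$-set $\mubb_N/\mubb_{N/e}$, of size $e$, not $\mubb_N/\mubb_e$. With your virtual character taken literally, the criterion for an order-$N$ character to appear would come out as $e=1$ instead of $e=N$. Second, since $\car(k)\nmid N$, the extension $K(\sqrt[N]{-1})/K$ is unramified, so $\theta^B$ is trivial on $I_K$. The twist by $\theta^B$ needs no inertia computation for the $\QQ_\ell[I_K]$-module statement; only the $\eta$-twist in the first case does.
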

\begin{proof}
  Start assuming that $N \nmid A+B$, so the curves $\C_1^{(i)}$ and
  $\C_2^{(j)}$ have positive genus. Let $\mu$ be a generator of
  $\mubb_N$.
  The action of $\mu$ gives
  an isomorphism between points of $\C_1^{(i)}$ and of $\C_1^{(i+1)}$
  (for $i$ an element of $\ZZ/d_1$). Then
  \[
  T_{\ell}(\Jac(\C_1^{1})) \simeq T_{\ell}(\Jac(\C_1^{i})) \qquad \text{ for all
    $i$}.
\]
The same holds for $\C_2^{(j)}$. Let $\Ab_1$ (respectively $\Ab_2$) be
the new part of $\Jac(\C_1^{(1)})$ (resp. $\Jac(\C_2^{(1)})$);
$\Ab_1$ consists of the eigenspaces for the action of
$\mubb_{\frac{N}{d_1}}$ whose eigenvalues are primitive $N/d_1$-th
roots of unity.

Let $v$ be an element of $\Jac(\C_1^{(1)})$ on the
$\zeta^{d_1}$-eigenspace for the action of $\mu^{d_1}$. Let
$i \in (\ZZ/N)^\times$ be an element congruent to $1$ modulo $N/d_1$
and consider the element in
$\Jac(\C_1) \simeq \bigoplus_j \Jac(\C_1^{(j)})$ defined by
  \[
    w_i = \zeta^{i(d_1-1)}v + \mu \cdot \zeta^{i(d_1-2)}v + \cdots +
    \mu^{d_1-2}\cdot \zeta^iv+\mu^{d_1-1}\cdot v
  \]
  It is clear from its definition that $\mu \cdot w_i = \zeta^i w_i$
  (since $\mu^{d_1}\cdot v = \zeta^{d_1}v$ and
  $i \equiv 1 \pmod{\frac{N}{d_1}}$), so $w_i$ belongs to the
  $\zeta^i$-eigenspace for the action of $\mu$ on
  $\Jac(\C_1)$. Furthermore, the map $v \to w_i$ gives an isomorphism
  between the $\zeta^{d_1}$-eigenspace for the action of $\mu^{d_1}$
  on $\Ab_1$ and the $\zeta^i$-eigenspace for the action of $\mu$ on
  $\Jac(\bfC_1)$, so as Galois modules
  \[
 T_\ell(\Jac(\bfC_1)^{\text{new}}) \simeq T_\ell(\Ab_1)^{r_1},
\]
where $r_1 = \frac{\varphi(N)}{\varphi(N/d_1)}$ (the number of possible
values of $i$).

  A similar result holds for the curve $\C_2$ and its components, but
  note that $\C_2^{(j)}$ is not a special fiber of $\widetilde{\bfC}$
  but a twist of it by $\eta^{-(A+B)}$ is (due to the change of variables
  $y=\pi^{\frac{A+B}{N}}y_2$).

  Condition (ii) (and the hypothesis $N \nmid A+B$) implies that the
  curve $\C_1^{(i)}$ (respectively $\C_2^{(j)}$) has genus
  $\varphi(N/d_1)/2$ (respectively $\varphi(N/d_2)/2$) and has an
  action of $\mubb_{\frac{N}{d_1}}$ (respectively
  $\mubb_{\frac{N}{d_2}}$), so its Jacobian has complex multiplication
  (see \cite{MR422164}).

   \vspace{2pt}

   Suppose then that $N \mid A+B$. Then $e=N$ and the components
   $\C_1^{(i)}$ and $\C_2^{(j)}$ have all genus
   zero. By~(\ref{eq:rep-splitting}) the Galois representation is a
   twist of the representation $\Sp_2$ by a character coming from the
   action of the inertia group $I_K$ on the graph cohomology. The
   edges of the graph $\text{H}^1(\Upsilon,\ZZ)$ correspond to the
   components $\C_1^{(i)}$ and $\C_2^{(j)}$, which are defined over
   $K$, so the inertia group $I_K$ acts trivially on them. The
   intersection points of $\C_1^{(i)}$ and $\C_2^{(j)}$ correspond to
   the desingularizations points at $(0,0)$ of the first curve and at
   $\infty$ of the second. By \S3.1.1 and \S3.1.2 of \cite{MR2005278},
   the intersection points of the desingularized curves are defined
   over the extension $K(\sqrt[N]{(-1)^B})$, giving the extra twist by
   $\theta^B$.
 \end{proof}
\subsection{The hypergeometric motive}
Keeping the previous notation, let $\Ab^{\zeta}$ denote the submotive of
$\Ab$ given by the $\zeta$-eigenspace for the action of $\mu$. Then (as done in \ref{defi:HGM})
 \[
 \HGM((a,b),(c,d)|z)= \Ab^{\zeta}(-1) \otimes \JacMot((-a,-b,c,d),(c-b,d-a))^{-1} \twist_K^{(d-b)N}.
 \]
 The motive $\JacMot((-a,-b,c,d),(c-b,d-a))^{-1}$ is a Hecke character
 (independent of the variable $z$), so it is unramified while
 restricted to $\Gal(\overline{\QQ(z)}/\overline{\QQ}(z))$ (the
 same holds for $\twist_K$).

 Let $\ell \equiv 1 \pmod N$ be a prime and let
 \[
   \rho_\ell: \Gal(\overline{\QQ(z)}/\overline{\QQ}(z)) \to
   \GL_2(\QQ_\ell)
 \]
 denote the $2$-dimensional Galois representation attached to
 $\HGM((a,b),(c,d)|z)$.  Recall the definition (as given in
 Proposition~\ref{prop:monodromy-eigenvalues}) of the monodromy matrix
 $M_0$ around $0$ for the parameters $(a,b),(c,d)$
  \[
    M_0 =
    \begin{cases}
      \begin{pmatrix}
        e^{-c} & 0\\
        0& e^{-d}
      \end{pmatrix} & \text{ if } c-d \not \in \ZZ,\\
      \begin{pmatrix}
        e^{-c} & 1\\
        0 &e^{-c}
      \end{pmatrix} & \text{ if }c-d \in \ZZ,
    \end{cases}
\]
where $e^a:= \exp(2\pi i a)$. 
\begin{theorem}
The image of the inertia group at $z=0$ under $\rho_{\ell}$ is
spanned (up to $\GL_2$-conjugation) by the
matrix $M_0$. Furthermore, if $s \in \QQ[z]$ and we consider Euler's
curve with parameter $s$, then the same holds, but replacing $M_0$ by
$M_0^{v(s)}$, where $v(s)$ denote the valuation of $s$ at $0$.
\label{thm:HGM-0}
\end{theorem}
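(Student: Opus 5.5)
We will apply Theorem~\ref{thm:hgm} with $\pi=z$ (or $\pi=s$, after passing to the completion of $\overline{\QQ}(z)$ at $z=0$ and replacing $\pi$ by a uniformizer of valuation $v(s)$). Since the Jacobi motive and $\twist_K$ are unramified on $\Gal(\overline{\QQ(z)}/\overline{\QQ}(z))$, the inertia action on $\HGM((a,b),(c,d)|z)$ is that on $\Ab^{\zeta}(-1)$, i.e.\ on the $\zeta$-eigenspace of $T_\ell(\Ab)$. The curve \eqref{eq:curve-equation} differs from $\widetilde{\bfC}$ in \eqref{eq:curve3} by the change of variables $y\to \pi^{(A+B-D)/N}y$. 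Hence, as $I_K$-modules,
\[
T_\ell(\Ab^{\text{new}})\simeq T_\ell(\widetilde{\Ab}^{\text{new}})\otimes \eta^{A+B-D}.
\]
Here $\eta$ is the Kummer character of $K(\sqrt[N]{\pi})/K$, whose value on a generator of tame inertia is $\zeta$ in the fixed normalisation. We then intersect everything with the $\zeta$-eigenspace for $\mu$. The plan is to read off this eigenspace from each summand of Theorem~\ref{thm:hgm}.

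Case $c-d\notin\ZZ$, i.e.\ $N\nmid A+B=(d-c)N$. The first summand $T_\ell(\Ab_1)^{r_1}$ comes from the components over $\overline{X_1}$, which are defined over $K$ and have good reduction, so inertia acts on it only through the twist $\eta^{A+B-D}$. The second summand carries the extra twist $\eta^{-(A+B)}$, so inertia acts there through $\eta^{-D}$. We must check that the $\zeta$-eigenspace is two-dimensional with exactly one line in each summand. This uses the genus formula \eqref{eq:euler-hodge} applied to $\bfC_1:y^N=x^{A+B}(1-x)^C$ and to $\bfC_2:y^N=x^A(1-x)^B$: each new part has one-dimensional $\chi_0$-eigenspace in $H^1$. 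Here we use $N\nmid A+B$, $N\nmid C$, $N\nmid A+B+C$, and similarly for $A,B$.

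The generator of inertia then acts by $\zeta^{A+B-D}=e^{(d-c)-d}=e^{-c}$ on one line and by $\zeta^{-D}=e^{-d}$ on the other. This is diagonal with eigenvalues $e^{-c},e^{-d}$, which is $M_0$. The Tate twist $(-1)$ and the sign conventions for $\zeta$ versus $\mu$ have to be tracked, and we expect this bookkeeping to be the fiddliest point. It can be cross-checked against the case $d=1$, where the answer is classical.

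Case $c-d\in\ZZ$. The second part of Theorem~\ref{thm:hgm} gives $\bigoplus\Sp_2\otimes\theta^B$. The character $\theta^B$ is unramified over $\overline{\QQ}(z)$ since $-1$ is a constant. The $\zeta$-eigenspace is one copy of $\Sp_2$, twisted by $\eta^{A+B-D}$ with $A+B-D=-cN$. So the generator acts unipotently-times-scalar, namely as $e^{-c}\left(\begin{smallmatrix}1&1\\0&1\end{smallmatrix}\right)$, which is conjugate to $M_0$. Here we must justify that the eigenspace decomposition is compatible with \eqref{eq:rep-splitting}: the $\mubb_N$-action permutes the graph edges, and the $\zeta$-part of $\text{H}^1(\Upsilon)$ is one-dimensional because $\Upsilon$ is the complete bipartite graph $K_{d_1,d_2}$ with $e$-fold edges and $e=N$.

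For $\pi=s$ with $v(s)=k$, the semistable model computations are identical. Only the Kummer twist changes, to $\eta(s)=\eta(\pi)^{k}$, and on $\Sp_2$ the tame generator acts as its $k$-th power. This yields $M_0^{k}$.

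The main obstacle is the eigenspace bookkeeping in the first case: showing that exactly one line of each of $\Ab_1$ and $\Ab_2$ carries the $\zeta$-eigenvalue, and that the twists come out as $\eta^{-cN}$ and $\eta^{-dN}$ rather than, say, with the two eigenvalues swapped or with an extra sign.
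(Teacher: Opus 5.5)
Your proposal is correct and follows the paper's proof. You apply Theorem~\ref{thm:hgm} with $\pi=z$, pass from $\widetilde{\bfC}$ to $\bfC$ via the twist $\eta^{A+B-D}=\eta^{-cN}$, and read off eigenvalues $e^{-c},e^{-d}$ when $c-d\notin\ZZ$, and a twisted $\Sp_2$ (with $\theta$ trivial on the geometric Galois group) when $c-d\in\ZZ$. Your eigenspace bookkeeping, and your treatment of $v(s)$ by noting that the Kummer character of $s$ restricts to $\eta^{v(s)}$ on inertia, make explicit details the paper leaves implicit.

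One small correction in the case $c-d\in\ZZ$: each pair $\bfC_1^{(i)},\bfC_2^{(j)}$ meets in $t=N/(d_1d_2)$ points, not $e$. There are $N$ edges in total, permuted simply transitively by the $N$-th roots of unity, and this is what makes the $\zeta$-part of $\mathrm{H}^1(\Upsilon,\ZZ)\otimes\QQ_\ell$ one-dimensional. With $e$-fold edges between every pair, you would instead get dimension $d_1d_2$.
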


\begin{proof}
  Let $\Om = \QQ[[z]]$, the completion of $\QQ[z]$ at the prime ideal
  $(z)$. It is a Henselian complete valuation ring with local
  uniformizer $z$, so we can apply the results of the previous section
  to the curve $\widetilde{\bfC}$. To relate the curve $\bfC$ to
  $\widetilde{\bfC}$ we need to twist by $\eta^{A+B-D} = \eta^{-cN}$.
  Since $A+B = (c-d)N$, if $c-d \not \in \ZZ$ we are in the first case
  of Theorem~\ref{thm:hgm}, so inertia acts on the
  $\zeta$-eigenspace like the sum of the two characters
  \[
\eta^{-c} \oplus \eta^{-d}.
\]
When $c-d \in \ZZ$, we are in the second case of Theorem~\ref{thm:hgm}, but
note that the character
$\theta$ is trivial on $\Gal(\overline{\QQ(z)}/\overline{\QQ}(z))$.
\end{proof}

\begin{corollary}
  \label{coro:specialized-inertia}
  Let $L$ be a number field containing $F$ and let $\z-spec \in L$,
  $\z-spec \neq 0,1$. Let $\id{p}$ be a prime ideal of $L$ not
  dividing $N$. Then the image of the inertial group $I_{\id{p}}$ on $\hgms$ is spanned (up to conjugation) by $M_0^{v_{\id{p}}(\z-spec)}$.
\end{corollary}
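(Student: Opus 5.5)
The plan is to reduce Corollary~\ref{coro:specialized-inertia} to the local computation of Theorem~\ref{thm:hgm}, replacing the function-field base $\QQ[[z]]$ used in the proof of Theorem~\ref{thm:HGM-0} by the completion $L_{\id{p}}$.

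Let $\Om=\Om_{L,\id{p}}$, which is Henselian. Since $\id{p}\nmid N$ and $\zeta_N\in F\subseteq L$, the residue characteristic does not divide $N$ and $K=L_{\id{p}}$ contains $\mubb_N$, so the appendix hypotheses hold (we assume $\irr$; otherwise we pass to the irreducible Euler curve of parameters $(a-d,b-d),(c-d,1)$ via Definition~\ref{defi:general-HGM} and account for the extra character $\eta_d(\z-spec)$ below).

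\textbf{Case $v:=v_{\id{p}}(\z-spec)=0$.} If also $v_{\id{p}}(\z-spec-1)=0$, Remark~\ref{remark:assumption} gives good reduction, inertia acts trivially, and $M_0^0=I$. The cases $v_{\id{p}}(\z-spec-1)>0$ or $v<0$ concern $M_1,M_\infty$ and belong to Theorem~\ref{thm:HGM-1-infty}, so we restrict to $v\ge 0$.

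\textbf{Case $v>0$.} Write $\z-spec=u\pi^{v}$ with $u$ a unit. Rerunning the cluster-picture analysis of Theorem~\ref{thm:hgm} with $\pi$ replaced by $\z-spec$, the roots $0$ and $\z-spec^{-1}$ (after the change of variables in \eqref{eq:curve2}) still form a cluster of depth $v$ separated from $1$. The components $\bfC_1,\bfC_2$ are unchanged, and only the twisting characters change: $\eta$ becomes the character of $K(\sqrt[N]{\z-spec})/K$, whose restriction to $I_K$ is the $v$-th power of the tame character of $K(\sqrt[N]{\pi})$, since $\sqrt[N]{u}$ generates an unramified extension. If $c-d\notin\ZZ$ then $N\nmid A+B$, and inertia acts on the $\zeta$-eigenspace through $\eta^{-cv}\oplus\eta^{-dv}$ restricted to $I_K$. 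The abelian parts $\Ab_1,\Ab_2$ have good reduction over $K$ because the components $\bfC_1^{(i)},\bfC_2^{(j)}$ are defined over $K$. This is conjugate to $M_0^{v}$ once the tame character is identified with $\exp(2\pi i\,\cdot)$ on a topological generator of tame inertia. If $c-d\in\ZZ$, we are in the second case of Theorem~\ref{thm:hgm}. The toric part gives $\Sp_2$, and the dual graph of the semistable model now has $e\cdot v$ edges between the two vertex sets: the node has thickness $v$, so the chain length is multiplied. Hence the monodromy operator on $\Sp_2$ is scaled by $v$, giving $\left(\begin{smallmatrix}1&v\\0&1\end{smallmatrix}\right)$ twisted by $\eta^{-cv}$ (the character $\theta^B$ is unramified since $\id{p}\nmid 2N$, or more generally is an unramified root of unity). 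This is conjugate to $M_0^{v}$.

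Finally, we tensor with the Jacobi motive and $\twist_N$. Both are unramified at $\id{p}$: they are Gr\"ossencharacters of conductor supported on $N$ and $2$, and we can use Corollary~\ref{lemma:char-extension} for $\twist_N$ when $2\mid N$. In the non-$\irr$ case, $\eta_d(\z-spec)$ on $I_{\id{p}}$ is $(\text{tame})^{dNv/N}$, which shifts exponents exactly as the twist relation in Theorem~\ref{thm:isogenous}(1) does on $M_0$.

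The main obstacle is justifying that the semistable model over $\Om$ with parameter $\z-spec$ of valuation $v$ has the described special fiber, with intersection multiplicities (edge lengths) $v$. We also need to match normalizations between $\eta$ on tame inertia and $\exp(2\pi i\cdot)$ in $M_0$. For the first point we would invoke the cluster-picture machinery of \cite{DDMM19} with depth $v$ rather than $1$. For the second, we would compare with the function-field computation of Theorem~\ref{thm:HGM-0} via specialization.
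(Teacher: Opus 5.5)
Your proposal is correct and is essentially the paper's own argument: the paper proves the corollary by rerunning the local computation of Theorem~\ref{thm:HGM-0} (i.e.\ Theorem~\ref{thm:hgm}) over the completion at $\id{p}$, and by noting that $\JacMot((-a,-b,c,d),(c-b,d-a))^{-1}$ and $\twist_N$ are unramified at primes not dividing $N$. Your extra bookkeeping---the $v$-th power of the tame character coming from $K(\sqrt[N]{\xi})$, the edge lengths $v$ when $c-d\in\ZZ$, the implicit restriction to $v_{\id{p}}(\xi)\ge 0$ with $v_{\id{p}}(\xi-1)=0$, and handling the non-$\irr$ case via Definition~\ref{defi:general-HGM}---only makes explicit what the paper's one-line ``mimics the previous theorem'' leaves implicit (one minor slip: after the change of variables \eqref{eq:curve2} the close pair of roots is $\{0,\xi\}$, not $\{0,\xi^{-1}\}$).
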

\begin{proof}
  Mimics the previous theorem, noting that the Hecke character
  $\JacMot((-a,-b,c,d),(c-b,d-a))^{-1}$ and the quadratic character
  $\twist_N$ are only ramified at primes dividing $N$.
\end{proof}

An analogous result holds for the inertia group at $1$ and
$\infty$. Define the matrices
  \[
    M_\infty =
    \begin{cases}
      \begin{pmatrix}
        e^{a} & 0\\
        0& e^{b}
      \end{pmatrix} & \text{ if } a-b \not \in \ZZ,\\
      \begin{pmatrix}
        e^{a} & 1\\
        0 &e^{a}
      \end{pmatrix} & \text{ if }a-b \in \ZZ.
    \end{cases} \qquad
    M_1 =
    \begin{cases}
      \begin{pmatrix}
        e^{\delta} & 0\\
        0& 1
      \end{pmatrix} & \text{ if } \delta:=a+b-c-d \not \in \ZZ,\\
      \begin{pmatrix}
        1 & 1\\
        0 &1
      \end{pmatrix} & \text{ otherwise}.
    \end{cases}
  \]

\begin{theorem}
The image of the inertia group at $z=\infty$ (respectively at $z=1$) under $\rho_{\ell}$ is
spanned (up to $\GL_2$-conjugation) by the
matrix $M_\infty$ (respectively $M_1$).
\label{thm:HGM-1-infty}
\end{theorem}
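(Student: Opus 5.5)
The plan is to reduce both statements to Theorem~\ref{thm:HGM-0}, which computes the image of inertia at $z=0$ for arbitrary generic parameters, by means of changes of variable that move $\infty$ or $1$ to $0$. This should be possible because Lemma~\ref{lemma:change-parameters} and the inversion statement of Theorem~\ref{thm:isogenous} already record how the parameters transform.

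\emph{The point $z=\infty$.} By part (3) of Theorem~\ref{thm:isogenous}, $\HGM((a,b),(c,d)|z)\simeq \HGM((-c,-d),(-a,-b)|z^{-1})$. The proof there exhibits an explicit isomorphism of Euler curves, $x=zx'$, $y=y'$. It also checks that the Jacobi motive and the quadratic character are unchanged, and both are unramified over $\overline{\Q}(z)$ in any case. Inertia at $z=\infty$ for the left side is therefore inertia at $w=z^{-1}=0$ for the motive with parameters $(-c,-d),(-a,-b)$. Theorem~\ref{thm:HGM-0} says this is generated by the monodromy at $0$ attached to the new lower parameters $(-a,-b)$. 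That matrix is $\mathrm{diag}(e^{a},e^{b})$ when $a-b\notin\Z$, and the unipotent-times-scalar block $\left(\begin{smallmatrix} e^{a}&1\\0&e^{a}\end{smallmatrix}\right)$ when $a-b\in\Z$, since $-(-a)=a$. This is exactly $M_\infty$. I would double-check that the orientation of the loop, i.e.\ the generator of inertia versus its inverse, does not matter. It should not: the statement is only up to conjugation, and the subgroup generated by a matrix is the same as the subgroup generated by its inverse.

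\emph{The point $z=1$.} This is the real work, because no symmetry among $0,1,\infty$ preserving the hypergeometric shape has been established. I would redo the appendix computation directly. Work over $\Om$ the completion at $z-1$, with uniformizer $\pi=1-z$. The Euler curve becomes $y^N=x^A(1-x)^B(1-x+\pi x)^C z^D$, and $z^D$ is a unit $N$-th power in the Henselian ring, so it can be ignored. The roots of the right-hand side are now $0$ and the cluster $\{1,1/(1-\pi)\}$, with multiplicities $B$ and $C$. After the substitution $x\mapsto 1-x$ this is the situation of \eqref{eq:curve3} with the exponent roles permuted: the cluster carries exponents $(B,C)$ and the far root carries $A$. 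The same cluster-picture analysis and Theorem~\ref{thm:hgm} then apply with $A+B$ replaced by $B+C=(a-c)N$. Equivalently, one can track the point at infinity, which has exponent $-(A+B+C)$. Extracting the $\zeta$-eigenspace then proceeds in two cases.

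If $N\nmid B+C$, the eigenspace splits into two characters. One comes from the component away from the cluster; it has good reduction up to a twist by $\eta^{0}$, so that character is trivial on geometric inertia. The other comes from the cluster component, twisted by a power of $\eta$ whose exponent should work out to $\delta=a+b-c-d$ after the twisting bookkeeping. This yields $\mathrm{diag}(e^{\delta},1)$.

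If $N\mid B+C$, the second case of Theorem~\ref{thm:hgm} gives $\Sp_2$ twisted by $\theta$. As $\theta$ is trivial geometrically, the result is $\left(\begin{smallmatrix}1&1\\0&1\end{smallmatrix}\right)$.

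The main obstacle is getting the exponent of the twist at $1$ right, so that it is $\delta=a+b-c-d$ rather than something like $a-c$. This is where the point at infinity and the full $A+B+C$ must enter. As a consistency check I would use Proposition~\ref{prop:monodromy-eigenvalues}, which says $M_1$ is a pseudo-reflection with special eigenvalue $\exp(2\pi i\sum(\beta_j-\alpha_j))$. Since $M_0M_1M_\infty=1$, the determinant forces $\det M_1=e^{a+b-c-d}$, up to the sign convention. So once the local computation shows that one eigenvalue of the inertia image is trivial and the other is a root of unity, the determinant identity pins that root of unity down as $e^{\delta}$. This matches $M_1$ and lets me avoid tracking the twist explicitly.
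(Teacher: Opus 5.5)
Your treatment of $z=\infty$ is sound. Part (3) of Theorem~\ref{thm:isogenous} is proved by the explicit isomorphism $x=zx'$, $y=y'$ of Euler curves. That isomorphism commutes with $y\mapsto\zeta y$ and does not use Theorem~\ref{thm:motive-function-field}, so combining it with Theorem~\ref{thm:HGM-0} is not circular. It is the same substitution the paper uses to find the components at $\infty$.

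At $z=1$ you also follow the paper's route: separate the cluster $\{1,1/z\}$, which gives the components $y^N=x^A(1-x)^{B+C}$ and, after $x=1+(1-z)u$, $y=(1-z)^{(B+C)/N}y'$, the curve $y'^N=\pm u^B(1-u)^C$. The execution, however, contains a concrete error.
\begin{itemize}
\item It is $A+B+C$, not $B+C$, that equals $(a-c)N$. In fact $B+C=(b-c)N+(a-d)N=\delta N$, exactly with the appendix exponents and modulo $N$ with those of Definition~\ref{def:Euler-curve}.
\item Hence the twist on the cluster component is the $(B+C)$-th power of the Kummer character of $(1-z)^{1/N}$. This gives the eigenvalues $1$ and $e^{\pm\delta}$ directly.
\item The case $N\mid B+C$ is exactly $\delta\in\Z$, the unipotent case of $M_1$.
\end{itemize}
With your value $B+C=(a-c)N$, genericity would exclude $N\mid B+C$, so you would never obtain the unipotent $M_1$. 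For example, for $(1/2,1/2),(1,1)$ one has $\delta=-1$ and the Legendre curve has multiplicative reduction at $z=1$. The twist exponent would also come out as $a-c$. The ``main obstacle'' you describe is an artifact of this slip; neither the point at infinity nor $A+B+C$ enters.

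The determinant argument you offer as a way around the bookkeeping is not a proof as it stands.
\begin{itemize}
\item The identity $M_0M_1M_\infty=1$ is a relation among topological monodromy matrices. To transfer it to $\rho_\ell$ you would need the relation $c_0c_1c_\infty=1$ among compatibly normalized inertia generators in $\widehat{\pi_1(0,3)}$. You would also need the exact images of $c_0$ and $c_\infty$, whereas Theorem~\ref{thm:HGM-0} as stated only identifies the subgroup they span, up to conjugation.
\item Appealing instead to the identification of $\rho_\ell$ with the monodromy representation would be circular, since Theorem~\ref{thm:motive-function-field} is deduced from the very statement you are proving.
\item A determinant cannot distinguish a trivial image from a unipotent one, so it could not repair a wrong case split in any event.
\end{itemize}
Once $B+C$ is computed correctly, the local computation alone, exactly as in the paper, finishes the proof.
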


\begin{proof}
  Mimics the last theorem. For the first statement, the local
  parameter is $1/z$. The change of variables $x'=zx$, $y'=(1/z)^{b}y$
  (on the equation for $\bfC$) gives the component
  \[
    \C_1:y'^N=x'^A(1-x')^C.
  \]
  Similarly, the second component is obtained by setting $y'=(1/z)^{a}y$, with equation
  \[
    \C_2: y'^N = (-1)^C x^{A+B}(1-x)^B.
  \]
  The result then follows from the same arguments as in the last
  theorem.  For the second statement, consider the two components of
  $\widetilde{\bfC}$ given by
  \[
   \C_1: y^N = x^A(1-x)^{B+C}, \qquad \C_2: y'^N=x'^B (1-x')^C,
\]
where the second curve is obtained by looking at the special fiber of
the curve obtained via the change of variables
$x'=\frac{(x-1)}{(z-1)}$, $y'=(1-z)^{-\frac{B+C}{N}}y$. The result
follows from the fact that $B+C = \delta$ and that the twist needed to
relate $\bfC$ and $\widetilde{\bfC}$ is unramified at $z-1$.
\end{proof}

\section{Stable models and the trace of Frobenius}
\label{section:frob-trace}
The goal of the present
appendix is to give a formula for the trace of a Frobenius element
$\Frob_{\id{p}}$ when the motive has good reduction at $\id{p}$ but
the equation used to define Euler's curve is singular at
$\id{p}$. There are three different cases, depending on whether the
specialization $\z-spec$ reduces to $0$, $1$ or $\infty$ modulo
$\id{p}$. More concretely, let $(a,b),(c,d)$ be generic rational
parameters and let $N$ be their least common denominator. Let
$F=\Q(\zeta_N)$ and let $\id{p}$ be a prime ideal of $F$ not dividing
$N$. Consider the following three cases:
\begin{enumerate}
\item The monodromy matrix $M_0$ has finite order $r_0$,
  $v_{\id{p}}(\z-spec)$ is positive and divisible by $r_0$.
  
\item The monodromy matrix $M_1$ has finite order $r_1$,
  $v_{\id{p}}(\z-spec-1)$ is positive and divisible by $r_1$.
\item The monodromy matrix $M_\infty$ has finite order $r_\infty$,
  $v_{\id{p}}(\z-spec)$ is negative and divisible by $r_\infty$.
\end{enumerate}
The following three theorems describe the value of the trace of
$\Frob_{\id{p}}$ in each case (assuming an extra technical condition
that holds in most instances).
\begin{theorem}
  \label{thm:trace-p-mid-z}
  Let $(a,b), (c,d)$ be rational generic parameters and let $N$ be
  their least common denominator. Let $\z-spec\in \QQ$. Suppose that
  condition (1) is satisfied.  Write
  $\z-spec = p^{v_{\id{p}}(\z-spec)}\tilde{\z-spec}$.  Suppose that the following
  two extra hypotheses hold:
  \[
    \gcd(N,(d-b)N,(b-c)N)=1, \quad \text{and} \quad \gcd(N,(d-c)N,(a-d)N)=1.
  \]
  Then the trace of $\Frob_{\id{p}}$
  acting on $\hgms$ equals
  \begin{multline}
    -\gent(-1)^{(d-b)(\normid{p}-1)}\JacMot((-a,-b,c,d),(c-b,d-a))(\id{p})^{-1}\cdot \\
    \left(\gent(\tilde{\z-spec})^{dN}J(\gent^{(d-b)N},\gent^{(b-c)N})+\gent(-1)^{(b-c)N}\gent(\tilde{\z-spec})^{cN}J(\gent^{(d-c)N},\gent^{(a-d)N})\right),    
  \end{multline}
 where $J(\chi_1,\chi_2)$ denotes the usual Jacobi sum.
\end{theorem}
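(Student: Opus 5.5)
We compute the Frobenius trace from the semistable reduction described in Appendix~\ref{appendix:Elisa}, applied to the local field $F_{\id{p}}$ with uniformizer $\pi$. Write $k:=v_{\id{p}}(\z-spec)$, so $k>0$ and $r_0\mid k$. By Definition~\ref{defi:HGM}, the trace on $\hgms$ is the trace on the $\chi_0$-part of $H^1$ of the Euler curve, multiplied by $\JacMot((-a,-b,c,d),(c-b,d-a))(\id{p})^{-1}$ and by $\twist_N(\id{p})^{(d-b)N}=\gent(-1)^{(d-b)N}$ (Lemma~\ref{lemma:char-values}). It therefore suffices to compute $\trace\Frob_{\id{p}}$ on $H^1_{\text{\'et},\chi_0}(\Cnsp,F_\lambda)$.

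\emph{Step 1: reduction to components.} Since $M_0^{k}=1$, Corollary~\ref{coro:specialized-inertia} shows that inertia acts trivially. Hence $M_0$ is diagonalizable, so $c\not\equiv d$ and $N\nmid A+B$. We are then in the first case of Theorem~\ref{thm:hgm}, where no toric part occurs. By \eqref{eq:rep-splitting}, the $\chi_0$-part is the sum of the $\chi_0$-parts of the Jacobians of the two special-fibre curves $\bfC_1: y^N=(-1)^Bx^{A+B}(1-x)^C$ and $\bfC_2: y^N=x^A(1-x)^B$, each twisted by the power of $\tilde{\z-spec}$ coming from the change of variables. Note that with $z=\z-spec$ in the roles of $\pi^k$, we have $A+B=(d-c)N$ and $C=(a-d)N$, while $\bfC_2$ has exponents $(d-b)N,(b-c)N$. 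The two extra gcd hypotheses say precisely that $\bfC_1$ and $\bfC_2$ are irreducible, i.e. $d_1=d_2=1$. This is the hypothesis under which each component carries a single rank one $\chi_0$-eigenspace, namely $r_i=1$.

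\emph{Step 2: the twists.} We track the scaling $y\mapsto \z-spec^{(\cdot)/N}y$ in each chart. On the $X_2$ chart (with $x_2=\z-spec x$), the original factor $z^D=z^{dN}$ gives, after taking the unit part, the character value $\gent(\tilde{\z-spec})^{dN}$. The divisibility $r_0\mid k$ guarantees that the $\pi$-power is an $N$-th power in the relevant eigenspace, so only $\tilde{\z-spec}$ survives. On the $X_1$ chart, the exponent becomes $D-(A+B)=cN$, giving $\gent(\tilde{\z-spec})^{cN}$. The roles of $\C_1,\C_2$ relative to the paper's $x$ versus $x/\pi$ normalization should be double checked here.

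\emph{Step 3: point counts.} By Theorem~\ref{thm:trace-equality}, the trace on the $\chi_0$-part of $y^N=\varepsilon f(x)$ is $-\Count(\gent)$. For $f=x^{\alpha}(1-x)^{\beta}$ with $N\nmid\alpha,\beta,\alpha+\beta$, this count is $-\gent(\varepsilon)\,J(\gent^{\alpha},\gent^{\beta})$. This gives $J(\gent^{(d-b)N},\gent^{(b-c)N})$ for $\bfC_2$. For $\bfC_1$ it gives $\gent(-1)^{(b-c)N}J(\gent^{(d-c)N},\gent^{(a-d)N})$, using $B=(b-c)N$. Assembling these with the twists of Step 2 and the global factors yields the stated formula.

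The main obstacle is Step 2, together with the correct normalization of the twists. One must pin down exactly which power of the unit $\tilde{\z-spec}$, and which sign, attaches to each component. This requires using $r_0\mid k$ to absorb $\pi^{k(\cdot)/N}$ (which may fail to be an $N$-th power separately on each eigenspace unless $r_0\mid k$), and checking consistency with the choice $\gen^{(q-1)/N}=\gent^{-1}$.
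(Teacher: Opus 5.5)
Your proposal is correct and follows essentially the same route as the paper. In both, the stable model has the two CM components $y^N=x^A(1-x)^B\tilde{\xi}^{D}$ and $y^N=(-1)^Bx^{A+B}(1-\tilde{\xi}x)^C\tilde{\xi}^{D}$; these are irreducible by the two gcd hypotheses, and the rescalings of $y$ by $p^{v_{\id{p}}(\xi)d}$ and $p^{v_{\id{p}}(\xi)c}$ are defined over $F$ because $r_0\mid v_{\id{p}}(\xi)$ forces $v_{\id{p}}(\xi)c,\,v_{\id{p}}(\xi)d\in\mathbb{Z}$. Applying Theorem~\ref{thm:trace-equality} to each component then gives the two Jacobi-sum terms. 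So the assignment you flagged for checking is right: $\chi_{\id{p}}(\tilde{\xi})^{dN}$ goes with $x^A(1-x)^B$, and $\chi_{\id{p}}(-1)^{B}\chi_{\id{p}}(\tilde{\xi})^{cN}$ goes with the other component.

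The only differences are minor. The paper performs these changes of variables directly on the model at valuation $v_{\id{p}}(\xi)$, rather than quoting Theorem~\ref{thm:hgm}, which is stated only for valuation one. Also, in Step~3 the quantity $-\chi_{\id{p}}(\varepsilon)J(\chi_{\id{p}}^{\alpha},\chi_{\id{p}}^{\beta})$ is the trace, not the count.
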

\begin{proof}
  The proof follows standard arguments (see \cite{wewers}, \cite{Tim}
  and \cite{MR4625990}). Let $A = (d-b)N$, $B=(b-c)N$, $C=(a-d)N$ and
  $D=dN$. Then Euler's curve is defined by the equation
  \begin{equation}
    \label{eq:equation-C}
\C:y^N=x^A(1-x)^B(1-\z-specx)^Cp^{v_{\id{p}}(\z-spec)D}\tilde{\z-spec}^D.    
  \end{equation}
  The order of $M_0$ is $\lcm\{\den(c),\den(d)\}$, hence the
  hypothesis of being in case (1) implies that both $v_{\id{p}}(\z-spec)c$
  and $v_{\id{p}}(\z-spec)d$ are integers. The change of variables
  $y \to yp^{-v_{\id{p}}(\z-spec)d}$ is defined over $F$. The new model
  reduces modulo $\id{p}$ to the curve
  \[
  \C_1: y^N = x^A(1-x)^B\tilde{\z-spec}^D.
\]
The first extra hypothesis implies that $\C_1$ is irreducible. Since
the parameters are generic, $N \nmid A$ and $N\nmid B$. Furthermore,
$N \nmid A+B=N(d-c)$ because the matrix $M_0$ has finite order. Then
$\dim(\Jac(\C_1)^{\text{new}})=\frac{\phi(N)}{2}>0$, so $\C_1$ is a
component of the stable model of $\C$ (see \cite{wewers} for more details).

To find the other component, consider the change of variables
$x \to xp^{-v_{\id{p}}(\z-spec)}$, which transforms \eqref{eq:equation-C} into
\[
y^Np^{v_{\id{p}}(\z-spec)(A+B-D)}=x^A(p^{v_{\id{p}}(\z-spec)}-x)^B(1-\tilde{\z-spec}x)^C\tilde{\z-spec}^D.
\]

From its definition, $A+B-D = -cN$ so after the change of variables
$y \to yp^{v_{\id{p}}(\z-spec)c}$ we get an equation whose reduction
modulo $\id{p}$ equals
\[
\C_2:y^N=(-1)^Bx^{A+B}(1-\tilde{\z-spec}x)^C\tilde{\z-spec}^D.
\]
The second extra hypothesis implies that $\C_2$ is irreducible. The
genericity condition on the parameters implies that $N \nmid C$,
$N \nmid A+B$ and $N\nmid A+B+C=N(a-c)$ so
$\dim(\Jac(\C_2)^{\text{new}})=\frac{\phi(N)}{2}$ and $\C_2$ is
another component of the stable reduction of $\C$. Then the Galois
representation of the decomposition group at $\id{p}$ acting on
$H^1_{\text{\'et},\chi}(\Cnsp,\Q_\ell)$ is the direct sum of the
contribution from $\C_1$ and that from $\C_2$ (both being abelian
varieties with complex multiplication by $\Z[\zeta_N]$).

To compute the trace of $\Frob_{\id{p}}$ acting on $\C_1$
(respectively $\C_2$), let $\gent$ be as in
\eqref{eq:gen-defi}. Then we can apply
Theorem~\ref{thm:trace-equality} to $\C_1$ to get that the trace of
$\Frob_{\id{p}}$ acting on $H_{et}^{1, \zeta_N}(\Cnsp_1, \QQ_{\ell})$ equals
\[
  -\sum_{x \in
    \F_q}\gent(x^A(1-x)^B\tilde{\z-spec}^D)=-\gent(\tilde{\z-spec})^DJ(\gent^A,\gent^B).
\]
The same computation for $\C_2$ gives that the trace of
$\Frob_{\id{p}}$ on the (new part of its) \'etale cohomology equals
\[
-\gent(-1)^B\gent(\tilde{\z-spec})^{D-A-B}J(\gent^{A+B},\gent^C).
\]
The result follows from the definition of $A,B,C$ and $D$ together
with the relation between the hypergeometric motive and Euler's curve
given in~(\ref{eq:motive-definition}).
\end{proof}

With the same technique one can prove the following results for the other two cases.

\begin{theorem}
  \label{thm:trace-p-mid-z-1}
  Let $(a,b), (c,d)$ be rational generic parameters and let $N$ be
  their least common denominator. Let $\z-spec\in \QQ$. Suppose that
  condition (2) is satisfied. Let $v = v_{\id{p}}(\z-spec-1)$ and write
  $\z-spec=1+p^v\tilde{\z-spec}$. Suppose that the following two extra
  hypotheses hold:
  \[
    \gcd(N,(d-b)N,(a+b-c-d)N)=1,\quad \text{and}\quad \gcd(N,(b-c)N,(a-d)N)=1.
  \]
  Then the trace of $\Frob_{\id{p}}$
  acting on $\hgms$ equals
  \begin{multline}
    -\gent(-1)^{(d-b)(\normid{p}-1)}\JacMot((-a,-b,c,d),(c-b,d-a))(\id{p})^{-1}\cdot \\
   \left(J(\gent^{(d-b)N},\gent^{(a+b-c-d)N})+\gent(-1)^{(a-d)N}\gent(\tilde{\z-spec})^{(a-b)N}J(\gent^{(d-b)N},\gent^{(a-d)N})\right).
  \end{multline}
\end{theorem}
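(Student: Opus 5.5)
We follow the strategy of the proof of Theorem~\ref{thm:trace-p-mid-z}, now working near $z=1$. Put $A=(d-b)N$, $B=(b-c)N$, $C=(a-d)N$, $D=dN$ and $v=v_{\id{p}}(\z-spec-1)$, so that $\z-spec=1+p^v\tilde{\z-spec}$. Euler's curve is
\[
\C: y^N=x^A(1-x)^B(1-\z-spec x)^C\z-spec^D .
\]
Since $\z-spec\equiv 1 \pmod{\id{p}}$, we have $\z-spec^D\equiv 1$, and $\chi_{\id{p}}(\z-spec)=1$. The two roots $x=1$ and $x=\z-spec^{-1}$ of the defining polynomial coalesce modulo $\id{p}$. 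So the cluster picture is a cluster $\{1,\z-spec^{-1}\}$ of depth $v$ inside the top cluster containing $0$ and $\infty$. This is exactly the configuration studied in the proof of Theorem~\ref{thm:HGM-1-infty}, with the parameter $z-1$ replaced by $p^v\tilde{\z-spec}$.

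\textbf{Step 1: the outer component.} Reducing the model directly modulo $\id{p}$ gives
\[
\C_1: y^N=x^A(1-x)^{B+C}.
\]
Here $B+C=(a+b-c-d)N$. The first extra hypothesis $\gcd(N,A,B+C)=1$ makes $\C_1$ irreducible. Genericity gives $N\nmid A$ and $N\nmid A+B+C$. Condition (2) says $M_1$ has finite order, which forces $a+b-c-d\notin\Z$, so $N\nmid B+C$. Hence the new part of $\Jac(\C_1)$ has dimension $\phi(N)/2$, and $\C_1$ is a component of the stable model.

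\textbf{Step 2: the inner component.} Substitute $x=1+p^v x'$. Then $1-x=-p^vx'$, and
\[
1-\z-spec x=-p^v\tilde{\z-spec}-p^v x'-p^{2v}\tilde{\z-spec}x'\equiv -p^v(x'+\tilde{\z-spec})
\]
to leading order. The right-hand side therefore picks up a factor $p^{v(B+C)}$. Condition (2) means $r_1=\den(a+b-c-d)$ divides $v$, so $v(B+C)/N\in\Z$, and rescaling $y\to p^{v(B+C)/N}y$ is defined over $F$. The reduction is
\[
y^N=(-1)^{B+C}x'^B(x'+\tilde{\z-spec})^C .
\]
After the substitution $x'=-\tilde{\z-spec}u$ this becomes
\[
\C_2:\ y^N=(-1)^{C}\tilde{\z-spec}^{B+C}u^B(1-u)^C .
\]
The sign and power of $\tilde{\z-spec}$ are to be checked carefully; they should produce $\chi_{\id{p}}(-1)^{(a-d)N}\chi_{\id{p}}(\tilde{\z-spec})^{(a-b)N}$ once combined with the normalisation. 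The second hypothesis $\gcd(N,B,C)=1$ gives irreducibility. Genericity gives $N\nmid B$, $N\nmid C$, and $N\nmid B+C$ as above, so this component also has new part of dimension $\phi(N)/2$. Since $\dim \Jac(\C)^{\rm new}=\phi(N)$, the two contributions exhaust it. As in Theorem~\ref{thm:trace-p-mid-z}, the $\chi$-part of $H^1$ at $\id{p}$ is then the direct sum of the contributions of $\C_1$ and $\C_2$, and inertia acts trivially.

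\textbf{Step 3: traces.} Apply Theorem~\ref{thm:trace-equality} to each component. For $\C_1$ this gives $-J(\chi_{\id{p}}^{A},\chi_{\id{p}}^{B+C})$. For $\C_2$ it gives $-\chi_{\id{p}}(\pm\tilde{\z-spec}^{\ast})\,J(\chi_{\id{p}}^{B},\chi_{\id{p}}^{C})$. We then use Definition~\ref{defi:HGM}: multiply by the inverse Jacobi motive and by the character $\twist_N^{(d-b)N}$, which equals $\chi_{\id{p}}(-1)^{(d-b)N}$ by Lemma~\ref{lemma:char-values}. The displayed formula should follow after rewriting the Jacobi sums via the symmetry $J(\chi_1,\chi_2)=J(\chi_1,\overline{\chi_1\chi_2})\chi_1(-1)$ where needed. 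Judging from the target, the second Jacobi sum is written as $J(\chi_{\id{p}}^{(d-b)N},\chi_{\id{p}}^{(a-d)N})$; we expect to reach it by one such transformation, or by choosing a different affine chart for $\C_2$, for instance $u\mapsto 1/u$.

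\textbf{Main obstacle.} The difficulty is the exact bookkeeping of the twist in Step 2. This includes the choice of $N$-th root of $p^{v(B+C)}$, the sign $(-1)^{B+C}$, and the power of $\tilde{\z-spec}$. Together these must produce precisely the factor $\chi_{\id{p}}(-1)^{(a-d)N}\chi_{\id{p}}(\tilde{\z-spec})^{(a-b)N}$ and the stated Jacobi sum, rather than a Galois conjugate or a reparametrised version. I would first do the computation for a concrete example, such as $(1/8,7/8),(3/8,5/8)$, checked numerically against the Pari code, to pin down the signs before writing the general argument.
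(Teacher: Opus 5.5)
Your strategy is the one the paper intends (its proof consists of the sentence ``with the same technique'' as Theorem~\ref{thm:trace-p-mid-z}). Your Steps 1 and 2 are correct. The two components are $y^N=x^A(1-x)^{B+C}$ and $y^N=(-1)^C\tilde{\xi}^{B+C}u^B(1-u)^C$, which are the ones found in the proof of Theorem~\ref{thm:HGM-1-infty}, and the sign $(-1)^C$ does give the factor $\chi_{\mathfrak p}(-1)^{(a-d)N}$.

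The gap is Step 3. Theorem~\ref{thm:trace-equality} applied to your inner component gives $-\chi_{\mathfrak p}(-1)^{C}\chi_{\mathfrak p}(\tilde{\xi})^{B+C}J(\chi_{\mathfrak p}^{B},\chi_{\mathfrak p}^{C})$. So what your argument actually proves has second summand
\[
\chi_{\mathfrak p}(-1)^{(a-d)N}\,\chi_{\mathfrak p}(\tilde{\xi})^{(a+b-c-d)N}\,J\bigl(\chi_{\mathfrak p}^{(b-c)N},\chi_{\mathfrak p}^{(a-d)N}\bigr),
\]
and no rewriting turns this into the displayed $\chi_{\mathfrak p}(\tilde{\xi})^{(a-b)N}J(\chi_{\mathfrak p}^{(d-b)N},\chi_{\mathfrak p}^{(a-d)N})$:
\begin{itemize}
\item The exponent $A=(d-b)N$ cannot enter the inner component, because $x^A\equiv 1$ near $x=1$.
\item The identities $J(\chi_1,\chi_2)=J(\chi_2,\chi_1)=\chi_1(-1)J(\chi_1,\overline{\chi_1\chi_2})$ and the chart changes $u\mapsto 1/u$, $u\mapsto 1-u$ only permute the three local exponents $B$, $C$, $-(B+C)$.
\item The power of $\chi_{\mathfrak p}(\tilde{\xi})$ is pinned to $B+C$ by the rescaling $x'=-\tilde{\xi}u$.
\end{itemize}
The two expressions coincide only when $2b\equiv c+d \pmod{\mathbb{Z}}$; then $A\equiv B \pmod N$ and $a-b\equiv a+b-c-d$.

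In fact the displayed formula appears to be misprinted, and your computation gives the correct one. Write everything with Gauss sums $g(x)=g(\psi,x,\mathfrak p)$ and use $g(x)g(-x)=\chi_{\mathfrak p}(-1)^{xN}q$ for $x\notin\mathbb{Z}$. Then the prefactor times the first summand is invariant under $a\leftrightarrow b$ and under $c\leftrightarrow d$. So is the prefactor times your second summand, which equals $\chi_{\mathfrak p}(-1)^{(c+d)N}\chi_{\mathfrak p}(\tilde{\xi})^{(a+b-c-d)N}q^2/\bigl(g(-a)g(-b)g(c)g(d)g(a+b-c-d)\bigr)$. The printed second summand instead depends on $\tilde{\xi}$ through $\chi_{\mathfrak p}(\tilde{\xi})^{(a-b)N}$, which $a\leftrightarrow b$ turns into $\chi_{\mathfrak p}(\tilde{\xi})^{(b-a)N}$. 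Take $N$ prime (so both gcd hypotheses hold for both orderings) and $p\equiv 1\pmod N$, and let $\tilde{\xi}$ run over $\mathbb{F}_p^\times$. Linear independence of characters then contradicts Theorem~\ref{thm:isogenous}(2) unless $2(a-b)\in\mathbb{Z}$.

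So instead of aiming at the printed expression, you should state and prove the corrected one. Your proposed test case also will not help. $(1/8,7/8),(3/8,5/8)$ has $a+b-c-d=0$, so $M_1$ is unipotent and condition (2) never occurs. Shimura's $(1/5,4/5),(3/5,1)$ satisfies $2b\equiv c+d$ and cannot distinguish the two formulas. Something like $(1/7,2/7),(3/7,5/7)$ would.
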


\begin{theorem}
  \label{thm:trace-p-mid-den-z}
  Let $(a,b), (c,d)$ be rational generic parameters and let $N$ be
  their least common denominator. Let $\z-spec\in \QQ$. Suppose that
  condition (3) is satisfied.  Write
  $\z-spec = p^{v_{\id{p}}(\z-spec)}\tilde{\z-spec}$.  Suppose that the following
  two extra hypotheses hold:
  \[
    \gcd(N,(a-b)N,(b-c)N)=1,\quad \text{and}\quad \gcd(N,(d-b)N,(a-d)N)=1.
  \]
  Then the trace of $\Frob_{\id{p}}$
  acting on $\hgms$ equals
  \begin{multline}
    -\gent(-1)^{(d-b)(\normid{p}-1)}\JacMot((-a,-b,c,d),(c-b,d-a))(\id{p})^{-1}\cdot \\
   \left(\omega(\tilde{\z-spec})^{bN}J(\omega^{(d-b)N},\omega^{(a-d)N})+\omega(-1)^{(a-d)N}\omega(\tilde{\z-spec})^{aN}J(\omega^{(a-b)N},\omega^{(b-c)N})\right).
  \end{multline}
\end{theorem}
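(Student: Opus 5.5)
The plan mirrors the proof of Theorem~\ref{thm:trace-p-mid-z}. Under condition (3), $v:=v_{\id{p}}(\z-spec)<0$ and the order of $M_\infty$ is $\lcm\{\den(a),\den(b)\}$. Hence both $va$ and $vb$ are integers, and every rescaling of $y$ by powers of $p^{v}$ used below is defined over $F$. With $A=(d-b)N$, $B=(b-c)N$, $C=(a-d)N$ and $D=dN$, I write Euler's curve as
\[
\C: y^N = x^A(1-x)^B(1-\z-spec x)^C \z-spec^D ,\qquad \z-spec=p^{v}\tilde{\z-spec}.
\]
As in Theorem~\ref{thm:HGM-1-infty}, I look at the two coordinate charts adapted to the cluster picture at $\infty$: one where $x$ stays bounded, and one after the substitution $x'=\z-spec x$.

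\emph{First component.} Substitute $x'=\z-spec x$ and rescale $y$ by $p^{v b}$ (i.e.\ $y'=\z-spec^{-b}y$ up to the unit $\tilde{\z-spec}^{b}$). Using $(1-x'/\z-spec)^B\to 1$, this reduces modulo $\id{p}$ to a curve of the shape $y^N=\tilde{\z-spec}^{bN}x^A(1-x)^C$, up to a unit absorbed into the $y$-scaling. The exponent bookkeeping is $A+D-A=D$ shifted by $-bN$. The hypothesis $\gcd(N,(d-b)N,(a-d)N)=1$ gives irreducibility. Genericity gives $N\nmid A$, $N\nmid C$, and $N\nmid A+C=(a-b)N$; the last holds because $M_\infty$ of finite order forces $a\not\equiv b$. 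So the new part has dimension $\phi(N)/2$, this is a stable component, and Theorem~\ref{thm:trace-equality} gives the contribution $-\omega(\tilde{\z-spec})^{bN}J(\omega^{(d-b)N},\omega^{(a-d)N})$.

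\emph{Second component.} Stay in the original $x$ but write $(1-\z-spec x)=-\z-spec x(1-1/(\z-spec x))$. The factor $(1-1/(\z-spec x))$ reduces to $1$. Pulling out $(-\z-spec)^C x^C$ and rescaling $y$ by $p^{v a}$ (the $\z-spec$-exponent becomes $C+D=aN$) gives
\[
y^N=(-1)^{C}\tilde{\z-spec}^{aN}x^{A+C}(1-x)^B ,\qquad A+C=(a-b)N .
\]
The hypothesis $\gcd(N,(a-b)N,(b-c)N)=1$ gives irreducibility, and genericity gives $N\nmid B$, $N\nmid A+B+C$, so again this is a CM component of dimension $\phi(N)/2$. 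Its Frobenius trace is $-\omega(-1)^{(a-d)N}\omega(\tilde{\z-spec})^{aN}J(\omega^{(a-b)N},\omega^{(b-c)N})$.

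\emph{Assembly and main obstacle.} Since both components already carry total dimension $\phi(N)$, the dual graph has no loops and no toric part. So the $\chi_0$-part of $H^1$ is the direct sum of the two CM contributions, and the trace is their sum. Plugging this into Definition~\ref{defi:HGM} introduces the Jacobi factor and the character $\twist_N^{(d-b)N}$, which Lemma~\ref{lemma:char-values} turns into $\gent(-1)$ to the stated power. The main obstacle is verifying that these two reductions really are the components of the stable model and that there are no further components. Following \cite{wewers}, I would argue by counting: the two genera already account for the full dimension, so any further component has genus zero and contributes only to the graph, and acyclicity follows from this count together with the gcd hypotheses. I must also track the signs and unit factors produced by the substitutions precisely, so that the character values $\omega(\tilde{\z-spec})^{bN}$ and $\omega(\tilde{\z-spec})^{aN}$ come out exactly. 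I would check that bookkeeping against the symmetry of Theorem~\ref{thm:isogenous}(3), which turns case (3) into case (1) for the parameters $(-c,-d),(-a,-b)$.
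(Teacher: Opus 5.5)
Your proposal is correct and follows the argument the paper intends: the paper proves this case only by saying that the technique of Theorem~\ref{thm:trace-p-mid-z} carries over. Your two charts, the matching of the two $\gcd$ hypotheses with the irreducibility of the two reductions, and the resulting Jacobi-sum traces all agree with the statement.

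One small correction: the two components meet in $\gcd(N,(a-b)N)$ points, so the dual graph need not be acyclic, and in general there is a toric part of rank $\gcd(N,(a-b)N)-1$. That toric part lies in eigenspaces of characters of order dividing $\gcd(N,(a-b)N)<N$. So your dimension count shows exactly what you need, namely that the primitive ($\chi_0$) part has no toric contribution.
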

\bibliographystyle{plain}
\bibliography{biblio2}
\end{document}